\documentclass[12pt, twoside]{article}

\usepackage{amsmath, amssymb, amsthm, physics, url, bbm, mathrsfs, cite, xcolor, mathtools, dsfont, esint}

\usepackage{graphicx}
\usepackage[shortlabels]{enumitem}
\usepackage[title]{appendix}

\mathtoolsset{showonlyrefs}
\usepackage[utf8]{inputenc}

\definecolor{darkblue}{RGB}{32,57,231}
\usepackage[colorlinks,
	citecolor=darkblue, 
	linkcolor=darkblue, 
	urlcolor=darkblue,
	bookmarks=true,
	bookmarksopen=true,
	pdfauthor={Fabrice Baudoin, Aobo Chen, Li Chen}, 
	pdftitle={Riesz transform on eventually Gaussian local trees}
]{hyperref}

\allowdisplaybreaks

\numberwithin{equation}{section}
\numberwithin{figure}{section}

\newtheorem{theorem}{Theorem}[section]
\newtheorem{lemma}[theorem]{Lemma}
\newtheorem{proposition}[theorem]{Proposition}
\newtheorem{corollary}[theorem]{Corollary}

\theoremstyle{definition}
\newtheorem{definition}[theorem]{Definition}
\newtheorem{example}[theorem]{Example}

\newtheorem{remark}[theorem]{Remark}
\newtheorem{framework}[theorem]{Framework}
\newtheorem{notation}[theorem]{Notation}

\def\dist{{\mathop {{\rm dist}}}}
\def\diam{{\mathop{{\rm diam }}}}
\newcommand*{\dif}{\mathop{}\!\mathrm{d}}
\newcommand{\supp}{\mathrm{supp}}
\newcommand{\one}{\mathds{1}}

 \def\sE {{\mathcal E}} 
\def\sG {{\mathcal G}} \def\sH {{\mathcal H}} 
  \def\sL {{\mathcal L}}

  \def\sX {{\mathcal X}}

 \def\bN {{\mathbb N}} 
  \def\bR {{\mathbb R}}

 \def\bZ {{\mathbb Z}}

\def\bfa {{\mathbf a}}  \def\bfc {{\mathbf c}} 
\def\bfk {{\mathbf k}}

\def\ol{\overline}
\def\wt{\widetilde}

\newcommand{\loc}[0]{\operatorname{loc}}
\newcommand{\set}[1]{\left\{ #1 \right\}}
\newcommand{\Sett}[2]{\left\{ #1  : \, #2 \right\}}

\newcommand\restr[2]{{
		\left.\kern-\nulldelimiterspace 
		#1 
		\vphantom{\big|}
		\right|_{#2}
}}

\newcommand{\ambient}{\mathcal{T}} 
\newcommand{\metric}{d} 
\newcommand{\meas}{m} 
\newcommand{\skeleton}{\mathcal{S}} 
\newcommand{\medm}{\nu} 
\newcommand{\wgrad}{\partial} 
\newcommand{\abscon}{\mathrm{AC}}
\DeclareMathOperator*{\einf}{einf}
\DeclareMathOperator*{\esup}{esup}
\DeclareMathOperator*{\osc}{osc}

\newcommand{\form}{\mathcal{E}} 
\newcommand{\domain}{\mathcal{F}} 
\newcommand{\gen}{\Delta} 
\newcommand{\Dom}{\mathrm{Dom}} 
\newcommand{\proj}{{E}} 

\newcommand{\riesz}{\mathcal{R}} 
\newcommand{\high}{\sharp} 
\newcommand{\low}{\flat} 

\newcommand{\volume}{V}
\newcommand{\scale}{\Psi}
\newcommand{\graph}{\mathsf{G}} 
\newcommand{\vertex}{\mathsf{V}}
\newcommand{\edge}[0]{\mathsf{E}}

\newcommand{\UVG}[0]{\hyperlink{UVG}{\mathrm{UVG}(\volume)}}
\newcommand{\HKE}[0]{\hyperlink{HKE}{\mathrm{HKE}(\scale)}}
\newcommand{\UHK}[0]{\hyperlink{UHK}{\mathrm{UHK}(\scale)}}

\newcommand{\Grad}[0]{\hyperlink{Grad}{\mathrm{Grad}(\scale)}}
\newcommand{\RH}[0]{\hyperlink{RH}{\mathrm{RH}}}
\newcommand{\R}[1]{\hyperlink{R}{\mathrm{R}_{ #1 }}}
\newcommand{\RR}[1]{\hyperlink{RR}{\mathrm{RR}_{ #1 }}}
\newcommand{\DN}[1]{\hyperlink{DN}{\mathrm{DN}_{ #1 }}}

\newcommand{\Ker}{\operatorname{Ker}}
\newcommand{\Leb}[0]{\mathcal{L}^{1}}
\newcommand{\Length}[0]{\mathrm{Length}}

\font\titlefont=cmbx12 scaled 1400
	\title{\titlefont Riesz transform on eventually Gaussian local trees}
\date{\today}
\author{Fabrice Baudoin, Aobo Chen, Li Chen}

\begin{document}

\maketitle

\vspace{-22pt}
\begin{abstract}
We study the Riesz transform $\riesz=\wgrad(-\gen)^{-\frac{1}{2}}$ on uniform local trees, metric measure spaces that are locally real trees and whose canonical Dirichlet form is built from weak derivatives along the skeleton. The reference measure $\meas$ may be singular with respect to the length measure $\medm$, so boundedness of $\riesz$ is understood from $L^{p}(\ambient,\meas)$ to $L^{p}(\skeleton,\medm)$. In contrast with fractal-like manifolds and cable systems, the diffusion is sub-Gaussian at small scales and Gaussian at large scales. Under uniform volume growth, two-sided heat kernel estimates and a pointwise gradient estimate for the heat kernel, we prove that a local Dini condition on the scale function implies boundedness of $\riesz$ on $L^{p}$ for every $p\in[2,\infty)$, and hence the reverse Riesz inequality for every $p\in(1,2]$. Conversely, boundedness of $\riesz$ for some $p<2$, or a reverse Riesz inequality for some $p>2$, forces the space to be one-dimensional at small scales. We show that a reverse H\"older inequality for harmonic functions yields the gradient estimate, and verify all hypotheses for spaces carrying a geometric group action whose generators have bounded displacement. As an application, for the alternating Vicsek fractafold in $\bZ^{d}$ we determine the exact ranges of $p$ for which the Riesz and reverse Riesz inequalities hold.
  \vskip0.2cm
  \noindent {\it Keywords:} Riesz transform; reverse Riesz inequality; uniform local tree; heat kernel estimates
  \vskip0.2cm
  \noindent {\it Mathematics Subject Classification (2020):} 31C25, 42B20, 46E30, 60J45.
\end{abstract}

\tableofcontents

\section{Introduction}
In Euclidean space $\bR^{n}$, the Riesz transform $\nabla(-\Delta)^{-\frac{1}{2}}$ is a singular integral operator that is bounded on $L^p(\bR^n)$ for every $p\in(1,\infty)$. It compares the gradient with the square root of the Laplacian through the estimate $\norm{\nabla f}_{L^p}\lesssim\norm{(-\Delta)^{\frac{1}{2}}f}_{L^p}$. Strichartz \cite{Str83} raised the question of boundedness of the Riesz transform on complete Riemannian manifolds. If the Riesz inequality and the reverse Riesz inequality hold at the same exponent $p$, then $\norm{\nabla f}_{L^p}\asymp\norm{(-\Delta)^{\frac{1}{2}}f}_{L^p}$ for functions in their common domain.

Bakry \cite{Bak87} proved that the Riesz transform is bounded on $L^p$ for every $p\in(1,\infty)$ on complete Riemannian manifolds with non-negative Ricci curvature. Coulhon and Duong \cite{CD99} proved that, on complete Riemannian manifolds, volume doubling and a Gaussian heat kernel upper bound imply boundedness of the Riesz transform for $p\in(1,2]$. These assumptions do not suffice when $p>2$. For example, the connected sum $\bR^2\#\bR^2$ satisfies both assumptions, but its Riesz transform is unbounded for every $p>2$ \cite[Section~5]{CD99}. The conjecture of Coulhon and Duong \cite{CD03} has recently been resolved by R.~Chen, Jiang, Li and Li \cite{CJLL26}. Their result establishes $L^p$ boundedness of the Riesz transform for $1<p\leq2$, together with a weak-type $(1,1)$ estimate, on arbitrary complete non-compact Riemannian manifolds.

For $p>2$, finer regularity properties of the heat kernel and harmonic functions become relevant, notably gradient estimates; see \cite{ACDH04,CJKS20}. More precisely, under volume doubling and a scale-invariant $L^2$-Poincar\'e inequality, Auscher, Coulhon, Duong and Hofmann \cite[Theorem~1.2]{ACDH04} proved that a gradient estimate for the heat semigroup of the form
\begin{equation}\label{e.intro-background-gradient}
\norm{\nabla e^{t\Delta}f}_{L^p}\leq Ct^{-\frac{1}{2}}\norm{f}_{L^p},\ \text{ for all }t\in(0,\infty)\text{ and }f\in L^p,
\end{equation}
for some $p>2$ implies boundedness of the Riesz transform on $L^q$ for every $q\in(2,p)$. Here $C$ is independent of $t$ and $f$. Reverse H\"older inequalities for gradients of harmonic functions are also used to prove boundedness of the Riesz transform; see \cite{AC05,CJKS20}.

The development of analysis on fractals led to the study of Riesz transforms on \emph{fractal-like manifolds and graphs}. These spaces exhibit \emph{sub-Gaussian diffusion at large scales}, while the manifolds and the corresponding cable systems retain \emph{Gaussian behaviour at small scales}. Thus, at large scales, diffusion is slower than in the Gaussian case. L.~Chen, Coulhon, Feneuil and Russ \cite{CCFR17} showed that, on complete manifolds satisfying volume doubling, Gaussian small-time and sub-Gaussian large-time heat kernel upper bounds still imply boundedness of the Riesz transform for $p\in(1,2]$. This range is sharp on Vicsek manifolds and graphs, where the Riesz transform is unbounded for every $p>2$. Feneuil \cite[Theorems~1.3--1.4]{Fen26a} established this phenomenon for broader classes of graphs and manifolds under sub-Gaussian estimates and additional regularity assumptions. These results show that the underlying geometry plays an important role in determining the boundedness of the Riesz transform.

We also recall the following results on quasi-Riesz transforms and gradient estimates for the heat kernel.
\begin{itemize}[leftmargin=*,topsep=5pt,parsep=0pt,itemsep=4pt]
\item L.~Chen \cite{Che15} introduced quasi-Riesz transforms and proved that, on complete non-compact Riemannian manifolds, $\nabla e^{\Delta}(-\Delta)^{-\alpha}$ is bounded on $L^p$ for every $p\in(1,2]$ and $\alpha\in(0,\frac{1}{2})$, without additional geometric assumptions.
\item Devyver, Russ and Yang \cite{DRY23} proved pointwise gradient estimates for the heat kernel on fractal-like cable systems and used them to establish boundedness of quasi-Riesz transforms. Their examples include the Vicsek and Sierpi\'nski gasket cable systems. On the Vicsek cable system, Devyver and Russ \cite{DR26} studied the values of $\alpha>0$ and $p\in(1,\infty)$ for which $\norm{(-\Delta)^\alpha e^{\Delta}f}_{L^p}\lesssim\norm{\nabla f}_{L^p}$ holds.
\end{itemize}

In this paper, we study the Riesz transform on metric measure spaces whose diffusion is \emph{sub-Gaussian at small scales} and \emph{Gaussian at large scales}. To define a gradient operator, we consider spaces that are \emph{locally trees}. On such spaces, absolutely continuous functions admit weak derivatives, which also define a canonical regular Dirichlet form. More precisely, unless otherwise specified, we work under the following framework.

\begin{framework}\label{f.treeMMD} We impose the following assumptions on the quintuple $(\ambient,\metric,\meas,\form,\domain)$.
\begin{enumerate}[label=\textup{(F{\arabic*})},align=right,leftmargin=*,topsep=5pt,parsep=0pt,itemsep=2pt]
	\item $(\ambient,\metric)$ is an unbounded, proper, geodesic, and separable uniform local tree with constant $\iota$, skeleton $\skeleton$, and length measure $\medm$; see Definitions~\ref{d.tree1} and \ref{d.tree2}. Let $\meas$ be a Radon measure on $(\ambient,\metric)$ satisfying a uniform volume growth $\UVG$.
\item Fix an arbitrary orientation $\mathfrak{O}$ of $(\ambient,\metric)$ (see Definition~\ref{d.tree3}). Let $(\form,\domain)$ be the strongly local regular symmetric Dirichlet form on $L^2(\ambient,\meas)$ constructed in Proposition~\ref{p.energy}, which is independent of the choice of orientation $\mathfrak{O}$. Let $(-\gen,\Dom(-\gen))$ be its non-negative self-adjoint generator on $L^2(\ambient,\meas)$.
\item We write $\wgrad:=\wgrad_{\mathfrak{O}}$ for the weak gradient defined in Definition~\ref{d.tree3}. The Riesz transform $\riesz:=\riesz_{\mathfrak{O}}:L^{2}(\ambient,\meas)\to L^{2}(\skeleton,\medm)$ denotes the $L^2$-isometric extension of the operator $\wgrad(-\gen)^{-\frac{1}{2}}$ constructed in Lemma~\ref{l.defriesz}-\ref{it.dres2}.
\end{enumerate}
\end{framework}

The length measure $\medm$, against which we measure the weak gradient $\wgrad$, need not be a Radon measure and may be mutually singular with the reference measure $\meas$. Thus, boundedness of the Riesz transform means an estimate from $L^p(\ambient,\meas)$ to $L^p(\skeleton,\medm)$. In the singular case, the theory based on a \emph{carr\'e du champ} with respect to $\meas$ does not apply directly. 

We focus on the following two inequalities.

\begin{definition}
		Let the quintuple $(\ambient,\metric,\meas,\form,\domain)$ be as in Framework~\ref{f.treeMMD}. Let $p\in(1,\infty)$.
\begin{enumerate}[label=\textup{({\arabic*})}, align=right, leftmargin=*, topsep=5pt, parsep=0pt, itemsep=2pt]
\item We say that the \hypertarget{R}{\emph{$L^{p}$-Riesz inequality}} $\R{p}$ holds if there exists a constant $C\in(0,\infty)$ such that
\begin{equation}\label{e.intro-riesz}
\norm{\riesz f}_{L^{p}(\skeleton,\medm)}\leq C\norm{f}_{L^{p}(\ambient,\meas)},\ \text{ for all }f\in L^{p}(\ambient,\meas)\cap L^{2}(\ambient,\meas).
  \end{equation}
\item We say that the \hypertarget{RR}{\emph{$L^{p}$-reverse Riesz inequality}} $\RR{p}$ holds if there exists a constant $C\in(0,\infty)$ such that
 \begin{equation}\label{e.reverse}
\norm{(-\gen)^{\frac{1}{2}}f}_{L^p(\ambient,\meas)}\leq C\norm{\wgrad f}_{L^p(\skeleton,\medm)},\ \text{ for all }f\in\Dom((-\gen)^{\frac{1}{2}})=\domain.
\end{equation}
Here the operator $((-\gen)^{\frac{1}{2}},\Dom((-\gen)^{\frac{1}{2}}))$ is defined by the spectral calculus.
\end{enumerate}
\end{definition}

We describe the heat kernel estimates using a \emph{scale function} $\scale$; see Definitions \ref{d.HKE} and \ref{d.Grad}. Let $\scale:(0,\infty)\to(0,\infty)$ be a continuous, increasing bijection such that, for some $C\in(1,\infty)$ and $1<\beta_{1}\leq\beta_{2}<\infty$,
\begin{equation}\label{e.scale}
		C^{-1}\left(\frac{R}{r}\right)^{\beta_{1}}\leq \frac{\scale(R)}{\scale(r)}\leq C \left(\frac{R}{r}\right)^{\beta_{2}},\ \text{ for all }0<r\leq R<\infty.
	\end{equation}
For $p\in(2,\infty)$, we say that the \hypertarget{DN}{\emph{local Dini condition}} $\DN{p}$ holds for $\scale$ if
\begin{equation}\label{e.near0int}\tag{$\mathrm{DN}_{p}$}
\int_0^1\left(\frac{\scale(r)}{r^2}\right)^{\frac{1}{2}-\frac{1}{p}}\frac{\dif r}{r}<\infty.
\end{equation}

Our first main result is the following.
\begin{theorem}\label{t.main}
Assume $\UVG$, $\HKE$, and $\Grad$. Suppose that $\scale$ is \emph{eventually Gaussian} \ref{e.EventGau}: there exist $R_*,C\in(0,\infty)$ such that
\begin{equation}\label{e.EventGau}\tag{$\mathrm{G}_{\infty}$}
\scale(r)\leq Cr^2,\qquad\text{for all }r\in[R_*,\infty).
\end{equation}
Assume, in addition, that one of the following conditions holds:
\begin{enumerate}[label=\textup{(\roman*)},leftmargin=*,topsep=5pt,parsep=0pt,itemsep=2pt]
\item\label{it.DNp} the condition $\DN{q}$ holds for some exponent $q\in(2,\infty)$;
\item\label{it.Lin} $\scale$ is \emph{locally Gaussian} \ref{e.linear}: there exist $r_*,C\in(0,\infty)$ such that
\begin{equation}\label{e.linear}\tag{$\mathrm{G}_{0}$}
C^{-1}r^2\leq\scale(r)\leq Cr^2,\qquad\text{for all }r\in(0,r_*].
\end{equation}
\end{enumerate}
Then the $L^{p}$-Riesz inequality $\R{p}$ holds for every $p\in[2,\infty)$, and the $L^{q}$-reverse Riesz inequality $\RR{q}$ holds for every $q\in(1,2]$.
\end{theorem}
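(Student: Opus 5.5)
The plan is to split the Riesz transform into a high-frequency (small-time) part and a low-frequency (large-time) part using the subordination formula
\[
(-\gen)^{-\frac12}f=\frac{1}{\sqrt{\pi}}\int_0^\infty e^{t\gen}f\,\frac{\dif t}{\sqrt t}.
\]
Set $\riesz=\riesz^{\high}+\riesz^{\low}$, where $\riesz^{\high}$ integrates over $t\in(0,1]$ and $\riesz^{\low}$ over $t\in(1,\infty)$. Since $\riesz$ is an $L^2$-isometry, $\R{2}$ is trivial. For $p\in(2,\infty)$ it suffices to bound each piece from $L^p(\ambient,\meas)$ to $L^p(\skeleton,\medm)$; we may reduce to $p$ large and interpolate, or treat every $p$ directly. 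The reverse inequality $\RR{q}$ for $q\in(1,2]$ then follows from $\R{p}$ with $p=q'$ by the usual duality argument. One pairs $(-\gen)^{\frac12}f$ with $g\in L^{q'}\cap L^2$ and uses $\langle(-\gen)^{\frac12}f,g\rangle=\form(f,(-\gen)^{-\frac12}g)=\int_\skeleton \wgrad f\,\riesz g\,\dif\medm$, which holds on a dense class by Lemma~\ref{l.defriesz}. Hölder's inequality and $\R{q'}$ then give the bound. A density argument on $\domain$ is needed, but it is routine.

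\textbf{Large-time part.} For $t\ge1\ge$ (a multiple of) $\scale(R_*)$, condition \eqref{e.EventGau} together with \eqref{e.scale} means that $\scale^{-1}(t)\asymp t^{1/2}$. Thus $\HKE$ and $\Grad$ give Gaussian-type bounds at large times: $|\wgrad_y p_t(x,y)|\lesssim t^{-1/2}V(x,\sqrt t)^{-1}\exp(-c\,(d(x,y)^2/t))$, with $\medm$-a.e.\ $y$ on the skeleton. The difficulty here is that the target measure is $\medm$ rather than $\meas$. To handle it, I would use the uniform local tree structure: $\medm$ restricted to a ball of radius $r\ge\iota$ is controlled by $V(x,r)/r$ up to constants. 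This should follow from $\UVG$ and the uniform tree geometry, or be available from the earlier section. The plan is to show that the kernel of $\riesz^{\low}$ is a Calderón--Zygmund kernel from $(\ambient,\meas)$ to $(\skeleton,\medm)$ at scales $\ge1$ and is integrable at scales $\le1$. I would then verify the $L^p$ bound for $p>2$ by the method of Auscher--Coulhon--Duong--Hofmann. That method uses a good-$\lambda$ / sharp-maximal argument with $L^p$ gradient bounds for the semigroup $\|\wgrad e^{t\gen}f\|_{L^p(\medm)}\lesssim t^{-1/2}\|f\|_{L^p(\meas)}$, $t\ge1$, which are obtained from $\Grad$ by Schur's test. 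Alternatively, I would directly dominate $|\riesz^{\low}f|$ by a maximal function plus an $L^2$-bounded square function.

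\textbf{Small-time part.} For $t\le1$ I would avoid cancellation altogether and use the crude bound
\[
\|\riesz^{\high}f\|_{L^p(\medm)}\le\int_0^1\|\wgrad e^{t\gen}f\|_{L^p(\medm)}\frac{\dif t}{\sqrt t}.
\]
Interpolating between the $L^2$ bound $\|\wgrad e^{t\gen}f\|_{L^2(\medm)}\lesssim t^{-1/2}\|f\|_2$ and the $L^\infty$ bound from $\Grad$, $\|\wgrad e^{t\gen}f\|_{L^\infty(\medm)}\lesssim \scale^{-1}(t)^{-1}\|f\|_\infty$ (using $\int|\wgrad p_t(x,\cdot)|\dif\medm$ against the sub-Gaussian bound), gives
\[
\|\wgrad e^{t\gen}f\|_{L^p(\medm)}\lesssim t^{-\frac1p}\,\scale^{-1}(t)^{-(1-\frac2p)}\|f\|_p .
\]
The change of variables $t=\scale(r)$ turns $\int_0^1 t^{-\frac12-\frac1p}\scale^{-1}(t)^{-(1-\frac2p)}\dif t$ into $\int_0^{r_1}(\scale(r)/r^2)^{\frac12-\frac1p}\,\dif\scale(r)/\scale(r)$, which by \eqref{e.scale} is comparable to the integral in $\DN{p}$. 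Under \ref{it.DNp} with exponent $q$, $\DN{p}$ holds for all $p\in(2,q]$ since $\scale(r)/r^2\gtrsim 1$ near $0$... Care is needed here, because $\DN{p}$ does not obviously pass to larger $p$. For $p>q$ I would instead interpolate the full $\R{}$ between $q$ and a large exponent, handled by a Calderón--Zygmund argument, or interpolate in the time variable. Under \ref{it.Lin} the small-time regime is Gaussian, so $\riesz^{\high}$ is treated exactly like $\riesz^{\low}$, with scales $<1$ and $\Psi^{-1}(t)\asymp\sqrt t$.

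\textbf{Main obstacle.} I expect the main obstacle to be the large-time Calderón--Zygmund / ACDH step, specifically the mixed measures. A Hörmander-type or sharp-maximal argument needs the comparison $\medm(B(x,r))\asymp V(x,r)/r$ for $r\ge1$ and an $L^2$ off-diagonal estimate with $\medm$ on the left side. I would try to establish these first from $\UVG$ and the geometry in Definitions~\ref{d.tree1}--\ref{d.tree2}, using that each ball of radius $\iota$ has $\medm$-length bounded above and below. With those in hand, I would run the ACDH argument on the space $(\ambient,\meas)$, transferring to $\medm$ only at the final gradient bound.
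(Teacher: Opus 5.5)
There is a genuine gap in your large-time step, exactly where you placed the main obstacle. Everything there rests on the claim that $\medm(B(x,r))\asymp\volume(r)/r$ for $r\geq\iota$, i.e.\ that balls of radius $\iota$ have $\medm$-length bounded above and below. This does not follow from $\UVG$ and the tree geometry, and it is false in the regime of case~(i), where $\medm$ need not be Radon. In the alternating Vicsek fractafold of Example~\ref{ex.Vicsek}, generation $n$ contributes of the order of $(2^d+1)^n$ new diagonal segments of length $\asymp 3^{-n}$. So every ball has infinite $\medm$-measure, and since $\meas(\skeleton)=0$ there, $\medm\perp\meas$.

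Consequently none of your three tools works. Schur's test from the pointwise bound $\Grad$ needs $\int_{\skeleton}e^{-c\Phi(\metric(x,y),t)}\dif\medm(y)<\infty$. A Calder\'on--Zygmund or sharp-maximal argument with target $(\skeleton,\medm)$ has no doubling structure on the target side. And ``running ACDH on $(\ambient,\meas)$ and transferring to $\medm$ at the end'' does not say how $|\wgrad u|^2$, which lives on a possibly $\meas$-null set, becomes a function on $(\ambient,\meas)$, or how it is converted back into an $L^p(\skeleton,\medm)$ bound. Relatedly, the $L^\infty$ gradient bound must come from integrating $|\wgrad p_{t,x}(y)|$ in $x$ against $\meas$, not in $y$ against $\medm$. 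The large-time bound $\|\wgrad P_t\|_{L^p(\meas)\to L^p(\medm)}\lesssim t^{-1/2}$ then follows from the same Riesz--Thorin interpolation you use at small times, not from Schur.

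The missing idea is the unit-scale discretization. The paper takes a maximal $1$-separated net $\vertex$ with cells $K_v$ and replaces the pointwise gradient by the finite cell energies $v\mapsto\|\wgrad u\|_{L^2(\skeleton_v,\medm)}$. It then runs the good-$\lambda$ argument (Proposition~\ref{p.ACDH}) on the doubling graph in $\ell^p(\vertex)$, for $Tf(v)=\|\wgrad(-\gen+\kappa)^{-1/2}P_sf\|_{L^2(\skeleton_v,\medm)}$, uniformly in $\kappa$:
\begin{itemize}
\item (Ha) comes from the cell-wise $L^2$ off-diagonal estimate of Proposition~\ref{p.cellgrad}, a Caccioppoli argument using $\HKE$ and $\Grad$;
\item (Hb) comes from the graph Poincar\'e inequality via \eqref{e.harhb};
\item $(\mathrm{G}_\infty)$ enters as $\sup_{R\geq1}R^{-2}\scale(R)<\infty$.
\end{itemize}
The return to $L^p(\skeleton,\medm)$ is Lemma~\ref{l.utsmth}, $\|\wgrad P_{1/2}u\|_{L^p(\skeleton,\medm)}\lesssim(\sum_v\|\wgrad u\|_{L^2(\skeleton_v,\medm)}^p)^{1/p}$. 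This is why the low part is $\riesz\circ P_1$.

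In case~(ii) your comparability premise is actually true, but it must be proved, and you do not prove it. Lemma~\ref{l.growthPsi} gives $\scale(r)\asymp r\volume(r)$ for small $r$, which with $(\mathrm{G}_0)$ yields $\volume(r)\asymp r$. A covering argument then gives $\meas\asymp\medm$ (Proposition~\ref{p.onedim}), after which \cite{CJKS20} applies. Since $\DN{p}$ fails for every $p$ under $(\mathrm{G}_0)$, there is no way around this.

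Finally, your Dini monotonicity is backwards. Lemma~\ref{l.growthPsi} gives $\scale(R)/\scale(r)\gtrsim R^2/r^2$, so $\scale(r)\lesssim r^2$ near $0$. Hence $\DN{q}$ implies $\DN{p}$ for all $p\geq q$, which is the range you worried about. By contrast, $\DN{p}$ can fail for $p<q$, e.g.\ for $\scale(r)\asymp r^2(\log(e/r))^{-\gamma}$. So treat $p\geq q$ directly and interpolate $\R{2}$ with $\R{q}$ for $p\in(2,q)$. That same lower bound, not \eqref{e.scale}, is also what gives $\scale^{-1}(t)\asymp t^{1/2}$ at large times.
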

\begin{remark}
We make the following remarks on Theorem~\ref{t.main}.
	\begin{enumerate}[label=\textup{({\alph*})}, align=right, leftmargin=*, topsep=5pt, parsep=0pt, itemsep=2pt]
  \item By Proposition~\ref{p.swap}, the validities of the $L^{p}$-Riesz inequality $\R{p}$ and the $L^{p}$-reverse Riesz inequality $\RR{q}$ are independent of the choice of the orientation $\mathfrak{O}$.
\item For $p=2$, the Riesz transform $\riesz$ is an $L^{2}$-isometry by Lemma~\ref{l.defriesz}.
\item By \cite[Theorem~2.6]{Mur25}, under Framework~\ref{f.treeMMD}, $\UVG$, and $\HKE$, we have
\begin{equation}\label{e.intro-scale-lower}
\frac{\scale(R)}{\scale(r)}\gtrsim\frac{R^2}{r^2},\qquad 0<r\leq R.
\end{equation}
See also Lemma~\ref{l.growthPsi}. Consequently, under $\UVG$ and $\HKE$, eventual Gaussianity \ref{e.EventGau} is equivalent to $\scale(r)\asymp r^2$ for all sufficiently large $r$. This estimate also gives $\scale(r)/r^2\lesssim1$ for small $r$. Thus $\DN{p}$ implies $\DN{q}$ for every $q\geq p>2$.
  \item If $\scale(r)\lesssim r^\beta$ near zero for some $\beta>2$, then $\DN{p}$ holds for every $p>2$. In the locally Gaussian case \ref{e.linear}, $\DN{p}$ fails for all $p\in(2,\infty)$. The two alternatives do not exhaust all possible scales. For example, $\scale(r)\asymp r^2(\log(e/r))^{-1}$ near zero satisfies neither. Theorem~\ref{t.main} does not settle the boundedness of $\riesz$ in this case.
\end{enumerate}
\end{remark}

We briefly outline the proof of Theorem~\ref{t.main}. Since $\R{p}$ implies $\RR{p/(p-1)}$ (see Lemma~\ref{l.reverse-duality}), it suffices to prove boundedness of the Riesz transform $\riesz$. In the case of Theorem~\ref{t.main}-\ref{it.Lin}, we prove that $\meas\asymp\medm$ and apply \cite[Theorem~1.9]{CJKS20}; see Proposition~\ref{p.onedim}. In the case of Theorem~\ref{t.main}-\ref{it.DNp}, we decompose $\riesz$ into two parts:
\begin{equation}\label{e.intro-frequency}
\riesz=\riesz_{\high}+\riesz_{\low},\ \text{ where }\ \riesz_{\high}:=\riesz\circ(I-P_1),\ \text{ and }\ \riesz_{\low}:=\riesz\circ P_1,
\end{equation}
where $P_t=\exp(t\gen)$ is the heat semigroup.  This decomposition also appears in the third author's work on quasi-Riesz transforms \cite[Section~2.1]{Che15}. We call $\riesz_{\high}$ and $\riesz_{\low}$ the \emph{high-} and \emph{low-} frequency parts of the Riesz transform $\riesz$, respectively, in accordance with the terminology in \cite[Remark~2.1]{Che15}.
\begin{enumerate}[label=\textup{({\alph*})}, align=right, leftmargin=*, topsep=5pt, parsep=0pt, itemsep=2pt]
\item For $\riesz_{\low}$, we discretize the space using a maximal $1$-separated net $\vertex$ with cells $K_{v}$. The associated graph is roughly isometric to $\ambient$ and is doubling (Proposition~\ref{p.count}). For fixed $s\in(0,\infty)$, we estimate the sublinear map $Tf(v):=\norm{\wgrad(-\gen+\kappa)^{-\frac{1}{2}}P_{s}f}_{L^{2}(\skeleton_{v},\medm)}$ uniformly in $\kappa\in(0,\infty)$. The argument adapts the good-$\lambda$ criterion of Auscher, Coulhon, Duong and Hofmann \cite[Theorem~2.1 and Lemmas~2.2, 2.3]{ACDH04}. This is where \ref{e.EventGau} is used.
\item For $\riesz_{\high}$, interpolating the pointwise bound $\Grad$ with an $L^{2}$ estimate gives $\norm{\wgrad P_{t}f}_{L^{p}(\skeleton,\medm)}\lesssim t^{-\frac{1}{p}}\scale^{-1}(t)^{-1+\frac{2}{p}}\norm{f}_{L^{p}(\ambient,\meas)}$ (Proposition~\ref{p.migrad}). We then use spectral calculus and duality to estimate $\riesz_{\high}$. This is where $\DN{p}$ is used.
\end{enumerate}

Our second main result gives a necessary geometric condition for $\R{q}$ to hold for some $q\in(1,2)$ or for $\RR{p}$ to hold for some $p\in(2,\infty)$.
\begin{theorem}\label{t.smallp}
Assume $\UVG$ and $\HKE$. If $\R{q}$ holds for some $q\in(1,2)$, or if $\RR{p}$ holds for some $p\in(2,\infty)$, then there exist $r_0\in(0,\infty)$ and $C\in(1,\infty)$ such that
\begin{equation}\label{e.intro-rigidity}
C^{-1}r\leq\volume(r)\leq Cr,\qquad C^{-1}r^2\leq\scale(r)\leq Cr^2,\qquad\text{for all }r\in(0,r_0],
\end{equation}
and
\begin{equation}\label{e.intro-measure-comparison}
	C^{-1}\medm(A)\leq \meas(A)\leq C\medm(A),\ \text{ for every Borel set }A\subset\ambient.
\end{equation}
Here we regard $\medm$ as a measure on $\ambient$ concentrated on $\skeleton$.
\end{theorem}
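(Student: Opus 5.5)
We first reduce to a single case: since $\R{q}$ for $q\in(1,2)$ implies $\RR{q/(q-1)}$ by Lemma~\ref{l.reverse-duality}, it suffices to assume $\RR{p}$ for some $p\in(2,\infty)$. The idea is to test $\RR{p}$ on localized bump functions at small scale and compare the two sides. Fix $x\in\ambient$ and small $r>0$. Let $f$ be a piecewise linear cutoff that equals $1$ on $B(x,r)$, vanishes outside $B(x,2r)$, and decays linearly in $\metric(x,\cdot)$. Since $\ambient$ is locally a tree with constant $\iota$, for $r\ll\iota$ the ball $B(x,2r)$ is a real tree, so $|\wgrad f|\le r^{-1}$ on $\skeleton\cap B(x,2r)$. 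Hence
\begin{equation}
\norm{\wgrad f}_{L^p(\skeleton,\medm)}^p\lesssim r^{-p}\medm(B(x,2r)).
\end{equation}
By uniformity of the local tree, $\medm(B(x,2r))\asymp r$ for small $r$, up to a bounded number of branches.

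For the left-hand side, we bound $\norm{(-\gen)^{1/2}f}_{L^p}$ from below by duality. Pairing with $g\in L^{p'}$ gives $\langle(-\gen)^{1/2}f,g\rangle=\langle f,(-\gen)^{1/2}g\rangle$, but a more robust route uses the energy: $\form(f,f)=\norm{(-\gen)^{1/2}f}_2^2\le\norm{(-\gen)^{1/2}f}_{L^p}\norm{(-\gen)^{1/2}f}_{L^{p'}}$. We therefore need an upper bound for the $L^{p'}$ norm, with $p'<2$. To get it, write $(-\gen)^{1/2}f=c\int_0^\infty t^{-3/2}(f-P_tf)\,\dif t$ and estimate with $\HKE$: for $t\le\scale(r)$ use $\norm{f-P_tf}_{p'}\lesssim (t/\scale(r))\volume(r)^{1/p'}$ (through $\gen f$ or a smoother cutoff), and for $t\ge\scale(r)$ use the trivial bound $\volume(r)^{1/p'}$. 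This yields $\norm{(-\gen)^{1/2}f}_{p'}\lesssim\scale(r)^{-1/2}\volume(r)^{1/p'}$. On the other hand, $\form(f,f)=\int|\wgrad f|^2\dif\medm\asymp r^{-1}$, by the explicit form from Proposition~\ref{p.energy}. Combining,
\begin{equation}
r^{-1}\lesssim r^{-1}r^{1/p}\cdot\scale(r)^{-1/2}\volume(r)^{1/p'}.
\end{equation}
Under $\HKE$ and $\UVG$ on a local tree we also have the resistance-type relation $\scale(r)\asymp r\,\volume(r)$ at small scales (from Lemma~\ref{l.growthPsi} and \cite{Mur25}, since the resistance across an annulus is $\asymp r$ in a tree). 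Substituting gives $1\lesssim (r/\volume(r))^{1/p-1/2}$. Because $p>2$ the exponent is negative, so $\volume(r)\lesssim r$. The reverse bound $\volume(r)\gtrsim r$ follows from $\scale(r)\gtrsim r^2$ (the remark after Theorem~\ref{t.main}) combined with $\scale\asymp r\volume$. This proves the first estimate in \eqref{e.intro-rigidity}, and then $\scale(r)\asymp r^2$.

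For \eqref{e.intro-measure-comparison}, we localize the previous argument. Since $\meas(B(x,r))\asymp r\asymp\medm(B(x,r))$ uniformly in $x$ and in small $r$, and both measures are Radon on the relevant region, a Vitali/differentiation argument gives $\dif\meas/\dif\medm\asymp1$ on small balls. This requires first showing $\meas\ll\medm$, which we get from the uniform ratio bound via covering. For large sets, we sum over a partition into small cells. Points not in $\skeleton$ (leaves or branch points) must have $\meas$-measure zero, which follows from $\meas(B(x,r))\lesssim r\to0$ together with countability arguments.

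The main obstacle is the upper bound on $\norm{(-\gen)^{1/2}f}_{L^{p'}}$ at the correct scale. It requires $f$ to lie in $\Dom(\gen)$ with $\norm{\gen f}_{p'}\lesssim\scale(r)^{-1}\volume(r)^{1/p'}$, which is not automatic for a piecewise linear cutoff when $\meas$ is singular. I would first try replacing $f$ by a heat-smoothed cutoff, such as $\scale(r)^{-1}\int_0^{\scale(r)}P_s\one_{B(x,r)}\dif s$. Its Laplacian is controlled by $\HKE$. Its gradient must then be estimated below, which needs only $\form$ comparisons rather than $\Grad$.
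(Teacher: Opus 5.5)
Your reduction to $\RR{p}$ via Lemma~\ref{l.reverse-duality} and your use of $\scale(r)\asymp r\volume(r)$ are right. However, there is a genuine gap at exactly the step you flag, and your starting test function is not admissible.

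\textbf{The test function.} The claim $\medm(B(x,2r))\asymp r$ ``by uniformity of the local tree'' is false in general. The measure $\medm$ need not be locally finite: on Vicsek-type pieces the skeleton has infinite length in every ball. So a cutoff that is piecewise linear in $\metric(x,\cdot)$ can have $\form(f,f)=\infty$. Local finiteness of $\medm$ is part of the conclusion and cannot be used as an input. The admissible test function is $\eta_{x,r}$ from Lemma~\ref{l.cutoff}. It satisfies $\abs{\wgrad\eta_{x,r}}\le r^{-1}$ and $\medm(\supp_{\medm}[\abs{\wgrad\eta_{x,r}}])\le Cr$, hence $\norm{\wgrad\eta_{x,r}}_{L^p(\skeleton,\medm)}\lesssim r^{-1+1/p}$.

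\textbf{The lower bound for $\norm{(-\gen)^{1/2}f}_{L^p}$.} This is the more serious problem. Your duality route $\form(f,f)\le\norm{(-\gen)^{1/2}f}_{L^p}\norm{(-\gen)^{1/2}f}_{L^{p'}}$ needs, for one and the same $f$, both $\norm{(-\gen)^{1/2}f}_{L^{p'}}\lesssim\scale(r)^{-1/2}\volume(r)^{1/p'}$ and $\norm{\wgrad f}_{L^p(\skeleton,\medm)}\lesssim r^{-1+1/p}$. You do not obtain both for either candidate.
\begin{itemize}
\item For a cutoff that is only in $\domain$, the available small-time bound, even in $L^2$, is $\norm{f-P_tf}_{L^2}\le(t\form(f,f))^{1/2}$. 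This yields only the divergent majorant $r^{-1/2}\int_0 t^{-1}\dif t$ in the subordination integral.
\item For the heat-smoothed $g$, the $L^{p'}$ bound is fine. But $\norm{\wgrad g}_{L^p(\skeleton,\medm)}$ with $p>2$ is the right-hand side of $\RR{p}$ and must be bounded from above. $\form$-comparisons control only $\norm{\wgrad g}_{L^2}$. An $L^p$ or $L^\infty$ bound on $\wgrad g$ is a gradient estimate of $\Grad$/$\RH$ type, which is not among the hypotheses. Moreover, $\wgrad g$ is not compactly supported while $\medm$ may be locally infinite.
\end{itemize}

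\textbf{The missing idea} is Lemma~\ref{l.reverse-semigroup}. We have $\int_0^\infty Q_{0,t,0}(s)e^{-s\lambda}\dif s=\lambda^{-1/2}(1-e^{-t\lambda})$ with $\int_0^\infty\abs{Q_{0,t,0}(s)}\dif s=4\sqrt{t}/\sqrt{\pi}$, and $P_s$ is an $L^p$-contraction. Hence every $u\in\domain$ satisfies $\norm{u-P_tu}_{L^p}\le\frac{4\sqrt t}{\sqrt\pi}\norm{(-\gen)^{1/2}u}_{L^p}$. This gives a direct lower bound with no $L^{p'}$ estimate and no gradient estimate.
\begin{itemize}
\item Take $u=\eta_{x,r}$ and $t=\scale(Ar)$. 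The upper bound in $\HKE$ and $\volume(4r)/\volume(Ar)\lesssim A^{-1}$ (from \eqref{e.volreverse}) give $P_{\scale(Ar)}\eta_{x,r}\le\frac12$ for $A$ large.
\item So $(I-P_{\scale(Ar)})\eta_{x,r}\ge\frac12$ on $B(x,r)$, and $\norm{(-\gen)^{1/2}\eta_{x,r}}_{L^p}\gtrsim\scale(Ar)^{-1/2}\volume(r)^{1/p}\asymp r^{-1/2}\volume(r)^{1/p-1/2}$ by \eqref{e.growth2} and doubling.
\item Combined with $\RR{p}$ and the gradient bound above, this yields $(r/\volume(r))^{1/2-1/p}\lesssim1$.
\end{itemize}

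\textbf{Direction errors.} You have both directions reversed, though the two slips cancel. First, $1\lesssim(r/\volume(r))^{1/p-1/2}$ with $p>2$ gives $\volume(r)\gtrsim r$, not $\lesssim$. Second, the a priori estimate is $\scale(r)\lesssim r^2$ for small $r$, which gives $\volume(r)\lesssim r$, not $\gtrsim$.

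\textbf{The measure comparison.} For \eqref{e.intro-measure-comparison}, you must first prove $\medm(B(x,r))\lesssim r$ from $\volume(r)\asymp r$. This uses a packing bound $N_\delta\lesssim r/\delta$ for $\delta$-separated sets together with $\medm=\mathcal{H}^1$ on $\skeleton$. Then use $5B$-coverings of compact $K\subset A\subset O$ and regularity of the now-Radon $\medm$. The leaf set $\ambient\setminus\skeleton$ can be uncountable, so countability does not dispose of it; the covering comparison $\meas(K)\lesssim\medm(O)$ does.
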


Heat kernel estimates $\HKE$ on metric measure spaces admit several equivalent characterizations; see, for example, \cite{GT12,Lie15,GH14,GHL15}. Baudoin, L.~Chen and Yang \cite{BCY25} proved local pointwise gradient estimates for the heat kernel for uniform local trees. To establish a \emph{global} gradient estimate $\Grad$ for a uniform local tree, we use the \emph{reverse H\"older inequality} $\RH$:

\begin{equation}\label{e.intro-harmonic-rh}
\norm{\wgrad h}_{L^{\infty}(B(x,r),\medm)}\leq\frac{C}{r}\fint_{B(x,Ar)}\abs{h}\dif\meas,\ \text{for functions $h$ that are $\form$-harmonic on $B(x,Ar)$}.
\end{equation}
 This is the analogue in our setting of the reverse H\"older inequalities in \cite{AC05,BF16,CJKS20}. Theorem~\ref{t.RH>Gd} establishes the implication
\begin{equation}\label{e.RH=>Gad}
\UVG+\UHK+\RH\;\Longrightarrow\;\Grad.
\end{equation}
It therefore remains to verify $\RH$. Motivated by \cite{CJKS20}, we show in Theorem~\ref{t.Gp} that, under $\UVG$, a geometric group action that preserves $(\form,\domain)$ and whose generators have uniformly bounded displacement gives $\HKE$, $\RH$, $\Grad$, and \ref{e.EventGau}. Theorem~\ref{t.smallp} then applies, and Theorem~\ref{t.main} applies when its additional local Dini or Gaussian condition holds for $\scale$.

As an application, for every integer $d\geq2$, Example~\ref{ex.Vicsek} provides an alternating Vicsek fractafold with
\begin{equation}\label{e.intro-vicsek-scales}
\volume(r)=r^{\alpha_d}\vee r^d,\ \ \scale(r)=r^{\alpha_d+1}\wedge r^2,\ \ r\in[0,\infty), \ \text{ where }\alpha_d=\log_3(2^d+1).
\end{equation}
Since $\alpha_d>1$, the condition $\DN{p}$ holds for every $p>2$. For $p\in(1,\infty)$, Theorems~\ref{t.main} and~\ref{t.smallp} therefore give that, 
\begin{equation}\label{e.intro-exact-ranges}
\text{for the alternating Vicsek fractafold, we have }\begin{cases}
\R{p}\text{ holds if and only if }p\in[2,\infty),\\
\RR{p}\text{ holds if and only if }p\in(1,2].
\end{cases}
\end{equation}

This paper is organized as follows. In Section~\ref{s.prelim}, we review local trees, define orientations, the canonical Dirichlet form and the Riesz transform, and discretize the ambient space. In Section~\ref{s.heat}, we establish consequences of the heat kernel estimates that will be used in the proofs. Section~\ref{s.riesz} proves Theorems~\ref{t.main} and~\ref{t.smallp}. In Section~\ref{s.RHG} , we prove \eqref{e.RH=>Gad}, verify the required assumptions using geometric group actions in Theorem~\ref{t.Gp}, and prove \eqref{e.intro-exact-ranges} for the alternating Vicsek fractafold. Appendix~\ref{s.appendix} collects some useful estimates.

\begin{notation}
In this paper, we use the following notation and conventions.
\begin{enumerate}[label=\textup{(\roman*)},align=right,leftmargin=*,topsep=5pt,parsep=0pt,itemsep=2pt]
\item $\bN:=\{1,2,\ldots\}$. Thus $0\notin \bN$.
\item Let $U$ and $V$ be open subsets of a topological space. If $U$ is precompact and its closure is contained in $V$, then we write $U\Subset V$.
\item Let $X$ be a non-empty set. For $A\subset X$, define $\one_A\in\bR^X$ by
\begin{equation}\label{e.intro-indicator}
\one_A(x):=
\begin{dcases}
1,&x\in A,\\
0,&x\notin A.
\end{dcases}
\end{equation}
\item Let $X$ be a topological space. We set
\begin{equation}\label{e.intro-continuous-spaces}
\begin{aligned}
C(X)&:=\Sett{f\in\bR^{X}}{f\text{ is continuous and real-valued}},\\
C_c(X)&:=\Sett{f\in C(X)}{X\setminus f^{-1}(0)\text{ has compact closure in }X}.
\end{aligned}
\end{equation}
For $f\in C(X)$, define $\norm{f}_{\sup}:=\sup_{x\in X}\abs{f(x)}$.
\item Let $X$ be a topological space equipped with a Borel measure $\mu$, and let $f$ be Borel measurable. We denote by $\supp_{\mu}[f]$ the support of the measure $\abs{f}\dif\mu$.
\item Let $(X,d)$ be a metric space. For $(x,r)\in X\times[0,\infty]$, define
\begin{equation}\label{e.intro-ball}
B(x,r):=\Sett{y\in X}{d(x,y)<r}.
\end{equation}
In particular, $B(x,0)=\emptyset$ and $B(x,\infty)=X$.
\item The letters $C,c$, and their variants denote positive constants whose values are inessential and may change from line to line.
\item For $A,B\in\bR\cup\{-\infty,\infty\}$, set $A\wedge B:=\min(A,B)$ and $A\vee B:=\max(A,B)$.
\item For non-negative quantities $f$ and $g$, we write $f\lesssim g$ if there exists a constant $C\geq1$, depending only on inessential parameters, such that $f\leq Cg$.
\end{enumerate}
\end{notation}

\section{Preliminaries}\label{s.prelim}
In this section, we review the definitions of local trees, their geometric properties, and their canonical
Dirichlet forms.
\subsection{Local trees and their orientation}
We first recall the definitions of a local tree, its skeleton, and its length measure, following \cite[Definitions~2.1, 2.2 and Section~2.1]{BCY25}.
\begin{definition}\label{d.tree1}
Let $(\ambient,\metric)$ be a metric space.
\begin{enumerate}[label=\textup{({\arabic*})},align=right,leftmargin=*,topsep=5pt,parsep=0pt,itemsep=2pt]
    	\item The space $(\ambient,\metric)$ is called a \emph{real tree} if it satisfies the following two properties:
\begin{enumerate}[label=\textup{(\roman*)},align=right,leftmargin=*,topsep=5pt,parsep=0pt,itemsep=2pt]
        \item for every $u,v\in\ambient$, there exists a unique isometric embedding $ \phi_{u,v}\colon [0,\metric(u,v)]\rightarrow \ambient$
        such that $\phi_{u,v}(0)=u$ and $\phi_{u,v}(\metric(u,v))=v$;
        \item for every injective continuous map
        $\kappa\colon[0,1]\to\ambient$, one has
        \[ \kappa([0,1])=\phi_{\kappa(0),\kappa(1)} \bigl([0,\metric(\kappa(0),\kappa(1))]\bigr).\]
    \end{enumerate}
    	\item The space $(\ambient,\metric)$ is called a \emph{local tree} if, for every $x\in\ambient$, there exists $\iota(x)\in(0,\infty]$ such that
$\bigl(B(x,\iota(x)),\restr{\metric}{B(x,\iota(x))\times B(x,\iota(x))}\bigr)$
    is a real tree.

    \item The space $(\ambient,\metric)$ is called a \emph{uniform local tree} if there exists
    $\iota\in(0,\diam(\ambient,\metric)]$ such that, for every $x\in\ambient$, the metric space $\bigl(B(x,\iota),\restr{\metric}{B(x,\iota)\times B(x,\iota)}\bigr)$ is a real tree.
\end{enumerate}
\end{definition}

\begin{definition}\label{d.tree2}
Let $(\ambient,\metric)$ be a local tree.\begin{enumerate}[label=\textup{({\arabic*})},align=right,leftmargin=*,topsep=5pt,parsep=0pt,itemsep=2pt]
	\item For each $x\in\ambient$, fix $\iota(x)\in(0,\infty]$ such that $B(x,\iota(x))$ is a real tree. Whenever $\metric(x,y)<\iota(x)$, write \[[x,y]:=\phi_{x,y}\bigl([0,\metric(x,y)]\bigr),\text{ and }(x,y):=\phi_{x,y}\bigl((0,\metric(x,y))\bigr).\]
	\item The \emph{skeleton} of $\ambient$ is defined as $\skeleton:=\bigcup_{x\in\ambient}\bigcup_{y\in B(x,\iota(x))}(x,y)$.
	\item There exists a unique measure $\medm$ on the $\sigma$-field generated by these open arcs such that \[\text{ $\medm((x,y))=\metric(x,y)$ whenever $x,y\in\skeleton$ and $\metric(x,y)\in(0,\iota(x))$.}\] The measure $\medm$ is called the \emph{length measure} on $\skeleton$. 
\end{enumerate}

\end{definition}
\begin{remark}
\begin{enumerate}[label=\textup{({\arabic*})},align=right,leftmargin=*,topsep=5pt,parsep=0pt,itemsep=2pt]
	\item  If $(\ambient,\metric)$ is a separable uniform local tree, and $\iota(x)=\iota$ for every $x\in\ambient$, then $\skeleton=\bigcup_{\substack{x,y\in D\\ \metric(x,y)<\iota}}(x,y)$ for any countable dense subset $D\subset\skeleton$.
	\item The length measure $\medm$ is supported on $\skeleton$, and need not be a Radon measure on $(\ambient,\metric)$.
\end{enumerate}

\end{remark}

We will define an orientation of a local tree and the weak gradient of an absolutely continuous function using \emph{local arc charts}.
\begin{definition}\label{d.tree3}
	Let $(\ambient,\metric)$ be a connected and separable local tree with skeleton $\skeleton$ and length measure $\medm$.
	\begin{enumerate}[label=\textup{({\arabic*})},align=right,leftmargin=*,topsep=5pt,parsep=0pt,itemsep=2pt]
	\item We say that a map $\gamma:[0,\ell]\to\ambient$ is a \emph{local arc chart} of $(\ambient,\metric)$ if $\ell\in(0,\infty)$, $\gamma$ is an isometric embedding such that $\ell=\Length(\gamma)$, and there exists $x\in \ambient$ such that $\gamma([0,\ell])\subset B(x,\iota(x))$.
	\item We define the collection of \emph{absolutely continuous} functions by \begin{equation}
	\abscon(\ambient,\metric):=\Biggl\{u\in C(\ambient)\Biggm|\begin{minipage}{270pt}
{for every {local arc chart} $\gamma:[0,\ell]\to\ambient$, the function ${(u\circ \gamma)}:[0,\ell]\to\bR$ is absolutely continuous on $[0,\ell]$}
\end{minipage}
\Biggr\}.
\end{equation}
\item We say that $\mathfrak{O}=(\gamma_{n})_{n\in\bN}$ is an \emph{orientation} of $(\ambient,\metric)$ if, for each $n\in\bN$, $\gamma_{n}:[0,\ell_{n}]\to\ambient$ is a local arc chart and $\skeleton=\bigcup_{n\in\bN}\gamma_{n}((0,\ell_{n}))$.
\item For every {orientation} $\mathfrak{O}=(\gamma_{n})_{n\in\bN}$ of $(\ambient,\metric)$ and every $u\in\abscon(\ambient,\metric)$, we define the \emph{weak gradient of $u$ with respect to $\mathfrak{O}$} by \begin{equation}\label{e.dwgrad}
	\wgrad_{\mathfrak{O}}u(z):=\sum_{n\in\bN}\one_{A_{n}}(z)(u\circ \gamma_{n})^{\prime}(\gamma_{n}^{-1}(z)),\ \medm\text{-a.e. }z\in\skeleton,
\end{equation}
where \begin{equation}\label{e.dfan}
	A_{1}:=\gamma_{1}((0,\ell_{1})),\ A_{n}:=\gamma_{n}((0,\ell_{n}))\setminus\left(\bigcup_{j=1}^{n-1}\gamma_{j}((0,\ell_{j}))\right).
\end{equation}
\end{enumerate}
\end{definition}
\begin{remark}
	\begin{enumerate}[label=\textup{({\arabic*})},align=right,leftmargin=*,topsep=5pt,parsep=0pt,itemsep=2pt]
	\item Clearly, the collection $\abscon(\ambient,\metric)$ does not depend on the choice of orientation of the local tree $(\ambient,\metric)$.
	\item We emphasize that the ordering of the local arc charts in an orientation $\mathfrak{O}=(\gamma_{n})_{n\in\bN}$ matters in the definition of weak gradient \eqref{e.dwgrad}.
	\item For every {orientation} $\mathfrak{O}=(\gamma_{n})_{n\in\bN}$ of $(\ambient,\metric)$ and every $u\in\abscon(\ambient,\metric)$, the weak derivative $\wgrad_{\mathfrak{O}}u$ of $u$ with respect to $\mathfrak{O}$ is $\medm$-measurable (that is, measurable with respect to the $\sigma$-algebra generated by all open arcs) and is uniquely defined up to a $\medm$-null set.
	 \end{enumerate}
\end{remark}
The following proposition describes how the weak gradient changes when the orientation is changed.
\begin{proposition}\label{p.swap}
Let $\mathfrak O=(\gamma_n)_{n\in\mathbb N}$ and $\mathfrak O'=(\eta_m)_{m\in\mathbb N}$ be two orientations of $(\ambient,\metric)$. There exists a $\medm$-measurable function $\varepsilon_{\mathfrak O,\mathfrak O'}:\skeleton\rightarrow\{-1,1\}$, depending only on $\mathfrak O$ and $\mathfrak O'$, such that 
\begin{equation}\label{e.swap1}
\text{ for every $u\in\abscon(\ambient,\metric)$, }\wgrad_{\mathfrak O'}u=\varepsilon_{\mathfrak O,\mathfrak O'}\wgrad_{\mathfrak O}u
\ \medm\text{-a.e. on }\skeleton, \text{ and }|\wgrad_{\mathfrak O'}u|=|\wgrad_{\mathfrak O}u|\ \medm\text{-a.e. on }\skeleton.
\end{equation}
Consequently, for every $u,v\in\abscon(\ambient,\metric)$,
\begin{equation}\label{e.swap2}
\wgrad_{\mathfrak O'}u\,\wgrad_{\mathfrak O'}v
=\wgrad_{\mathfrak O}u\,\wgrad_{\mathfrak O}v
\ \medm\text{-a.e. on }\skeleton.
\end{equation}
\end{proposition}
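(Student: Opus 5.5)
The plan is to build the sign function $\varepsilon_{\mathfrak O,\mathfrak O'}$ piece by piece, using the partitions $(A_n)$ of $\mathfrak O$ and $(A'_m)$ of $\mathfrak O'$ from \eqref{e.dfan}. By construction $\skeleton=\bigsqcup_{n,m}(A_n\cap A'_m)$, and each $A_n\cap A'_m$ is $\medm$-measurable. On $A_n\cap A'_m$ I will define $\varepsilon$ by comparing the two charts $\gamma_n$ and $\eta_m$. The relevant comparison map is
\begin{equation}
\tau_{n,m}:=\gamma_n^{-1}\circ\eta_m:\eta_m^{-1}\bigl(\gamma_n((0,\ell_n))\bigr)\to(0,\ell_n).
\end{equation}
I want to show that $\tau_{n,m}$ is locally of the form $s\mapsto \pm s+c$.

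Fix $z\in\gamma_n((0,\ell_n))\cap\eta_m((0,\ell'_m))$.
\begin{enumerate}[label=\textup{(\arabic*)},align=right,leftmargin=*,topsep=5pt,parsep=0pt,itemsep=2pt]
\item Both arcs are isometric embeddings of intervals, and each lies in a real-tree ball. A small neighbourhood of $z$ is itself contained in some real-tree ball $B(z,\iota(z))$.
\item Take a sub-segment of $\eta_m$ through $z$ that also lies in the image of $\gamma_n$. Uniqueness of geodesic segments in the real tree, property (i) of Definition~\ref{d.tree1}, forces $\gamma_n$ restricted to the corresponding parameters to be the same geodesic, possibly traversed in the opposite direction.
\item Hence $\gamma_n^{-1}\circ\eta_m$ is an isometry between intervals, so it equals $s\mapsto \pm s+c$ locally.
\end{enumerate}

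The delicate point is the intersection set itself. The set $\eta_m^{-1}(\gamma_n((0,\ell_n)))$ need not be open, because two arcs can merge and split at branch points. However, on the set of density points the local isometry is still well defined up to a $\medm$-null set: two arcs in a real tree intersect in a connected set, hence a sub-arc. Using property (ii) of Definition~\ref{d.tree1} within a common real-tree ball, the intersection of the arcs is a segment, on which $\tau_{n,m}$ is a single affine isometry. Since $\eta_m$ may leave the ball, I will cover it by finitely many short sub-charts and obtain finitely many segments.

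With this in hand, I define $\varepsilon(z)=\tau_{n,m}'(\eta_m^{-1}(z))\in\{\pm1\}$ for $z\in A_n\cap A'_m$. This is measurable, being piecewise constant on countably many arcs.

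For $u\in\abscon(\ambient,\metric)$, the chain rule for an absolutely continuous function composed with an affine isometry gives
\begin{equation}
(u\circ\eta_m)'(s)=(u\circ\gamma_n)'(\tau_{n,m}(s))\,\tau_{n,m}'(s)\quad\text{for a.e. }s.
\end{equation}
Pushing forward by $\eta_m$, which maps Lebesgue measure to $\medm$ by the defining property of $\medm$, yields $\wgrad_{\mathfrak O'}u=\varepsilon\,\wgrad_{\mathfrak O}u$ $\medm$-a.e. on $A_n\cap A'_m$. Summing over all pairs $(n,m)$ gives \eqref{e.swap1}, since $\varepsilon$ does not depend on $u$. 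The identity $|\wgrad_{\mathfrak O'}u|=|\wgrad_{\mathfrak O}u|$ follows from $\varepsilon^2=1$, and \eqref{e.swap2} follows by multiplying the identities for $u$ and $v$.

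I expect the main obstacle to be the geometric step: showing rigorously that the overlap of two local arc charts is a finite union of segments on which the transition map is $\pm$ a translation. This needs the charts to be localized into a single real-tree ball, and the null-set issues at endpoints have to be handled.
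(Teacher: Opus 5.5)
Your proof is correct, but it works much harder than necessary at exactly the step you flag as the main obstacle; the paper bypasses that step entirely.

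The transition map is an isometry on its whole domain for free. If $s,s'$ lie in the domain of $\tau_{n,m}=\gamma_n^{-1}\circ\eta_m$, then
$|\tau_{n,m}(s)-\tau_{n,m}(s')|=\metric(\gamma_n(\tau_{n,m}(s)),\gamma_n(\tau_{n,m}(s')))=\metric(\eta_m(s),\eta_m(s'))=|s-s'|$,
simply because both charts are isometric embeddings into $(\ambient,\metric)$. Moreover, an isometry from an \emph{arbitrary} subset $E\subset\bR$ with at least two points into $\bR$ is the restriction of a single map $t\mapsto c\pm t$. This is Lemma~\ref{l.isom}: the distances from $\tau(t)$ to the images of two fixed points of $E$ determine $\tau(t)$.

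So the paper obtains one sign per piece $A_n\cap A'_m$ without any tree geometry. It needs no localization into balls and no proof that the overlap is a finite union of segments. Whether the overlap is open or connected, or whether the arcs merge and split at branch points, becomes irrelevant. The price is that the coincidence set is only measurable. The derivative comparison therefore needs the fact that an absolutely continuous function vanishing on a measurable set has zero derivative a.e.\ on that set (\cite[Exercise~8.10]{Bre11}). The paper applies this to $s\mapsto (u\circ\eta_m)(s)-(u\circ\gamma_n)(c+\varepsilon s)$ on an open interval containing the coincidence set.

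Your route instead uses uniqueness of geodesics inside tree balls. It also needs a compactness/Lebesgue-number argument to fit short pieces of both charts into one ball, since $\iota(\cdot)$ need not be uniform in Definition~\ref{d.tree3}. With these you show that, up to finitely many points, the overlap is a finite union of open intervals on which the two compositions coincide identically.

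What this buys you is an elementary derivative comparison: the chain rule on open intervals, with no need for the vanishing-set lemma. The cost is that you use the local-tree structure essentially, whereas the paper's argument works for isometrically embedded arcs in any metric space. It also yields a single sign on each $A_n\cap A'_m$, while yours is a priori only piecewise constant.

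Your own step (3) already contains the key observation. Once you see that the isometry property holds on the full domain of $\tau_{n,m}$, steps (1)--(2) and the ``delicate point'' become unnecessary.
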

\begin{proof}
	For $n,m\in\bN$, define $I_{n}:=\gamma_{n}((0,\Length(\gamma_{n})))$, $J_{m}:=\eta_{m}((0,\Length(\eta_{m})))$, $A_{1}:=I_{1}$, $A_{n}:=I_{n}\setminus \bigcup_{j=1}^{n-1}I_{j}$ for $n\geq2$, $B_{1}:=J_{1}$, and $B_{m}:=J_{m}\setminus \bigcup_{k=1}^{m-1}J_{k}$ for $m\geq2$. Then the sets $\{A_{n}\cap B_{m}\}_{(n,m)\in\bN^{2}}$ are pairwise disjoint, and $\bigcup_{(n,m)\in\bN^{2}}A_{n}\cap B_{m}=\skeleton$. Let \[E_{n,m}:=\gamma_n^{-1}(A_{n}\cap B_{m})\subset(0,\Length(\gamma_{n})),\]which is a Borel measurable subset of $\bR$. Observe that $(A_{n}\cap B_{m})\subset I_n\cap J_m$. Hence the transition map $\tau_{n,m}:=\eta_m^{-1}\circ\gamma_n: E_{n,m}\rightarrow(0,\Length(\eta_{m}))$ is well-defined. For $s,t\in E_{n,m}$, both parametrizations are isometries, so
\begin{equation}
|\tau_{n,m}(s)-\tau_{n,m}(t)|=
\metric(\eta_m(\tau_{n,m}(s)),\eta_m(\tau_{n,m}(t)))=
\metric(\gamma_n(s),\gamma_n(t))=|s-t|.
\end{equation}
Thus $\tau_{n,m}$ is an isometry on the subset $E_{n,m}$ of the real line.

If $E_{n,m}$ has Lebesgue measure zero, define
$\varepsilon_{n,m}:=1$. If $E_{n,m}$ has positive Lebesgue measure, then it contains two distinct points, so Lemma~\ref{l.isom} gives constants
$\varepsilon_{n,m}\in\{-1,1\}$ and $c_{n,m}\in\mathbb R$ such that $\tau_{n,m}(t)=c_{n,m}+\varepsilon_{n,m}t$ for all $t\in E_{n,m}$. Define
\begin{equation}
\varepsilon_{\mathfrak O,\mathfrak O'}(z)
:=\sum_{(n,m)\in\bN^{2}}
\varepsilon_{n,m}\one_{A_{n}\cap B_{m}}(z),\ z\in\skeleton.
\end{equation}
Then the function $\varepsilon_{\mathfrak O,\mathfrak O'}:\skeleton\to\{1,-1\}$ is well-defined because the sets $A_{n}\cap B_{m}$ form a partition of $\skeleton$. It is $\medm$-measurable because it is constant on each member of a countable Borel partition. Fix $u\in\abscon(\ambient,\metric)$ and $(n,m)\in\bN^{2}$. Put $f:=u\circ\gamma_n$ and $g:=u\circ\eta_m$. Both $f$ and $g$ are absolutely continuous on their respective parameter intervals. 

Assume first that $E_{n,m}$ has positive Lebesgue measure. Define the open interval
\begin{equation}
D_{n,m}
:=
(0,\Length(\gamma_{n}))\cap
\Sett{t\in\mathbb R}{c_{n,m}+\varepsilon_{n,m}t\in(0,\Length(\eta_{m}))}.
\end{equation}
It contains $E_{n,m}$. The function $h(t):=f(t)-g(c_{n,m}+\varepsilon_{n,m}t)$, $t\in D_{n,m}$, is absolutely continuous. For $t\in E_{n,m}$, we have $\gamma_n(t)=\eta_m(\tau_{n,m}(t))=\eta_m(c_{n,m}+\varepsilon_{n,m}t)$, and hence $h(t)=0$. By \cite[Exercise~8.10]{Bre11}, it follows that $f'(t)=\varepsilon_{n,m}
 g'(c_{n,m}+\varepsilon_{n,m}t)$ for $\Leb$-a.e. $t\in E_{n,m}$. Since $\varepsilon_{n,m}^2=1$, this is equivalent to $g'(\tau_{n,m}(t))=\varepsilon_{n,m}f'(t)$ for $\Leb$-a.e. $t\in E_{n,m}$. For $z=\gamma_n(t)\in A_{n}\cap B_{m}$, the two weak gradients are given by
\begin{equation}
\wgrad_{\mathfrak O}u(z)=f'(t),\ 
\wgrad_{\mathfrak O'}u(z)=g'(\eta_m^{-1}(z))=g'(\tau_{n,m}(t)).
\end{equation}
Since the length measure $\medm$ on $I_n$ is the push-forward of Lebesgue measure under $\gamma_n$, the preceding derivative identity yields $\wgrad_{\mathfrak O'}u=\varepsilon_{n,m}\wgrad_{\mathfrak O}u$ $\medm$-a.e. on $A_{n}\cap B_{m}$.

 If $\sL^{1}(E_{n,m})=0$, then $A_{n}\cap B_{m}=\gamma_n(E_{n,m})$ is $\medm$-null, so the same assertion is vacuous on $A_{n}\cap B_{m}$. 
 
 Taking the countable union over $(n,m)\in\mathbb N^2$ gives $\wgrad_{\mathfrak O'}u=\varepsilon_{\mathfrak O,\mathfrak O'}\wgrad_{\mathfrak O}u$ $\medm$-a.e. on $\skeleton$. Since $|\varepsilon_{\mathfrak O,\mathfrak O'}|=1$, taking absolute values gives $|\wgrad_{\mathfrak O'}u|=|\wgrad_{\mathfrak O}u|$ $\medm$-a.e. on $\skeleton$. This proves \eqref{e.swap1}. Applying the same identity to $u$ and $v$ gives \eqref{e.swap2}.
\end{proof}

The following proposition is a version of the fundamental theorem of calculus for local trees.
\begin{proposition}\label{p.arc}
Let $\mathfrak O$ be an orientation of $(\ambient,\metric)$. Let $z\in\ambient$, $\ell\in(0,\infty)$, and let $\alpha:[0,\ell]\rightarrow B(z,\iota(z))$ be an isometric embedding. Then there exists a measurable function $\sigma_{\alpha,\mathfrak O}:\alpha((0,\ell))\rightarrow\{-1,1\}$ depending only on $\alpha$ and $\mathfrak O$ such that
\begin{equation}
u(\alpha(\ell))-u(\alpha(0))
=
\int_{\alpha((0,\ell))}
\sigma_{\alpha,\mathfrak O}(z)\,
\wgrad_{\mathfrak O}u(z)\dif\medm(z),\ \text{for every $u\in\abscon(\ambient,\metric)$}.
\end{equation}
\end{proposition}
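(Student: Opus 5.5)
The plan is to reduce to the situation of Proposition~\ref{p.swap}, treating $\alpha$ as a local arc chart. Since $\alpha$ is an isometric embedding into $B(z,\iota(z))$ with $\Length(\alpha)=\ell$, it is a local arc chart. Hence $f:=u\circ\alpha$ is absolutely continuous on $[0,\ell]$, and
\begin{equation}
u(\alpha(\ell))-u(\alpha(0))=\int_0^\ell f'(t)\dif t .
\end{equation}
It therefore suffices to show two things: $f'(t)=\sigma(\alpha(t))\,\wgrad_{\mathfrak O}u(\alpha(t))$ for $\Leb$-a.e.\ $t$, with $\sigma$ independent of $u$, and the push-forward of $\Leb$ on $(0,\ell)$ under $\alpha$ coincides with $\medm$ restricted to $\alpha((0,\ell))$.

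For the derivative identity I would repeat the partition argument from the proof of Proposition~\ref{p.swap}, with the single chart $\alpha$ playing the role of $\mathfrak O'$.
\begin{enumerate}[label=\textup{(\arabic*)},leftmargin=*,topsep=3pt,itemsep=2pt]
\item Write $\mathfrak O=(\gamma_n)$ and let $A_n$ be as in \eqref{e.dfan}. Since $\alpha((0,\ell))$ consists of interior points of arcs, it lies in $\skeleton$, so the sets $A_n\cap\alpha((0,\ell))$ partition $\alpha((0,\ell))$.
\item Set $E_n:=\alpha^{-1}(A_n)\subset(0,\ell)$, which is Borel. The transition map $\tau_n:=\gamma_n^{-1}\circ\alpha$ is an isometry on $E_n$.
\item If $\Leb(E_n)>0$, Lemma~\ref{l.isom} gives $\tau_n(t)=c_n+\varepsilon_n t$ with $\varepsilon_n\in\{-1,1\}$. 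If $\Leb(E_n)=0$, set $\varepsilon_n:=1$.
\item Define $\sigma_{\alpha,\mathfrak O}:=\sum_n\varepsilon_n\one_{A_n}$ on $\alpha((0,\ell))$. It is measurable because it is constant on each set of a countable Borel partition.
\item On the open interval where both $f(t)$ and $g_n(c_n+\varepsilon_n t)$ are defined, with $g_n:=u\circ\gamma_n$, the absolutely continuous difference $f(t)-g_n(c_n+\varepsilon_n t)$ vanishes on $E_n$.
\item By \cite[Exercise~8.10]{Bre11}, the derivative of this difference vanishes a.e.\ on $E_n$. This gives $f'(t)=\varepsilon_n g_n'(\tau_n(t))=\varepsilon_n\wgrad_{\mathfrak O}u(\alpha(t))$ for a.e.\ $t\in E_n$, since $\alpha(t)\in A_n$.
\end{enumerate}
Taking the countable union over $n$ yields the identity a.e.\ on $(0,\ell)$.

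For the measure identity, I would check that for each subinterval $(a,b)\subset(0,\ell)$ we have $\medm(\alpha((a,b)))=b-a$. This holds because $\alpha((a,b))=(\alpha(a),\alpha(b))$ is an open arc inside the real tree $B(z,\iota(z))$. So on the $\sigma$-algebra generated by open arcs, $\medm$ restricted to $\alpha((0,\ell))$ equals $\alpha_*\Leb$, by uniqueness of measures agreeing on intervals, a $\pi$-system. A change of variables then gives
\begin{equation}
\int_0^\ell f'(t)\dif t=\int_{\alpha((0,\ell))}\sigma_{\alpha,\mathfrak O}\,\wgrad_{\mathfrak O}u\dif\medm .
\end{equation}

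The main obstacle is this measure identification. One has to make sure that the endpoints $\alpha(a),\alpha(b)$ fall within the normalization of Definition~\ref{d.tree2}; this requires $\metric(\alpha(a),\alpha(b))<\iota(\alpha(a))$, which may fail unless $\ell$ is small. I would handle it by subdividing $[0,\ell]$ into short pieces and using additivity. The case $\Leb(E_n)=0$ is harmless, since $\alpha(E_n)$ is then $\medm$-null by the same push-forward identity.
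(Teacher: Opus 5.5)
Your proposal is correct and follows essentially the same route as the paper: you partition $\alpha((0,\ell))$ by the sets $A_n$, use Lemma~\ref{l.isom} to make the transition maps $\gamma_n^{-1}\circ\alpha$ affine on $E_n$, apply \cite[Exercise~8.10]{Bre11} to get $(u\circ\alpha)'=\varepsilon_n\,\wgrad_{\mathfrak O}u\circ\alpha$ a.e.\ on $E_n$, and conclude with $\alpha_{\#}\Leb=\medm$ on $\alpha((0,\ell))$. The differences are minor: you justify the push-forward identity via short subarcs and a $\pi$-system argument, which the paper only asserts, and you split cases on $\Leb(E_n)>0$ instead of ``at least two points,'' which works equally well.
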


\begin{proof}
Let $\mathfrak O=(\gamma_n)_{n\in\bN}$. For each $n\in\bN$, define $\{A_{n}\}_{n\in\bN}$ by \eqref{e.dfan}, and define $E_n:=\alpha^{-1}\bigl(A_n\cap\alpha((0,\ell))\bigr)$. Then $(0,\ell)=\bigcup_{n\in\bN}E_{n}$. On $E_n$, the map $\tau_n:=\gamma_n^{-1}\circ\alpha$ is an isometry between subsets of $\bR$. Hence, if $E_n$ contains at least two points, there exist $c_n\in\bR$ and $\varepsilon_n\in\{-1,1\}$ such that $\tau_n(t)=c_n+\varepsilon_n t$ for all $t\in E_n$. When $E_n$ contains at most one point, set $\varepsilon_n:=1$. Define
\begin{equation}\label{e.arc1}
	\sigma_{\alpha,\mathfrak O}(x):=\sum_{n\in\bN}\varepsilon_n\one_{E_n}(\alpha^{-1}(x)),
\  x\in \alpha((0,\ell)).
\end{equation}
This function $\sigma_{\alpha,\mathfrak O}$ is measurable and depends only on $\alpha$ and $\mathfrak O$. Fix $u\in\abscon(\ambient,\metric)$ and write $f:=u\circ\alpha$ and $f_n:=u\circ\gamma_n$. Then $f$ and $f_n$ are absolutely continuous on their respective parameter intervals. If $E_{n}$ contains at least two points, then we have $f(t)=f_n(c_n+\varepsilon_n t)$ for all $t\in E_n$. Since two absolutely continuous functions that agree on a measurable set have equal derivatives almost everywhere on that set by \cite[Exercise~8.10]{Bre11}, it follows that $f^{\prime}(t)=\varepsilon_n f_n'(\gamma_n^{-1}(\alpha(t)))=\varepsilon_n\wgrad_{\mathfrak O}u(\alpha(t))$ for $\Leb$-a.e. $t\in E_n$. Therefore,
\[
u(\alpha(\ell))-u(\alpha(0))=
\int_0^\ell f^{\prime}(t)\dif t
=
\sum_{n\in\bN}\int_{E_n}
\varepsilon_n
\wgrad_{\mathfrak O}u(\alpha(t))\dif t\overset{\eqref{e.arc1}}{=}\int_{\alpha((0,\ell))}
\sigma_{\alpha,\mathfrak O}(z)\,
\wgrad_{\mathfrak O}u(z)\dif\medm(z),
\]
since $\alpha_{\#}\mathcal L^1=\medm$ on $\alpha((0,\ell))$. This proves the formula.
\end{proof}

We present some basic properties of the weak gradient.
\begin{lemma}\label{l.wgrad}
	Let $(\ambient,\metric)$ be a connected and separable local tree with skeleton $\skeleton$ and length measure $\medm$. Fix an orientation $\mathfrak O$.
	\begin{enumerate}[label=\textup{({\arabic*})},align=right,leftmargin=*,topsep=5pt,parsep=0pt,itemsep=2pt]
\item\label{it.line} If $u,v\in\abscon(\ambient,\metric)$ and $a,b\in\mathbb R$, then $au+bv\in\abscon(\ambient,\metric)$ and
\begin{equation}\label{e.line}
\wgrad_{\mathfrak O}(au+bv)
=a\wgrad_{\mathfrak O}u+b\wgrad_{\mathfrak O}v
\ \ \medm\text{-a.e. on } \skeleton.
\end{equation}
\item\label{it.chain} \textup{(Chain rule)} If $u\in\abscon(\ambient,\metric)$ and $F:\mathbb R\to\mathbb R$ is Lipschitz with $\operatorname{Lip}(F)<\infty$, then $F\circ u\in\abscon(\ambient,\metric)$ and
\begin{equation}\label{e.chain}
\wgrad_{\mathfrak O}(F\circ u)
=F^{\prime}(u)\wgrad_{\mathfrak O}u
\ \ \medm\text{-a.e. on } \skeleton.
\end{equation}
\item\label{it.leb} \textup{(Leibniz rule)} If $u,v\in\abscon(\ambient,\metric)$, then $uv\in\abscon(\ambient,\metric)$ and
\begin{equation}\label{e.leb}
\wgrad_{\mathfrak O}(uv)
=u\wgrad_{\mathfrak O}v+v\wgrad_{\mathfrak O}u
\ \ \medm\text{-a.e. on } \skeleton.
\end{equation}
\item\label{it.clos} \textup{(Closedness)} Let $m$ be a Radon measure on $(\ambient,\metric)$ with full support. Suppose that $u_n\in\abscon(\ambient,\metric)\cap L^2(\ambient,\meas)$,
that $\wgrad_{\mathfrak O}u_n\in L^2(\skeleton,\medm)$, and that
\begin{equation}\label{e.clos1}
u_n\to u
\quad\text{in }L^2(\ambient,\meas),
\ \text{ and }\
 \wgrad_{\mathfrak O}u_n\to g
\quad\text{in }L^2(\skeleton,\medm).
\end{equation}
Then there exists $\widetilde u\in\abscon(\ambient,\metric)$ such that $\wt{u}=u$ $\meas$-a.e. on $\ambient$, and $\wgrad_{\mathfrak O}\widetilde u=g$ $\medm$-a.e. on $\skeleton$.
\end{enumerate}
\end{lemma}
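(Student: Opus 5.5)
Parts \ref{it.line}--\ref{it.leb} all reduce to the one-dimensional statements along each local arc chart. Fix $\mathfrak O=(\gamma_n)_{n\in\bN}$ and the partition $\{A_n\}$ from \eqref{e.dfan}. For any local arc chart $\gamma$, the functions $(au+bv)\circ\gamma=a(u\circ\gamma)+b(v\circ\gamma)$, $(F\circ u)\circ\gamma=F\circ(u\circ\gamma)$ and $(uv)\circ\gamma=(u\circ\gamma)(v\circ\gamma)$ are absolutely continuous on $[0,\ell]$. Here one uses that Lipschitz maps preserve absolute continuity and that a product of absolutely continuous functions on a compact interval is absolutely continuous. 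Continuity on $\ambient$ is clear, so all three functions lie in $\abscon(\ambient,\metric)$. Evaluating \eqref{e.dwgrad} on each $A_n$ via $\gamma_n$ gives the identities \eqref{e.line} and \eqref{e.leb} $\Leb$-a.e. on $\gamma_n^{-1}(A_n)$. Since $\medm$ on $\gamma_n((0,\ell_n))$ is the push-forward of Lebesgue measure, these identities hold $\medm$-a.e. on $A_n$, and summing over $n$ gives them $\medm$-a.e. on $\skeleton$. For the chain rule \eqref{e.chain}, apply the one-dimensional chain rule for Lipschitz $F$ composed with an absolutely continuous $f$: $(F\circ f)'=F'(f)f'$ a.e. This is interpreted with the convention that $F'(f)f':=0$ wherever $f'=0$, and this product is well defined because on $\{t: f(t)\in N\}$, where $N$ is the null set on which $F$ is not differentiable, one has $f'=0$ a.e. I will cite this standard fact (e.g.\ from \cite{Bre11} or the Serrin--Ziemer style lemma) rather than reprove it.

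For the closedness part \ref{it.clos}, the plan is as follows.
\begin{enumerate}[label=\textup{(\alph*)},leftmargin=*,topsep=5pt,parsep=0pt,itemsep=2pt]
\item Pass to a subsequence with $u_n\to u$ $\meas$-a.e. and $\sum_n\norm{\wgrad u_n-g}_{L^2(\medm)}<\infty$.
\item Show that $(u_n)$ is locally uniformly Cauchy. Fix $x\in\ambient$ and a real-tree ball $B:=B(x,\iota(x)/2)$. For $y,z\in B$, Proposition~\ref{p.arc} along the geodesic $[y,z]$, together with Cauchy--Schwarz, gives
\[\abs{(u_n-u_k)(y)-(u_n-u_k)(z)}\le \metric(y,z)^{1/2}\norm{\wgrad u_n-\wgrad u_k}_{L^2(\skeleton,\medm)}.\]
So the oscillation of $u_n-u_k$ on $B$ is uniformly small.
\item Control the constant part. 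Because $\meas$ has full support and is Radon, $0<\meas(B)<\infty$, so the averages $\fint_B(u_n-u_k)\dif\meas\to0$ by $L^2$ convergence. Combined with (b), this shows $u_n$ is uniformly Cauchy on $B$. Covering $\ambient$ by such balls (separability suffices), $u_n\to\wt u$ locally uniformly with $\wt u\in C(\ambient)$, and $\wt u=u$ $\meas$-a.e. by (a).
\item Identify the gradient. Let $\alpha$ be any local arc chart, in particular any $\gamma_m$. Pass to the limit in Proposition~\ref{p.arc} applied to sub-arcs $\alpha|_{[s,t]}$: the left side converges by uniform convergence, and the right side converges because $\wgrad u_n\to g$ in $L^2(\medm)$ and the arc has finite length. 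This yields
\[\wt u(\alpha(t))-\wt u(\alpha(s))=\int_{\alpha((s,t))}\sigma_{\alpha,\mathfrak O}\,g\dif\medm.\]
Here $g\in L^1$ on the arc, so $\wt u\circ\alpha$ is absolutely continuous, giving $\wt u\in\abscon(\ambient,\metric)$.
\item Taking $\alpha=\gamma_m$, the sign $\sigma_{\gamma_m,\mathfrak O}$ equals $1$ on $A_m$: the construction in the proof of Proposition~\ref{p.arc} gives $\tau_m=\mathrm{id}$ there. Differentiating then gives $(\wt u\circ\gamma_m)'=g\circ\gamma_m$ a.e. on $\gamma_m^{-1}(A_m)$, that is, $\wgrad_{\mathfrak O}\wt u=g$ $\medm$-a.e.
\end{enumerate}

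The main obstacle is step (c): upgrading $L^2(\meas)$ convergence to pointwise convergence when $\meas$ may be singular to $\medm$. The gradient bound only controls oscillations, and the full support of $\meas$ on each ball is exactly what pins down the additive constants. A minor technical point is that sub-arcs of $\alpha$ must lie in a real-tree ball for Proposition~\ref{p.arc} to apply; this holds because $\alpha$ is a local arc chart.
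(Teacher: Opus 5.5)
Parts (1)--(3) match the paper's proof: both reduce to one-dimensional calculus along each chart $\gamma_n$ and use $\medm|_{\gamma_n((0,\ell_n))}=(\gamma_n)_{\#}\Leb$. Your observation that $f'=0$ a.e. on $f^{-1}(N)$ is what justifies the paper's ``extend $F'$ arbitrarily'' convention.

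For part (4) your argument is correct but takes a different route.

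\textbf{The paper's route.} It passes to an $\meas$-a.e. convergent subsequence and uses full support to pick a base point $x_j$ of convergence in each tree ball $U_j$. It then defines $v_j(y):=u(x_j)+\int_{[x_j,y]}\sigma_{x_j,y,\mathfrak O}\,g\dif\medm$ explicitly and shows $v_j=u$ on the convergence set. Finally it patches the $v_j$ together using continuity and full support.

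\textbf{Your route.} You show that $(u_n)$ itself is locally uniformly Cauchy. Oscillations are controlled by Proposition~\ref{p.arc} and Cauchy--Schwarz, and the additive constants are anchored by $\meas$-averages over a ball of positive finite measure. Then $\wt u$ is simply the locally uniform limit, so well-definedness and patching are automatic. You also get the stronger conclusion that the whole sequence converges locally uniformly.

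You identify the gradient by passing to the limit along a fixed arc. There the only sign fact needed is $(u_n\circ\alpha)'=\sigma_{\alpha,\mathfrak O}(\alpha)\,\wgrad_{\mathfrak O}u_n(\alpha)$ a.e., which comes straight from the proof of Proposition~\ref{p.arc}. This avoids a point the paper leaves implicit: that the signs $\sigma_{x_j,y,\mathfrak O}$ for different endpoints $y$ agree $\medm$-a.e. on shared sub-arcs. The paper needs that both for its H\"older estimate on $v_j(y)-v_j(z)$ and for reading off $\wgrad_{\mathfrak O}\wt u=g$. Your step (e), using $\sigma_{\gamma_m,\mathfrak O}=1$ on $A_m$, makes that final identification explicit.

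\textbf{What your route costs.} You need $\meas(B)<\infty$ on the anchoring balls. In this lemma $\ambient$ is not assumed proper, so $B(x,\iota(x)/2)$ need not have finite measure, and its radius is infinite when $\iota(x)=\infty$. Either shrink the radius so that local finiteness of the Radon measure $\meas$ applies, or anchor at a point of the $\meas$-a.e. convergence set inside $B$, as the paper does.
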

\begin{proof}
Write $\mathfrak O=(\gamma_n)_{n\in\bN}$ and let
$\gamma_n:(0,\Length(\gamma_{n}))\to \ambient$ be the corresponding isometric parametrization. \begin{enumerate}[label=\textup{({\arabic*})},align=right,leftmargin=*,topsep=5pt,parsep=0pt,itemsep=2pt]
	\item[\ref{it.line}] For each $n\in\bN$, set $u_n:=u\circ\gamma_n$ and $v_n:=v\circ\gamma_n$. By definition, $u_n$ and $v_n$ are absolutely continuous on $(0,\Length(\gamma_{n}))$. For $a,b\in\bR$, the function $au_n+bv_n$ is absolutely continuous and $(au_n+bv_n)^{\prime}=au_n^{\prime}+bv_n^{\prime}$ $\Leb$-a.e. on $(0,\Length(\gamma_{n}))$. Hence $au+bv\in\abscon(\ambient,\metric)$, and \eqref{e.line} follows from the definition of the weak gradient.
	\item[\ref{it.chain}] Now let $F:\bR\to\bR$ be Lipschitz. Then $F\circ u_n$ is locally absolutely
continuous. Extend $F^{\prime}$ arbitrarily, for instance by zero, on the Lebesgue-null
set on which $F$ is not differentiable. The one-dimensional chain rule for
absolutely continuous functions gives $(F\circ u_n)^{\prime}=F^{\prime}(u_n)u_n^{\prime}$ $\Leb$-a.e. on $(0,\Length(\gamma_{n}))$. Thus $F\circ u\in\abscon(\ambient,\metric)$ and \eqref{e.chain} holds.
\item[\ref{it.leb}] Since $u_n$ and $v_n$ are bounded on every compact subinterval, their
product is locally absolutely continuous, and the one-dimensional Leibniz rule gives $(u_nv_n)^{\prime}=u_nv_n^{\prime}+v_nu_n^{\prime}$ $\Leb$-a.e. on $(0,\Length(\gamma_{n}))$. Consequently, $uv\in\abscon(\ambient,\metric)$ and \eqref{e.leb} holds.
\item[\ref{it.clos}]
After passing to a subsequence, we may assume that $u_n\to u$
$\meas$-a.e. on $\ambient$. Let $E:=\Sett{x\in \ambient}{\lim_{n\to\infty}u_{n}(x)=u(x)}$, so $\meas(\ambient\setminus E)=0$. Choose a countable cover $(U_j)_{j\in\bN}$ of $\ambient$ by open balls, such that $(U_j,\restr{\metric}{U_j\times U_{j}})$ is a tree, and choose $x_j\in E\cap U_j$. Fix an arbitrary $j\in\bN$ and let $\sigma_{x_j,y,\mathfrak O}$ be the sign function supplied by Proposition~\ref{p.arc} for the arc from $x_j$ to $y$, when $y\neq x_j$. For $y=x_j$, set $\sigma_{x_j,y,\mathfrak O}=1$. Define\begin{equation}
	v_j(y):=u(x_j)+
\int_{[x_j,y]}
\sigma_{x_j,y,\mathfrak O}(z)\,g(z)\dif\medm(z),\ \text{ for all }y\in U_j.\label{e.clos2}
\end{equation}

Since every arc $[x_j,y]\subset U_{j}$ has finite length and $g\in L^2(\skeleton,\medm)$,
this integral is finite. Moreover, for $y,z\in U_j$, by H\"{o}lder's inequality,
\[
|v_j(y)-v_j(z)|
\le \metric(y,z)^{\frac{1}{2}}
\left(\int_{[y,z]}|g|^2\dif\medm\right)^{\frac{1}{2}},
\]
so $v_j:U_{j}\to\bR$ is absolutely continuous. For $y\in E\cap U_j$, Proposition \ref{p.arc} gives that
\begin{equation}
	u_n(y)-u_n(x_j)
=
\int_{[x_j,y]}
\sigma_{x_j,y,\mathfrak O}\; \wgrad_{\mathfrak O}u_n\dif\medm.\label{e.clos2+}
\end{equation}
Since $u_n(y)\to u(y)$, $u_n(x_j)\to u(x_j)$, and
\begin{equation}
	\lim_{n\to\infty}\left|
\int_{[x_j,y]}
\sigma_{x_j,y,\mathfrak O}(\wgrad_{\mathfrak O}u_n-g)\dif\medm
\right|
\leq \lim_{n\to\infty} \metric(x_j,y)^{\frac{1}{2}}\norm{\wgrad_{\mathfrak O}u_n-g}_{L^2(\skeleton,\medm)}\overset{\eqref{e.clos1}}{=}0,\label{e.clos3}
\end{equation}
we obtain $v_j(y)=u(y)$ by \eqref{e.clos2}. Hence $v_j=u$ on $U_j\cap E$.

If $U_j\cap U_k\neq\emptyset$, then $v_j-v_k$ is continuous and vanishes
$\meas$-a.e. on $U_j\cap U_k$. Since $\meas$ has full support, it follows
that $v_j=v_k$ everywhere on $U_j\cap U_k$. Thus the functions $v_j$
patch together to form a continuous function
$\widetilde u\in\abscon(\ambient,\metric)$ satisfying
$\widetilde u=u$ $\meas$-a.e. Finally, the identity \eqref{e.clos2} shows that $\wgrad_{\mathfrak O}\widetilde u=g$ $\medm$-a.e. on $\skeleton$. Therefore, $u$ admits a locally absolutely continuous representative and
$g=\wgrad_{\mathfrak O}u\in L^2(\skeleton,\medm)$.
\qedhere
\end{enumerate}
\end{proof}

\subsection{Large-scale geometry of a uniform local tree}\label{s.graph}

Let $(\ambient,\metric)$ be a uniform local tree. Fix $\epsilon\in(0,(20)^{-1}\iota)$ and let $\vertex\subset\ambient$ be a \emph{maximal $\epsilon$-separated set} of $(\ambient,\metric)$; that is, $\metric(x,y)\geq\epsilon$ for all $x,y\in\vertex$ with $x\neq y$, and $\ambient=\bigcup_{v\in\vertex}B(v,\epsilon)$. The existence of such a maximal $\epsilon$-separated set follows from Zorn's lemma. We define the \emph{edge set} by \begin{equation}\label{e.edges}
	\edge:=\Sett{(x,y)\in \vertex\times \vertex}{d(x,y)\in(0,3\epsilon]}.
\end{equation}
Then $\graph:=(\vertex,\edge)$ is a graph. Let $\metric_{\graph}:\vertex\times \vertex\to[0,\infty]$ denote the graph distance associated with $\edge$, and let
\begin{equation}
 B_{\graph}(v,R):=\Sett{w\in\vertex}{\metric_{\graph}(v,w)<R},\ \text{for all }(v,R)\in\vertex\times(0,\infty).
\end{equation}

\begin{definition}
	Let $\volume:[0,\infty)\to [0,\infty)$ be an increasing function. Let $\meas$ be a Radon measure on $(\ambient,\metric)$. We say that $(\ambient,\metric,\meas)$ satisfies the \hypertarget{UVG}{\emph{uniform volume growth}} condition $\UVG$, or the measure $m$ satisfies $\UVG$, if there exists $C_{\volume}\in[1,\infty)$ such that \begin{equation}\label{e.UVG}
	0<C_{\volume}^{-1}\volume(r)\leq \meas(B(x,r))\leq C_{\volume}\volume(r)<\infty,\ \text{ for all }(x,r)\in \ambient\times(0,\diam(\ambient,\metric)),
\end{equation}
and
\begin{equation}\label{e.VD}
	\volume(2r)\leq C_{\volume}\volume(r),\ \text{ for all }r\in(0,\infty).
\end{equation}
\end{definition}
\begin{remark}
	\begin{enumerate}[label=\textup{({\arabic*})}, align=right, leftmargin=*, topsep=5pt, parsep=0pt, itemsep=2pt]
  \item If $\meas$ satisfies $\UVG$, then $\meas$ has full support; if, in addition, $(\ambient,\metric)$ is unbounded, then $\meas(\ambient)=\infty$. In fact, unboundedness provides infinitely many pairwise disjoint balls with radii $1$, each of measure at least $C_{\volume}^{-1}\volume(1)>0$. 
  \item By \eqref{e.VD}, we know that there exist $C\in(0,\infty)$ and $d_{2}\in[1,\infty)$ such that \begin{equation}\label{e.VD1}
	\frac{V(R)}{V(r)}\leq C\left(\frac{R}{r}\right)^{d_{2}},\ \text{ for all }0<r\leq R<\infty.
\end{equation}
\end{enumerate}
\end{remark}

\begin{lemma}\label{l.covering}
Let $(\ambient,\metric)$ be a $\sigma$-compact unbounded uniform local tree, and let $\meas$ be a Borel measure on $(\ambient,\metric)$ satisfying $\UVG$. Then $\vertex$ is locally finite and countable. Moreover, there exists $C\in(1,\infty)$ such that
\begin{equation}\label{e.pack1}
\#(\vertex\cap B(x,r))
 \leq C\frac{\volume(r)}{\volume(\epsilon)},\ \text{ for all }(x,r)\in\ambient\times [\epsilon,\diam(\ambient,\metric)),
\end{equation}
and
\begin{equation}\label{e.pack2}
 \#(\vertex\cap B(x,r+\epsilon))
 \geq C^{-1}\frac{\volume(r)}{\volume(\epsilon)},\ \text{ for all }(x,r)\in\ambient\times (0,\diam(\ambient,\metric)).
\end{equation}
\end{lemma}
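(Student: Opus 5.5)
The plan is a standard packing/covering argument based on the disjointness of the balls $B(v,\epsilon/2)$, $v\in\vertex$, together with $\UVG$ and the doubling bound \eqref{e.VD1}.

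\emph{Local finiteness and countability.} Since $\vertex$ is $\epsilon$-separated, the balls $B(v,\epsilon/2)$, $v\in\vertex$, are pairwise disjoint. If $v\in\vertex\cap B(x,r)$, then $B(v,\epsilon/2)\subset B(x,r+\epsilon/2)$. By $\UVG$, each such ball has measure at least $C_{\volume}^{-1}\volume(\epsilon/2)>0$. Hence
\[
\#(\vertex\cap B(x,r))\,C_{\volume}^{-1}\volume(\epsilon/2)\leq\meas(B(x,r+\epsilon/2)).
\]
The right-hand side is finite, either by $\UVG$ or because $\meas$ is finite on balls, which are precompact since the space is proper. So every ball contains finitely many points of $\vertex$. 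Writing $\ambient=\bigcup_{k\in\bN}B(x_0,k)$ then shows that $\vertex$ is countable.

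\emph{Upper bound \eqref{e.pack1}.} Take $r\geq\epsilon$. The disjointness argument above gives
\[
\#(\vertex\cap B(x,r))\leq C_{\volume}^{2}\,\frac{\volume(r+\epsilon/2)}{\volume(\epsilon/2)}.
\]
Here we apply $\UVG$ at radius $r+\epsilon/2$. If this radius exceeds $\diam$, we are in the unbounded case, so in fact $\diam=\infty$ and the constraint is void. Since $r+\epsilon/2\leq 2r$ and $\epsilon/2\geq\epsilon/2$, the doubling property \eqref{e.VD} gives $\volume(r+\epsilon/2)\leq C_{\volume}\volume(r)$ and $\volume(\epsilon)\leq C_{\volume}\volume(\epsilon/2)$. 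Combining these yields \eqref{e.pack1}.

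\emph{Lower bound \eqref{e.pack2}.} Use maximality of $\vertex$: the balls $B(v,\epsilon)$ cover $\ambient$. So $B(x,r)$ is covered by the balls $B(v,\epsilon)$ with $v\in\vertex\cap B(x,r+\epsilon)$. This gives
\[
C_{\volume}^{-1}\volume(r)\leq\meas(B(x,r))\leq\#(\vertex\cap B(x,r+\epsilon))\,C_{\volume}\volume(\epsilon),
\]
which is \eqref{e.pack2} with $C=C_{\volume}^{2}$.

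\emph{Main obstacle.} The only delicate point is bookkeeping of the radius restriction $r<\diam(\ambient,\metric)$ in $\UVG$. Because $\ambient$ is unbounded, $\diam=\infty$, so the restriction is vacuous. The remaining steps only require ensuring that the enlarged radii $r+\epsilon/2$ and $r+\epsilon$ are controlled by doubling, and that step is routine.
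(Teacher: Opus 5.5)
Your proposal is correct and follows essentially the same packing/covering argument as the paper. The paper uses the disjoint balls $B(v,\epsilon/3)$ rather than $B(v,\epsilon/2)$, and it applies the count to arbitrary finite subsets $F\subset\vertex\cap B(x,r)$ before taking the supremum; that is the clean way to justify your inequality while the cardinality is not yet known to be finite. Your derivation of countability from the exhaustion $\ambient=\bigcup_{k}B(x_0,k)$, instead of from $\sigma$-compactness, is equally valid.
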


\begin{proof}
For $(x,r)\in\ambient\times [\epsilon,\diam(\ambient,\metric))$ and for every finite set $F\subset\vertex\cap B(x,r)$, since $r\geq\epsilon$ and the balls $B(v,\epsilon/3)$, $v\in\vertex$, are pairwise disjoint, we have
\begin{equation}
 C_{\volume}^{-1} \volume(\epsilon/3)\cdot \#F\leq\sum_{v\in F}\meas(B(v,\epsilon/3))
 \leq \meas(B(x,r+\epsilon/3))\leq \meas(B(x,4r/3))\overset{\eqref{e.UVG}}{\leq} C_{\volume} \volume(4r/3).
\end{equation}
Combining this with \eqref{e.VD} and then taking the supremum over all finite sets $F$ proves \eqref{e.pack1}.
In particular, $\vertex$ is locally finite. Since $\ambient$ is $\sigma$-compact and $\vertex$ is locally finite, $\vertex$ is countable.

Fix $(x,r)\in\ambient\times (0,\diam(\ambient,\metric))$. By maximality, for every $y\in B(x,r)$, there exists $v\in\vertex$ such that $\metric(y,v)<\epsilon$. Such a point satisfies $v\in B(x,r+\epsilon)$, and therefore
$B(x,r)\subset\bigcup_{v\in\vertex\cap B(x,r+\epsilon)}B(v,\epsilon)$. Using \eqref{e.UVG} and subadditivity, we obtain $\volume(r)
 \leq C_{\volume}\meas(B(x,r))
 \leq C\volume(\epsilon)
 \#(\vertex\cap B(x,r+\epsilon))$, which proves \eqref{e.pack2}.
\end{proof}

\begin{proposition}\label{p.count}
	Let $(\ambient,\metric)$ be a $\sigma$-compact geodesic unbounded uniform local tree, and let $\meas$ be a Borel measure on $(\ambient,\metric)$ satisfying $\UVG$. The graph $\graph=(\vertex,\edge)$ is connected and has uniformly bounded degree. Moreover, \begin{equation}
		  \frac{\epsilon}{2}\metric_{\graph}(v,w)
 \leq \metric(v,w)
 \leq 3\epsilon\metric_{\graph}(v,w),\ \text{ for every }v,w\in\vertex,\label{e.quaiso}
	\end{equation}
	and there exists $C\in(1,\infty)$ such that \begin{equation}\label{e.count}
		C^{-1}\frac{\volume(\epsilon R)}{\volume(\epsilon)}
 \leq \#B_{\graph}(v,R)
 \leq C\frac{\volume(\epsilon R)}{\volume(\epsilon)},\ \text{ for every } (v,R)\in\vertex\times\big[1,\frac{1}{3\epsilon}\diam(\ambient,\metric)\big).
	\end{equation}
\end{proposition}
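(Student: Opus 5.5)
The plan is to handle the four assertions in the order: metric comparison \eqref{e.quaiso}, then connectedness, then bounded degree, then the counting estimate \eqref{e.count}. Each step reduces either to geodesic chaining or to Lemma~\ref{l.covering}.

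\emph{Upper bound in \eqref{e.quaiso}.} If $v=v_0,v_1,\dots,v_k=w$ is a path in $\graph$, then each step has $\metric(v_{i-1},v_i)\leq 3\epsilon$. The triangle inequality then gives $\metric(v,w)\leq 3\epsilon k$, and minimizing over paths gives $\metric(v,w)\leq 3\epsilon\metric_{\graph}(v,w)$.

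\emph{Lower bound in \eqref{e.quaiso} and connectedness.} Set $L:=\metric(v,w)$ and take a geodesic $\phi:[0,L]\to\ambient$ from $v$ to $w$, which exists because $\ambient$ is geodesic. Let $N:=\lceil L/\epsilon\rceil$ and choose the points $t_i:=iL/N$ for $i=0,\dots,N$. By maximality of $\vertex$, each $\phi(t_i)$ lies in some ball $B(u_i,\epsilon)$ with $u_i\in\vertex$, and we may take $u_0=v$ and $u_N=w$. Consecutive points then satisfy
\[
\metric(u_{i-1},u_i)<\epsilon+L/N+\epsilon\leq 3\epsilon .
\]
So either $u_{i-1}=u_i$ or $(u_{i-1},u_i)\in\edge$. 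This shows that $\graph$ is connected and that $\metric_{\graph}(v,w)\leq N\leq L/\epsilon+1$. Since $\vertex$ is $\epsilon$-separated, $v\neq w$ forces $L\geq\epsilon$, hence $1\leq L/\epsilon$ and $\metric_{\graph}(v,w)\leq 2L/\epsilon$. This is the left inequality of \eqref{e.quaiso}; the case $v=w$ is trivial.

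\emph{Bounded degree.} Every neighbour of $v$ lies in $\vertex\cap B(v,4\epsilon)$. By \eqref{e.pack1} (valid since $4\epsilon\geq\epsilon$), this set has cardinality at most $C\volume(4\epsilon)/\volume(\epsilon)$, which is bounded by a constant using \eqref{e.VD}. If $\diam(\ambient,\metric)\leq 4\epsilon$, one uses $r$ slightly below the diameter instead. Since the space is unbounded, however, this case does not occur.

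\emph{Upper bound in \eqref{e.count}.} By the upper bound in \eqref{e.quaiso}, $B_{\graph}(v,R)\subset\vertex\cap B(v,3\epsilon R)$, at least once the graph-radius $R$ is converted to a strict metric inequality with a harmless enlargement. Applying \eqref{e.pack1} with $r=3\epsilon R\geq\epsilon$ and then doubling \eqref{e.VD} gives $\#B_{\graph}(v,R)\lesssim\volume(\epsilon R)/\volume(\epsilon)$.

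\emph{Lower bound in \eqref{e.count}.} By the lower bound in \eqref{e.quaiso}, if $\metric(v,w)<\epsilon R/2$ then $\metric_{\graph}(v,w)<R$. Hence $\vertex\cap B(v,\epsilon R/2)\subset B_{\graph}(v,R)$. Next apply \eqref{e.pack2} with $r=\epsilon R/2-\epsilon$. When $R\geq4$, this gives $r\geq\epsilon R/4$, so the count is at least $c\,\volume(\epsilon R/4)/\volume(\epsilon)\gtrsim\volume(\epsilon R)/\volume(\epsilon)$ by doubling. When $1\leq R<4$, the trivial bound $\#B_{\graph}(v,R)\geq1$ suffices, since $\volume(\epsilon R)\leq\volume(4\epsilon)\lesssim\volume(\epsilon)$.

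The main obstacle is the bookkeeping of constants: making the chaining step land within $3\epsilon$, and handling the strict versus non-strict inequalities in the ball definitions. No deeper difficulty is expected; local finiteness and countability are already supplied by Lemma~\ref{l.covering}.
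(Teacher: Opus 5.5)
Your proposal is correct and follows the paper's proof essentially step for step. The shared steps are: geodesic chaining with $N=\lceil L/\epsilon\rceil$ for connectivity and the left inequality in \eqref{e.quaiso}; the triangle inequality for the right inequality; Lemma~\ref{l.covering} applied to $\vertex\cap B(v,4\epsilon)$ for bounded degree; and the inclusions $\vertex\cap B(v,\epsilon R/2)\subset B_{\graph}(v,R)\subset\vertex\cap B(v,3\epsilon R)$ combined with \eqref{e.pack1}, with \eqref{e.pack2} at $r=\epsilon(R/2-1)$ when $R\geq4$, and with the trivial bound when $1\leq R<4$ (no enlargement is actually needed in the upper inclusion, since $\metric_{\graph}(v,w)<R$ already gives $\metric(v,w)<3\epsilon R$).
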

\begin{proof}
Let $v,w\in\vertex$ with $v\neq w$, set $L:=\metric(v,w)\geq\epsilon$, and let $\gamma:[0,L]\to\ambient$ be a unit-speed geodesic from $v=\gamma(0)$ to $w=\gamma(L)$. Set $N=\left\lceil{L}{\epsilon}^{-1}\right\rceil$ and $x_j=\gamma\left({jL}{N}^{-1}\right)$ for $j\in\{0,\ldots, N\}$. For $j\in\{1,\ldots, N-1\}$, choose $v_j\in\vertex$ such that $\metric(v_j,x_j)<\epsilon$, and set $v_0=v$ and $v_N=w$. Then $\metric(v_j,v_{j+1}) <\epsilon+{L}{N}^{-1}+\epsilon\leq3\epsilon$. So $(v_{j},v_{j+1})\in\edge$ whenever $v_{j}\neq v_{j+1}$. After deleting repeated elements, $(v_0,\ldots,v_N)$ is a path in $\graph$. Hence
\begin{equation}
 \metric_{\graph}(v,w)\leq N\leq\frac{L}{\epsilon}+1\leq\frac{2L}{\epsilon}=\frac{2}{\epsilon}d(v,w).
\end{equation}
Conversely, by the triangle inequality, every path in $\graph$ of graph length $n$ has length in $\metric$ at most $3\epsilon n$. This proves connectivity and \eqref{e.quaiso}.

By Lemma~\ref{l.covering}, for any $v\in\vertex$, we have $\deg_{\graph}(v)+1
 \leq\#(\vertex\cap B(v,4\epsilon)) \leq C$, so the degree of $\graph$ is uniformly bounded. The triangle inequality and \eqref{e.quaiso} give
\begin{equation}
 \vertex\cap B(v,{\epsilon R}/{2})
 \subset B_{\graph}(v,R)
 \subset\vertex\cap B(v,3\epsilon R).
\end{equation}
The upper bound in \eqref{e.count} follows from \eqref{e.pack1}. If $R\geq4$, apply \eqref{e.pack2} with $r=\epsilon\left(\frac{R}{2}-1\right)$ and use \eqref{e.VD} to compare $\volume(r)$ with $\volume(\epsilon R)$. This gives the lower bound. If $1\leq R<4$, then $\#B_{\graph}(v,R)\geq1$ and $\volume(\epsilon R)\leq C\volume(\epsilon)$ by \eqref{e.VD}.
\end{proof}

Enumerate $\vertex=\set{v_1,v_2,\ldots}$. Since $\vertex$ is $\epsilon$-separated, it is closed. Since it is locally finite, the distance (with respect to $\metric$) from any $x\in\ambient$ to $\vertex$ is attained. Define \begin{align}
		K_{v_1}&:=\Sett{x\in\ambient}{\metric(x,v_1)=\dist(x,\vertex)}\\
		K_{v_n}&:=\Sett{x\in\ambient}{\metric(x,v_n)=\dist(x,\vertex)}\setminus\bigcup_{j<n}K_{v_{j}},\ \text{ for }n\in\bN\cap[2,\#\vertex+1).
\end{align} 
Each $K_v$ is Borel because the function $x\mapsto\dist(x,\vertex)$ is $1$-Lipschitz. Therefore, $\ambient=\bigsqcup_{v\in\vertex}K_v$. Set $\skeleton_v:=\skeleton\cap K_v$ for $v\in\vertex$. Then $(\skeleton_v)_{v\in\vertex}$ is a partition of $\skeleton$. For any $A\subset\vertex$, define $K_{A}:=\bigcup_{v\in A}K_{v}$.

\begin{lemma}\label{l.sanwich}
	Let $(\ambient,\metric)$ be a $\sigma$-compact geodesic unbounded uniform local tree, and let $\meas$ be a Borel measure on $(\ambient,\metric)$ satisfying $\UVG$. Then \begin{equation}\label{e.sanwich}
		B(v,\epsilon/3)\subset K_v\subset B(v,\epsilon),\ \text{ for every }v\in\vertex.
	\end{equation}
\end{lemma}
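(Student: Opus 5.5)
Both inclusions in \eqref{e.sanwich} follow from the definition of $K_v$ as the set of points whose nearest point of $\vertex$ is $v$, with ties broken by the enumeration $\vertex=\{v_1,v_2,\ldots\}$. The upper inclusion only uses the maximality of $\vertex$. The lower inclusion uses the $\epsilon$-separation together with the triangle inequality.

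For the upper inclusion $K_v\subset B(v,\epsilon)$, fix $x\in K_v$. By construction, $\metric(x,v)=\dist(x,\vertex)$. Maximality gives $\ambient=\bigcup_{w\in\vertex}B(w,\epsilon)$, so there is some $w\in\vertex$ with $\metric(x,w)<\epsilon$. Hence $\dist(x,\vertex)<\epsilon$, and therefore $\metric(x,v)<\epsilon$. The distance is attained because $\vertex$ is locally finite (Lemma~\ref{l.covering}), but we do not even need this here: strict inequality already follows from the existence of $w$.

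For the lower inclusion $B(v,\epsilon/3)\subset K_v$, fix $x$ with $\metric(x,v)<\epsilon/3$. For any $w\in\vertex\setminus\{v\}$, the $\epsilon$-separation and the triangle inequality give
\[
\metric(x,w)\geq\metric(v,w)-\metric(x,v)>\epsilon-\epsilon/3=2\epsilon/3>\metric(x,v).
\]
Since $\vertex$ is locally finite, $\dist(x,\vertex)$ is attained. By the strict inequality above, it can only be attained at $v$, so $\dist(x,\vertex)=\metric(x,v)$. Moreover, $x$ does not satisfy $\metric(x,w)=\dist(x,\vertex)$ for any $w\neq v$, so $x$ is not removed when we subtract $\bigcup_{j<n}K_{v_j}$, where $v=v_n$. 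Hence $x\in K_v$.

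Neither step is a real obstacle. The only point needing care is the tie-breaking in the definition of $K_{v_n}$. This is dealt with by noting that in the ball $B(v,\epsilon/3)$ the point $v$ is the unique nearest point of $\vertex$, so no earlier $K_{v_j}$ contains $x$.
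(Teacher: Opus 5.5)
Your proof is correct and follows essentially the same argument as the paper. The lower inclusion comes from $\epsilon$-separation plus the triangle inequality, which make $v$ the strictly unique nearest point of $\vertex$ (so tie-breaking plays no role), and the upper inclusion comes from maximality giving $\dist(x,\vertex)<\epsilon$. As a minor simplification, attainment of $\dist(x,\vertex)$ is not needed in the lower inclusion either, since $\inf_{w\neq v}\metric(x,w)\geq 2\epsilon/3>\metric(x,v)$ already gives $\dist(x,\vertex)=\metric(x,v)$.
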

\begin{proof}
If $x\in B(v,\epsilon/3)$ and $w\in\vertex\setminus\set{v}$, then \[\metric(x,w)
 \geq\metric(v,w)-\metric(x,v)
 \geq\frac{2\epsilon}{3}
 >\metric(x,v),\] and hence $x\in K_v$. If $x\in K_v$, maximality of $\vertex$ gives $\metric(x,v)=\dist(x,\vertex)<\epsilon$, and hence $x\in B(v,\epsilon)$.
\end{proof}
For $A\subset\vertex$ and a nonempty subset $E\subset\ambient$, define
\begin{equation}\label{e.defdG}
 \metric_{\graph}(A,E):=\inf\Sett{\metric_{\graph}(v,w)}
 {v\in A,\ w\in\vertex,\ K_w\cap E\neq\emptyset},
\end{equation}
and $\metric_{\graph}(v,E):=\metric_{\graph}(\{v\},E)$ for $v\in\vertex$.

\begin{lemma}\label{l.c-dist}
Let $(\ambient,\metric)$ be a $\sigma$-compact geodesic unbounded uniform local tree, and let $\meas$ be a Borel measure on $(\ambient,\metric)$ satisfying $\UVG$. Let $A\subset\vertex$ be a non-empty subset, $v\in\vertex$, let $E\subset\ambient$ be a non-empty Borel subset, and let $R\in(0,\infty)$. Then
\begin{equation}\label{e.d-ball}
 \frac{\epsilon}{2}\metric_{\graph}(v,E)-(R+1)\epsilon\leq \dist(B(v,R\epsilon),E),
\end{equation}
\begin{equation}\label{e.d-cell}
 \dist(K_A,E)
 \geq\frac{\epsilon}{2}\metric_{\graph}(A,E)-2\epsilon,
\end{equation}
and
\begin{equation}\label{e.d-nbhd}
 \dist(N_{R\epsilon}(K_A),E)
 \geq\frac{\epsilon}{2}\metric_{\graph}(A,E)
 -(R+2)\epsilon.
\end{equation}
Here $N_{r}(S):=\Sett{x\in\ambient}{\dist(x,S)<r}$.
\end{lemma}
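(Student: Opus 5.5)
The plan is to reduce all three inequalities to the quasi-isometry bound \eqref{e.quaiso} together with the cell containment $K_w\subset B(w,\epsilon)$ from Lemma~\ref{l.sanwich}. Throughout we use the definition \eqref{e.defdG}: $\metric_{\graph}(A,E)$ is an infimum over pairs $(v,w)$ with $v\in A$, $w\in\vertex$ and $K_w\cap E\neq\emptyset$.

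For \eqref{e.d-ball}, fix $x\in B(v,R\epsilon)$ and $y\in E$. Since $\ambient=\bigsqcup_{w}K_w$, the point $y$ lies in a unique cell $K_w$, and this $w$ is admissible in \eqref{e.defdG}. Lemma~\ref{l.sanwich} gives $\metric(y,w)<\epsilon$. By \eqref{e.quaiso},
\[
\tfrac{\epsilon}{2}\metric_{\graph}(v,E)\le\tfrac{\epsilon}{2}\metric_{\graph}(v,w)\le \metric(v,w)\le \metric(v,x)+\metric(x,y)+\metric(y,w)<R\epsilon+\metric(x,y)+\epsilon.
\]
Taking the infimum over $x$ and $y$ yields \eqref{e.d-ball}.

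For \eqref{e.d-cell}, fix $x\in K_A$ and $y\in E$. Then $x\in K_v$ for some $v\in A$, so $\metric(x,v)<\epsilon$, and $y\in K_w$ with $K_w\cap E\neq\emptyset$, so $\metric(y,w)<\epsilon$. Applying \eqref{e.quaiso} and the triangle inequality,
\[
\tfrac{\epsilon}{2}\metric_{\graph}(A,E)\le \metric(v,w)< \metric(x,y)+2\epsilon .
\]
Taking the infimum over $x$ and $y$ gives \eqref{e.d-cell}.

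For \eqref{e.d-nbhd}, fix $x\in N_{R\epsilon}(K_A)$. We choose $x'\in K_A$ with $\metric(x,x')<R\epsilon$, which is possible because the defining inequality for the neighbourhood is strict. For $y\in E$ we then have
\[
\metric(x,y)\ge \metric(x',y)-R\epsilon\ge \dist(K_A,E)-R\epsilon,
\]
and combining this with \eqref{e.d-cell} gives the claim.

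No step is a genuine obstacle. The only care needed is in edge cases. If $\metric_{\graph}(v,w)=0$, then $v=w$ and the bounds hold trivially. Each point of $E$ lies in exactly one cell, so every point of $E$ is actually covered by an admissible $w$. Since $E$ is nonempty, the infimum in \eqref{e.defdG} is finite.
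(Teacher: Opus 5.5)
Your proposal is correct and follows essentially the same argument as the paper. For \eqref{e.d-ball} and \eqref{e.d-cell} you combine the triangle inequality with \eqref{e.quaiso} and the containment $K_w\subset B(w,\epsilon)$ from Lemma~\ref{l.sanwich}; for \eqref{e.d-nbhd} you pick a point of $K_A$ within distance $R\epsilon$ and then invoke \eqref{e.d-cell}.
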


\begin{proof}
For any $x\in E$, choose $w\in\vertex$ such that $x\in E\cap K_w$, and let $y\in B(v,R\epsilon)$. By the triangle inequality, \eqref{e.quaiso}, and Lemma~\ref{l.sanwich},
\begin{align}
 \metric(y,x)
 \geq\metric(v,w)-\metric(v,y)-\metric(w,x)
&\overset{\eqref{e.quaiso},\eqref{e.sanwich}}{\geq}\frac{\epsilon}{2}\metric_{\graph}(v,w)
 -(R+1)\epsilon\\
&\overset{\eqref{e.defdG}}{\geq} \frac{\epsilon}{2}\metric_{\graph}(v,E)-(R+1)\epsilon.
\end{align}
Taking the infimum over all $x\in E$ and all $y\in B(v,R\epsilon)$ gives the lower bound in \eqref{e.d-ball}. 
For any $x\in E$, there exists $w\in\vertex$ such that $x\in E\cap K_{w}$. For any $y\in K_{A}$, there exists $u\in A$ with $y\in K_u$. By the triangle inequality, \eqref{e.quaiso}, and Lemma~\ref{l.sanwich},
\begin{equation}
 \metric(x,y)
 \geq\metric(u,w)-\metric(u,y)-\metric(w,x)\overset{\eqref{e.quaiso},\eqref{e.sanwich}}{\geq}\frac{\epsilon}{2}\metric_{\graph}(u,w)-2\epsilon\overset{\eqref{e.defdG}}{\geq} \frac{\epsilon}{2}\metric_{\graph}(A,E)-2\epsilon
\end{equation}
which proves \eqref{e.d-cell} by taking the infimum over all $x\in E$ and all $y\in K_{A}$.

Finally, if $z\in N_{R\epsilon}(K_A)$, choose $y\in K_A$ such that $\metric(z,y)<R\epsilon$. Applying \eqref{e.d-cell} and the triangle inequality then gives \eqref{e.d-nbhd}.
\end{proof}

\subsection{Canonical Dirichlet form and Riesz transform}

\begin{proposition}\label{p.energy}
Let $(\ambient,\metric)$ be a proper, connected, and separable uniform local tree with skeleton $\skeleton$ and length measure $\medm$. Let $\meas$ be a Radon measure satisfying $\UVG$, and choose an orientation $\mathfrak O$. Define
\begin{equation}
	\domain:=\Biggl\{u\in L^2(\ambient,\meas)\Biggm|
	\begin{minipage}{180pt}
		there exists $\widetilde u\in\abscon(\ambient,\metric)$ such that $\wt{u}=u$ $\meas$-a.e. and $\wgrad_{\mathfrak O}\widetilde u\in L^2(\skeleton,\medm)$.
	\end{minipage}
	\Biggr\},
\end{equation}
and \begin{equation}
	\form(u,v):=\int_{\skeleton}\wgrad_{\mathfrak O}\wt{u}\,\wgrad_{\mathfrak O}\wt{v}\dif\medm,\ \text{ for all }(u,v)\in \domain\times\domain.
\end{equation}
Then $(\form,\domain)$ is a strongly local regular symmetric Dirichlet form on $L^2(\ambient,\meas)$, independent of the orientation.
\end{proposition}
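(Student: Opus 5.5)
We will verify the Dirichlet form axioms in turn: well-definedness, closedness, the Markov property, regularity, and strong locality, together with independence of the orientation.

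\textbf{Well-definedness and orientation independence.} Since $\meas$ satisfies $\UVG$, it has full support. Hence two continuous representatives of $u$ that agree $\meas$-a.e. agree everywhere, so $\wt u$ is unique and $\form$ is well defined. Independence of the orientation follows from \eqref{e.swap2} in Proposition~\ref{p.swap}, because $\wgrad_{\mathfrak O'}\wt u\,\wgrad_{\mathfrak O'}\wt v=\wgrad_{\mathfrak O}\wt u\,\wgrad_{\mathfrak O}\wt v$ $\medm$-a.e. Bilinearity, symmetry and non-negativity follow from Lemma~\ref{l.wgrad}-\ref{it.line}.

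\textbf{Closedness and the Markov property.} If $(u_n)$ is $\form_1$-Cauchy, then $u_n\to u$ in $L^2(\ambient,\meas)$ and $\wgrad\wt u_n\to g$ in $L^2(\skeleton,\medm)$. Lemma~\ref{l.wgrad}-\ref{it.clos} then gives $u\in\domain$ with $\wgrad\wt u=g$, so $\form_1(u_n-u)\to0$. For the Markov property, apply the chain rule (Lemma~\ref{l.wgrad}-\ref{it.chain}) with $F(t)=(0\vee t)\wedge1$. This gives $|\wgrad(F\circ\wt u)|\le|\wgrad\wt u|$, and $F\circ u\in L^2$.

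\textbf{Strong locality.} Suppose $u,v\in\domain$ have compact supports and $v$ is constant on a neighbourhood of $\supp_\meas[u]$. By full support of $\meas$, $\wt u=0$ on the open set $\ambient\setminus\supp[u]$, so $\wgrad\wt u=0$ $\medm$-a.e. there. Similarly, on an open neighbourhood $U$ of $\supp[u]$ the function $\wt v$ is constant, so $\wgrad\wt v=0$ $\medm$-a.e. on $U\cap\skeleton$; this holds because $u\circ\gamma_n$ is constant on the open set $\gamma_n^{-1}(U)$. Since $U$ and $\ambient\setminus\supp[u]$ cover $\ambient$, the integrand $\wgrad\wt u\,\wgrad\wt v$ vanishes $\medm$-a.e., so $\form(u,v)=0$.

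\textbf{Regularity (main obstacle).} We show that $\mathcal C:=\domain\cap C_c(\ambient)$ is dense in both $C_c(\ambient)$ and $\domain$. We first check that $\mathcal C$ is large. If $\phi$ is $L$-Lipschitz with compact support, then $\phi\circ\gamma$ is $L$-Lipschitz for every local arc chart $\gamma$, hence $|\wgrad\phi|\le L$ $\medm$-a.e. We also need $\int_{\supp\phi}|\wgrad\phi|^2\dif\medm<\infty$, that is, $\medm(K\cap\skeleton\cap\{\wgrad\phi\ne0\})<\infty$ for compact $K$. The difficulty is that $\medm$ need not be Radon. We resolve it by using functions of distance along finite trees. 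Let $\phi=\psi(\metric(x_0,\cdot))$ with $\psi$ Lipschitz and $\supp\psi\subset[0,\iota/2)$. Inside the real tree $B(x_0,\iota)$, the derivative of $\metric(x_0,\cdot)$ along any arc is $\pm1$ on the geodesic directions from $x_0$. Its weak gradient is supported on a set whose length we must control; in general this set may have infinite total length.

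We avoid this obstruction as follows. For a finite set $F\subset B(x_0,\iota/2)$, consider the finite subtree $T_F=\bigcup_{y\in F}[x_0,y]$, which has finite length, and the nearest-point retraction $\pi_F$ onto $T_F$. Define $\phi_F:=g\circ\pi_F$, where $g$ is Lipschitz on $T_F$ and vanishes near the leaves of $T_F$ and at the ball boundary. Then $\phi_F$ is continuous, and $\wgrad\phi_F$ is supported on $T_F$, which has finite $\medm$-measure; hence $\phi_F\in\mathcal C$. Continuity of $\pi_F$ holds because $T_F$ is a closed subtree. These functions separate points, since properness gives compactness of balls, and they are closed under products and lattice operations by Lemma~\ref{l.wgrad}. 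Stone--Weierstrass on each compact ball, together with a partition of unity built from such functions on the countable cover of $\ambient$ by balls of radius $\iota/4$, then gives uniform density of $\mathcal C$ in $C_c(\ambient)$. For density in $\domain$, we first truncate by $(-N)\vee u\wedge N$ and cut off with the above functions, which are bounded by $1$ with gradient bounded by $C$, using Leibniz. This reduces to bounded $u$ with compact support. Continuity of $\wt u$ then gives $u\in C_c$ directly, so $\mathcal C$ is $\form_1$-dense. The cut-off step needs cut-offs equal to $1$ on large balls with $\form$-energy bounded uniformly. Constructing such cut-offs while keeping $\medm$ of the gradient support finite is the main technical point. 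We handle it by taking $\eta\circ\pi_F$ along a finite subtree connecting a finite $\iota/4$-net of the large ball, and patching the pieces together.
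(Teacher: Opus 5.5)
Your treatment of well-definedness, orientation independence, closedness (via Lemma~\ref{l.wgrad}-\ref{it.clos}), the Markov property and strong locality is fine. The paper obtains all of this, regularity included, by running the argument of \cite[Lemma~4.1]{BCY25} with a partition of unity.

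The genuine gap is the last step of your regularity argument: the $\form_1$-density of $\domain\cap C_c(\ambient)$ in $\domain$. After truncating to $\abs{\wt u}\le N$, the Leibniz error is bounded only by $\int_{\skeleton}\abs{\wt u}^2\abs{\wgrad\chi_R}^2\dif\medm\le N^2\form(\chi_R,\chi_R)$. You therefore ask for cut-offs $\chi_R\in\domain\cap C_c(\ambient)$ with $\chi_R=1$ on $B(x_0,R)$ and $\sup_R\form(\chi_R,\chi_R)<\infty$.

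Such a family cannot exist when the form is transient. It would make $\one_{\ambient}$ an element of the extended Dirichlet space $\domain_e$ with zero energy. The proposition covers transient spaces, for instance the alternating Vicsek fractafold of Example~\ref{ex.Vicsek} in $\bZ^{d}$ with $d\geq3$, where $\volume(R)\asymp R^{d}$ and $\scale(R)\asymp R^{2}$ for large $R$. Your own net-based cut-offs also have energy growing with the number of cells near $\partial B(x_0,R)$. Even in the recurrent case, bounded energy would only give boundedness of $\form_1(\chi_R u,\chi_R u)$, not convergence, without a further Banach--Saks argument.

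What is missing is a \emph{local} substitute for the global bound on $u$. It is needed precisely because $u\in L^2(\ambient,\meas)$ gives no control of $\int u^2\dif\medm$ when $\meas$ and $\medm$ are mutually singular. By Proposition~\ref{p.arc} and Cauchy--Schwarz, $\abs{\wt u(y)-\wt u(z)}\le\metric(y,z)^{1/2}\norm{\wgrad\wt u}_{L^2([y,z],\medm)}$. Balls of radius $r<\iota/2$ are convex subtrees, so averaging in $z$ against $\meas$ and using $\UVG$ gives
\[
\sup_{B(x,r)}\abs{\wt u}^2\lesssim \volume(r)^{-1}\int_{B(x,r)}u^2\dif\meas+r\int_{B(x,r)\cap\skeleton}\abs{\wgrad\wt u}^2\dif\medm .
\]
Now set $\chi_R:=1\wedge\sum_i\phi_i$. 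Here the $\phi_i$ are cut-offs at a fixed scale $\epsilon<\iota/8$, centred at a maximal separated set in $B(x_0,R)$, with $\abs{\wgrad\phi_i}\le C$ and $\medm(\supp_{\medm}[\abs{\wgrad\phi_i}])\le C$ uniformly in $i$. This uniform length bound comes from counting net points with $\UVG$, and is exactly what \cite[Lemma~3.3]{BCY25} provides (see Lemma~\ref{l.cutoff}). Bounded overlap then yields
\[
\int_{\skeleton}\abs{\wt u}^2\abs{\wgrad\chi_R}^2\dif\medm\lesssim\int_{A_R}u^2\dif\meas+\int_{A_R\cap\skeleton}\abs{\wgrad\wt u}^2\dif\medm\to0,
\]
where $A_R$ is an annulus of bounded width near $\partial B(x_0,R)$. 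No truncation and no global energy bound are needed.

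Separately, your building block $g\circ\pi_F$ is not compactly supported as described. A component of $B(x_0,\iota)\setminus T_F$ attached at an interior point $p$ of $T_F$ with $g(p)\neq0$ can reach $\partial B(x_0,\iota)$, where $g\circ\pi_F\equiv g(p)$, so the zero extension is discontinuous. To fix this, span $T$ by a finite $\delta$-net of $\overline{B(x_0,\rho)}$, which exists by properness. Take $g=\psi(\metric(x_0,\cdot))$ with $\psi=1$ on $[0,\rho-2\delta]$ and $\psi=0$ on $[\rho-\delta,\infty)$. Since $\pi_T(z)\in[x_0,z]$, every component leaving $\overline{B(x_0,\rho)}$ attaches at a point of $T$ farther than $\rho-\delta$ from $x_0$, so $g\circ\pi_T$ is a genuine cut-off. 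With that, point separation and your Stone--Weierstrass and partition-of-unity argument go through.
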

\begin{proof}
By $\UVG$, the properness of $(\ambient,\metric)$ and the uniform local tree property, we may use the partition of unity and the argument in \cite[Proof of Lemma 4.1]{BCY25} to conclude that $(\form,\domain)$ is a strongly local regular Dirichlet form on $L^2(\ambient,\meas)$. The independence of $(\form,\domain)$ from the chosen orientation follows directly from \eqref{e.swap2}.
\end{proof}

By \cite[Theorem~1.3.1 and Corollary~1.3.1]{FOT11}, there exists a unique non-positive self-adjoint operator $(\gen,\Dom(\gen))$ on $L^{2}(\ambient,\meas)$ corresponding to $(\form,\domain)$. The associated symmetric Markov semigroup $\{P_{t}\}_{t\in(0,\infty)}$ is provided by \cite[Theorem~1.4.1]{FOT11}. Let $\proj$ be the spectral measure on the Borel $\sigma$-algebra of $\bR$ associated with $(-\gen,\Dom(-\gen))$, so that $\{\proj_{\lambda}:=\proj(( -\infty,\lambda])\}_{\lambda\in[0,\infty)}$ is the spectral family of projection operators associated with $(-\gen,\Dom(-\gen))$. For every Borel-measurable function $f:[0,\infty)\to\bR\cup\{-\infty,\infty\}$ such that
\[
\proj(\Sett{\lambda\in[0,\infty)}{\abs{f(\lambda)}=\infty})=0,
\]
the spectral calculus defines an (unbounded) linear operator $(f(-\gen),\Dom(f(-\gen)))$ by
\begin{equation}
	\begin{dcases}
		\Dom(f(-\gen)):=\Sett{u\in L^{2}(\ambient,\meas)}{\int_{[0,\infty)}\abs{f(\lambda)}^{2}\dif\langle \proj_{\lambda}u,u\rangle<\infty};\\
		\langle f(-\gen)u,v\rangle_{L^{2}(\ambient,\meas)} :=\int_{[0,\infty)}f(\lambda)\dif\langle \proj_{\lambda}u,v\rangle,
		\ u\in \Dom(f(-\gen)) \text{ and } v\in L^{2}(\ambient,\meas).
	\end{dcases}
\end{equation}
Here and throughout the remainder of the paper, all inner products involving the projection operators $\{\proj_{\lambda}\}_{\lambda\in[0,\infty)}$ are taken in the Hilbert space $L^{2}(\ambient,\meas)$. In particular, the Dirichlet form $(\form,\domain)$ admits the representation
\begin{equation}
	\begin{dcases}
		\domain =\Dom(( -\gen)^{\frac{1}{2}}),\\
		\form(u,v) =\langle(-\gen)^{\frac{1}{2}}u,(-\gen)^{\frac{1}{2}}v\rangle
		=\int_{[0,\infty)}\lambda\dif\langle \proj_{\lambda}u,v\rangle,
		\quad (u,v)\in\domain\times\domain,
	\end{dcases}
\end{equation}
and $\{P_{t}=\exp(t\gen)\}_{t\in (0,\infty)}$ is the unique strongly continuous symmetric Markovian semigroup on $L^{2}(\ambient,\meas)$ corresponding to $(\form,\domain)$. We say that a family of measurable functions $\{p_{t}:\ambient\times\ambient\to[0,\infty)\}_{t\in(0,\infty)}$ is a \emph{heat kernel} of $\{P_{t}\}_{t\in(0,\infty)}$, or of $(\ambient,\meas,\form,\domain)$, if, for every $f\in L^{2}(\ambient,\meas)$,
\begin{equation}
	P_{t}f(x)=\int_{\ambient}p_{t}(x,y)f(y)\dif \meas(y),
	\quad\text{for }\meas\text{-a.e. }x\in\ambient.
\end{equation}

\begin{lemma}\label{l.meab}
		Let $(\ambient,\metric)$ be a proper, connected, and separable uniform local tree with skeleton $\skeleton$ and length measure $\medm$. Choose an orientation $\mathfrak{O}$ of $(\ambient,\metric)$. Let $\meas$ be a Radon measure on $(\ambient,\metric)$ satisfying $\UVG$. Let $(\form,\domain)$ be the strongly local regular symmetric Dirichlet form on $L^2(\ambient,\meas)$ constructed in Proposition \ref{p.energy}. Then
\begin{enumerate}[label=\textup{({\arabic*})},align=right,leftmargin=*,topsep=5pt,parsep=0pt,itemsep=2pt]
	\item\label{it.meab0} The metric measure Dirichlet space $(\ambient,\metric,\meas,\form,\domain)$ admits a jointly continuous heat kernel $p=p_{t}(x,y):(0,\infty)\times\ambient\times\ambient\to[0,\infty)$.
	\item\label{it.meab1} For every $(t,x)\in(0,\infty)\times \ambient$, the function $p_{t,x}:=p_{t}(x,\cdot)$ belongs to $\Dom(-\gen)$, and $\wgrad_{\mathfrak{O}} p_{t,x}\in L^{2}(\skeleton,\medm)$. For each $t\in(0,\infty)$, there exists a jointly measurable function $h_{t}:\ambient\times\skeleton\to \bR$ such that, for every $x\in\ambient$, $h_{t}(x,y)=(\wgrad_{\mathfrak{O}} p_{t,x})(y)$ for $\medm$-a.e. $y\in \skeleton$. 
	\item\label{it.meab2} For every $(x,y)\in \ambient\times\ambient$, the function $t\mapsto p_{t}(x,y)$ is real-analytic on $(0,\infty)$. The function $(t,x,y)\mapsto \frac{\dif}{\dif t}p_{t}(x,y)$ is jointly measurable on $(0,\infty)\times \ambient\times\ambient$.
	\item\label{it.meab3} For every $(t,x)\in(0,\infty)\times \ambient$, we have
\begin{equation}\label{e.dif-gen}
		\frac{\dif}{\dif t}p_{t}(x,y)=(\gen p_{t,x})(y),
		\quad \meas\text{-a.e. } y\in\ambient.
	\end{equation}
\end{enumerate}
\end{lemma}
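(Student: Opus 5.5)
The plan is to derive everything from an ultracontractivity estimate, which in this one-dimensional setting comes from a local Morrey-type embedding; no heat kernel bounds are needed. Fix $r_0:=\iota/2$. For $u\in\domain$ with continuous representative $\wt u$, and for $x,y$ with $\metric(x,y)<r_0$, Proposition~\ref{p.arc} and Cauchy--Schwarz give
\begin{equation}
\abs{\wt u(x)-\wt u(y)}^2\leq \metric(x,y)\,\form(u,u).
\end{equation}
Averaging over $y\in B(x,r_0)$ with respect to $\meas$ and using $\UVG$, we get
\begin{equation}
\abs{\wt u(x)}^2\lesssim \volume(r_0)^{-1}\norm{u}_{L^2}^2+r_0\,\form(u,u).
\end{equation}
Hence $\norm{\wt u}_{\sup}^2\leq C(\norm{u}_2^2+\form(u,u))$. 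Since $\form(P_tf,P_tf)\leq (2et)^{-1}\norm{f}_2^2$ by spectral calculus, this yields
\begin{equation}
\norm{P_tf}_{\sup}\leq C(1+t^{-1})^{\frac12}\norm{f}_2 .
\end{equation}
Consequently, for each $s>0$ and $x\in\ambient$, the map $f\mapsto \wt{P_sf}(x)$ is a bounded functional on $L^2(\ambient,\meas)$. By the Riesz representation theorem there is $k^s_x\in L^2$ with $\wt{P_sf}(x)=\langle f,k^s_x\rangle$. Moreover, $\norm{k^s_x-k^s_y}_2\lesssim s^{-1/2}\metric(x,y)^{1/2}$ for $\metric(x,y)<r_0$, by the first display applied to $P_sf$. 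Thus $x\mapsto k^s_x$ is locally Hölder continuous into $L^2$.

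Next I would define $p_t(x,y):=\langle P_{t-2s}k^s_x,k^s_y\rangle$ for any $s\in(0,t/2)$. By the semigroup property this is independent of $s$, and it equals $\wt{(P_{t-s}k^s_x)}(y)$. Joint continuity in $(t,x,y)$ then follows from the continuity of $x\mapsto k^s_x$ in $L^2$ together with the strong continuity of $P_t$ on $L^2$. Non-negativity comes from positivity of $P_t$. The kernel property is Fubini applied to $P_tf(y)=\langle P_{t-s}f,k^s_y\rangle$. For \ref{it.meab1}, note that $p_{t,x}=P_{t-s}k^s_x$ lies in the range of $P_{t-s}$, hence in $\Dom(-\gen)\subset\domain$, so $\wgrad p_{t,x}\in L^2(\skeleton,\medm)$. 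The same computation gives
\begin{equation}
\form(p_{t,x}-p_{t,y},p_{t,x}-p_{t,y})\lesssim (t-s)^{-1}\norm{k^s_x-k^s_y}_2^2 ,
\end{equation}
so $x\mapsto\wgrad p_{t,x}$ is continuous from $\ambient$ into $L^2(\skeleton,\medm)$. To obtain a jointly measurable $h_t$, I would use a countable dense set $D\subset\ambient$ (separability) and a Borel partition of $\ambient$ into pieces of diameter $<1/n$, each tagged by a point of $D$. Setting $h^n_t(x,\cdot):=\wgrad p_{t,x_n(x)}$ gives a jointly measurable function. Passing to a subsequence that converges $\medm$-a.e. (via $\sum_n\norm{h^n_t(x,\cdot)-h^{n+1}_t(x,\cdot)}<\infty$, using continuity and a local uniform modulus), we define $h_t:=\limsup_n h^n_t$. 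This is the step I expect to need the most care, since $\medm$ need not be Radon. It is, however, $\sigma$-finite on $\skeleton$ by the countable orientation, which is all the argument uses.

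For \ref{it.meab2}, fix $s$. Then $z\mapsto\langle P_{z-2s}k^s_x,k^s_y\rangle=\int e^{-(z-2s)\lambda}\dif\langle \proj_\lambda k^s_x,k^s_y\rangle$ is holomorphic on $\{\operatorname{Re}z>2s\}$. Letting $s\downarrow0$ gives real-analyticity on $(0,\infty)$. Its derivative is $\langle \gen P_{t-2s}k^s_x,k^s_y\rangle$. This is the limit of difference quotients of jointly continuous functions, hence jointly measurable. Finally, for \ref{it.meab3}, we have $\gen p_{t,x}=\gen P_{t-s}k^s_x=P_s(\gen P_{t-2s}k^s_x)$, which lies in the range of $P_s$. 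Its continuous representative at $y$ is therefore $\langle \gen P_{t-2s}k^s_x,k^s_y\rangle=\frac{\dif}{\dif t}p_t(x,y)$, which gives \eqref{e.dif-gen} for $\meas$-a.e.\ $y$.
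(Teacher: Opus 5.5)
Your proposal is correct, and it proves the lemma by a more self-contained route than the paper.

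\textbf{Existence and joint continuity.} The paper gets a separately continuous, symmetric kernel satisfying Chapman--Kolmogorov from the abstract result \cite[Proposition~3.3]{KLVW15}, using only that $P_t$ is positivity preserving and $P_t(L^2)\subset\abscon(\ambient,\metric)$. It then upgrades to joint continuity with the same $L^2$-H\"older estimate $\norm{p_{t,x}-p_{t,x'}}_{L^2}\leq(\metric(x,x')/(2et))^{1/2}$ that you derive. You instead build the kernel by hand as $p_t(x,y)=\langle P_{t-2s}k^s_x,k^s_y\rangle$. This rests on an explicit $L^2\to L^\infty$ bound, obtained from the one-dimensional Morrey inequality and the lower bound in $\UVG$ at the single scale $\iota/2$.

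\textbf{Jointly measurable gradient.} The paper applies the general selection Lemma~\ref{l.measu}, which uses martingale convergence and needs only measurability of $x\mapsto\wgrad p_{t,x}$. You use the uniform H\"older-$\frac12$ modulus of this map into $L^2(\skeleton,\medm)$, discretize $x$ over a countable dense set, and pass to a fast subsequence. This works and does not even need $\sigma$-finiteness. The one point to make explicit is that the subsequence is chosen independently of $x$ (e.g.\ mesh $4^{-k}$), which your uniform modulus permits.

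\textbf{Time derivative.} You get real-analyticity from holomorphy of the spectral Laplace transform rather than \cite[Theorem~3]{Dav97}. You get joint measurability of $\frac{\dif}{\dif t}p_t$ from joint continuity rather than \cite[Proposition~3.1]{GHH21}. In the last part you also obtain a slightly stronger fact: $\frac{\dif}{\dif t}p_t(x,\cdot)$ is everywhere the continuous representative of $\gen p_{t,x}$.

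\textbf{Trade-off.} Your route is independent of external theorems and gives a quantitative on-diagonal bound $p_t(x,x)=\norm{k^{t/2}_x}_{L^2}^2\lesssim 1+t^{-1}$. The paper's route is shorter and more general, since it needs neither a quantitative bound nor a uniform modulus of continuity.
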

\begin{proof}
	\begin{enumerate}[label=\textup{({\arabic*})},align=right,leftmargin=*,topsep=5pt,parsep=0pt,itemsep=2pt]
	\item[\ref{it.meab0}] Note that $(-\gen,\Dom(-\gen))$ is a self-adjoint operator that is bounded below. Since for each $t\in(0,\infty)$, $P_{t}$ is Markovian, thus positivity preserving, and $P_{t}(L^{2}(\ambient,\meas))\subset\domain\subset\abscon(\ambient,\metric)$. By \cite[Proposition~3.3]{KLVW15}, we obtain a measurable function $p=p_{t}(x,y):(0,\infty)\times\ambient\times\ambient\to[0,\infty)$ that is a heat kernel of $\{P_{t}\}_{t\in(0,\infty)}$, is continuous separately in each $\ambient$-variable, and for all $t,s\in(0,\infty)$ and all $x,y\in\ambient$, we have
\begin{equation}\label{e.meab1}
	p_{t}(x,y)=p_{t}(y,x)
	\quad\text{and}\quad
	p_{t+s}(x,y)=\int_{\ambient}p_{t}(x,z)p_{s}(z,y)\dif\meas(z).
\end{equation}
For any $t\in(0,\infty)$ and any $x,x'\in\ambient$ such that $\metric(x,x')<\iota$, we have \begin{align}
	&\phantom{\ \leq}\norm{p_{t}(x,\cdot)-p_{t}(x',\cdot)}_{L^{2}(\ambient,\meas)}\\
	&=\sup_{\substack{f\in L^{2}(\ambient,\meas)\\ \norm{f}_{L^{2}(\ambient,\meas)}\leq1}}\abs{P_{t}f(x)-P_{t}f(x')}\leq \sup_{\substack{f\in L^{2}(\ambient,\meas)\\ \norm{f}_{L^{2}(\ambient,\meas)}\leq1}}\int_{[x,x']}\abs{\wgrad_{\mathfrak O}P_{t}f} \dif\medm\\
	&\leq \metric(x,x')^{\frac{1}{2}}\sup_{\substack{f\in L^{2}(\ambient,\meas)\\ \norm{f}_{L^{2}(\ambient,\meas)}\leq1}}\form(P_{t}f,P_{t}f)^{\frac{1}{2}}\leq \left(\frac{\metric(x,x')}{2et}\right)^{\frac{1}{2}}.\label{e.measb1+}
\end{align}
Fix $(t,x,y)\in(0,\infty)\times\ambient\times\ambient$ and let $(t_n,x_n,y_n)\to(t,x,y)$. For all sufficiently large $n$, we have $t_n>t/2$ and $\max(\metric(x_n,x),\metric(y_n,y))<\iota$. If we denote $p_{t,z}:=p_{t}(z,\cdot)$, then $p_{t+s,z}=P_{t}p_{s,z}$ for all $t,s\in(0,\infty)$. By the Cauchy--Schwarz inequality and the strong $L^{2}$-continuity of $\{P_{t}\}_{t\in(0,\infty)}$,
\begin{align}
&\phantom{\ \leq}|p_{t_n}(x_n,y_n)-p_t(x,y)|\\
&\overset{\eqref{e.meab1}}{\leq}\big|\langle p_{\frac{t}{4},x_{n}}-p_{\frac{t}{4},x},p_{t_{n}-\frac{t}{4},y_{n}}\rangle_{L^2(\ambient,\meas)}+\langle p_{\frac{t}{4},x}, p_{t_{n}-\frac{t}{4},y_{n}}-p_{t_{n}-\frac{t}{4},y}\rangle_{L^2(\ambient,\meas)}\\
&\ \qquad+ \langle p_{\frac{t}{4},x}, p_{t_{n}-\frac{t}{4},y}-p_{\frac{3}{4}t,y}\rangle_{L^2(\ambient,\meas)}\big|\\
&\leq\|p_{t/4,x_n}-p_{t/4,x}\|_{L^2(\ambient,\meas)}\|p_{t/4,y_n}\|_{L^2(\ambient,\meas)}+\|p_{t/4,x}\|_{L^2(\ambient,\meas)}\|p_{t/4,y_n}-p_{t/4,y}\|_{L^2(\ambient,\meas)}\\
&\ \qquad+\|p_{t/4,x}\|_{L^2(\ambient,\meas)}\|(P_{t_n-t/2}-P_{t/2})p_{t/4,y}\|_{L^2(\ambient,\meas)}\\
&\overset{\eqref{e.measb1+}}{\to }0\ \text{ as $n\to\infty$}.
\label{e.hk.joint.difference}
\end{align}
This proves the joint continuity of $p$.
	\item[\ref{it.meab1}] By the spectral theorem, $\gen P_{t/2}$ is bounded on $L^2(\ambient,\meas)$, since $\sup_{\lambda\in[0,\infty)}\lambda e^{-t\lambda/2}<\infty$. Consequently, $p_{t,x}\in\Dom(\gen)\subset\domain$. By the definition of the weak gradient, $\wgrad_{\mathfrak O}p_{t,x}\in L^2(\skeleton,\medm)$.
	
	Note that the operator $\wgrad_{\mathfrak O}P_{t/2}:L^2(\ambient,\meas)\to L^2(\skeleton,\medm)$ is bounded. Indeed, for $f\in L^2(\ambient,\meas)$, the spectral theorem gives
	\[
	\|\wgrad_{\mathfrak O}P_{t/2}f\|_{L^2(\skeleton,\medm)}^2
	=\|(-\gen)^{\frac{1}{2}}P_{t/2}f\|_{L^2(\ambient,\meas)}^2
	\leq (et)^{-1}\|f\|_{L^2(\ambient,\meas)}^2.
	\]
	Therefore, $x\mapsto\wgrad_{\mathfrak O}p_{t,x}=\wgrad_{\mathfrak O}P_{t/2}p_{t/2,x}$ is continuous as an $L^2(\skeleton,\medm)$-valued map.
	
	Since $(\ambient,\metric)$ is proper and separable, it is Polish. The skeleton is a countable union of finite-length arcs, and hence $(\skeleton,\medm)$ is a $\sigma$-finite measure space. By Lemma \ref{l.measu}, we may define $h_t(x,y):=\mathsf M(\wgrad_{\mathfrak O}p_{t,x},y)$. Then $h_{t}$ is measurable because it is the composition of the continuous map $(x,y)\mapsto (\wgrad_{\mathfrak O}p_{t,x},y)\in L^{2}(\skeleton,\medm)\times \skeleton$ with the measurable function $\mathsf{M}:L^{2}(\skeleton,\medm)\times \skeleton\to\bR$.
	\item[\ref{it.meab2}] The analyticity of $t\mapsto p_{t}(x,y)$ follows from \cite[Theorem~3]{Dav97} and \eqref{e.meab1}. By \cite[Proposition~3.1]{GHH21}, the function $(t,x,y)\mapsto p_{t}(x,y)$ is jointly measurable. Therefore,
	\begin{equation}
		\frac{\dif}{\dif t}p_{t}(x,y)=\lim_{n\to\infty}n\left(p_{t+n^{-1}}(x,y)-p_{t}(x,y)\right)
	\end{equation}
	is measurable as the pointwise limit of a sequence of measurable functions.
	\item[\ref{it.meab3}] Fix $t\in(0,\infty)$ and $x\in\ambient$. Since $p_{t,x}\in\Dom(\gen)$, by the definition of generator, $s^{-1}(p_{t+s,x}-p_{t,x})\to\gen p_{t,x}$ in $L^2(\ambient,\meas)$ as $s\downarrow0$. Taking $s=1/n$ and then passing to a subsequence, the difference quotients converge to $\gen p_{t,x}$ for $\meas$-a.e. $y$. On the other hand, by \ref{it.meab2}, the full sequence converges at every $y$ to $\frac{\dif}{\dif t}p_t(x,y)$. Therefore, the two limits agree for $\meas$-a.e. $y$.
\qedhere
\end{enumerate}
\end{proof}

\begin{lemma}\label{l.cutoff}
Let $(\ambient,\metric)$ be a proper, connected, and separable uniform local tree with skeleton $\skeleton$ and length measure $\medm$. Choose an orientation $\mathfrak{O}$ of $(\ambient,\metric)$. Let $\meas$ be a Radon measure on $(\ambient,\metric)$ satisfying $\UVG$. Let $(\form,\domain)$ be the strongly local regular symmetric Dirichlet form on $L^2(\ambient,\meas)$ constructed in Proposition \ref{p.energy}. There exists $C\in(0,\infty)$ such that, for any $(x,\epsilon)\in\ambient\times(0,6^{-1}\iota)$, there exists $\eta_{x,\epsilon}\in \domain\cap C_{c}(\ambient)$ with $0\leq\eta_{x,\epsilon}\leq1$ such that $\restr{\eta_{x,\epsilon}}{B(x,\epsilon)}=1$,  $\restr{\eta_{x,\epsilon}}{\ambient\setminus B(x,4\epsilon)}=0,$\begin{equation}\label{e.cutoff1}
		\abs{\wgrad_{\mathfrak{O}}\eta_{x,\epsilon}}\leq \epsilon^{-1}\ \text{ for $\medm$-a.e. $y\in\skeleton$},\ \ \abs{\wgrad_{\mathfrak{O}}\eta_{x,\epsilon}}=0\ \text{ for $\medm$-a.e. $y\in B(x,2\epsilon)^{c}$},
	\end{equation}
	and \begin{equation}\label{e.cutoff2}
		\medm(\supp_{\medm}[\abs{\wgrad_{\mathfrak{O}}\eta_{x,\epsilon}}])\leq C\epsilon.
	\end{equation}
\end{lemma}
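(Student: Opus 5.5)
The plan is to build $\eta_{x,\epsilon}$ from a distance function along the tree, and to control the support of its gradient using the real-tree structure of $B(x,\iota)$. Since $4\epsilon<\iota$, the ball $B(x,\iota)$ is a real tree, and it contains $B(x,4\epsilon)$. A first candidate is
\[
\eta(y)=\bigl(2-\metric(x,y)/\epsilon\bigr)\wedge 1\vee 0 .
\]
It equals $1$ on $B(x,\epsilon)$, vanishes outside $B(x,2\epsilon)\subset B(x,4\epsilon)$, and is $\epsilon^{-1}$-Lipschitz. Along any local arc chart $\gamma$, the function $\eta\circ\gamma$ is Lipschitz, hence absolutely continuous, with $|(\eta\circ\gamma)'|\le\epsilon^{-1}$. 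So $\eta\in\abscon(\ambient,\metric)\cap C_c(\ambient)$; compact support comes from properness. By the chain rule (Lemma~\ref{l.wgrad}\ref{it.chain}), $|\wgrad_{\mathfrak O}\eta|\le\epsilon^{-1}$ $\medm$-a.e. The gradient vanishes a.e. off the annulus $\{\epsilon\le \metric(x,\cdot)\le 2\epsilon\}$, because $\eta$ is locally constant on the open sets $B(x,\epsilon)$ and $\{\metric(x,\cdot)>2\epsilon\}$. This gives \eqref{e.cutoff1}.

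The main obstacle is \eqref{e.cutoff2}. For the candidate above, the gradient is supported on all geodesic segments crossing the annulus, so its $\medm$-measure is roughly $\epsilon$ times the number of branches of the tree at distance $\epsilon$ from $x$. That number need not be bounded by $C$. I would handle this with $\UVG$ and a more economical cutoff that only varies along finitely many arcs.

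Choose a maximal $\epsilon/2$-separated set $\{z_1,\dots,z_N\}$ in the sphere-like set $\{y:\metric(x,y)=3\epsilon/2\}$, or alternatively work with the branch points. By the packing argument of Lemma~\ref{l.covering}, using $\UVG$ and \eqref{e.VD}, we get $N\le C\volume(2\epsilon)/\volume(\epsilon/4)\le C'$, uniformly in $x$ and $\epsilon$. In a real tree, every point $y$ with $\metric(x,y)\ge 2\epsilon$ lies in a subtree hanging off one of the points $w_i$ on $[x,z_i]$ at distance between $\epsilon$ and $2\epsilon$ from $x$. More carefully, let $T$ be the finite subtree spanned by $x$ and the $z_i$. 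Define $\eta$ to be $1$ on the component of $\ambient\setminus\{w_i\}$ containing $x$. On the arcs $[w_i,w_i']$ of length $\epsilon$ beyond $w_i$, let $\eta$ decrease linearly in the distance from $w_i$, and make it constant on subtrees hanging off those arcs, equal to its value at the attachment point. Set $\eta=0$ beyond. Since the $w_i$ are taken with $\metric(x,w_i)\in[\epsilon,2\epsilon)$ and the decreasing arcs end before radius $4\epsilon$, we get $\eta=1$ on $B(x,\epsilon)$ and $\eta=0$ outside $B(x,4\epsilon)$. Each value is computed via a tree retraction, which is $1$-Lipschitz, so $|\wgrad\eta|\le\epsilon^{-1}$.

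The gradient is then supported on at most $N$ arcs of length $\epsilon$, which gives $\medm(\supp_\medm[|\wgrad\eta|])\le N\epsilon\le C\epsilon$. The check that gradient vanishes off $B(x,2\epsilon)$ then requires placing the decreasing arcs inside radius $2\epsilon$. So I would rescale the construction: put the $w_i$ at radius about $\epsilon$ and the decreasing arcs within $[\epsilon,2\epsilon)$, using a slope of $\epsilon^{-1}$ over length $\epsilon$. The delicate point is proving that the retraction-based definition covers every branch. I would do this using uniqueness of arcs in the real tree $B(x,\iota)$: every geodesic from $x$ to a point at distance $\ge2\epsilon$ passes through the $\epsilon/2$-neighbourhood of some $z_i$, and hence through a designated arc.
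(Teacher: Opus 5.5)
Your diagnosis of the naive radial cutoff is correct: its gradient lives on every branch crossing the annulus, and since $\medm$ need not be Radon this set can have infinite length. The economical construction, however, fails at exactly the step you flag as delicate.

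\textbf{The false step.} You claim that every geodesic from $x$ to a point at distance $\ge 2\epsilon$ passes within $\epsilon/2$ of some $z_i$ ``and hence through a designated arc''. This is false. In the real tree $B(x,\iota)$, two points at distance $3\epsilon/2$ from $x$ that are less than $\epsilon/2$ apart branch at some radius in $(5\epsilon/4,3\epsilon/2]$. So a far geodesic is only guaranteed to share $[x,z_i]$ up to that radius, not the whole decreasing arc. Moreover, $[x,z_i]$ stops at radius $3\epsilon/2$, so ``the arc of length $\epsilon$ beyond $w_i$'' is not determined: the tree may branch or end after $z_i$.

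\textbf{A counterexample.} Let the tree near $x$ be a segment from $x$ to radius $1.3\epsilon$ that then splits into two long branches. A maximal $\epsilon/2$-separated subset of the sphere of radius $3\epsilon/2$ picks only one of them. The other branch hangs off the designated arc at a point where $\eta=0.7$. Your rule then makes $\eta\equiv 0.7$ on a connected set leaving $B(x,4\epsilon)$. This contradicts $\restr{\eta}{\ambient\setminus B(x,4\epsilon)}=0$, or continuity if you force $\eta=0$ ``beyond''.

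\textbf{Why a net cannot repair this.} With the stated constants the construction is rigid. Let $\gamma$ be an arc from $x$ to a point at distance $\ge 4\epsilon$. Then $\eta\circ\gamma$ has zero derivative a.e.\ on the radial range $(2\epsilon,4\epsilon)$ and vanishes at radius $4\epsilon$, so it is $0$ at radius $2\epsilon$. It is $1$ at radius $\epsilon$ and has slope at most $\epsilon^{-1}$, so it must equal $2-\metric(x,\cdot)/\epsilon$ on the whole radial segment $[\epsilon,2\epsilon]$. Hence the set where $\eta$ varies must contain exactly the radial segment of every branch reaching radius $4\epsilon$.

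\textbf{The missing idea: select the long branches directly.}
\begin{itemize}
\item Let $Z$ be the set of $z$ with $\metric(x,z)=2\epsilon$ and $z\in[x,w]$ for some $w$ with $\metric(x,w)=4\epsilon$. If $z\neq z'$ in $Z$ branch at radius $r<2\epsilon$, their witnesses satisfy $\metric(w,w')=2(4\epsilon-r)>4\epsilon$.
\item The balls $B(w,2\epsilon)$ are therefore disjoint and lie in $B(x,6\epsilon)$. By $\UVG$ and doubling, $\#Z\le C$ uniformly in $x$ and $\epsilon$.
\item Put $T:=\bigcup_{z\in Z}[x,z]$. Let $\pi_T$ be the nearest-point retraction of the real tree $B(x,\iota)$ onto $T$; it is $1$-Lipschitz and constant on components of the complement of $T$.
\item Set $\eta:=\rho\circ\pi_T$ on $B(x,\iota)$ and $\eta:=0$ elsewhere, where $\rho$ is your radial profile.
\item If $y\in B(x,\iota)$ and $\metric(x,y)\ge 4\epsilon$, then $[x,y]$ contains some $[x,z]$ with $z\in Z$, so $\eta(y)=0$.
\item The gradient is carried by $T\cap\{\epsilon\le\metric(x,\cdot)\le 2\epsilon\}$, whose $\medm$-measure is at most $\epsilon\,\#Z$. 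This also gives $\wgrad\eta\in L^2(\skeleton,\medm)$, so $\eta\in\domain$.
\end{itemize}

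For comparison, the paper does not build the cutoff by hand. It takes $\Psi_x^{2\epsilon}$ from \cite[Lemma~3.3]{BCY25} and truncates it as $0\vee\bigl((2\Psi_x^{2\epsilon})\wedge 1\bigr)$.
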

\begin{proof}
	By $\UVG$, we know that the hypotheses of \cite[Lemma~3.3]{BCY25} hold. Apply \cite[Lemma~3.3]{BCY25} with parameter $2\epsilon$, we obtain a function $\Psi_{x}^{2\epsilon}\in\domain$ such that $\restr{\Psi_{x}^{2\epsilon}}{B(x,\epsilon)}\geq \frac{1}{2}$, $\restr{\Psi_{x}^{2\epsilon}}{\ambient\setminus B(x,4\epsilon)}=0$, and $\medm(\supp_{\medm}[\abs{\wgrad_{\mathfrak{O}}\Psi_{x}^{2\epsilon}}])\leq C\epsilon$ for some constant $C$ independent of $(x,\epsilon)$. Define $\eta_{x,\epsilon}(z):=0\vee((2\Psi_{x}^{2\epsilon}(z))\wedge1)$, $z\in\ambient$. Since the function $s\mapsto 0\vee((2s)\wedge1)$ is Lipschitz, the chain rule in Lemma~\ref{l.wgrad}-\ref{it.chain} gives $\abs{\wgrad_{\mathfrak{O}}\eta_{x,\epsilon}}\leq 2\abs{\wgrad_{\mathfrak{O}}\Psi_{x}^{2\epsilon}}$. Applying \cite[Lemma~3.3-(v), (vi)]{BCY25}, we obtain \eqref{e.cutoff1} and \eqref{e.cutoff2}.
\end{proof}

\subsubsection{Homogeneous Riesz transform}

In this section, we define the Riesz transform operator $\riesz$.

\begin{lemma}\label{l.defriesz}
	Let $(\ambient,\metric)$ be an unbounded, proper, connected, and separable uniform local tree with skeleton $\skeleton$ and length measure $\medm$. Choose an orientation $\mathfrak{O}$ of $(\ambient,\metric)$. Let $\meas$ be a Radon measure on $(\ambient,\metric)$ that satisfies $\UVG$. Let $(\form,\domain)$ be the strongly local regular symmetric Dirichlet form on $L^2(\ambient,\meas)$ constructed in Proposition \ref{p.energy}.\begin{enumerate}[label=\textup{({\arabic*})},align=right,leftmargin=*,topsep=5pt,parsep=0pt,itemsep=2pt]
	\item\label{it.dres1} $\mathrm{Ker}(-\gen)=\{0\}$, and the subspace $\Dom((-\gen)^{-\frac{1}{2}})$ defined by \begin{equation}
	\Dom((-\gen)^{-\frac{1}{2}})=\Sett{f\in L^{2}(\ambient,\meas)}{\int_{[0,\infty)}\lambda^{-1}\dif\langle \proj_{\lambda}f,f\rangle<\infty}.
\end{equation}
 is dense in $(L^{2}(\ambient,\meas),\norm{\cdot}_{L^{2}(\ambient,\meas)})$. Moreover, for each $f\in \Dom((-\gen)^{-\frac{1}{2}})$, we have $(-\gen)^{-\frac{1}{2}}f\in\domain$ and
\begin{equation}\label{e.iso1}
	\norm{\wgrad_{\mathfrak{O}}(-\gen)^{-\frac{1}{2}}f}_{L^{2}(\skeleton,\medm)}=\norm{f}_{L^{2}(\ambient,\meas)},\ \text{for all }f\in \Dom((-\gen)^{-\frac{1}{2}}).
\end{equation}
	\item\label{it.dres2} There exists a unique linear isometry $\riesz_{\mathfrak{O}}:L^{2}(\ambient,\meas)\to L^{2}(\skeleton,\medm)$ such that $\riesz f=\wgrad_{\mathfrak{O}}(-\gen)^{-\frac{1}{2}}f$ for all $f\in \Dom((-\gen)^{-\frac{1}{2}})$.
	\item\label{it.dres3} For every $\kappa\in(0,\infty)$ and every $f\in \domain$, we have $(-\gen+\kappa)^{-\frac{1}{2}}f\in \domain$ and 
	\begin{equation}
		\lim_{\kappa \downarrow 0}\wgrad_{\mathfrak{O}}(-\gen+\kappa)^{-\frac{1}{2}}f=\riesz_{\mathfrak{O}} f\ \text{ in }L^{2}(\skeleton,\medm).
	\end{equation}
\end{enumerate}
\end{lemma}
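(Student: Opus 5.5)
Part \ref{it.dres1} comes first, and everything in it hinges on showing that $\Ker(-\gen)=\{0\}$. Suppose $f\in\Dom(-\gen)$ satisfies $\gen f=0$. Then $\form(f,f)=0$, so $\wgrad_{\mathfrak O}\wt f=0$ $\medm$-a.e. Proposition~\ref{p.arc} then says $\wt f$ is constant along every arc inside a tree ball $B(z,\iota)$. Since $(\ambient,\metric)$ is geodesic and connected, any two points can be joined by a finite chain of such arcs, so $\wt f$ is constant on $\ambient$. The space is unbounded, so $\meas(\ambient)=\infty$ by the remark after $\UVG$, and $f\in L^2(\ambient,\meas)$ then forces $f=0$.

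Density of $\Dom((-\gen)^{-\frac12})$ follows from the spectral theorem. Since $\proj(\{0\})$ is the projection onto $\Ker(-\gen)=\{0\}$, the family $\proj_\lambda$ satisfies $\proj_\lambda\to\proj(\{0\})=0$ strongly as $\lambda\downarrow0$. Hence for any $f\in L^2(\ambient,\meas)$ the vectors $f_\delta:=(I-\proj_\delta)f$ lie in $\Dom((-\gen)^{-\frac12})$, because $\int_{(\delta,\infty)}\lambda^{-1}\dif\langle\proj_\lambda f,f\rangle\le\delta^{-1}\|f\|^2$, and $f_\delta\to f$ as $\delta\downarrow0$.

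For $f\in\Dom((-\gen)^{-\frac12})$, set $u=(-\gen)^{-\frac12}f$. The spectral calculus gives $u\in\Dom((-\gen)^{\frac12})=\domain$ together with $\int\lambda\,\lambda^{-1}\dif\langle\proj_\lambda f,f\rangle=\|f\|^2$. Therefore
\[
\|\wgrad_{\mathfrak O}u\|_{L^2(\skeleton,\medm)}^2=\form(u,u)=\|(-\gen)^{\frac12}u\|^2=\|f\|^2,
\]
which is \eqref{e.iso1}. The only delicate point is that $\int_{[0,\infty)}$ should effectively exclude $\lambda=0$; this is exactly what $\proj(\{0\})=0$ guarantees.

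Part \ref{it.dres2} is the standard extension of a linear isometry from a dense subspace. The map $f\mapsto\wgrad_{\mathfrak O}(-\gen)^{-\frac12}f$ is linear by Lemma~\ref{l.wgrad}-\ref{it.line} and isometric by \eqref{e.iso1}. It therefore extends uniquely by continuity to all of $L^2(\ambient,\meas)$, since the target $L^2(\skeleton,\medm)$ is complete.

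Part \ref{it.dres3} is the step needing the most care, and I would argue through the isometry of the form. Fix $f\in L^2(\ambient,\meas)$; this covers $f\in\domain$ in particular. Because $(\lambda+\kappa)^{-\frac12}\le\kappa^{-\frac12}$ and $\lambda(\lambda+\kappa)^{-1}\le1$, we get $u_\kappa:=(-\gen+\kappa)^{-\frac12}f\in\domain$. Now take a second function $g\in L^2$ and use the polarization form of $\form(u,u)=\|(-\gen)^{\frac12}u\|^2$ to estimate the difference. For $g\in\Dom((-\gen)^{-\frac12})$ with $g=f$ one has
\[
\|\wgrad_{\mathfrak O}u_\kappa-\riesz_{\mathfrak O} f\|_{L^2(\skeleton,\medm)}^2
=\int_{(0,\infty)}\Bigl(\bigl(\tfrac{\lambda}{\lambda+\kappa}\bigr)^{\frac12}-1\Bigr)^2\dif\langle\proj_\lambda f,f\rangle .
\]
The integrand is bounded by $1$ and tends to $0$ pointwise on $(0,\infty)$ as $\kappa\downarrow0$. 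Since $\proj(\{0\})=0$, dominated convergence makes the integral tend to $0$.

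For general $f$, the operators $\wgrad_{\mathfrak O}(-\gen+\kappa)^{-\frac12}$ have norm at most $1$, uniformly in $\kappa$, because the norm is computed by the same spectral integral with integrand $\lambda/(\lambda+\kappa)\le1$. A $3\varepsilon$ argument using the density from part \ref{it.dres1} and the isometry of $\riesz_{\mathfrak O}$ then finishes the proof. In fact the displayed identity already holds for every $f\in L^2$ once $\riesz_{\mathfrak O} f$ is interpreted as a limit, so the $3\varepsilon$ step is just bookkeeping.
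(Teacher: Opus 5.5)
Your proposal is correct and follows the paper's proof of (1) and (2) essentially verbatim; the one small slip is that geodesicity is not among this lemma's hypotheses, but you only need that $\wt f$ is locally constant (Proposition~\ref{p.arc} on tree balls) together with connectedness, which is exactly what the paper uses. For (3), the paper packages the same spectral computation more directly: it writes $\wgrad(-\gen+\kappa)^{-\frac12}f=\riesz f_\kappa$ with $f_\kappa:=(-\gen)^{\frac12}(-\gen+\kappa)^{-\frac12}f\in\Dom((-\gen)^{-\frac12})$ and $f_\kappa\to f$ in $L^2(\ambient,\meas)$, so continuity of $\riesz$ gives the limit at once, without your separate density and $3\varepsilon$ step.
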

\begin{proof}
\begin{enumerate}[label=\textup{({\arabic*})},align=right,leftmargin=*,topsep=5pt,parsep=0pt,itemsep=2pt]
	\item[\ref{it.dres1}] Let $h\in\Dom(-\gen)\subset \domain$ such that $-\gen h=0$. We represent $h$ by its $\meas$-version in $\abscon(\ambient,\metric)$. Then, \begin{equation}
		\norm{\wgrad_{\mathfrak O}h}^{2}_{L^{2}(\skeleton,\medm)}=\form(h,h)=\langle-\gen h,h\rangle_{L^{2}(\ambient,\meas)}=0,
	\end{equation}
	that is, $\wgrad_{\mathfrak O}h=0$ $\medm$-a.e. on $\skeleton$. By Proposition \ref{p.arc}, $h$ is locally constant over $\ambient$. Since $(\ambient,\metric)$ is connected, $h$ must be a constant. Since $\meas(\ambient)=\infty$ and $h\in L^2(\ambient,\meas)$ is constant, we obtain $h=0$. This shows that $\mathrm{Ker}(-\gen)=\{0\}$. In particular, $\proj(\{0\})=0$.
	
	 For any $f\in L^{2}(\ambient,\meas)$, denote $f_{n}:=\one_{[n^{-1},n]}(-\gen)f\in L^{2}(\ambient,\meas)$ for $n\in\bN$. By spectral calculus and the dominated convergence theorem, \begin{align}
		\lim_{n\to\infty}\norm{f-f_{n}}_{L^{2}(\ambient,\meas)}^{2}&=\lim_{n\to\infty}\left(\int_{[0,n^{-1})}\dif\langle \proj_{\lambda}f,f\rangle+\int_{(n,\infty)}\dif\langle \proj_{\lambda}f,f\rangle\right)\\
		&=\langle \one_{\{0\}}(-\gen)f,f\rangle_{L^{2}(\ambient,\meas)}.\label{e.dresc1}
	\end{align}
	Since $\one_{\{0\}}(-\gen)f\in\Ker(-\gen)$, we have $\one_{\{0\}}(-\gen)f=0$. By \eqref{e.dresc1}, we obtain that $f_{n}\to f$ in $L^{2}(\ambient,\meas)$. This proves the density of $\Dom((-\gen)^{-\frac{1}{2}})$. For each $f\in \Dom((-\gen)^{-\frac{1}{2}})$, spectral calculus gives
\begin{equation}
	\int_{[0,\infty)}\lambda\dif\langle \proj_{\lambda}(-\gen)^{-\frac{1}{2}}f,(-\gen)^{-\frac{1}{2}}f\rangle=\int_{[0,\infty)}\dif\langle \proj_{\lambda}f,f\rangle=\norm{f}_{L^{2}(\ambient,\meas)}^{2}<\infty.
\end{equation}
Therefore, \begin{equation}
\norm{\wgrad_{\mathfrak{O}}(-\gen)^{-\frac{1}{2}}f}_{L^{2}(\skeleton,\medm)}^{2}=\form((-\gen)^{-\frac{1}{2}}f,(-\gen)^{-\frac{1}{2}}f)=\norm{f}_{L^{2}(\ambient,\meas)}^{2}.
\end{equation}

	\item[\ref{it.dres2}]	By \eqref{e.iso1}, the map $\wgrad_{\mathfrak{O}}(-\gen)^{-\frac{1}{2}}:\Dom((-\gen)^{-\frac{1}{2}})\to L^{2}(\skeleton,\medm)$ is an isometry. Since $\Dom((-\gen)^{-\frac{1}{2}})$ is dense in $(L^{2}(\ambient,\meas),\norm{\cdot}_{L^{2}(\ambient,\meas)})$, we may extend $\wgrad_{\mathfrak{O}}(-\gen)^{-\frac{1}{2}}$ uniquely to all of $L^{2}(\ambient,\meas)$, and we denote the resulting operator by $\riesz_{\mathfrak{O}}$. This proves existence, while uniqueness follows from the density of $\Dom((-\gen)^{-\frac{1}{2}})$. 	\item[\ref{it.dres3}]For each $f\in\domain$ and $\kappa\in(0,\infty)$, define $f_{\kappa}:=(-\gen)^{\frac{1}{2}}(-\gen+\kappa)^{-\frac{1}{2}}f$. By spectral calculus and the fact that $\Ker(-\gen)=\{0\}$ in \ref{it.dres1}, we have  $f_{\kappa}\in\Dom((-\gen)^{-\frac{1}{2}})$ and $f_{\kappa}\to f$ in $L^{2}(\ambient,\meas)$ as $\kappa\downarrow 0$. Therefore, by the continuity of $\riesz_{\mathfrak{O}}$, we have
\begin{equation}
		\lim_{\kappa \downarrow 0}\wgrad_{\mathfrak{O}}(-\gen+\kappa)^{-\frac{1}{2}}f=\lim_{\kappa \downarrow 0}\wgrad_{\mathfrak{O}}(-\gen)^{-\frac{1}{2}}f_{\kappa}=\lim_{\kappa\downarrow 0}\riesz_{\mathfrak{O}} f_{\kappa}=\riesz_{\mathfrak{O}} f.
	\end{equation}	
	This completes the proof.
\qedhere
\end{enumerate}
\end{proof}

\begin{lemma}
\label{l.funint}
Let $\phi:[0,\infty)\to\bR$ be a measurable function such that $\int_{0}^{\infty}\abs{\phi(t)}\dif t<\infty$. Define
\begin{equation}
 m_{\phi}(\lambda):=\int_{0}^{\infty}\phi(t)e^{-t\lambda}\dif t, \text{ and }n_{\phi}(\lambda):=\lambda^{\frac{1}{2}}m_{\phi}(\lambda), \text{ for all }\lambda\in[0,\infty).
\end{equation}
Let $f\in L^{2}(\ambient,\meas)$. Then the following statements hold.
\begin{enumerate}[label=\textup{({\arabic*})},align=right,leftmargin=*,topsep=5pt,parsep=0pt,itemsep=2pt]
	\item\label{it.scalar1} $m_{\phi}(-\gen)f\in L^{2}(\ambient,\meas)$ is the unique element satisfying
\begin{equation}\label{e.scamtp}
 \langle m_{\phi}(-\gen)f,g\rangle_{L^2(\ambient,\meas)} = \int_{0}^{\infty}\phi(t)\langle P_tf,g\rangle_{L^2(\ambient,\meas)}\dif t,\ \text{for all }g\in L^2(\ambient,\meas).
\end{equation}
\item\label{it.scalar2} If $\sup_{\lambda\in[0,\infty)}\abs{n_{\phi}(\lambda)}<\infty$, then $m_{\phi}(-\gen)f\in\domain$, and $\wgrad_{\mathfrak{O}} m_{\phi}(-\gen)f=\riesz_{\mathfrak{O}} n_{\phi}(-\gen)f$ $\medm$-a.e. on $\skeleton$.
\item\label{it.scalar3} If $\int_{0}^{\infty}|\phi(t)|t^{-\frac{1}{2}}\dif t<\infty$, then for all $h\in L^{2}(\skeleton,\medm)$,
 \begin{equation}\label{e.sccgradf}
 \langle \riesz_{\mathfrak{O}} n_{\phi}(-\gen)f,h\rangle_{L^2(\skeleton,\nu)}
 =
 \int_{0}^{\infty}\phi(t)
 \langle\wgrad_{\mathfrak{O}} P_tf,h\rangle_{L^2(\skeleton,\nu)}\dif t.
\end{equation}
\end{enumerate}

\end{lemma}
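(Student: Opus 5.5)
All three parts follow from the spectral theorem together with Fubini's theorem. The main tool is the identity $\langle P_t f,g\rangle=\int_{[0,\infty)}e^{-t\lambda}\dif\langle \proj_\lambda f,g\rangle$, where $\dif\langle\proj_\lambda f,g\rangle$ is a finite signed measure of total variation at most $\norm{f}\norm{g}$.

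For \ref{it.scalar1}, note first that $\abs{m_\phi(\lambda)}\leq\norm{\phi}_{L^1}$. Hence $m_\phi(-\gen)$ is a bounded operator and
\[
\langle m_\phi(-\gen)f,g\rangle=\int_{[0,\infty)}\int_0^\infty\phi(t)e^{-t\lambda}\dif t\,\dif\langle\proj_\lambda f,g\rangle .
\]
The double integral converges absolutely, being bounded by $\norm{\phi}_{L^1}\norm{f}\norm{g}$. Fubini's theorem therefore lets us exchange the order of integration, which gives \eqref{e.scamtp}. Uniqueness holds because an element of $L^2$ is determined by its pairings with every $g\in L^2$.

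For \ref{it.scalar2}, boundedness of $n_\phi$ gives
\[
\int\lambda\abs{m_\phi(\lambda)}^2\dif\langle\proj_\lambda f,f\rangle\leq\norm{n_\phi}_\infty^2\norm{f}^2,
\]
so $u:=m_\phi(-\gen)f\in\Dom((-\gen)^{1/2})=\domain$. To identify $\wgrad u$ we approximate. Let $f_n:=\one_{[n^{-1},n]}(-\gen)f$, as in the proof of Lemma~\ref{l.defriesz}. Then $g_n:=n_\phi(-\gen)f_n$ lies in $\Dom((-\gen)^{-1/2})$, and $(-\gen)^{-1/2}g_n=m_\phi(-\gen)f_n$ because $\lambda^{-1/2}n_\phi(\lambda)=m_\phi(\lambda)$ on $[n^{-1},n]$. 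Lemma~\ref{l.defriesz}-\ref{it.dres2} then yields $\wgrad m_\phi(-\gen)f_n=\riesz n_\phi(-\gen)f_n$.

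We now let $n\to\infty$. By boundedness of $m_\phi$, $m_\phi(-\gen)f_n\to u$ in $L^2$. Since $\riesz$ is an isometry and $n_\phi$ is bounded, $\riesz n_\phi(-\gen)f_n\to\riesz n_\phi(-\gen)f$ in $L^2(\skeleton,\medm)$. The closedness in Lemma~\ref{l.wgrad}-\ref{it.clos}, applied to the $\abscon$ representatives, gives $\wgrad\tilde u=\riesz n_\phi(-\gen)f$. Alternatively, $\form(u-u_n,u-u_n)=\norm{(-\gen)^{1/2}(u-u_n)}^2\to0$ gives the same conclusion directly.

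For \ref{it.scalar3}, first check that $n_\phi$ is bounded:
\[
\abs{n_\phi(\lambda)}\leq\int\abs{\phi(t)}\lambda^{1/2}e^{-t\lambda}\dif t\leq C\int\abs{\phi(t)}t^{-1/2}\dif t,
\]
using $\sup_{s\geq0} s^{1/2}e^{-s}<\infty$. So \ref{it.scalar2} applies. The remaining task is to justify moving the pairing with $h$ inside the $t$-integral. Each $t\mapsto\wgrad P_tf$ is continuous into $L^2(\skeleton,\medm)$ on $(0,\infty)$, since $\wgrad P_t f=\riesz(-\gen)^{1/2}P_tf$ on a dense set and $\norm{\wgrad P_tf}\leq(2et)^{-1/2}\norm f$. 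This bound, together with $\int\abs\phi t^{-1/2}\dif t<\infty$, makes $\int_0^\infty\phi(t)\wgrad P_tf\dif t$ a Bochner integral in $L^2(\skeleton,\medm)$.

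We identify this Bochner integral using the isometry $\riesz$. Since $\riesz$ is an isometry, $\langle\riesz a,\riesz b\rangle=\langle a,b\rangle$. Write $\wgrad P_tf=\riesz(-\gen)^{1/2}P_tf$. Then for $h$ in the range of $\riesz$, say $h=\riesz k$, the right side of \eqref{e.sccgradf} equals
\[
\int\phi(t)\langle(-\gen)^{1/2}P_tf,k\rangle\dif t=\int\int\phi(t)\lambda^{1/2}e^{-t\lambda}\dif t\,\dif\langle\proj_\lambda f,k\rangle,
\]
with the exchange justified by Fubini. This last expression is $\langle n_\phi(-\gen)f,k\rangle=\langle\riesz n_\phi(-\gen)f,h\rangle$.

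For $h$ in the orthogonal complement of the range, the left side of \eqref{e.sccgradf} vanishes, and so does each $\langle\wgrad P_tf,h\rangle$, because $\wgrad P_tf$ lies in the range of $\riesz$. This holds since $\riesz$ has closed range (being an isometry) and $\wgrad P_t f=\riesz(-\gen)^{1/2}P_tf$ by the approximation argument of \ref{it.scalar2}.

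The main obstacle is this last step: justifying $\wgrad P_tf=\riesz(-\gen)^{1/2}P_tf$ and the integrability needed for Fubini. The approximation argument above should handle it.
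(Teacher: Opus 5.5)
Your proposal is correct and follows the paper's argument. Part \ref{it.scalar1} uses spectral calculus plus Fubini, part \ref{it.scalar2} identifies the gradient via Lemma~\ref{l.defriesz}-\ref{it.dres2}, and in part \ref{it.scalar3} your splitting of $h$ into $\mathrm{Ran}(\riesz)$ and its orthogonal complement is just the paper's use of the adjoint $\riesz^{*}$ written out. The spectral truncation $f_n$ in \ref{it.scalar2} is an unneeded detour, since $\lambda^{-1}n_\phi(\lambda)^2=m_\phi(\lambda)^2\leq\norm{\phi}_{L^1}^2$ gives $n_\phi(-\gen)f\in\Dom((-\gen)^{-\frac12})$ directly (as the paper notes). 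The same direct check yields $\wgrad P_tf=\riesz(-\gen)^{\frac12}P_tf$, so your ``main obstacle'' needs no approximation at all.
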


\begin{proof}

\begin{enumerate}[label=\textup{({\arabic*})},align=right,leftmargin=*,topsep=5pt,parsep=0pt,itemsep=2pt]
	\item[\ref{it.scalar1}] Since $\norm{m_{\phi}}_{L^{\infty}([0,\infty))}\leq \norm{\phi}_{L^{1}([0,\infty))}$, the function $m_{\phi}$ defines a bounded linear operator $m_{\phi}(-\gen)$ on $L^{2}(\ambient,\meas)$, and hence $m_{\phi}(-\gen)f\in L^{2}(\ambient,\meas)$. By spectral calculus,
\begin{align}
		\langle m_{\phi}(-\gen)f,g\rangle_{L^2(\ambient,\meas)}& =\int_{[0,\infty)}m_{\phi}(\lambda)\dif\langle\proj_{\lambda}f,g\rangle=\int_{[0,\infty)}\int_{0}^{\infty}\phi(t)e^{-t\lambda}\dif t\dif\langle\proj_{\lambda}f,g\rangle\\
		&=\int_{0}^{\infty}\phi(t)\left(\int_{[0,\infty)}e^{-t\lambda}\dif\langle\proj_{\lambda}f,g\rangle\right)\dif t \ \text{(by Fubini's theorem)}\\
		&=\int_{0}^{\infty}\phi(t)\langle P_tf,g\rangle_{L^2(\ambient,\meas)}\dif t.
\end{align}
This proves \eqref{e.scamtp}. The uniqueness of $m_{\phi}(-\gen)f\in L^{2}(\ambient,\meas)$ follows from the Riesz representation theorem.
	\item[\ref{it.scalar2}] By spectral calculus,
\begin{align}
		&\phantom{\ \leq}\int_{[0,\infty)}\lambda\dif\langle\proj_{\lambda}m_{\phi}(-\gen)f,m_{\phi}(-\gen)f\rangle =\int_{[0,\infty)}\lambda m_{\phi}(\lambda)^{2}\dif\langle\proj_{\lambda}f,f\rangle\\
		&=\int_{[0,\infty)}n_{\phi}(\lambda)^{2}\dif\langle\proj_{\lambda}f,f\rangle\leq \Big(\sup_{[0,\infty)}n_{\phi}^{2}\Big)\norm{f}_{L^{2}(\ambient,\meas)}^{2}.
\end{align}
Therefore, $m_{\phi}(-\gen)f\in\domain$. Similarly, $n_{\phi}(-\gen)f\in \Dom((-\gen)^{-\frac{1}{2}})$, and therefore,
\begin{equation}
		\riesz_{\mathfrak{O}} n_{\phi}(-\gen)f=\wgrad_{\mathfrak{O}} (-\gen)^{-\frac{1}{2}}n_{\phi}(-\gen)f=\wgrad_{\mathfrak{O}} m_{\phi}(-\gen)f.
\end{equation}
	\item[\ref{it.scalar3}] Since $\sup_{\lambda\in[0,\infty)}\lambda^{1/2}e^{-t\lambda}=(2et)^{-1/2}$ for $t\in(0,\infty)$, we have 
	\begin{equation}
		\sup_{\lambda\in[0,\infty)}\lambda^{\frac{1}{2}}\abs{\int_{0}^{\infty}\phi(t)e^{-t\lambda}\dif t}\leq (2e)^{-\frac{1}{2}}\int_{0}^{\infty}|\phi(t)|t^{-\frac{1}{2}}\dif t<\infty.\label{e.sgdl1}
	\end{equation} 
	Thus the assumption in \ref{it.scalar2} is satisfied. For $t\in(0,\infty)$, spectral calculus gives
\begin{equation}\label{e.sgdl2}
 \|\wgrad_{\mathfrak{O}} P_tf\|_{L^2(\skeleton,\medm)}
 =\|(-\gen)^{\frac{1}{2}}P_tf\|_{L^2(\ambient,\meas)}
 \leq(2et)^{-\frac{1}{2}}\|f\|_{L^2(\ambient,\meas)}.
\end{equation}
Thus, by the Cauchy--Schwarz inequality,
\begin{align}
&\phantom{\ \leq} \int_{0}^{\infty}|\phi(t)|
 |\langle\wgrad_{\mathfrak{O}} P_tf,h\rangle_{L^{2}(\skeleton,\medm)}|\dif t\\
& \overset{\eqref{e.sgdl2}}{\leq}(2e)^{-\frac{1}{2}}\int_{0}^{\infty}|\phi(t)|t^{-\frac{1}{2}}\dif t\|f\|_{L^{2}(\ambient,\meas)}\|h\|_{L^{2}(\skeleton,\medm)}\overset{\eqref{e.sgdl1}}{<}\infty.
\end{align}

Since $\wgrad_{\mathfrak{O}} P_{t}f=\riesz_{\mathfrak{O}}(-\gen)^{\frac{1}{2}}P_{t}f$ by \ref{it.scalar2}, if we denote the adjoint of $\riesz_{\mathfrak{O}}$ by $\riesz_{\mathfrak{O}}^*:L^{2}(\skeleton,\medm)\to L^{2}(\ambient,\meas)$, then Fubini's theorem gives
\begin{align}
	 &\phantom{\ \leq}\int_{0}^{\infty}\phi(t)
 \langle\wgrad_{\mathfrak{O}} P_tf,h\rangle_{L^2(\skeleton,\nu)}\dif t = \int_{0}^{\infty}\phi(t)
 \langle\riesz_{\mathfrak{O}}(-\gen)^{\frac{1}{2}}P_{t}f,h\rangle_{L^2(\skeleton,\nu)}\dif t\\
 &=\int_{0}^{\infty}\phi(t)
 \langle (-\gen)^{\frac{1}{2}}P_{t}f,\riesz_{\mathfrak{O}}^*h\rangle_{L^{2}(\ambient,\meas)}\dif t\\
 &=\langle n_{\phi}(-\gen)f,\riesz_{\mathfrak{O}}^*h\rangle_{L^{2}(\ambient,\meas)}=\langle \riesz_{\mathfrak{O}} n_{\phi}(-\gen)f,h\rangle_{L^{2}(\skeleton,\medm)}.
\end{align}
This proves \eqref{e.sccgradf}.
\qedhere
\end{enumerate}
\end{proof}
\begin{lemma}\label{l.reverse-duality}
Let $p\in(1,\infty)$ and $p'=p/(p-1)$. Then \begin{equation}
	\R{p'}\ \Longrightarrow \ \RR{p}.
\end{equation}
\end{lemma}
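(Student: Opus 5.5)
We argue by duality, using that $\riesz$ is an $L^2$-isometry and that $(-\gen)^{\frac{1}{2}}f$ can be tested against a dense class. Assume $\R{p'}$ with constant $C$. Fix $f\in\domain$ with $\wgrad f\in L^p(\skeleton,\medm)$; otherwise there is nothing to prove. We will compute $\norm{(-\gen)^{\frac{1}{2}}f}_{L^p(\ambient,\meas)}$ as a supremum of $\abs{\langle(-\gen)^{\frac{1}{2}}f,g\rangle_{L^2(\ambient,\meas)}}$ over $g\in L^{p'}(\ambient,\meas)\cap L^2(\ambient,\meas)$ with $\norm{g}_{L^{p'}}\leq1$. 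This requires a preliminary remark: if the supremum is finite, then the $L^2$ function $(-\gen)^{\frac{1}{2}}f$ lies in $L^p$ with that norm. This follows by testing against truncations $g=\abs{u}^{p-2}u\,\one_{\{\abs{u}\leq n\}\cap B(x_0,n)}$ normalized in $L^{p'}$, which belong to $L^2\cap L^{p'}$. Here $u=(-\gen)^{\frac{1}{2}}f$, and we use monotone convergence.

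The key identity is
\[
\langle(-\gen)^{\frac{1}{2}}f,g\rangle_{L^2(\ambient,\meas)}=\langle\wgrad f,\riesz g\rangle_{L^2(\skeleton,\medm)},\qquad f\in\domain,\ g\in L^2(\ambient,\meas).
\]
We first prove it for $g\in\Dom((-\gen)^{-\frac{1}{2}})$. There $\riesz g=\wgrad(-\gen)^{-\frac{1}{2}}g$ by Lemma~\ref{l.defriesz}, so the right side equals $\form(f,(-\gen)^{-\frac{1}{2}}g)$. By the spectral representation of $\form$, this is $\langle(-\gen)^{\frac{1}{2}}f,(-\gen)^{\frac{1}{2}}(-\gen)^{-\frac{1}{2}}g\rangle=\langle(-\gen)^{\frac{1}{2}}f,g\rangle$. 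For general $g\in L^2$, both sides are continuous in $g$ for the $L^2$ topology: the left side because $(-\gen)^{\frac{1}{2}}f\in L^2$, the right side because $\riesz$ is an isometry and $\wgrad f\in L^2(\skeleton,\medm)$. Density of $\Dom((-\gen)^{-\frac{1}{2}})$ from Lemma~\ref{l.defriesz}-\ref{it.dres1} then extends the identity.

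With the identity in hand, Hölder's inequality on $(\skeleton,\medm)$ and $\R{p'}$ give, for $g\in L^{p'}\cap L^2$,
\[
\abs{\langle(-\gen)^{\frac{1}{2}}f,g\rangle}\leq\norm{\wgrad f}_{L^p(\skeleton,\medm)}\norm{\riesz g}_{L^{p'}(\skeleton,\medm)}\leq C\norm{\wgrad f}_{L^p(\skeleton,\medm)}\norm{g}_{L^{p'}(\ambient,\meas)}.
\]
Taking the supremum as described yields $\RR{p}$ with the same constant.

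The only delicate point is the approximation step: extending the identity from $\Dom((-\gen)^{-\frac{1}{2}})$ to all of $L^2$, and justifying the norming-by-duality argument for a function only a priori in $L^2$. The first uses only the $L^2$-boundedness of both pairings. The second uses that $\meas$ is $\sigma$-finite, being Radon on a proper space, so the truncated test functions are admissible.
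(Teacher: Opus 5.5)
Your proof is correct and follows the same route as the paper. Both arguments rest on the pairing identity $\langle(-\gen)^{\frac12}f,g\rangle_{L^2(\ambient,\meas)}=\langle\wgrad f,\riesz g\rangle_{L^2(\skeleton,\medm)}$, followed by H\"older's inequality, $\R{p'}$, and duality.

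The differences are minor. The paper gets the identity in one line: it notes that $(-\gen)^{\frac12}f\in\Dom((-\gen)^{-\frac12})$ with $\riesz((-\gen)^{\frac12}f)=\wgrad f$, and then uses that the isometry $\riesz$ preserves inner products. You instead verify the identity on $\Dom((-\gen)^{-\frac12})$ through the spectral form of $\form$ and extend it by continuity. Your explicit truncation argument also spells out the norming step that the paper compresses into ``by duality''.
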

\begin{proof}
Let $u\in\domain=\Dom((-\gen)^{\frac12})$. By Lemma~\ref{l.defriesz}-\ref{it.dres1}, $\Ker(-\gen)=\{0\}$. Therefore,
\begin{equation}\label{e.sharpdual1}
\int_{(0,\infty)}\lambda^{-1}\dif\langle\proj_\lambda (-\gen)^{\frac12}u,(-\gen)^{\frac12}u\rangle=\int_{(0,\infty)}\dif\langle\proj_\lambda u,u\rangle=\norm{u}_{L^2(\ambient,\meas)}^2<\infty.
\end{equation}
Thus $(-\gen)^{\frac12}u\in\Dom((-\gen)^{-\frac12})$, and $(-\gen)^{-\frac12}((-\gen)^{\frac12}u)=u$. By Lemma~\ref{l.defriesz}-\ref{it.dres2}, $\riesz ((-\gen)^{\frac12}u)=\wgrad u$, and $\riesz:L^{2}(\ambient,\meas)\to L^{2}(\skeleton,\medm)$ is a linear isometry. Therefore, for every $u\in\domain$ and every $g\in L^2(\ambient,\meas)$, we have
\begin{align}
\langle\wgrad u,\riesz g\rangle_{L^2(\skeleton,\medm)}&=\langle\riesz ((-\gen)^{\frac12}u),\riesz g\rangle_{L^2(\skeleton,\medm)}\\
&=\langle(-\gen)^{\frac12}u,g\rangle_{L^2(\ambient,\meas)}\ \text{ (since $\riesz$ is an isometry).}\label{e.reverse-pairing}
\end{align}
Suppose that $\wgrad u\in L^p(\skeleton,\medm)$. For $g\in L^2(\ambient,\meas)\cap L^{p'}(\ambient,\meas)$, H\"older's inequality and $\R{p'}$ give
\begin{equation}
\abs{\langle (-\gen)^{\frac12}u,g\rangle_{L^2(\ambient,\meas)}}\overset{\eqref{e.reverse-pairing}}{\leq}\norm{\wgrad u}_{L^p(\skeleton,\medm)}\norm{\riesz g}_{L^{p'}(\skeleton,\medm)}\overset{\R{p'}}{\leq} C_{p'}\norm{\wgrad u}_{L^p(\skeleton,\medm)}\norm{g}_{L^{p'}(\ambient,\meas)}.\label{e.sharpdual2}
\end{equation}
Since \eqref{e.sharpdual2} holds for all $g\in L^2(\ambient,\meas)\cap L^{p'}(\ambient,\meas)$. Since $L^2(\ambient,\meas)\cap L^{p'}(\ambient,\meas)$ is dense in $L^{p'}(\ambient,\meas)$, by \eqref{e.sharpdual2} and the duality, we know that \begin{equation}
	\norm{(-\gen)^{\frac12}u}_{L^p(\ambient,\meas)}\leq C_{p'}\norm{\wgrad u}_{L^p(\skeleton,\medm)},\ \text{ whenever }u\in \domain \ \text{ and }\wgrad u\in L^p(\skeleton,\medm).
\end{equation}
This is exactly $\RR{p}$.
\end{proof}

\section{Heat kernel estimates and their consequences}\label{s.heat}
In this section and in the rest of this paper, we will work under Framework \ref{f.treeMMD}. After rescaling the metric, we also assume that $\iota>40$. We choose $\epsilon=1$, let $\vertex$ be a maximal $1$-separated set of $(\ambient,\metric)$, and let $\edge$ be defined by \eqref{e.edges}.

We first recall some basic facts on heat kernel estimates.

\begin{definition}\label{d.HKE}
We say that $(\form,\domain)$ on $L^{2}(\ambient,\meas)$ satisfies the \hypertarget{HKE}{\emph{heat kernel estimates}} $\HKE$ if there exist
		$C_{1},c_{2}, \delta\in(0,\infty)$ and a heat kernel $\set{p_t}_{t\in(0,\infty)}$ of its semigroup $\{P_{t}\}_{t\in(0,\infty)}$ such that, for every $t\in(0,\infty)$,
  		\begin{align}
		p_{t}(x,y) &\leq \frac{C_{1}}{\meas(B(x,\scale^{-1}(t)))} \exp\left(-c_{2}\Phi\left({\metric(x, y)},{t}\right)\right)
		\quad \mbox{for $\meas\otimes \meas$-a.e.\ $(x,y) \in \ambient\times \ambient$}\\
		\text{and }\ p_{t}(x,y) &\geq \frac{C_{1}^{-1}}{\meas(B(x,\scale^{-1}(t)))}
		\quad \mbox{for $\meas\otimes \meas$-a.e.\ $(x,y) \in \ambient\times \ambient$ with $\metric(x,y) \leq \delta \scale^{-1}(t)$},
		\end{align}
		where  \begin{equation}\label{e.phi}
		\Phi(R,t):=\sup_{r\in(0,\infty)}\left({\frac{R}{r}-\frac{t}{\scale(r)}}\right),\ (R,t)\in[0,\infty)\times(0,\infty).
		\end{equation}
		We say that the \hypertarget{UHK}{\emph{heat kernel upper estimates}} $\UHK$ hold if the upper bound in $\HKE$ holds.
\end{definition}

\begin{remark}
	Suppose $(\ambient,\metric,\meas,\form,\domain)$ satisfies $\UVG$ and $\HKE$. \begin{enumerate}[label=\textup{({\arabic*})},align=right,leftmargin=*,topsep=5pt,parsep=0pt,itemsep=2pt]
	  \item Since $(\ambient,\metric)$ is geodesic, it satisfies the \emph{chain condition} \cite[Definition~2.10-(a)]{KM20}. By \cite[Remark~2.5-(d)]{KM20}, the heat kernel estimates $\HKE$ can be strengthened to the \hypertarget{HKEf}{\emph{full heat kernel estimates}}, that is, the upper bound of $p_{t}$ in $\HKE$ remains valid, while the lower bound of $p_{t}$ in $\HKE$ is replaced by \begin{equation}
		p_{t}(x,y) \geq \frac{C_{3}^{-1}}{\meas(B(x,\scale^{-1}(t)))}\exp\left(-c_{4}\Phi\left({\metric(x, y)},{t}\right)\right),
	\end{equation}
		for $\meas\otimes \meas$-a.e. $(x,y) \in \ambient\times \ambient$, for some constants $C_{3},c_{4}\in(0,\infty)$. Conversely, full off-diagonal estimates imply the chain condition by \cite[Theorem~2.11]{Mur20}; under precompactness of balls, \cite[Proposition~A.1]{KM20} identifies this with bi-Lipschitz equivalence to a geodesic metric.
		\item By $\UVG$, $\HKE$, the unboundedness of $(\ambient,\metric)$ and \cite[Theorem~7.4 and Lemma~7.3-(b)]{GT12} (alternatively \cite[Theorem~3.2]{Lie15}), we know that the heat kernel is \emph{conservative}, i.e., \begin{equation}\label{e.conserv}
		1=P_{t}\one_{\ambient}(x):=\int_{\ambient}p_{t}(x,y)\dif \meas(y),\ \text{ for all }(t,x)\in(0,\infty)\times\ambient.
	\end{equation}
\end{enumerate}
\end{remark}

\begin{lemma}\label{l.growthPsi}
	Suppose that $(\ambient,\metric,\meas,\form,\domain)$ satisfies $\UVG$ and $\HKE$. Then there exists $C\in(1,\infty)$ such that \begin{equation}\label{e.growth1}
		C^{-1}\frac{R^{2}}{r^{2}}\leq\frac{\scale(R)}{\scale(r)}\leq C\frac{R\volume(R)}{r\volume(r)},\ \text{ for all }0<r\leq R<\infty,
	\end{equation}
	and \begin{equation}\label{e.growth2}
		C^{-1}r\volume(r)\leq\scale(r)\leq Cr\volume(r),\ \text{ for all }r\in(0,\iota].
	\end{equation}
\end{lemma}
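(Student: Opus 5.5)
The statement splits into three estimates, and I would prove them in the order: the lower bound $\scale(R)/\scale(r)\gtrsim R^2/r^2$, then the local comparison \eqref{e.growth2}, and last the upper bound in \eqref{e.growth1}. The common tool is the capacity characterization of $\HKE$. Under $\UVG$ and $\HKE$ (see \cite{GHL15,GH14}), one has for all $x\in\ambient$ and $r>0$
\begin{equation}
\mathrm{cap}\bigl(B(x,r),B(x,2r)\bigr)\asymp \frac{\volume(r)}{\scale(r)},
\end{equation}
where $\mathrm{cap}(A,B)$ is the infimum of $\form(f,f)=\int_\skeleton|\wgrad f|^2\dif\medm$ over $f\in\domain$ with $f=1$ on $A$ and $f=0$ off $B$. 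I would also use the Poincar\'e inequality $\mathrm{PI}(\scale)$ and the mean exit time estimate $\mathbb E_x\tau_{B(x,r)}\asymp\scale(r)$, both of which follow from $\HKE$.

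\textbf{Lower bound in \eqref{e.growth1}.} I would use the chain condition, which holds because $(\ambient,\metric)$ is geodesic (see the remark after Definition~\ref{d.HKE}). Fix $x,y$ with $\metric(x,y)=R$ and put $n$ equally spaced points on a geodesic from $x$ to $y$. Semigroup chaining with the near-diagonal lower bound gives $p_t(x,y)\gtrsim c^n/\volume(\scale^{-1}(t))$ when $t=n\,\scale(R/n)$. Comparing this with the upper bound $\exp(-c_2\Phi(R,t))$ and optimizing over $n$ forces $\Phi(R,n\scale(R/n))\lesssim n$. Unwinding the definition \eqref{e.phi} of $\Phi$ then gives $\scale(R)\gtrsim (R/r)^2\scale(r)$. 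This is the standard statement that the walk dimension is at least $2$, and I would quote \cite[Theorem~2.6]{Mur25} for it as the introduction indicates.

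\textbf{Proof of \eqref{e.growth2}.} The tree structure enters here through one-dimensional calculus along arcs.

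For the upper bound $\scale(r)\lesssim r\volume(r)$, which I expect to hold for all $r$, take any admissible $f$ for $\mathrm{cap}(B(x,r),B(x,2r))$. Since $\ambient$ is unbounded and geodesic, there is a geodesic segment of length $r$ from $\partial B(x,r)$ to $\partial B(x,2r)$. Cover it by finitely many local arc charts and apply Proposition~\ref{p.arc} on each piece. Combined with Cauchy--Schwarz this gives
\begin{equation}
1\le \Bigl(\int_{\mathrm{seg}}|\wgrad f|\dif\medm\Bigr)^2\le r\,\form(f,f).
\end{equation}
Hence $\volume(r)/\scale(r)\asymp\mathrm{cap}\gtrsim 1/r$.

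For the lower bound $\scale(r)\gtrsim r\volume(r)$ with $r\le\iota$, I would use the cutoff $\eta_{x,r/4}$ of Lemma~\ref{l.cutoff}, applying it only for $r<\iota/6$ and using doubling of $\volume$ and $\scale$ for the remaining range up to $\iota$. By \eqref{e.cutoff1} and \eqref{e.cutoff2}, $\form(\eta,\eta)\le r^{-2}\cdot Cr$, so $\mathrm{cap}\lesssim 1/r$ and therefore $\volume(r)/\scale(r)\lesssim 1/r$.

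\textbf{Upper bound in \eqref{e.growth1}.} This is the main obstacle. When $R\le\iota$ it follows immediately from \eqref{e.growth2}: $\scale(R)\lesssim R\volume(R)$ and $\scale(r)\gtrsim r\volume(r)$. For large $r$ there is no local tree cutoff available, so the claim is equivalent to a comparison of capacities across scales,
\begin{equation}
\mathrm{cap}_R\gtrsim \frac{r}{R}\cdot\frac{\volume(R)}{\volume(r)}\cdot\mathrm{cap}_r .
\end{equation}
I would try to obtain this by discretizing at scale $r$. Cover the annulus $B(x,2R)\setminus B(x,R)$ by about $N\asymp \volume(R)/\volume(r)$ balls of radius $r$. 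On each ball, apply $\mathrm{PI}(\scale)$ at scale $r$ to bound the local energy of an admissible $f$ from below by $\scale(r)^{-1}\volume(r)^{-1}$ times its local $L^2$ oscillation. Then sum a discrete gradient of $f$ over the net graph of Proposition~\ref{p.count} rescaled to scale $r$. The delicate point is the counting. The naive graph-capacity bound yields a factor $(r/R)^2$ rather than $r/R$, so one has to exploit that $f$ must drop by $1$ along each of many essentially disjoint radial chains of $R/r$ balls.

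If that counting fails, my fallback is a route through the heat kernel. Use the on-diagonal identity $p_t(x,x)\asymp 1/\volume(\scale^{-1}(t))$ together with the energy bound $\form(p_{t,x},p_{t,x})=-\partial_t p_{2t}(x,x)/2$. Along an arc, the one-dimensional Morrey-type inequality $|f(y)-f(z)|^2\le \metric(y,z)\,\form(f,f)$ (as in \eqref{e.measb1+}) should then give H\"older control of $p_t$ at spatial scale $\scale^{-1}(t)$ comparable to its size. Comparing two times $t=\scale(r)$ and $T=\scale(R)$ in this way should produce the factor $R\volume(R)/(r\volume(r))$.
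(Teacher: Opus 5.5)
Your proof of \eqref{e.growth2} is correct but differs from the paper's. The paper compares $p_t(x,x)\asymp 1/\volume(\scale^{-1}(t))$ from $\HKE$ with the small-time on-diagonal asymptotics $p_t(x,x)\asymp 1/\widetilde{\volume}(\Theta^{-1}(t))$, $\Theta(r)=r\widetilde{\volume}(r)$, imported from \cite{BCY25}, and then inverts using \eqref{e.volreverse}. You instead combine the two-sided capacity estimate with one-dimensional Cauchy--Schwarz along a geodesic and the cutoff of Lemma~\ref{l.cutoff}. This avoids the local heat kernel analysis of \cite{BCY25}, at the price of the capacity characterization of $\HKE$. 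It also gives $\scale(R)\lesssim R\volume(R)$ for every $R$. Together with $\scale(r)\gtrsim r\volume(r)$ for $r\le\iota$, this settles the upper bound in \eqref{e.growth1} whenever $r\le\iota$, not only when $R\le\iota$.

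For the lower bound in \eqref{e.growth1} you rely on \cite[Theorem~2.6]{Mur25}, as the paper does. Your chaining sketch cannot produce the exponent $2$ by itself. The constraint $\Phi(R,n\scale(R/n))\lesssim n$ it yields already follows from \eqref{e.scale} and \eqref{e.phi1}, since chaining the near-diagonal bound is exactly how the off-diagonal lower bound is derived.

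\textbf{The gap.} The upper bound in \eqref{e.growth1} is not established when $\iota<r\le R$, and your reduction to capacities is off by a factor. Write $\mathrm{cap}_\rho:=\mathrm{cap}(B(x,\rho),B(x,2\rho))\asymp\volume(\rho)/\scale(\rho)$. The desired inequality is then equivalent to $\mathrm{cap}_R\gtrsim (r/R)\,\mathrm{cap}_r$, with no factor $\volume(R)/\volume(r)$. The inequality you display is equivalent to $\scale(R)\lesssim (R/r)\,\scale(r)$. That is false, since $\beta_1>1$ in \eqref{e.scale}. On the alternating Vicsek fractafold with $1\le r\le R$, its two sides are of order $R^{d-2}$ and $R^{d-1}/r$. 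So the many-chain counting you plan is aimed at a false statement.

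The fallback does not close the gap either. The one-dimensional Morrey estimate along an arc, used at a single time, only reproduces $\scale(\rho)\lesssim\rho\volume(\rho)$, and a quotient of two upper bounds gives no ratio bound. For $r>\iota$ the lower bound $\scale(r)\gtrsim r\volume(r)$ fails (e.g.\ $\scale(r)\asymp r^2$ on the fractafold), so you need an argument that genuinely works at scale $r$.

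\textbf{A fix.} The correct inequality needs only one chain. Take $R\ge4r$; smaller ratios are trivial by \eqref{e.scale}. Choose points $z_0,\dots,z_N$ with spacing $r$ on a geodesic issuing from $x$, with $N\asymp R/r$, $B(z_0,r)\subset B(x,R)$ and $B(z_N,r)\cap B(x,2R)=\emptyset$.
\begin{itemize}
\item For $f$ admissible for $\mathrm{cap}_R$, one has $1=\abs{f_{B(z_0,r)}-f_{B(z_N,r)}}$.
\item Since $B(z_{k+1},r)\subset B(z_k,2r)$, doubling and \eqref{e.PI} give $\abs{f_{B(z_k,r)}-f_{B(z_{k+1},r)}}^2\lesssim \frac{\scale(r)}{\volume(r)}\int_{B(z_k,2A_{\mathrm{PI}}r)}\abs{\wgrad f}^2\dif\medm$.
\item Apply Cauchy--Schwarz over the $N$ terms. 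The balls $B(z_k,2A_{\mathrm{PI}}r)$ have bounded overlap, because their centres lie on a geodesic with spacing $r$. This yields $\form(f,f)\gtrsim \frac{r}{R}\cdot\frac{\volume(r)}{\scale(r)}$.
\item Combined with $\mathrm{cap}_R\lesssim\volume(R)/\scale(R)$, this is exactly the upper bound in \eqref{e.growth1} for all $0<r\le R$.
\end{itemize}
Alternatively, follow the paper and quote \cite[Theorem~2.6]{Mur25} for both sides of \eqref{e.growth1}, after replacing $\volume$ by the comparable doubling homeomorphism $\widetilde{\volume}(r)=\int_0^r\volume(s)s^{-1}\dif s$.
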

\begin{proof}
We first regularize the volume function $\volume$. By $\UVG$ and the geodesic property of $(\ambient,\metric)$, we have a constant $c\in(0,\infty)$ such that
\begin{equation}\label{e.volreverse}
\frac{\volume(R)}{\volume(r)}\geq c\frac{R}{r},\ \text{ for all } 0<r\leq R<\infty.
\end{equation}
Define
\[
\widetilde{\volume}(r):=\int_0^r\frac{\volume(s)}{s}\dif s,\ \text{ for } r\in[0,\infty).
\]
By \eqref{e.volreverse} and $\UVG$, we know that there exists $C_{1}\in(1,\infty)$ such that
\[
C_{1}^{-1}\volume(r)\leq\widetilde{\volume}(r)\leq C_{1}\volume(r),\ \text{ for all } r\in(0,\infty).
\]
Consequently, $\widetilde{\volume}$ is a doubling homeomorphism of $[0,\infty)$ onto itself, and $\UVG$ also holds with $\widetilde{\volume}$ in place of $\volume$. The full heat-kernel estimates and \cite[Theorem~2.6]{Mur25} imply \eqref{e.growth1}.

Set $\Theta(r):=r\widetilde{\volume}(r)$ for $r\in[0,\infty)$. Then $\Theta$ is an increasing homeomorphism of $[0,\infty)$ onto itself. The estimates \eqref{e.volreverse} and \eqref{e.VD1} verify the assumptions on the volume growth in \cite[Section~3]{BCY25}. Choose the continuous heat kernel provided by \cite[Proposition~4.2]{BCY25}. By \cite[Theorem~4.3, Lemma~4.6]{BCY25}, there exist $C_{2}\in(1,\infty)$ and $t_{0}\in(0,\infty)$ such that \begin{equation}
		\frac{C_{2}^{-1}}{\wt{\volume}(\Theta^{-1}(t))}\leq p_{t}(x,x)\leq \frac{C_{2}}{\wt{\volume}(\Theta^{-1}(t))},\ \text{ for all }(t,x)\in (0,t_{0}]\times\ambient,
	\end{equation}
which, combined with the continuity of $p_{t}$, $\UVG$, and $\HKE$, gives
\begin{equation}\label{e.pgrowth1}
C_{3}^{-1}
\widetilde{\volume}(\Theta^{-1}(t))\leq \widetilde{\volume}(\scale^{-1}(t))\leq C_{3}
\widetilde{\volume}(\Theta^{-1}(t)),\ \text{ for all } t\in(0,t_0].
\end{equation}
Let $t\in(0,t_0]$. Then \eqref{e.volreverse} and \eqref{e.pgrowth1} give
\begin{equation}\label{e.review_inverse_comparison}
\max\left(\frac{\scale^{-1}(t)}{\Theta^{-1}(t)},\frac{\Theta^{-1}(t)}{\scale^{-1}(t)}\right)\leq C\max\left(\frac{\widetilde\volume(\scale^{-1}(t))}{\widetilde\volume(\Theta^{-1}(t))},\frac{\widetilde\volume(\Theta^{-1}(t))}{\widetilde\volume(\scale^{-1}(t))}\right)\leq C.
\end{equation}
For $r\in(0,\scale^{-1}(t_{0})]$, apply \eqref{e.review_inverse_comparison} at $t=\scale(r)$ and then apply the doubling property of $\Theta$. This yields
\begin{equation}\label{e.review_scale_comparison}
\Theta(C^{-1}r)\leq\scale(r)\leq\Theta(Cr),\ \text{ and } \scale(r)\asymp\Theta(r)\asymp r\volume(r).
\end{equation}
 This proves \eqref{e.growth2}.
\end{proof}
Recall that, by Lemma \ref{l.meab}-\ref{it.meab0}, for the canonical Dirichlet form on a uniform local tree, a jointly continuous heat kernel always exists.
\begin{definition}\label{d.Grad}
	We say that $(\form,\domain)$ on $L^{2}(\ambient,\meas)$ satisfies the \hypertarget{Grad}{\emph{heat kernel gradient estimates}} $\Grad$ if there exist $C_{1},c_{2}\in(0,\infty)$ such that, for every $t\in(0,\infty)$, the following statement holds for $\meas$-a.e. $x\in\ambient$, \begin{equation}
		\abs{\wgrad p_{t,x}(y)}\leq \frac{C_{1}}{\scale^{-1}(t)\meas(B(x,\scale^{-1}(t)))}\exp\left(-c_{2}\Phi\left({\metric(x, y)},{t}\right)\right)\  \mbox{for $\medm$-a.e.\ $y \in \skeleton$.}
	\end{equation}
\end{definition}
\begin{remark}\label{r.Grad}
Assume $\UVG$ and $\Grad$. By the joint continuity of $p_{t}$ for each $t\in(0,\infty)$ and Lemma \ref{l.meab}, for each $t\in(0,\infty)$, there exists a jointly measurable function $h_{t}:\ambient\times \skeleton\to\bR$, such that, for each $x\in\ambient$, we have $h_{t}(x,\cdot)=\wgrad p_{t,x}$ $\medm$-a.e. on $\skeleton$. Choose a jointly measurable representative $h_t(x,y)$ as in Lemma~\ref{l.meab}, and set
\[
H_t(x,y):=\frac{C}{\scale^{-1}(t)\volume(\scale^{-1}(t))}\exp({-c\Phi(\metric(x,y),t)}),\]
and \begin{equation}
	N_t:=\Sett{(x,y)\in \ambient\times\skeleton}{|h_t(x,y)|>H_t(x,y)},\ t\in(0,\infty).
\end{equation}
	Then $N_{t}$ is a measurable subset of $\ambient\times \skeleton$ since $h_{t}$ is measurable. For every $x\in\ambient$, by $\Grad$, the continuity of the map $\ambient\ni x\mapsto \wgrad p_{t,x}\in L^{2}(\skeleton,\medm)$ and the lower semicontinuity of $\Phi$ in Lemma \ref{l.phi}, we know that the section of $N_t$ is $\medm$-null,
\begin{equation}\label{e.measexc}
\medm(\Sett{y\in\skeleton}{(x,y)\in N_t})=0.
\end{equation}
	Let $\wt{h}_{t}(x,y):=h_{t}(x,y)\one_{(\ambient\times \skeleton)\setminus N_{t}}(x,y)$. Then $\wt{h}_{t}$ is jointly measurable, for each $x\in\ambient$, $\wt{h}_{t}(x,\cdot)=\wgrad p_{t,x}$ $\medm$-a.e. on $\skeleton$ by \eqref{e.measexc}, and \begin{equation}
		\abs{\wt{h}_{t}(x,y)}\leq \frac{C_{1}}{\scale^{-1}(t)\meas(B(x,\scale^{-1}(t)))}\exp\left(-c_{2}\Phi\left({\metric(x, y)},{t}\right)\right),\ \text{ for all }(x,y)\in\ambient\times\skeleton.
	\end{equation}
	For each $x\in\ambient$, we always choose $\wt{h}_{t}(x,\cdot)$ as the representative of $\wgrad p_{t,x}$ in its $\medm$-equivalence class. Therefore, the upper bound in $\Grad$ holds \emph{in the pointwise sense}.
\end{remark}

\begin{lemma}\label{l.intexp}
Assume $\UVG$. For every $c\in(0,\infty)$, there exists a constant $C\in(0,\infty)$ such that, for all $t\in(0,\infty)$, $y\in\ambient$, and $R\in[0,\infty)$,
\begin{equation}\label{e.intexp}
 \int_{\ambient\setminus B(y,R)}
 \exp(-c\Phi(\metric(x,y),t))\dif\meas(x)
 \leq C\volume(\scale^{-1}(t))\exp(-c\Phi(R,t)/2).
\end{equation}
\end{lemma}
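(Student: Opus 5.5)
We prove \eqref{e.intexp} by a dyadic annular decomposition adapted to the scale $r_t:=\scale^{-1}(t)$, using two standard properties of $\Phi$ that follow from \eqref{e.scale}. First, $\Phi(\cdot,t)$ is non-decreasing. Second, it has superlinear growth beyond the scale $r_t$: there exist $c_0>0$ and $\gamma>0$ (one may take $\gamma=1/(\beta_2-1)$) with
\begin{equation}
\Phi(R,t)\geq c_0\Bigl(\frac{R}{r_t}\Bigr)^{\gamma}-C_0,\qquad R\geq 0.
\end{equation}
This follows by choosing $r=\lambda r_t$ in \eqref{e.phi} with $\lambda:=(R/r_t)^{1/(\beta_2-1)}$ suitably small and using $\scale(\lambda r_t)\geq C^{-1}\lambda^{\beta_2}t$ for $\lambda\geq1$ from \eqref{e.scale}. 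If this bound is already available in the paper (e.g.\ via Lemma~\ref{l.phi}), we cite it; otherwise we prove it as a preliminary computation.

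The main step is the splitting
\begin{equation}
\exp(-c\Phi(\metric(x,y),t))=\exp\bigl(-\tfrac{c}{2}\Phi(\metric(x,y),t)\bigr)\exp\bigl(-\tfrac{c}{2}\Phi(\metric(x,y),t)\bigr).
\end{equation}
On $\ambient\setminus B(y,R)$, monotonicity gives $\Phi(\metric(x,y),t)\geq\Phi(R,t)$, so the first factor is at most $\exp(-c\Phi(R,t)/2)$. It therefore remains to show
\begin{equation}
\int_{\ambient}\exp\bigl(-\tfrac{c}{2}\Phi(\metric(x,y),t)\bigr)\dif\meas(x)\leq C\volume(r_t),
\end{equation}
uniformly in $y$ and $t$.

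To prove this, we decompose $\ambient$ into the ball $B(y,r_t)$ and the annuli $A_k:=B(y,2^{k}r_t)\setminus B(y,2^{k-1}r_t)$ for $k\geq1$.
\begin{itemize}
\item On the ball, the integrand is at most $1$, since $\Phi\geq0$ (take $r\to\infty$ in \eqref{e.phi}). By $\UVG$, $\meas(B(y,r_t))\leq C_{\volume}\volume(r_t)$.
\item On $A_k$, the growth bound gives integrand $\leq C\exp(-c'2^{k\gamma})$. By \eqref{e.UVG} and \eqref{e.VD1}, $\meas(A_k)\leq C\volume(2^kr_t)\leq C2^{kd_2}\volume(r_t)$.
\end{itemize}
The series $\sum_k 2^{kd_2}\exp(-c'2^{k\gamma})$ converges, which gives the claim.

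One caveat concerns \eqref{e.UVG}: it is stated only for radii below $\diam(\ambient,\metric)$. Since $\ambient$ is unbounded, the diameter is infinite, so this restriction causes no difficulty.

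The main (mild) obstacle is deriving the superlinear lower bound on $\Phi$ uniformly in $t$ from \eqref{e.scale} alone, in particular handling the regime $R\lesssim r_t$, where only $\Phi\geq0$ is needed. Everything else is routine doubling bookkeeping.
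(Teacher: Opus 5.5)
Your proposal is correct and is essentially the paper's proof: the same splitting $e^{-c\Phi}=e^{-c\Phi/2}\,e^{-c\Phi/2}$ combined with monotonicity of $\Phi(\cdot,t)$ on $\ambient\setminus B(y,R)$, followed by the same decomposition into $B(y,\scale^{-1}(t))$ and dyadic annuli, controlled by $\UVG$, \eqref{e.VD1}, and the growth of $\Phi$ from Lemma~\ref{l.phi} (the paper cites \eqref{e.phi2}). One small slip in your optional self-contained derivation: \eqref{e.scale} gives $\scale(\lambda r_t)\geq C^{-1}\lambda^{\beta_2}t$ only for $\lambda\leq1$ (for $\lambda\geq1$ only the exponent $\beta_1$ is available), but no such computation is needed, since taking $r=r_t$ in \eqref{e.phi} already gives $\Phi(R,t)\geq R/r_t-1$, and this linear growth suffices to make $\sum_k 2^{kd_2}e^{-c'2^{k}}$ converge.
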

\begin{proof}

Set $A_{0}:=B(y,\scale^{-1}(t))$. For $k\in\bN$, set $A_k:=B(y,2^{k}\scale^{-1}(t))\setminus B(y,2^{k-1}\scale^{-1}(t))$. Then by Lemma \ref{l.phi}-\ref{it.phi3} and $\UVG$,\begin{align}
	&\phantom{\ \leq}\int_{\ambient}\exp(-\frac{c}{2}\Phi(\metric(x,y),t))\dif\meas(x)=\sum_{k=0}^{\infty}\int_{A_{k}}\exp(-\frac{c}{2}\Phi(\metric(x,y),t))\dif\meas(x)\\
	&\lesssim \volume(\scale^{-1}(t))+\sum_{k=1}^{\infty}\volume(2^{k}\scale^{-1}(t))\exp(-c_{1}2^{c_{2}k})\ \text{ (by $\UVG$ and \eqref{e.phi2})}\\
	&\lesssim \sum_{k=0}^{\infty}\volume(\scale^{-1}(t))2^{kd_{2}}\exp(-c_{1}2^{c_{2}k})\lesssim \volume(\scale^{-1}(t)).\label{e.intexp1}
\end{align}
If $\metric(x,y)\geq R$, then by Lemma \ref{l.phi}-\ref{it.phi1},\begin{align}
	\exp(-c\Phi(\metric(x,y),t))&=\exp(-\frac{c}{2}\Phi(\metric(x,y),t))\exp(-\frac{c}{2}\Phi(\metric(x,y),t))\\
	&\leq \exp(-\frac{c}{2}\Phi(R,t))\exp(-\frac{c}{2}\Phi(\metric(x,y),t)),\label{e.intexp2}
\end{align}
and hence \begin{align}
	&\phantom{\ \leq}\int_{\ambient\setminus B(y,R)}
 \exp(-c\Phi(\metric(x,y),t))\dif\meas(x)\\
 &\overset{\eqref{e.intexp2}}{\leq} \exp(-\frac{c}{2}\Phi(R,t))\int_{\ambient} \exp(-\frac{c}{2}\Phi(\metric(x,y),t))\dif\meas(x)\overset{\eqref{e.intexp1}}{\lesssim}\volume(\scale^{-1}(t)) \exp(-\frac{c}{2}\Phi(R,t)).
\end{align}
This proves \eqref{e.intexp}.
\end{proof}
\begin{lemma}\label{l.time_derivative}
Assume $\UVG$ and $\UHK$. There exist $C,c_1\in(0,\infty)$ such that
\begin{equation}\label{e.ddt<}
	\abs{\frac{\dif}{\dif t}p_{t}(x,y)}\leq \frac{C}{t\volume(\scale^{-1}(t))} \exp\left(-c_{1}\Phi\left({\metric(x, y)},{t}\right)\right),\ \text{ for all }(t,x,y)\in(0,\infty)\times\ambient\times\ambient.
\end{equation}
\end{lemma}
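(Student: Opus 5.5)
The plan is the standard route from on-diagonal upper bounds to time-derivative bounds via analyticity of the semigroup, combined with a Davies--Gaffney/complex-time argument or, more simply, with the semigroup identity and Cauchy--Schwarz to recover the off-diagonal factor.

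\emph{Step 1: $L^2$ bound on the derivative.} Write $\frac{\dif}{\dif t}p_t(x,y)=\gen p_{t,x}(y)$ by Lemma~\ref{l.meab}-\ref{it.meab3}. Using $p_{t,x}=P_{t/2}p_{t/2,x}$ and the spectral bound $\|\gen P_{t/2}\|_{L^2\to L^2}\le 2(et)^{-1}$, we get
\[
\Bigl\|\tfrac{\dif}{\dif t}p_{t,x}\Bigr\|_{L^2(\ambient,\meas)}\le \frac{C}{t}\|p_{t/2,x}\|_{L^2(\ambient,\meas)}=\frac{C}{t}p_t(x,x)^{1/2}\le \frac{C}{t\,\volume(\scale^{-1}(t))^{1/2}},
\]
by $\UHK$ and $\UVG$. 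Then, by the semigroup property, $\frac{\dif}{\dif t}p_t(x,y)=2\langle \frac{\dif}{\dif s}p_{s,x}|_{s=t/2},p_{t/2,y}\rangle$, and Cauchy--Schwarz yields the on-diagonal bound $|\partial_tp_t(x,y)|\le C/(t\volume(\scale^{-1}(t)))$, using doubling of $\volume$ and \eqref{e.scale} to compare $\scale^{-1}(t/2)$ with $\scale^{-1}(t)$.

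\emph{Step 2: off-diagonal decay.} Here I insert a weight. Let $w(z)=\exp(c\Phi(\metric(x,z),t))$ with $c$ small. A weighted $L^2$ bound $\int |\partial_s p_{s,x}(z)|^2\exp(c\Phi(\metric(x,z),s))\dif\meas(z)\lesssim (s^2\volume(\scale^{-1}(s)))^{-1}$ would finish the proof. Combining it with the triangle-type inequality $\Phi(\metric(x,y),t)\le C(\Phi(\metric(x,z),t/2)+\Phi(\metric(z,y),t/2))$ (from the properties of $\Phi$ in Lemma~\ref{l.phi}) in the Cauchy--Schwarz step of Step~1, with the weight split between the two factors, and using the $\UHK$ weighted bound for $p_{t/2,y}$ via Lemma~\ref{l.intexp}, gives the claim.

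To obtain the weighted derivative bound, the cleanest route is the standard analyticity argument: $\UHK$ (together with $\UVG$) extends to complex times $z$ in a sector, with an exponential factor in $\Phi(\metric(x,y),\operatorname{Re}z)$. This is the Davies/Coulhon--Sikora type extension, proved via Phragm\'en--Lindel\"of applied to $z\mapsto \langle P_z f,g\rangle$ for $f,g$ supported in separated balls. Cauchy's integral formula on a circle of radius $t/2$ around $t$ then gives \eqref{e.ddt<} directly, since on that circle $\operatorname{Re}z\in[t/2,3t/2]$ and $\volume(\scale^{-1}(\cdot))$, $\Phi(\cdot,\cdot)$ are comparable there by doubling and the monotonicity/scaling of $\Phi$ in Lemma~\ref{l.phi}.

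\emph{Main obstacle.} The hardest step is the complex-time extension with sub-Gaussian $\Phi$. I would rely on the known result (e.g.\ Davies' lemma generalized as in Grigor'yan--Hu--Lau or Coulhon--Sikora) that $\UHK$ implies the complex estimate $|p_z(x,y)|\le C\volume(\scale^{-1}(\operatorname{Re}z))^{-1}\exp(-c\Phi(\metric(x,y),\operatorname{Re}z))$ in a sector. If that citation is unavailable, I would fall back on proving the weighted $L^2$ bound for $\partial_sp_{s,x}$ directly. That direct proof uses the integrated weighted bound on $p_{s,x}$ from $\UHK$ and Lemma~\ref{l.intexp}, together with a Phragm\'en--Lindel\"of interpolation between the trivial complex $L^2$ bound (imaginary direction) and the real-time exponential decay.
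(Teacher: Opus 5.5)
Your main route (holomorphy of $z\mapsto p_z(x,y)$ in the right half-plane, a Phragm\'en--Lindel\"of/two-constants interpolation between the on-diagonal bound $|p_z(x,y)|\le p_{\operatorname{Re}z}(x,x)^{1/2}p_{\operatorname{Re}z}(y,y)^{1/2}$ and the real-time off-diagonal bound, then Cauchy's estimate) is exactly the paper's proof. The paper packages it as \cite[Theorem~4]{Dav97} with $\delta=1/2$, $\varepsilon=1/16$, after noting that $\volume(\scale^{-1}(s))\asymp\volume(\scale^{-1}(t))$ and $\Phi(\metric(x,y),s)\ge c_0\Phi(\metric(x,y),t)$ for $s\in[t/2,3t/2]$. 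That interpolation is done pointwise for the single scalar function $z\mapsto p_z(x,y)$ and costs only a factor $1-\varepsilon$ in the exponent, so your ``main obstacle'' disappears: you need no uniform sector estimate for general $\scale$, no separated-ball Coulhon--Sikora argument, no weighted $L^2$ bound for $\partial_s p_{s,x}$, and not your Step~1 either.
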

\begin{proof}
Fix $t\in(0,\infty)$ and $x,y\in\ambient$. By \eqref{e.scale} and $\UVG$, $\volume(\scale^{-1}(s))\asymp\volume(\scale^{-1}(t))$ for $s\in[t/2,3t/2]$. By \eqref{e.phi},
\[
 \Phi(\metric(x,y),at)=a\Phi(\metric(x,y)/a,t),\ \text{ for all }a\in(0,\infty),
\]
which, combined with \eqref{e.phi2}, gives $\Phi(\metric(x,y),s)\ge c_0\Phi(\metric(x,y),t)$ for $s\in[t/2,3t/2]$. Thus, by $\UHK$, we have
\[
\max( p_{t/2}(x,x),p_{t/2}(y,y))\le\frac K{\volume(\scale^{-1}(t))},
  \text{ and } \sup_{s\in[t/2,3t/2]}p_s(x,y)\le\frac{K\exp(-c\Phi(\metric(x,y),t))}{\volume(\scale^{-1}(t))}.
\]
Applying \cite[Theorem~4]{Dav97} with $\delta=1/2$, $\varepsilon=1/16$, $a=b=K/\volume(\scale^{-1}(t))$ and $c=\exp({-c\Phi(\metric(x,y),t)})$, we have
\[
 \left|\frac{\dif}{\dif t}p_t(x,y)\right|
 \le\frac{32K}{t\volume(\scale^{-1}(t))}\exp({-\frac{13c}{16}\Phi(\metric(x,y),t)}).
\]
This proves the assertion.
\end{proof}

\begin{lemma}\label{l.deri<}
	Assume $\UVG$, $\UHK$ and $\Grad$.
	\begin{enumerate}[label=\textup{({\arabic*})},align=right,leftmargin=*,topsep=5pt,parsep=0pt,itemsep=2pt]
	\item\label{it.deri<1}  There exists $C\in(1,\infty)$ such that
	\begin{equation}\label{e.wl2<}
		\int_{\ambient}(\wgrad p_{t,x}(y))^{2}\dif\meas(x)\leq \frac{C}{\scale^{-1}(t)^{2}\volume(\scale^{-1}(t))},\ \text{ for all }(t,y)\in(0,\infty)\times\skeleton.
	\end{equation}
\item\label{it.deri<2} For $g\in L^2(\ambient,\meas)$ and $t\in(0,\infty)$, one has
$P_tg\in\Dom(-\gen)\subset\domain$. Moreover,
\begin{equation}\label{e.kernel1}
	(\gen P_tg)(y)
 =\int_{\ambient}\frac{\dif}{\dif t}p_t(x,y)g(x) \dif\meas(x)
\  \text{ for $\meas$-a.e. }y\in\ambient,
\end{equation}
and
\begin{equation}\label{e.kernel2}
	(\wgrad P_tg)(y)
 =\int_{\ambient}\wgrad p_{t,x}(y)g(x)\dif\meas(x)\ \text{ for $\medm$-a.e. }y\in\skeleton.
\end{equation}
Both integrals converge absolutely.
\end{enumerate}
\end{lemma}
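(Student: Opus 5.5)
For part \ref{it.deri<1}, I would use the pointwise form of $\Grad$ from Remark~\ref{r.Grad}. For every $(t,y)\in(0,\infty)\times\skeleton$ and every $x$,
\[
|\wgrad p_{t,x}(y)|^{2}\lesssim \frac{1}{\scale^{-1}(t)^{2}\volume(\scale^{-1}(t))^{2}}\exp\bigl(-2c_{2}\Phi(\metric(x,y),t)\bigr),
\]
where $\meas(B(x,\scale^{-1}(t)))$ has been replaced by $\volume(\scale^{-1}(t))$ using $\UVG$. Integrating in $x$ and applying Lemma~\ref{l.intexp} with $R=0$ bounds the remaining integral by $C\volume(\scale^{-1}(t))$, which gives \eqref{e.wl2<}. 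One technical point: $y$ lies in $\skeleton$ but not necessarily in a point set over which Lemma~\ref{l.intexp} is stated. That lemma only uses $\UVG$ and balls centred at $y\in\ambient$, so it applies as written.

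For part \ref{it.deri<2}, the membership $P_tg\in\Dom(-\gen)$ follows from spectral calculus, since $\lambda e^{-t\lambda}$ is bounded. For \eqref{e.kernel1}, absolute convergence comes from Lemma~\ref{l.time_derivative} combined with Cauchy--Schwarz: $\int|\partial_t p_t(x,y)|^2\dif\meas(x)\lesssim t^{-2}\volume(\scale^{-1}(t))^{-1}$ by Lemma~\ref{l.intexp}. To identify the integral with $\gen P_tg$, I would take the difference quotient $s^{-1}(P_{t+s}g-P_tg)(y)=\int s^{-1}(p_{t+s}(x,y)-p_t(x,y))g(x)\dif\meas(x)$. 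By the mean value theorem and Lemma~\ref{l.time_derivative} on $[t,3t/2]$ (as in that lemma's proof, $\Phi(\cdot,s)\gtrsim\Phi(\cdot,t)$ there), the integrand is dominated by an $L^2$-in-$x$ function times $|g|$. Dominated convergence then gives pointwise convergence for every $y$ to the claimed integral. Along a subsequence, the $L^2$ convergence of the difference quotient to $\gen P_tg$ holds $\meas$-a.e., so the two limits agree.

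For \eqref{e.kernel2}, the right-hand side converges absolutely by part \ref{it.deri<1} and Cauchy--Schwarz, and it is jointly measurable by Remark~\ref{r.Grad}. I would then test against $h\in L^2(\skeleton,\medm)$ with compact support on an arc chart. Write $\langle\wgrad P_tg,h\rangle=\langle\wgrad P_{t/2}(P_{t/2}g),h\rangle$ and use that $x\mapsto\wgrad p_{t,x}$ is continuous into $L^2(\skeleton,\medm)$ (Lemma~\ref{l.meab}). I would first treat $g\in C_c(\ambient)$: the Bochner-type integral $\int g(x)p_{t,x}\dif\meas(x)=P_tg$ converges in $\domain$, because $\form(p_{t,x})^{1/2}$ is locally bounded in $x$. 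Since $\wgrad$ is bounded from $(\domain,\form_1)$ to $L^2(\skeleton,\medm)$, it commutes with this integral, and Fubini (justified by part \ref{it.deri<1}) identifies $\wgrad P_tg$ with $\int\wgrad p_{t,x}(\cdot)g(x)\dif\meas(x)$. For general $g\in L^2$, approximate by $g_n\in C_c$. The left-hand side converges in $L^2(\skeleton,\medm)$ because $\wgrad P_t$ is bounded. The right-hand side converges pointwise in $y$ by part \ref{it.deri<1} and Cauchy--Schwarz.

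The main obstacle is justifying the interchange of $\wgrad$ with the $x$-integral in a setting where $\medm$ is not comparable to $\meas$. My plan handles it through the $\domain$-valued integral and the uniform $L^2$ bound of part \ref{it.deri<1}.
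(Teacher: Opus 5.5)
Your proposal is correct. Part~\ref{it.deri<1} and the treatment of \eqref{e.kernel1} are essentially the paper's argument. The paper differentiates $P_tg$ in $L^2$ and combines \eqref{e.dif-gen} with dominated convergence. Your version (mean value theorem in $t$, via Lemma~\ref{l.meab}-\ref{it.meab2}, plus the bound of Lemma~\ref{l.time_derivative} on $[t,3t/2]$) amounts to the same thing.

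For \eqref{e.kernel2} you take a genuinely different route.

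\emph{The paper's route.} It exploits the one-dimensional structure directly. Proposition~\ref{p.arc} is applied to $P_tg$ and to each $p_{t,x}$, with a sign function that depends only on the arc and the orientation. This reduces the gradient identity to the pointwise kernel formula $P_tg(b)-P_tg(a)=\int_{\ambient}(p_t(x,b)-p_t(x,a))g(x)\dif\meas(x)$. Fubini on $[a,b]\times\ambient$, justified by part~\ref{it.deri<1}, and Lebesgue differentiation along the arc then finish the proof for every $g\in L^2(\ambient,\meas)$ at once.

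\emph{Your route.} You work in the energy space. You take a $\domain$-valued Bochner integral of $x\mapsto g(x)p_{t,x}$ for $g\in C_c(\ambient)$ and use that $\wgrad$ is bounded from $(\domain,\form_1)$ to $L^2(\skeleton,\medm)$. You then apply Fubini against test functions supported on sets of finite $\medm$-measure. Finally, a density step passes to general $g$, with part~\ref{it.deri<1} giving convergence of the kernel integrals uniformly in $y$.

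\emph{Comparison.}
\begin{itemize}
\item Your approach never uses the fundamental theorem of calculus along arcs. It also avoids pointwise values of $P_tg$ at arc endpoints, which the paper's route needs (this requires that the continuous version of $P_tg$ equals the kernel integral at those points). In this sense your argument is essentially abstract.
\item The price is the vector-valued machinery. You need strong measurability of $x\mapsto p_{t,x}$ in $(\domain,\form_1)$, which follows from the continuity statements in Lemma~\ref{l.meab} and \eqref{e.measb1+}. You also need to identify the Bochner integral with $P_tg$, by pairing with $\phi\in L^2(\ambient,\meas)$ and using the symmetry of $p_t$; this step should be stated explicitly.
\item Your route also requires the extra approximation step from $C_c(\ambient)$ to $L^2(\ambient,\meas)$. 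This step is unavoidable for you, since $g(x)p_{t,x}$ is Bochner integrable only when $g\in L^1(\ambient,\meas)$.
\end{itemize}

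The remark about writing $\wgrad P_tg=\wgrad P_{t/2}(P_{t/2}g)$ plays no role in your argument and can be dropped.
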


\begin{proof}
\begin{enumerate}[label=\textup{({\arabic*})},align=right,leftmargin=*,topsep=5pt,parsep=0pt,itemsep=2pt]
\item[\ref{it.deri<1}] The estimate \eqref{e.wl2<} is a direct consequence of $\Grad$ and \eqref{e.intexp} in Lemma \ref{l.intexp} with $R=0$.
\item[\ref{it.deri<2}]  The inclusion $P_tg\in\Dom(-\gen)$ follows from spectral calculus. By \eqref{e.ddt<}, \eqref{e.wl2<}, Cauchy--Schwartz inequality and Lemma \ref{l.intexp}, the two integrals in \eqref{e.kernel1} and \eqref{e.kernel2} are absolutely convergent. The identity \eqref{e.kernel1} follows by differentiating $P_tg$ in $L^2(\ambient,\meas)$, \eqref{e.dif-gen} and the dominated convergence theorem.  Let $\gamma:[0,\Length(\gamma)]\to\ambient$ be a unit-speed geodesic, whose image is contained in a local tree, with $a:=\gamma(0)$ and $b=\gamma(\Length(\gamma))$. By the symmetry of the heat kernel and Proposition \ref{p.arc}, 
\begin{equation}
 \begin{aligned}
 &\phantom{\ \leq}\int_{\gamma([0,\Length(\gamma)])}\sigma_{\gamma,\mathfrak{O}}(y)
 (\wgrad P_tg)(y) \dif\medm(y)=P_tg(b)-P_tg(a)\\
 &=\int_{\ambient}\left(p_t(x,b)-p_t(x,a)\right)g(x)\dif\meas(x)\\
 &=\int_{\gamma([0,\Length(\gamma)])}\sigma_{\gamma,\mathfrak{O}}(y)
 \left(\int_{\ambient}(\wgrad p_{t,x})(y)g(x)\dif\meas(x)\right)\dif\medm(y)\ \text{(by Fubini's theorem)}.
 \end{aligned}
\end{equation}
By the Lebesgue differentiation theorem, we obtain \eqref{e.kernel2}.
\qedhere
\end{enumerate}
\end{proof}

\subsection{Gradient estimates of the semigroup}
Define \begin{equation}\label{e.deflamb}
	\Lambda(t):=(\scale^{-1}(t)\wedge t)^{-\frac{1}{2}}\scale^{-1}(t)^{-\frac{1}{2}}\volume(\scale^{-1}(t))^{-\frac{1}{2}}, \ t\in(0,\infty).
\end{equation}
Then $\Lambda$ is Borel measurable on $(0,\infty)$, and decreasing.
\begin{proposition}
\label{p.migrad}
Assume $\UVG$, $\UHK$ and $\Grad$. For
$p\in[2,\infty]$, there exists $C_{p}\in(0,\infty)$ such that, for every $t\in(0,\infty)$,
\begin{equation}\label{e.migrad}
 \left\lVert\wgrad P_tf\right\rVert_{L^p(\skeleton,\medm)}
 \leq C_pt^{-\frac{1}{p}}\scale^{-1}(t)^{-1+\frac{2}{p}}\lVert f\rVert_{L^p(\ambient,\meas)},\ \text{ for all $f \in L^p(\ambient,\meas)\cap L^{2}(\ambient,\meas)$.}
\end{equation}
\end{proposition}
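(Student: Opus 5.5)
The plan is to interpolate between the endpoint $p=2$ and the endpoint $p=\infty$. The case $p=2$ is immediate from spectral calculus, as in \eqref{e.sgdl2}:
\[
\norm{\wgrad P_tf}_{L^2(\skeleton,\medm)}=\norm{(-\gen)^{\frac12}P_tf}_{L^2(\ambient,\meas)}\leq (2et)^{-\frac12}\norm{f}_{L^2(\ambient,\meas)},
\]
which matches \eqref{e.migrad} with exponent $t^{-1/2}\scale^{-1}(t)^{0}$.

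For $p=\infty$ we use the kernel representation \eqref{e.kernel2} from Lemma~\ref{l.deri<}-\ref{it.deri<2}, with the pointwise representative $\wt h_t$ of Remark~\ref{r.Grad}. For $\medm$-a.e. $y\in\skeleton$ this gives
\[
\abs{\wgrad P_tf(y)}\leq \int_{\ambient}\frac{C}{\scale^{-1}(t)\volume(\scale^{-1}(t))}\exp\bigl(-c\Phi(\metric(x,y),t)\bigr)\abs{f(x)}\dif\meas(x).
\]
Here $\UVG$ lets us replace $\meas(B(x,\scale^{-1}(t)))$ by $\volume(\scale^{-1}(t))$. Lemma~\ref{l.intexp} with $R=0$ (applied with $y$ regarded as a point of $\ambient$) then gives
\[
\norm{\wgrad P_tf}_{L^\infty(\skeleton,\medm)}\leq C\scale^{-1}(t)^{-1}\norm{f}_{L^\infty(\ambient,\meas)}.
\]
This is \eqref{e.migrad} with $p=\infty$, where $t^{0}\scale^{-1}(t)^{-1}$ is the claimed factor.

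For $2<p<\infty$, fix $t$ and apply the Riesz--Thorin theorem to the linear operator $T_t:=\wgrad P_t$. It maps $L^2(\ambient,\meas)\to L^2(\skeleton,\medm)$ with norm $\lesssim t^{-1/2}$, and, via the integral formula, $L^2\cap L^\infty\to L^\infty$ with norm $\lesssim\scale^{-1}(t)^{-1}$. Both measure spaces are $\sigma$-finite; $(\skeleton,\medm)$ is $\sigma$-finite as noted in the proof of Lemma~\ref{l.meab}. Take $\theta=2/p$, so that $1/p=\theta/2$. Interpolation gives the bound
\[
t^{-\theta/2}\scale^{-1}(t)^{-(1-\theta)}=t^{-1/p}\scale^{-1}(t)^{-1+2/p},
\]
which is exactly \eqref{e.migrad}, on the dense class of simple functions in $L^2\cap L^p$. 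It extends to all $f\in L^2\cap L^p$ by approximating $f$ by simple functions in both norms simultaneously: $T_t$ is $L^2$-continuous, so a subsequence of the gradients converges $\medm$-a.e., and Fatou's lemma finishes the argument.

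The main obstacle is purely technical. We must make sure the kernel formula \eqref{e.kernel2}, stated for $g\in L^2$, can be used with the pointwise bound for $f\in L^2\cap L^\infty$, and that the $\medm$-null exceptional sets are handled. Remark~\ref{r.Grad} resolves both: with $\wt h_t$ the bound holds everywhere and the integral converges absolutely. With that in place, the Riesz--Thorin step is routine, because $T_t$ is a genuinely linear operator between $\sigma$-finite measure spaces.
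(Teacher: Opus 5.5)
Your proposal is correct and follows essentially the same route as the paper's proof: the $L^2$ bound comes from spectral calculus, the $L^\infty$ bound from the kernel formula \eqref{e.kernel2}, $\Grad$ and \eqref{e.intexp} with $R=0$, and the two are then combined by Riesz--Thorin (your $\theta=2/p$ is the paper's $1-\theta$). Your additional step of passing from simple functions to all of $L^2\cap L^p$, via an a.e.\ convergent subsequence and Fatou's lemma, is a detail the paper leaves implicit, and you handle it correctly.
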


\begin{proof}
By spectral calculus, for every $f\in L^2(\ambient,\meas)$,
\begin{equation}
 \left\lVert\wgrad P_tf\right\rVert_{L^2(\skeleton,\medm)}=\form(P_tf,P_{t}f)^{\frac{1}{2}}=\left(\int_{[0,\infty)}\lambda e^{-2t\lambda}
 \dif\langle\proj_{\lambda}f,f\rangle\right)^{\frac{1}{2}}
 \leq ({2et})^{-\frac{1}{2}}\lVert f\rVert_{L^2(\ambient,\meas)}.\label{e.l2bd}
\end{equation}
Fix $f\in L^\infty(\ambient,\meas)\cap L^2(\ambient,\meas)$. For $\medm$-a.e. $y\in\skeleton$, we have
\begin{align}
 \lvert\wgrad P_tf(y)\rvert
 &\overset{\eqref{e.kernel2}}{\leq} \lVert f\rVert_{L^\infty(\ambient,\meas)}
 \int_{\ambient}\lvert\wgrad p_{t,x}(y)\rvert\dif\meas(x)\\
 &\overset{\Grad}{\lesssim} \frac{\lVert f\rVert_{L^\infty(\ambient,\meas)}}{\scale^{-1}(t)\volume(\scale^{-1}(t))}
 \int_{\ambient}e^{-c\Phi(\metric(x,y),t)}\dif\meas(x)\overset{\eqref{e.intexp}}{\lesssim} \frac{C}{\scale^{-1}(t)}\lVert f\rVert_{L^\infty(\ambient,\meas)}.
\end{align}
Hence
\begin{equation}\label{e.gradinf}
 \lVert\wgrad P_tf\rVert_{L^\infty(\skeleton,\medm)}
 \leq \frac{C}{\scale^{-1}(t)}\lVert f\rVert_{L^\infty(\ambient,\meas)},\ \text{ for all $f\in L^\infty(\ambient,\meas)\cap L^2(\ambient,\meas)$}.
\end{equation}
Let $\theta=1-\frac{2}{p}$. 
Riesz--Thorin interpolation between \eqref{e.l2bd} and \eqref{e.gradinf} gives
\begin{align}
 \lVert\wgrad P_tf\rVert_{L^{p}(\skeleton,\medm)}
 &\leq C_p\bigl(t^{-\frac{1}{2}}\bigr)^{1-\theta}
 \bigl(\scale^{-1}(t)^{-1}\bigr)^\theta\lVert f\rVert_{L^p(\ambient,\meas)}=C_pt^{-\frac{1}{p}}\scale^{-1}(t)^{-1+\frac{2}{p}}\lVert f\rVert_{L^p(\ambient,\meas)}.
\end{align}
This proves \eqref{e.migrad}.  
\end{proof}

\begin{proposition}\label{p.cellgrad}
Assume $\UVG$, $\HKE$ and $\Grad$. There exist $C,c\in(0,\infty)$ such that, for every $t\in(0,\infty)$, every $w\in\vertex$, every Borel subset $E\subset\ambient$, and every $g\in L^2(\ambient,\meas)$,
\begin{equation}\label{e.rawgd}
 \left\lVert\wgrad P_t(\one_{E}g)\right\rVert_{L^2(\skeleton_w,\medm)}
 \leq C\Lambda(t){\exp(-c\Phi(\dist(B(w,4),E),t))}\norm{g}_{L^{2}(E,\meas)},
\end{equation}
where $\Lambda:(0,\infty)\to(0,\infty)$ is the function defined in \eqref{e.deflamb}.
\end{proposition}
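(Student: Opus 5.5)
The plan is to localize with the cutoff of Lemma~\ref{l.cutoff} and run a Caccioppoli-type integration by parts around the cell $K_w$. A direct bound $\norm{\cdot}_{L^2(\skeleton_w)}\le\nu(\skeleton_w)^{1/2}\norm{\cdot}_{L^\infty}$ is not available, since $\medm(\skeleton_w)$ need not be finite: $\medm$ is not Radon. The localization avoids this because the derivative of the cutoff lives on a set of $\medm$-measure $\lesssim1$.

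\textbf{Setup.} Put $r:=\scale^{-1}(t)$, $u:=P_t(\one_E g)$ and $D:=\dist(B(w,4),E)$. Since $\iota>40$, Lemma~\ref{l.cutoff} with $\epsilon=1$ gives $\eta:=\eta_{w,1}$ with the following properties:
\begin{itemize}
\item $\eta=1$ on $B(w,1)\supset K_w$ (by Lemma~\ref{l.sanwich}), and $\supp\eta\subset B(w,4)$;
\item $\abs{\wgrad\eta}\le1$, and $\medm(\supp_\medm[\wgrad\eta])\le C$.
\end{itemize}

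\textbf{Step 1 (pointwise bounds on $B(w,4)$).} Take $y\in B(w,4)$, so that $\dist(y,E)\ge D$. Combine Cauchy--Schwarz with the kernel bounds $\UHK$ for $u$, \eqref{e.ddt<} for $\gen u$ via \eqref{e.kernel1}, and $\Grad$ for $\wgrad u$ via \eqref{e.kernel2}. Then apply Lemma~\ref{l.intexp} with $R=D$ to the squared kernels integrated over $E$. This should give, with a uniform $c>0$,
\begin{equation}
\abs{u(y)}\lesssim \frac{e^{-c\Phi(D,t)}\norm{g}}{\volume(r)^{1/2}},\qquad
\abs{\gen u(y)}\lesssim \frac{e^{-c\Phi(D,t)}\norm{g}}{t\,\volume(r)^{1/2}},\qquad
\abs{\wgrad u(y)}\lesssim \frac{e^{-c\Phi(D,t)}\norm{g}}{r\,\volume(r)^{1/2}},
\end{equation}
where $\norm{g}:=\norm{g}_{L^2(E,\meas)}$. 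The bound on $\wgrad u$ holds for $\medm$-a.e.\ $y$. The bound on $u$ holds everywhere by continuity, and the bound on $\gen u$ holds $\meas$-a.e.

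\textbf{Step 2 (oscillation).} Any point of $B(w,4)$ is joined to $w$ by an arc of length $<4$ inside a real tree, because $\iota>40$. Proposition~\ref{p.arc} and the gradient bound then give
\begin{equation}
\sup_{B(w,4)}\abs{u-u(w)}\lesssim \frac{e^{-c\Phi(D,t)}\norm{g}}{r\,\volume(r)^{1/2}}.
\end{equation}

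\textbf{Step 3 (Caccioppoli identity).} Test the equation for $u\in\Dom(\gen)$ against $\eta^2(u-u(w))$. This function lies in $\domain$ by the Leibniz rule of Lemma~\ref{l.wgrad}, and we use that constants have zero gradient. Writing $\tilde u:=u-u(w)$, this gives
\begin{equation}
\int_\skeleton\eta^2\abs{\wgrad u}^2\dif\medm=-\int_\ambient \eta^2\,\tilde u\,\gen u\dif\meas-2\int_\skeleton\eta\,\tilde u\,\wgrad\eta\,\wgrad u\dif\medm.
\end{equation}
Localizing is slightly delicate: $\one$ is not in $\domain$, so one should use $\eta^2$ times a truncation of $\tilde u$, or simply note that $\eta^2\tilde u$ is compactly supported in $\abscon$ with $L^2$ gradient, hence in $\domain$.

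\textbf{Step 4 (bounding the two terms).} The first term is at most
\begin{equation}
\meas(B(w,4))\,\sup_{B(w,4)}\abs{\tilde u}\,\sup_{B(w,4)}\abs{\gen u}\lesssim \volume(4)\,\frac{e^{-2c\Phi(D,t)}\norm{g}^2}{r\,t\,\volume(r)}.
\end{equation}
The second term is at most
\begin{equation}
2\,\medm(\supp_\medm[\wgrad\eta])\,\sup\abs{\tilde u}\,\sup\abs{\wgrad u}\lesssim \frac{e^{-2c\Phi(D,t)}\norm{g}^2}{r^2\,\volume(r)}.
\end{equation}
Since $\eta=1$ on $K_w\supset\skeleton_w$, adding the two bounds gives
\begin{equation}
\norm{\wgrad u}_{L^2(\skeleton_w,\medm)}^2\lesssim \frac{1}{r\,\volume(r)}\Big(\frac1t+\frac1r\Big)e^{-2c\Phi(D,t)}\norm{g}^2\asymp \Lambda(t)^2e^{-2c\Phi(D,t)}\norm{g}^2,
\end{equation}
using $\frac1t+\frac1r\asymp(r\wedge t)^{-1}$. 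Taking square roots gives \eqref{e.rawgd}.

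\textbf{Main obstacle.} I expect the difficulty to lie in Steps 2--3. Without subtracting the constant $u(w)$, the boundary term is only $\lesssim(r\volume(r))^{-1}$, which loses a factor $r$ for large $t$. The oscillation bound, which relies on the tree structure inside $B(w,4)$, is exactly what recovers the extra $r^{-1}$. The other point needing care is justifying that $\eta^2(u-u(w))$ is an admissible test function in $\domain$.
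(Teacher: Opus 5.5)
Your proposal is correct and follows essentially the same route as the paper: the cutoff $\eta_{w,1}$, the Caccioppoli identity obtained by testing with $\eta^{2}(u-u(w))$, the kernel bounds \eqref{e.ddt<} and $\Grad$ combined with Lemma~\ref{l.intexp}, and the oscillation estimate along arcs from Proposition~\ref{p.arc}. The only difference is minor: you bound the cross term $2\int_{\skeleton}\eta\,\tilde u\,\wgrad\eta\,\wgrad u\dif\medm$ directly, using the pointwise gradient bound on $\supp_{\medm}[\abs{\wgrad\eta}]$. The paper instead first absorbs that term with Young's inequality and then bounds $\int_{\skeleton}\tilde u^{2}\abs{\wgrad\eta}^{2}\dif\medm$; both give the same $\scale^{-1}(t)^{-2}\volume(\scale^{-1}(t))^{-1}$ contribution, and your pointwise bound on $\abs{u}$ itself is never used.
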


\begin{proof}
	Let $\eta_{w}:=\eta_{w,1}$ be the cutoff function in Lemma \ref{l.cutoff} with $\epsilon=1$. Let $f:=\one_{E}g$. By Lemma \ref{l.wgrad}, \begin{equation}\label{e.cg1}
		\wgrad(\eta_{w}^{2}(P_{t}f-P_{t}f(w)))=\eta_{w}^{2}\wgrad P_{t}f+2\eta_{w}(P_{t}f-P_{t}f(w))\wgrad\eta_{w}.
	\end{equation}
	Multiplying both sides of \eqref{e.cg1} by $\wgrad P_{t}f$ and rearranging the terms, we obtain \begin{align}\label{e.cg1+}
		\eta_{w}^{2}\abs{\wgrad P_{t}f}^{2}=\wgrad(\eta_{w}^{2}(P_{t}f-P_{t}f(w)))\cdot\wgrad P_{t}f-2\eta_{w}(P_{t}f-P_{t}f(w))\wgrad\eta_{w}\wgrad P_{t}f.
	\end{align}
	Therefore, using Young's inequality $2ab\leq \frac{1}{2}a^{2}+2b^2$ in the first inequality below, we obtain\begin{align}
		&\phantom{\ \leq}\int_{\skeleton}\eta_{w}^{2}\abs{\wgrad P_{t}f}^{2}\dif\medm \overset{\eqref{e.cg1+}}{=}\form(\eta_{w}^{2}(P_{t}f-P_{t}f(w)),P_{t}f)-2\int_{\skeleton}\eta_{w}(P_{t}f-P_{t}f(w))\wgrad\eta_{w}\wgrad P_{t}f\dif\medm\\
		&=\langle-\gen P_{t}f,\eta_{w}^{2}(P_{t}f-P_{t}f(w))\rangle_{L^{2}(\ambient,\meas)}-2\int_{\skeleton}\eta_{w}\wgrad P_{t}f\cdot (P_{t}f-P_{t}f(w))\wgrad\eta_{w}\dif\medm\\
		&\leq \langle-\gen P_{t}f,\eta_{w}^{2}(P_{t}f-P_{t}f(w))\rangle_{L^{2}(\ambient,\meas)}\\
		&\qquad +\frac{1}{2}\int_{\skeleton}\eta_{w}^{2}\abs{\wgrad P_{t}f}^{2}\dif\medm+2\int_{\skeleton}(P_{t}f-P_{t}f(w))^{2}\abs{\wgrad\eta_{w}}^{2}\dif\medm.\label{e.cg2}
	\end{align}
	Since $\int_{\skeleton}\eta_{w}^{2}\abs{\wgrad P_{t}f}^{2}\dif\medm<\infty$, rearranging the terms in \eqref{e.cg2} gives \begin{equation}\label{e.cg2.1}
		\int_{\skeleton}\eta_{w}^{2}\abs{\wgrad P_{t}f}^{2}\dif\medm \leq 2\langle-\gen P_{t}f,\eta_{w}^{2}(P_{t}f-P_{t}f(w))\rangle_{L^{2}(\ambient,\meas)}+4\int_{\skeleton}(P_{t}f-P_{t}f(w))^{2}\abs{\wgrad\eta_{w}}^{2}\dif\medm.
	\end{equation}
	We estimate the terms on the right-hand side separately. We first note that $\supp_{\meas}[\eta_{w}]\subset B(w,4)$ and $\supp_{\medm}[\abs{\wgrad\eta_{w}}]\subset B(w,4)$. Let $R:=\dist(B(w,4),E)$.
	
		We first estimate $-\gen P_{t}f(x)$ for $x\in B(w,4)$. Note that $E\subset B(x,R)^{c}$ for every $x\in B(w,4)$.  By \eqref{e.kernel1}, and the fact that $f=\one_{E}f$, we have\begin{align}
		&\phantom{\ \leq}\abs{-\gen P_tf(x)}\\
		&\overset{\eqref{e.kernel1}}{\leq}\int_{E}\abs{\frac{\dif}{\dif t}p_t(z,x)}\abs{f(z)} \dif\meas(z)
			\overset{\eqref{e.ddt<}}{\lesssim}\int_{E}\frac{\exp\left(-c_{2}\Phi\left({\metric(z, x)},{t}\right)\right)}{t\volume(\scale^{-1}(t))} \abs{f(z)}\dif\meas(z)\\
			&\lesssim \frac{\norm{\exp\left(-c_{2}\Phi\left({\metric(\cdot, x)},{t}\right)\right)}_{L^{2}(B(x,R)^{c},\meas)}}{{t\volume(\scale^{-1}(t))}}
			 \norm{f}_{L^{2}(E,\meas)}\ \text{(Cauchy--Schwarz inequality)}\\
			&\overset{\eqref{e.intexp}}{\lesssim} \frac{\exp\left(-c_{3}\Phi\left(R,{t}\right)\right)}{{t\volume(\scale^{-1}(t))^{\frac{1}{2}}}}\norm{f}_{L^{2}(E,\meas)}\label{e.cg3}
		\end{align} 

		{We then estimate $\abs{P_{t}f(x)-P_{t}f(w)}$ for $x\in B(w,4)$.} By Proposition \ref{p.arc}, for every $x\in B(w,4)$, we have\begin{align}
			&\phantom{\ \leq}\abs{P_{t}f(x)-P_{t}f(w)}\\
			&\leq \int_{(x,w)}\abs{\wgrad P_{t}f(y)}\dif\medm(y)\overset{\eqref{e.kernel2}}{\leq} \int_{(x,w)}\int_{E}\abs{\wgrad p_{t,z}(y)}\abs{f(z)}\dif\meas(z)\dif\medm(y)\\
			&\leq \int_{(x,w)}\left(\int_{B(y,R)^{c}}\abs{\wgrad p_{t,z}(y)}^{2}\dif\meas(z)\right)^{\frac{1}{2}}
			\dif\medm(y)\norm{f}_{L^{2}(E,\meas)}\\
			&\overset{\Grad,\eqref{e.intexp}}{\lesssim} \int_{(x,w)}\frac{\exp(-c_{4}\Phi(R,t))}{\scale^{-1}(t)\volume(\scale^{-1}(t))^{\frac{1}{2}}}\dif\medm(y)\norm{f}_{L^{2}(E,\meas)}\\
			&=\frac{\exp(-c_{4}\Phi(R,t))}{\scale^{-1}(t)\volume(\scale^{-1}(t))^{\frac{1}{2}}}d(x,w)\norm{f}_{L^{2}(E,\meas)};\label{e.cg4}
		\end{align}
	here we use Proposition \ref{p.arc} in the first inequality, the fact that $(x,w)\subset B(w,4)$ (since $B(w,4)$ is a tree) and the Cauchy--Schwarz inequality in the third inequality; in the final equality, we use the fact that $\medm$ is a length measure on each tree.

	Combining these two estimates, we obtain \begin{align}
		&\phantom{\ \leq}\abs{\langle-\gen P_{t}f,\eta_{w}^{2}(P_{t}f-P_{t}f(w))\rangle_{L^{2}(\ambient,\meas)}}\\
		&\overset{\eqref{e.cg3},\eqref{e.cg4}}{\lesssim} \int_{B(w,4)}\frac{\exp\left(-c_{5}\Phi\left(R,{t}\right)\right)}{{t\scale^{-1}(t)\volume(\scale^{-1}(t))}}d(x,w)\dif\meas(x)\norm{f}_{L^{2}(E,\meas)}^{2}\\
		&\overset{\UVG}{\lesssim }\frac{\exp\left(-c_{5}\Phi\left(R,{t}\right)\right)}{{t\scale^{-1}(t)\volume(\scale^{-1}(t))}}\norm{f}_{L^{2}(E,\meas)}^{2},\label{e.cg5}
	\end{align}
	and \begin{align}
		&\phantom{\ \leq}\int_{\skeleton}(P_{t}f-P_{t}f(w))^{2}\abs{\wgrad\eta_{w}}^{2}\dif\medm\\
		&\overset{\eqref{e.cutoff1},\eqref{e.cg4}}{\lesssim} \int_{B(w,2)\cap\supp_{\medm}[\abs{\wgrad\eta_{w}}]}\frac{\exp(-c_{6}\Phi(R,t))}{\scale^{-1}(t)^{2}\volume(\scale^{-1}(t))}d(x,w)^{2}\norm{f}_{L^{2}(E,\meas)}^{2}\dif\medm(x)\\
		&\overset{\eqref{e.cutoff2}}{\lesssim}\frac{\exp(-c_{6}\Phi(R,t))}{\scale^{-1}(t)^{2}\volume(\scale^{-1}(t))}\norm{f}_{L^{2}(E,\meas)}^{2}.\label{e.cg6}
	\end{align}
	Combining Lemma \ref{l.cutoff}, \eqref{e.cg2.1}, \eqref{e.cg5}, and \eqref{e.cg6}, we obtain  \begin{align}
		\left\lVert\wgrad P_t(f)\right\rVert_{L^2(\skeleton_w,\medm)}^{2}&\leq \int_{\skeleton}\eta_{w}^{2}\abs{\wgrad P_{t}f}^{2}\dif\medm \ \text{ (since $\restr{\eta_{w}}{K_{w}}=1$)}\\
		&\lesssim \frac{\exp(-c_{7}\Phi(R,t))}{(\scale^{-1}(t)\wedge t)\scale^{-1}(t)\volume(\scale^{-1}(t))}\norm{f}_{L^{2}(E,\meas)}^{2}
	\end{align}
	This proves the claim.
	\end{proof}

\begin{corollary}\label{c.sebagd}
Assume $\UVG$, $\HKE$ and $\Grad$.

\begin{enumerate}[label=\textup{({\arabic*})},align=right,leftmargin=*,topsep=5pt,parsep=0pt,itemsep=2pt]
	\item\label{it.corgd1}  There exist constants $C,c\in(0,\infty)$ such that, for every $(v,R)\in\vertex\times[1,\infty)$ and every Borel set $E\subset\ambient$ such that $\metric_{\graph}(B_{\graph}(v,R),E)\geq 20$, the following estimate holds for every $t\in(0,\infty)$ and every $f\in L^2(\ambient,\meas)$,
\begin{equation}\label{e.sepgrad1}
 \lVert\wgrad P_t(\one_{E}f)
 \rVert_{L^2(\skeleton_{B_{\graph}(v,R)},\medm)}
 \leq C \volume(R)^{\frac{1}{2}}\Lambda(t){\exp(-c\Phi(\metric_{\graph}(B_{\graph}(v,R),E),t))}\norm{f}_{L^{2}(E,\meas)}.
\end{equation}
\item\label{it.corgd2} There exists $c\in(0,\infty)$ such that, for every $q\in[2,\infty)$, there exists $C_q\in(0,\infty)$ for which the following estimate holds for every $v\in\vertex$, every Borel set
$E\subset\ambient$, every $t\in(0,\infty)$, and every $f\in L^2(\ambient,\meas)$,
\begin{equation}\label{e.sepgrad2}
 \lVert\wgrad P_t(\one_{E}f)\rVert_{L^q(\skeleton_v,\medm)}\leq C_q
 \frac{\Lambda(t)^{\frac{2}{q}}\exp(-c q^{-1}\Phi(\dist(B(v,4),E),t))}{\scale^{-1}(t)^{1-\frac{2}{q}}\volume(\scale^{-1}(t))^{\frac{1}{2}-\frac{1}{q}}}\lVert f\rVert_{L^{2}(E,\meas)}.
\end{equation}
\end{enumerate}
 \end{corollary}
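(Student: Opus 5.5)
Both parts reduce to Proposition~\ref{p.cellgrad}, applied cell by cell.

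\emph{Part (1).} Write $A:=B_{\graph}(v,R)$. Since $(\skeleton_w)_{w\in A}$ partitions $\skeleton_A$, we have
\[
\norm{\wgrad P_t(\one_E f)}_{L^2(\skeleton_A,\medm)}^2=\sum_{w\in A}\norm{\wgrad P_t(\one_E f)}_{L^2(\skeleton_w,\medm)}^2 .
\]
Apply \eqref{e.rawgd} to each summand, with $g=f$. For $w\in A$, we need to bound $\dist(B(w,4),E)$ from below by $\metric_{\graph}(A,E)$. By \eqref{e.d-ball} with $\epsilon=1$ and radius $4$,
\[
\dist(B(w,4),E)\ge \tfrac12\metric_{\graph}(w,E)-5\ge \tfrac12\metric_{\graph}(A,E)-5 .
\]
The hypothesis $\metric_{\graph}(A,E)\ge20$ then gives $\dist(B(w,4),E)\ge \frac14\metric_{\graph}(A,E)$. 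Since $\Phi(\cdot,t)$ is increasing and satisfies the homogeneity bound from Lemma~\ref{l.phi} (namely $\Phi(R/4,t)\gtrsim \Phi(R,t)$ up to constants, via \eqref{e.phi2}), we obtain
\[
\exp(-c\Phi(\dist(B(w,4),E),t))\le \exp(-c'\Phi(\metric_{\graph}(A,E),t)).
\]
Summing over $w\in A$ produces a factor $\#A$. By Proposition~\ref{p.count}, $\#A\lesssim \volume(R)/\volume(1)\lesssim \volume(R)$. Taking square roots gives \eqref{e.sepgrad1}. The main point to verify here is the scaling of $\Phi$ that lets us replace $\frac14 d$ by $d$ at the cost of a smaller constant $c$. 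This follows from the sublinear/superlinear bounds on $\Phi$ in Lemma~\ref{l.phi}, since $\scale$ satisfies \eqref{e.scale}.

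\emph{Part (2).} Interpolate on $\skeleton_v$, using Hölder's inequality in the form
\[
\norm{h}_{L^q}\le \norm{h}_{L^2}^{2/q}\norm{h}_{L^\infty}^{1-2/q}.
\]
The $L^2$ factor comes from \eqref{e.rawgd}:
\[
\Lambda(t)\exp(-c\Phi(\dist(B(v,4),E),t))\norm{f}_{L^2(E)} .
\]
For the $L^\infty$ factor, use \eqref{e.kernel2}, Remark~\ref{r.Grad} (pointwise $\Grad$), Cauchy--Schwarz, and Lemma~\ref{l.intexp} with radius $\dist(y,E)\ge \dist(B(v,4),E)$ for $y\in \skeleton_v\subset B(v,1)$. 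This gives, for $\medm$-a.e.\ $y\in\skeleton_v$,
\[
|\wgrad P_t(\one_E f)(y)|\lesssim \frac{\exp(-c\Phi(\dist(B(v,4),E),t))}{\scale^{-1}(t)\volume(\scale^{-1}(t))^{1/2}}\norm{f}_{L^2(E)} .
\]
Raising the two bounds to the powers $2/q$ and $1-2/q$ and multiplying gives the prefactor in \eqref{e.sepgrad2}. The combined exponential is $\exp(-c\Phi)$, which is at most $\exp(-cq^{-1}\Phi)$, so it is stronger than required.
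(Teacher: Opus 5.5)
Your argument is correct. Part (1) coincides with the paper's proof: cellwise application of Proposition~\ref{p.cellgrad}, the bound $\dist(B(w,4),E)\geq\frac14 d$ with $d=\metric_{\graph}(B_{\graph}(v,R),E)$ from \eqref{e.d-ball}, and \eqref{e.count}. Your explicit appeal to \eqref{e.phi2} to pass from $\Phi(\frac14 d,t)$ to $\Phi(d,t)$ fills in a step the paper leaves implicit.

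In Part (2) you use the same interpolation with the same $L^2$ factor \eqref{e.rawgd}, but you obtain the $L^\infty$ factor differently.
\begin{itemize}
\item The paper writes $P_t=P_{t/2}P_{t/2}$ and bounds $\norm{P_{t/2}(\one_Ef)}_{L^\infty}$ by $\sup_x p_t(x,x)^{1/2}\norm{f}_{L^2(E)}$, using the on-diagonal upper bound in $\HKE$. It then applies \eqref{e.migrad} with $p=\infty$. This yields a global bound on $\skeleton$ with no decay in the distance. So the only exponential comes from the $L^2$ factor raised to the power $2/q$, which is why $q^{-1}$ appears in \eqref{e.sepgrad2}.
\item You instead redo the kernel computation of \eqref{e.cg4}: \eqref{e.kernel2}, pointwise $\Grad$, Cauchy--Schwarz, and Lemma~\ref{l.intexp} with radius $\dist(y,E)\geq\dist(B(v,4),E)$ for $y\in\skeleton_v\subset B(v,1)$. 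This gives only a local $L^\infty(\skeleton_v)$ bound, which is all the interpolation on $\skeleton_v$ needs, and it carries the decay factor as well.
\end{itemize}
As a result you get $\exp(-c\Phi(\dist(B(v,4),E),t))$ with $c$ independent of $q$, which is slightly stronger than stated, and you avoid Proposition~\ref{p.migrad}. The improvement is harmless but not used later, since in Lemma~\ref{l.utsmth} both the exponent and $t$ are fixed.
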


\begin{proof}
\begin{enumerate}[label=\textup{({\arabic*})},align=right,leftmargin=*,topsep=5pt,parsep=0pt,itemsep=2pt]
	\item[\ref{it.corgd1}] By Lemma \ref{l.c-dist}, if $\metric_{\graph}(B_{\graph}(v,R),E)\geq 20$,\begin{equation}
	\min_{w\in B_{\graph}(v,R)}\dist(B(w,4),E)\overset{\eqref{e.d-ball}}{\geq}\frac{1}{2}\inf_{w\in B_{\graph}(v,R)}\metric_{\graph}(w,E)-5\geq \frac{1}{4}\metric_{\graph}(B_{\graph}(v,R),E). 
\end{equation}
Replacing $f$ by $\one_Ef$, Proposition~\ref{p.cellgrad} and the partition $\skeleton_{B_{\graph}(v,R)}=\bigsqcup_{w\in B_{\graph}(v,R)}\skeleton_w$ give
\begin{align}
 \lVert\wgrad P_t(f)\rVert_{L^2(\skeleton_{B_{\graph}(v,R)},\medm)}^{2} &\overset{\eqref{e.rawgd}}{\lesssim} \# B_{\graph}(v,R)\Lambda(t)^{2}{\exp(-c\Phi(\metric_{\graph}(B_{\graph}(v,R),E),t))}\norm{f}_{L^{2}(E,\meas)}^{2}\\
 &\overset{\eqref{e.count}}{\lesssim}\volume(R) \Lambda(t)^{2}{\exp(-c\Phi(\metric_{\graph}(B_{\graph}(v,R),E),t))}\norm{f}_{L^{2}(E,\meas)}^{2}.
\end{align}
\item[\ref{it.corgd2}] By \eqref{e.meab1} and $\HKE$, we have
\begin{equation}
 |P_{t/2} (\one_{E}f)(x)|\leq p_{t}(x,x)^{\frac{1}{2}}\lVert (\one_{E}f)\rVert_{L^{2}(\ambient,\meas)}
 \overset{\HKE}{\lesssim}{\volume(\scale^{-1}(t))^{-\frac{1}{2}}} \lVert f\rVert_{L^{2}(E,\meas)},
\end{equation}
and hence 
\begin{equation}
	\lVert P_{t/2}(\one_{E}f)\rVert_{L^{\infty}(\ambient,\meas)}\lesssim {\volume(\scale^{-1}(t))^{-\frac{1}{2}}} \lVert f\rVert_{L^{2}(E,\meas)}.\label{e.q2norm1}
\end{equation}
Consequently,
\begin{align}
 &\phantom{\ \leq}\lVert\wgrad P_{t}(\one_{E}f)\rVert_{L^{\infty}(\skeleton,\medm)}
 =\lVert\wgrad P_{t/2}(P_{t/2} (\one_{E}f))\rVert_{L^{\infty}(\skeleton,\medm)}\\
 &\overset{\eqref{e.migrad}}{\lesssim} \scale^{-1}(t)^{-1}\lVert P_{t/2} (\one_{E}f)\rVert_{L^{\infty}(\ambient,\meas)}\overset{\eqref{e.q2norm1}}{\lesssim}\scale^{-1}(t)^{-1}\volume(\scale^{-1}(t))^{-\frac{1}{2}}\lVert f\rVert_{L^{2}(E,\meas)},\label{e.fd2r}
\end{align}
and
\begin{align}
 \lVert\wgrad P_t(\one_{E}f)\rVert_{L^q(\skeleton_v,\medm)}
& \leq 
 \lVert\wgrad P_t(\one_{E}f)\rVert_{L^2(\skeleton_v,\medm)}^{\frac{2}{q}}
 \lVert\wgrad P_t(\one_{E}f)\rVert_{L^{\infty}(\skeleton,\medm)}^{1-\frac{2}{q}}\\
& \overset{\eqref{e.rawgd},\eqref{e.fd2r}}{\lesssim} \frac{\Lambda(t)^{\frac{2}{q}}\exp(-c q^{-1}\Phi(\dist(B(v,4),E),t))}{\scale^{-1}(t)^{1-\frac{2}{q}}\volume(\scale^{-1}(t))^{\frac{1}{2}-\frac{1}{q}}}\lVert f\rVert_{L^{2}(E,\meas)}.
\end{align}
This completes the proof.
\qedhere
\end{enumerate}
\end{proof}

\begin{lemma}\label{l.lcons}
Assume $\UVG$, $\HKE$ and $\Grad$. Let $D_n\subset\vertex$, $n\in\bN$, be finite subsets of $\vertex$ such that $D_n\subset D_{n+1}$ and $\bigcup_{n\in\bN}D_n=\vertex$. Let $B\subset\vertex$ be non-empty and finite and suppose $\metric_{\graph}(B,\vertex\setminus D_n)\uparrow\infty$. Then
\begin{equation}\label{e.lcons}
 \lim_{n\to\infty}\left\lVert\wgrad P_t\one_{K_{D_n}}
 \right\rVert_{L^2(\skeleton_B,\medm)}=0,
 \ \text{ for all }t\in(0,\infty).
\end{equation}
\end{lemma}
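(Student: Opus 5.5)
The idea is to use conservativeness to turn the statement into a decay estimate for the complementary set. By \eqref{e.conserv}, $P_t\one_{\ambient}=1$, so its weak gradient should vanish. We cannot apply $\wgrad P_t$ directly to $\one_{\ambient}\notin L^2$, so we argue pointwise via the kernel. By \eqref{e.wl2<} and $\Grad$ together with Lemma~\ref{l.intexp}, the integral $\int_{\ambient}\abs{\wgrad p_{t,x}(y)}\dif\meas(x)$ is finite, uniformly bounded in $y\in\skeleton$. The function $y\mapsto\int_{\ambient}p_t(x,y)\dif\meas(x)=1$ is constant. Using Proposition~\ref{p.arc} along a short arc $\gamma$ inside a tree ball, exactly as in the proof of \eqref{e.kernel2}, Fubini gives
\[
\int_{\gamma}\sigma_{\gamma,\mathfrak O}(y)\int_{\ambient}\wgrad p_{t,x}(y)\dif\meas(x)\dif\medm(y)=0
\]
for every such arc. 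Lebesgue differentiation then yields $\int_{\ambient}\wgrad p_{t,x}(y)\dif\meas(x)=0$ for $\medm$-a.e.\ $y\in\skeleton$. Here we use the jointly measurable pointwise representative $\wt h_t$ from Remark~\ref{r.Grad}.

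Next, $\one_{K_{D_n}}\in L^2(\ambient,\meas)$, since $D_n$ is finite and each $K_v$ has finite measure. So \eqref{e.kernel2} applies, and for $\medm$-a.e.\ $y\in\skeleton_B$,
\[
\wgrad P_t\one_{K_{D_n}}(y)=\int_{K_{D_n}}\wgrad p_{t,x}(y)\dif\meas(x)=-\int_{E_n}\wgrad p_{t,x}(y)\dif\meas(x),\qquad E_n:=\ambient\setminus K_{D_n}=K_{\vertex\setminus D_n}.
\]
By $\Grad$ in the pointwise form, $\abs{\wgrad P_t\one_{K_{D_n}}(y)}$ is at most
\[
\frac{C}{\scale^{-1}(t)\volume(\scale^{-1}(t))}\int_{\ambient\setminus B(y,R_n)}e^{-c\Phi(\metric(x,y),t)}\dif\meas(x),
\qquad R_n:=\dist(K_B,E_n).
\]
Lemma~\ref{l.intexp} bounds this by $C\scale^{-1}(t)^{-1}\exp(-c\Phi(R_n,t)/2)$.

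To conclude, note that \eqref{e.d-cell} gives $R_n\geq\frac12\metric_{\graph}(B,E_n)-2$. Moreover, $\metric_{\graph}(B,E_n)\geq\metric_{\graph}(B,\vertex\setminus D_n)$: if $K_w\cap E_n\neq\emptyset$ then $w\notin D_n$, because the cells are disjoint. Hence $R_n\to\infty$, and then $\Phi(R_n,t)\to\infty$ for fixed $t$ by the properties of $\Phi$ in Lemma~\ref{l.phi}. Since $B$ is finite, $\medm(\skeleton_B)<\infty$: each $\skeleton_v\subset B(v,1)$ lies in a tree ball, and the length measure of a compact subtree of bounded diameter is finite by the uniform local tree structure and properness. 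Alternatively, one can bound the $L^2(\skeleton_v,\medm)$ norm via Proposition~\ref{p.cellgrad} with $E=E_n$ and $g=\one_{E_n}$, but $g\notin L^2$ there, so the pointwise route is the one to use. Therefore $\norm{\wgrad P_t\one_{K_{D_n}}}_{L^2(\skeleton_B,\medm)}\leq C\scale^{-1}(t)^{-1}\medm(\skeleton_B)^{1/2}e^{-c\Phi(R_n,t)/2}\to0$.

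The main obstacle is finiteness of $\medm(\skeleton_B)$, since $\medm$ need not be Radon. If it is not available, I would instead take $L^2(\skeleton_w,\medm)$ norms through Proposition~\ref{p.cellgrad} applied to $f=\one_{K_{D_n}}$ compared with the bounded sets $K_{D_m}\setminus K_{D_n}$, $m>n$, which are in $L^2$, and let $m\to\infty$. Proposition~\ref{p.cellgrad} gives $\norm{\wgrad P_t(\one_{K_{D_m}\setminus K_{D_n}})}_{L^2(\skeleton_w)}\lesssim\Lambda(t)e^{-c\Phi(R_n,t)}\meas(K_{D_m}\setminus K_{D_n})^{1/2}$. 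The growing measure factor would then need to be absorbed by splitting $E_n$ into dyadic shells and using the exponential decay in $\Phi$ together with volume doubling.
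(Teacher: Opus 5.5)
Your conservativeness step and the pointwise bound on $\skeleton_B$ are correct, but the argument has a genuine gap where you pass from $L^\infty$ to $L^2$. You derive $\int_{\ambient}\wgrad p_{t,x}(y)\dif\meas(x)=0$, then $\wgrad P_t\one_{K_{D_n}}=-\int_{E_n}\wgrad p_{t,x}\dif\meas(x)$, which matches the paper. The estimate $\abs{\wgrad P_t\one_{K_{D_n}}}\lesssim\scale^{-1}(t)^{-1}e^{-c\Phi(R_n,t)/2}$ on $\skeleton_B$ also holds.

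The gap is the claim that $\medm(\skeleton_B)<\infty$ ``by the uniform local tree structure and properness''. This is false. A compact real tree can have infinite length, for example branches of lengths $1/j$ glued at one point.

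It fails in the paper's own example. In the Vicsek set $K_{\mathbf 0}$, the skeleton contains the diagonals of all cells. The diagonals newly created at generation $n$ have total length $\asymp((2^d+1)/3)^n\to\infty$. Hence $\medm(\skeleton\cap U)=\infty$ for every nonempty open $U$ in the alternating Vicsek fractafold. So an $L^\infty(\skeleton_B,\medm)$ bound gives no $L^2(\skeleton_B,\medm)$ control.

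This is why the paper stresses that $\medm$ need not be Radon. It is also why every cell-wise $L^2(\skeleton_w,\medm)$ bound is obtained through the Caccioppoli argument of Proposition~\ref{p.cellgrad}. That argument only uses $\medm(\supp_{\medm}[\abs{\wgrad\eta_w}])\lesssim1$ and never $\medm(\skeleton_w)$.

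Your fallback is the right fix, and it is essentially the paper's proof. The comparison with $K_{D_m}\setminus K_{D_n}$ is unnecessary, though. Letting $m\to\infty$ in $L^2$ would presuppose the lemma; it is only legitimate pointwise, through the kernel identity and Fatou.

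The direct route is to split the pointwise identity you already have. Set $A_{n,k}:=\{w\in\vertex\setminus D_n: k\le\metric_{\graph}(B,w)<k+1\}$. These sets are finite by local finiteness of $\graph$, and empty for $k<r_n:=\metric_{\graph}(B,\vertex\setminus D_n)$. Then $\wgrad P_t\one_{K_{D_n}}=-\sum_{k\ge r_n}\wgrad P_t\one_{K_{A_{n,k}}}$ holds $\medm$-a.e. on $\skeleton_B$, with absolute convergence by $\Grad$. Minkowski (via Fatou) therefore gives $\norm{\wgrad P_t\one_{K_{D_n}}}_{L^2(\skeleton_B,\medm)}\le\sum_{k\ge r_n}\norm{\wgrad P_t\one_{K_{A_{n,k}}}}_{L^2(\skeleton_B,\medm)}$.

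To bound each term, apply Corollary~\ref{c.sebagd}-\ref{it.corgd1} cell by cell for $v\in B$, together with $\meas(K_{A_{n,k}})\lesssim\volume(1+k+\diam(B,\metric_{\graph}))$. For $k\ge20$ the $k$-th term is at most $C(\#B)^{1/2}\Lambda(t)\volume(1+k+\diam(B,\metric_{\graph}))^{1/2}e^{-c\Phi(k,t)}$. By \eqref{e.phi2}, $\Phi(k,t)$ grows superlinearly in $k$, while $\volume$ is doubling, so the series converges. Its tail over $k\ge r_n\to\infty$ tends to $0$, which is exactly the paper's argument.
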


\begin{proof}
Fix $t\in(0,\infty)$. By $\Grad$, Remark \ref{r.Grad} and the estimate in \eqref{e.intexp}, we have $\int_{\ambient}\left\lvert\wgrad p_{t,x}(y)\right\rvert\dif\meas(x)<\infty$ for all $y\in\skeleton$. Let $[a,b]$ be a compact geodesic. Since
$\medm([a,b])=\metric(a,b)<\infty$, Fubini's theorem gives, for $z\in[a,b]$,
\begin{equation}\label{e.ckergd}
 \begin{aligned}
0\overset{\eqref{e.conserv}}{=}P_t\one_{\ambient}(z)-P_t\one_{\ambient}(a)
 &=\int_{\ambient}\bigl(p_t(x,z)-p_t(x,a)\bigr)\dif\meas(x)\\
 &=\int_{[a,z]}\sigma_{a,z}(y)
 \left(\int_{\ambient}\wgrad p_{t,x}(y)\dif\meas(x)\right)
 \dif\medm(y),
 \end{aligned}
\end{equation}
and hence
\begin{equation}\label{e.lcons1}
 \int_{\ambient}\wgrad p_{t,x}(y)\dif\meas(x)=0
 \quad\text{for $\medm$-a.e. $y\in\skeleton$}.
\end{equation}
Since $D_n$ is finite, $\one_{K_{D_n}}\in L^2(\ambient,\meas)$, and the same argument gives
\begin{equation}\label{e.lcons2}
 \wgrad P_t\one_{K_{D_n}}(y)
 =\int_{K_{D_n}}\wgrad p_{t,x}(y)\dif\meas(x),
 \ \text{for $\medm$-a.e. $y\in\skeleton$}.
\end{equation}
Combining \eqref{e.lcons1} and
\eqref{e.lcons2}, we obtain 
\begin{equation}\label{e.lcons3}
 \wgrad P_t\one_{K_{D_n}}(y)
 =-\int_{\ambient\setminus K_{D_n}}\wgrad p_{t,x}(y)\dif\meas(x),\ \text{for $\medm$-a.e. $y\in\skeleton$}.
\end{equation}
The integral on the right is absolutely convergent by
$\Grad$ and \eqref{e.intexp}.
Set $r_n:=\metric_{\graph}(B,\vertex\setminus D_n)$, and partite $\vertex\setminus D_n$ into 
\begin{equation}\label{e.lcons4}
 A_{n,k}:=\set{w\in\vertex\setminus D_n:
 k\leq\metric_{\graph}(B,w)<k+1},
 \  k\in\bN\cup\set{0}.
\end{equation}
For each $n\in\bN$, the sets $\{A_{n,k}\}_{k\in\bN\cup\set{0}}$ are pairwise disjoint, $\bigcup_{k\in\bN\cup\set{0}}A_{n,k}=\vertex\setminus D_n$, and
$A_{n,k}=\emptyset$ whenever $k<r_n$. 
By Proposition~\ref{p.count},
\begin{equation}\label{e.lcons5}
 \left\lVert \one_{K_{A_{n,k}}}\right\rVert_{L^2(\ambient,\meas)}^2
 =\meas(K_{A_{n,k}})
 \overset{\eqref{e.count}}{\lesssim}\volume(1+k+\diam(B,\metric_{\graph})).
\end{equation}
Fix $v\in B$. For $k\geq20$,
\begin{equation}\label{e.lcons6}
 \lVert\wgrad P_t\one_{K_{A_{n,k}}}\rVert_{L^2(\skeleton_v,\medm)}^{2}
 \overset{\eqref{e.sepgrad1},\eqref{e.lcons5}}{\lesssim}\Lambda(t)^{2}{\exp({-c\Phi(k,t)})\volume(1+k+\diam(B,\metric_{\graph}))}.
\end{equation}
Summing \eqref{e.lcons6} over $v\in B$, we have
\begin{equation}\label{e.lcons7}
 \left\lVert\wgrad P_t\one_{K_{A_{n,k}}}\right\rVert_{L^2(\skeleton_B,\medm)}
 \lesssim (\# B)^{\frac{1}{2}}\Lambda(t)\volume(1+k+\diam(B,\metric_{\graph}))^{\frac{1}{2}}\exp(-c_{1}\Phi(k,t)),
 \ \text{ for all }k\geq20.
\end{equation}
By Lemma \ref{l.phi}, we have
\begin{equation}\label{e.lcons8}
 \sum_{k=20}^{\infty}
\volume(1+k+\diam(B,\metric_{\graph}))^{\frac{1}{2}}\exp(-c_{1}\Phi(k,t))<\infty.
\end{equation}
By \eqref{e.lcons3}, $\wgrad P_t\one_{K_{D_n}}=-\sum_{k\geq r_{n}}\wgrad P_t\one_{K_{A_{n,k}}}$, and hence \begin{align}
	&\phantom{\ \leq}\left\lVert\wgrad P_t\one_{K_{D_n}}
 \right\rVert_{L^2(\skeleton_B,\medm)}\leq \sum_{k\geq r_{n}}\left\lVert\wgrad P_t\one_{K_{A_{n,k}}} \right\rVert_{L^2(\skeleton_B,\medm)}\\
 &\overset{\eqref{e.lcons7}}{\lesssim}(\# B)^{\frac{1}{2}}\Lambda(t)\sum_{k\geq r_{n}}\volume(1+k+\diam(B,\metric_{\graph}))^{\frac{1}{2}}\exp(-c_{1}\Phi(k,t))\\
 &\to 0 \text{ (as $n\to\infty$, by \eqref{e.lcons8})}.
\end{align}
This proves \eqref{e.lcons}.
\end{proof}
\begin{lemma}\label{l.trunid}
Assume $\UVG$, $\HKE$ and $\Grad$. Let $B\subset\vertex$ be finite, $u\in\domain$, $c\in\bR$, and
let $\{A_j\}_{j\in\bN}$ be a disjoint partition of $\vertex$ into finite sets. Let $u_j:=(u-c)\one_{K_{A_j}}$. If, for some $t\in(0,\infty)$ and $q\in[2,\infty)$, $\sum_{j\in\bN}
 \lVert\wgrad P_tu_j\rVert_{L^q(\skeleton_B,\medm)}<\infty$, then 
\begin{equation}\label{e.trunid}
 \wgrad P_tu=\sum_{j\in\bN}\wgrad P_tu_j
 \quad\text{in }L^q(\skeleton_B,\medm).
\end{equation}
\end{lemma}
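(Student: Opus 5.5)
The plan is to identify the partial sums of the series with $\wgrad P_t$ applied to a truncation of $u-c$. We then show that these converge to $\wgrad P_t u$ in $L^2(\skeleton_B,\medm)$. Separately, the hypothesis makes them converge absolutely in $L^q(\skeleton_B,\medm)$, and the two limits must agree $\medm$-a.e.

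First, each $u_j$ lies in $L^2(\ambient,\meas)$. Indeed, $K_{A_j}$ is a finite union of cells contained in balls of radius $1$, so $\meas(K_{A_j})<\infty$ by $\UVG$, and $u\in L^2$. Thus $\wgrad P_t u_j$ is well defined.

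Set $D_N:=A_1\cup\dots\cup A_N$, which is finite and increasing, with $\bigcup_N D_N=\vertex$. Because the $K_v$ are disjoint, $S_N:=\sum_{j\le N}u_j=(u-c)\one_{K_{D_N}}$. By linearity of $P_t$ and Lemma~\ref{l.wgrad}-\ref{it.line},
\[
\sum_{j\le N}\wgrad P_tu_j=\wgrad P_tS_N=\wgrad P_t(u\one_{K_{D_N}})-c\,\wgrad P_t\one_{K_{D_N}}.
\]

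We treat the two terms separately.

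For the first term, $u\one_{K_{D_N}}\to u$ in $L^2(\ambient,\meas)$ by dominated convergence, since $\ambient=\bigsqcup_{v}K_v$. The operator $\wgrad P_t:L^2(\ambient,\meas)\to L^2(\skeleton,\medm)$ is bounded with norm at most $(2et)^{-1/2}$ by \eqref{e.l2bd}. Hence $\wgrad P_t(u\one_{K_{D_N}})\to\wgrad P_tu$ in $L^2(\skeleton,\medm)$.

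For the second term we apply Lemma~\ref{l.lcons}. We must check that $\metric_{\graph}(B,\vertex\setminus D_N)\uparrow\infty$. Monotonicity is clear from $D_N\subset D_{N+1}$. For divergence, fix $M$: the set $\bigcup_{v\in B}B_{\graph}(v,M)$ is finite by \eqref{e.count} (or bounded degree), so it is contained in $D_N$ for all large $N$. Then $\metric_{\graph}(B,\vertex\setminus D_N)\ge M$ for those $N$. (If $B=\emptyset$ the statement is trivial.) Lemma~\ref{l.lcons} then gives $\wgrad P_t\one_{K_{D_N}}\to0$ in $L^2(\skeleton_B,\medm)$. Consequently $\wgrad P_tS_N\to\wgrad P_tu$ in $L^2(\skeleton_B,\medm)$.

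On the other hand, the hypothesis $\sum_j\norm{\wgrad P_tu_j}_{L^q(\skeleton_B,\medm)}<\infty$ means the series converges absolutely in the Banach space $L^q(\skeleton_B,\medm)$ to some $g$. So $\wgrad P_tS_N\to g$ in $L^q(\skeleton_B,\medm)$.

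Passing to a common subsequence, both convergences hold $\medm$-a.e. on $\skeleton_B$, so $g=\wgrad P_tu$ $\medm$-a.e. on $\skeleton_B$. This proves \eqref{e.trunid}.

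The only delicate point is the constant $c$. Since $u-c$ need not lie in $L^2$, one cannot simply write $P_t(u-c)$. The argument avoids this by never forming $P_t(u-c)$ directly: it is exactly the role of Lemma~\ref{l.lcons}, which relies on conservativeness \eqref{e.conserv}, to show that the contribution $c\,\wgrad P_t\one_{K_{D_N}}$ vanishes in the limit.
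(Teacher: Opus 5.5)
Your proof is correct and follows the paper's argument essentially step for step. You split the partial sums into $\wgrad P_t(u\one_{K_{D_N}})$, which converges in $L^2$ by boundedness of $\wgrad P_t$, and $c\,\wgrad P_t\one_{K_{D_N}}$, which vanishes on $\skeleton_B$ by Lemma~\ref{l.lcons}; you then identify the $L^2$ and $L^q$ limits through $\medm$-a.e.\ convergent subsequences. Your extra checks make explicit what the paper leaves implicit: that each $u_j\in L^2$, that $\metric_{\graph}(B,\vertex\setminus D_N)\uparrow\infty$ via local finiteness, and that the case $B=\emptyset$ is trivial.
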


\begin{proof}
Since $\sum_{j=1}^{N}u\one_{K_{A_{j}}}\to u$ in $L^2(\ambient,\meas)$, spectral calculus gives $\wgrad P_t(\sum_{j=1}^{N}u\one_{K_{A_{j}}})\to\wgrad P_tu$ in $L^2(\skeleton,\medm)$.
Local finiteness of $\graph$ gives$d_{\graph}(B,\vertex\setminus \bigcup_{j=1}^{N}A_{j})\to\infty$ as $N\to\infty$, and Lemma~\ref{l.lcons} gives $\wgrad P_t(c\sum_{j=1}^{N}\one_{K_{A_{j}}})\to0$ in $L^2(\skeleton_B,\medm)$. Thus the partial sums in \eqref{e.trunid} converge to $\wgrad P_tu$ in $L^2(\skeleton_B,\medm)$. The assumption $\sum_{j\in\bN} \lVert\wgrad P_tu_j\rVert_{L^q(\skeleton_B,\medm)}<\infty$ implies convergence in $L^q(\skeleton_B,\medm)$.  A subsequence converges $\medm$-a.e. to both limits; hence the limits agree.
\end{proof}

\subsection{Graph Poincar\'{e} inequality}

It is well-known that the volume doubling condition, which is implied by $\UVG$, and the heat kernel estimates $\HKE$ give a \emph{Poincar\'{e} inequality}. To be precise, we let \begin{equation}\label{e.localformdomain}
\domain_{\loc}:=\left\{u\in L^2_{\loc}(\ambient,\meas):\begin{array}{l}\text{for every relatively compact open }U\subset\ambient,\\\text{there exists }v\in\domain\text{ such that }u=v\ \meas\text{-a.e. on }U\end{array}\right\}.
\end{equation}
\begin{proposition}
	Assume $\UVG$ and $\HKE$. There exist constants $C_{\mathrm{PI}}\in(0,\infty)$ and $A_{\mathrm{PI}}\in[1,\infty)$ such that, for all $(x,r)\in\ambient\times(0,\infty)$ and all $f\in\domain_{\loc}$, \begin{equation}\label{e.PI}
		\int_{B(x,r)}\abs{f-f_{B(x,r)}}^{2}\dif\meas\leq C_{\mathrm{PI}} \scale(r)\int_{B(x,A_{\mathrm{PI}}r)}\abs{\wgrad f}^{2}\dif\medm,
	\end{equation}
	where $f_{B(x,r)}:=\frac{1}{\meas({B(x,r)})}\int_{B(x,r)}f\dif\meas$.
\end{proposition}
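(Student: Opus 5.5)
The plan is to derive \eqref{e.PI} from the near-diagonal lower bound in $\HKE$ by the standard semigroup argument (as in Grigor'yan--Hu--Lau or Kigami), followed by a localization step. Fix a ball $B=B(x,r)$ and set $t:=\scale(\delta' r)$, with $\delta'$ small enough (depending on $\delta$ and the constants in \eqref{e.scale}) that $\scale^{-1}(t)\ge 2r/\delta$. Then every pair $y,z\in B$ satisfies $\metric(y,z)<2r\le\delta\scale^{-1}(t)$. The lower bound in $\HKE$ together with $\UVG$ gives
\[
p_t(y,z)\gtrsim \volume(r)^{-1}\gtrsim \meas(B)^{-1}\qquad\text{for } y,z\in B.
\]
Since the heat kernel is jointly continuous (Lemma~\ref{l.meab}), this bound holds pointwise. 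For $f\in\domain$, symmetry and conservativeness \eqref{e.conserv} yield the identity
\[
\frac12\iint p_t(y,z)(f(y)-f(z))^2\dif\meas(y)\dif\meas(z)=\|f\|_2^2-\langle P_tf,f\rangle .
\]
By the spectral theorem, $\|f\|_2^2-\langle P_tf,f\rangle=\int(1-e^{-t\lambda})\dif\langle \proj_\lambda f,f\rangle\le t\form(f,f)$. Restricting the double integral to $B\times B$ and using the lower bound, we get
\[
\int_B|f-f_B|^2\dif\meas\le \frac{1}{2\meas(B)}\iint_{B\times B}(f(y)-f(z))^2\lesssim t\,\form(f,f)\lesssim\scale(r)\form(f,f).
\]

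This gives a global-energy Poincaré inequality. The next step is to localize the right-hand side to $B(x,A_{\mathrm{PI}}r)$. For a general strongly local form this is where the real difficulty lies, and one usually invokes the equivalence $\HKE\Leftrightarrow$ (volume doubling, $\mathrm{PI}(\scale)$, cutoff Sobolev) from \cite{GHL15} or \cite{GT12}, or Kigami's resistance-form argument. I would simply cite that equivalence, which gives \eqref{e.PI} with $\int_{B(x,A_{\mathrm{PI}}r)}\dif\Gamma(f)$ on the right. The remaining task is then to identify the energy measure $\Gamma(f)$ with $|\wgrad f|^2\dif\medm$.

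To identify the energy measure, take $\phi\in\domain\cap C_c(\ambient)$ and $f\in\domain$ bounded. By the Leibniz rule (Lemma~\ref{l.wgrad}),
\[
2\form(f\phi,f)-\form(f^2,\phi)=\int\bigl(2\wgrad(f\phi)\wgrad f-\wgrad(f^2)\wgrad\phi\bigr)\dif\medm=2\int\phi|\wgrad f|^2\dif\medm ,
\]
so $\dif\Gamma(f)=|\wgrad f|^2\dif\medm$. This extends to unbounded $f$ by truncation and to $f\in\domain_{\loc}$ by strong locality.

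Alternatively, a self-contained localization is available in this tree setting. For $r\le\iota/8$ the ball is a real tree, and one can use the pointwise bound
\[
|f(y)-f(z)|\le\int_{[y,z]}|\wgrad f|\dif\medm\le \sqrt{2r}\,\|\wgrad f\|_{L^2(B,\medm)},
\]
which gives $\int_B|f-f_B|^2\dif\meas\lesssim r\volume(r)\int_B|\wgrad f|^2\dif\medm$. Since $r\volume(r)\asymp\scale(r)$ by \eqref{e.growth2}, this is \eqref{e.PI} with $A_{\mathrm{PI}}=1$ for small radii. For large $r$, however, one would still need the chaining and Whitney-covering argument on the doubling graph from Proposition~\ref{p.count}, combined with the semigroup bound applied to $f\eta$ for a cutoff $\eta$. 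I expect that large-scale localization to be the main obstacle, and for that reason I would rely on the cited characterization together with the energy-measure identification above.
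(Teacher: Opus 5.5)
Your proposal is correct and is essentially the paper's argument: the paper's proof consists only of citing the known implication from $\HKE$ (with volume doubling) to the Poincar\'e inequality, namely \cite[Proof of Theorem~3.2]{Lie15} and \cite[Remark~2.9-(b)]{KM20}, and your Leibniz-rule identification $\dif\Gamma\langle f,f\rangle=\abs{\wgrad f}^{2}\dif\medm$ makes explicit a step the paper leaves implicit. The semigroup warm-up is not needed: it only gives the non-localized bound $\int_B\abs{f-f_B}^2\dif\meas\lesssim\scale(r)\form(f,f)$, and there $\delta'$ must be large, e.g.\ $\delta'=2/\delta$, not small; your tree argument for $r\leq\iota/8$ is a valid self-contained proof of the small-scale case with $A_{\mathrm{PI}}=1$.
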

\begin{proof}
		This follows from \cite[Proof of Theorem~3.2]{Lie15} (see also \cite[Remark~2.9-(b)]{KM20}).
\end{proof}

Define the \emph{uncentered graph maximal operator} by
\begin{equation}\label{e.ghmax}
 M_{\graph}f(z)=\sup\Sett{\frac{1}{\#B}
 \sum_{q\in B}\lvert f(q)\rvert}{\text{$B$ is a non-empty ball in $(\vertex,d_{\graph})$ that contains $z$}},
\end{equation}
where the supremum is over finite graph balls.

\begin{proposition}
\label{p.ghpPII}
Assume $\UVG$ and $\HKE$.

\begin{enumerate}[label=\textup{({\arabic*})},align=right,leftmargin=*,topsep=5pt,parsep=0pt,itemsep=2pt]
	\item\label{it.pi1} There are constants $C,A\in(1,\infty)$ such that, for every graph ball
$B=B_{\graph}(v,R)$ with $(v,R)\in \vertex\times[1,\infty)$ and every
$u\in\domain_{\loc}$,
\begin{equation}\label{e.ghpPI}
 \int_{K_B}\lvert u-u_{K_B}\rvert^2\dif\meas
 \leq C\scale(R)
 \sum_{w\in B_{\graph}(v,AR)}
       \left\lVert\wgrad u\right\rVert_{L^2(\skeleton_w,\medm)}^2.
\end{equation}
\item\label{it.pi2} There exists $C\in(1,\infty)$ such that the following statement holds. For every $v\in\vertex$ and every pair of numbers $1\leq r\leq R<\infty$, write $B_{r}=B_{\graph}(v,r)$ and $B_{R}=B_{\graph}(v,R)$. Then 
\begin{equation}\label{e.nestPI}
 \left\lVert u-u_{K_{B_{r}}}\right\rVert_{L^2(K_{B_{R}},\meas)}
 \leq C\scale(R)^{\frac{1}{2}}\volume(R)^{\frac{1}{2}}
\min_{q\in B_{r}} (M_{\graph}E_u)(q)^{\frac{1}{2}},\ \text{for all }u\in\domain_{\loc},
\end{equation}
where $E_u(q):=\left\lVert\wgrad u\right\rVert_{L^2(\skeleton_{q},\medm)}^2$, $q\in\vertex$.
\end{enumerate}
\end{proposition}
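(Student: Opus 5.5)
Part \ref{it.pi1} is a discretization of the continuous Poincar\'e inequality \eqref{e.PI}, and part \ref{it.pi2} follows from it by a telescoping (chaining) argument over dyadic graph balls. I will derive \ref{it.pi2} from \ref{it.pi1}.

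\textbf{Part \ref{it.pi1}.} Let $B=B_{\graph}(v,R)$. By \eqref{e.quaiso} and Lemma~\ref{l.sanwich},
\[
K_B\subset B(v,3R+1)\subset B(v,4R),
\]
using $\epsilon=1$ and $R\geq1$. Put $B^*:=B(v,4R)$. Since $u_{K_B}$ minimizes $c\mapsto\int_{K_B}|u-c|^2\dif\meas$, we have
\[
\int_{K_B}\abs{u-u_{K_B}}^2\dif\meas\leq\int_{B^*}\abs{u-u_{B^*}}^2\dif\meas .
\]
Applying \eqref{e.PI} and $\scale(4R)\lesssim\scale(R)$ from \eqref{e.scale}, this is at most $C\scale(R)\int_{B(v,4A_{\mathrm{PI}}R)}|\wgrad u|^2\dif\medm$. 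The ball $B(v,4A_{\mathrm{PI}}R)$ is covered by the cells $K_w$ with $\metric(v,w)<4A_{\mathrm{PI}}R+1$. By \eqref{e.quaiso}, these $w$ satisfy $\metric_{\graph}(v,w)\leq 2(4A_{\mathrm{PI}}R+1)<AR$ for $A:=10A_{\mathrm{PI}}+2$. Since $(\skeleton_w)_{w\in\vertex}$ partitions $\skeleton$, the integral splits as $\sum_{w\in B_{\graph}(v,AR)}\norm{\wgrad u}^2_{L^2(\skeleton_w,\medm)}$, which gives \eqref{e.ghpPI}.

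\textbf{Part \ref{it.pi2}.} Fix $q\in B_r$. First, \ref{it.pi1} gives the single-ball bound. The ball $B_{\graph}(v,AR)$ contains $q$, and $\#B_{\graph}(v,AR)\lesssim\volume(R)$ by \eqref{e.count} and doubling, so
\[
\sum_{w\in B_{\graph}(v,AR)}E_u(w)\leq\#B_{\graph}(v,AR)\,M_{\graph}E_u(q)\lesssim\volume(R)\,M_{\graph}E_u(q).
\]
Hence $\norm{u-u_{K_{B_R}}}_{L^2(K_{B_R})}\lesssim(\scale(R)\volume(R)M_{\graph}E_u(q))^{1/2}$, which is the target bound with $u_{K_{B_R}}$ in place of $u_{K_{B_r}}$.

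It remains to control the constant $\meas(K_{B_R})^{1/2}\abs{u_{K_{B_R}}-u_{K_{B_r}}}$. Let $r_j:=2^jr$ for $0\leq j\leq N$, where $N$ is the least integer with $2^Nr\geq R$, and set $r_N:=R$. For consecutive balls, averaging over the smaller one gives
\[
\abs{u_{K_{B_{r_j}}}-u_{K_{B_{r_{j+1}}}}}\leq\meas(K_{B_{r_j}})^{-1/2}\norm{u-u_{K_{B_{r_{j+1}}}}}_{L^2(K_{B_{r_{j+1}}})}.
\]
By Lemma~\ref{l.sanwich} and \eqref{e.count}, $\meas(K_{B_s})\asymp\volume(s)$ for $s\geq1$. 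Combined with the single-ball bound, the $j$-th increment is $\lesssim\scale(r_j)^{1/2}M_{\graph}E_u(q)^{1/2}$.

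Multiplying the sum of increments by $\volume(R)^{1/2}$ leaves $\sum_j\scale(2^jr)^{1/2}$. By the lower bound in \eqref{e.scale}, with $\beta_1>0$ (in fact $\scale(R)/\scale(r)\gtrsim(R/r)^2$ by Lemma~\ref{l.growthPsi}), this sum is geometric and dominated by its last term, so it is $\lesssim\scale(R)^{1/2}$. Taking the minimum over $q\in B_r$ gives \eqref{e.nestPI}.

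The main obstacle is this telescoping step: obtaining $\meas(K_{B_s})\asymp\volume(s)$ uniformly for $s\geq1$, and arranging that the geometric series is dominated by its top term $\scale(R)$, which is where the reverse doubling of $\scale$ is used. Part \ref{it.pi1} itself is routine bookkeeping with \eqref{e.quaiso} and \eqref{e.sanwich}.
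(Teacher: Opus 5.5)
Your proposal is correct and follows essentially the same route as the paper: part (1) via $K_B\subset B(v,4R)$, the Poincar\'e inequality \eqref{e.PI}, and covering $B(v,4A_{\mathrm{PI}}R)\cap\skeleton$ by cells $\skeleton_w$ with $\metric_{\graph}(v,w)\lesssim R$; part (2) via dyadic chaining, with increments bounded by $\scale(2^jr)^{1/2}(M_{\graph}E_u)(q)^{1/2}$ and the sum controlled by the lower scaling exponent $\beta_1$. The only cosmetic difference is that you end the chain exactly at $R$ and split off the top-scale oscillation, whereas the paper chains up to $2^Nr\geq R$ and enlarges $K_{B_R}$ to $K_{B_{\graph}(v,2^Nr)}$.
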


\begin{proof}
\begin{enumerate}[label=\textup{({\arabic*})},align=right,leftmargin=*,topsep=5pt,parsep=0pt,itemsep=2pt]
	\item[\ref{it.pi1}]

Lemma~\ref{l.sanwich} and \eqref{e.quaiso} give $K_B\subset B(v,(3R+1))\subset B(v,4R)$. By the Poincar\'{e} inequality \eqref{e.PI} for $(\ambient,\metric,\meas,\form,\domain)$,
 \begin{equation}\label{e.gpi1}
 \int_{K_B}\lvert u-u_{K_B}\rvert^2\dif\meas\leq\int_{B(v,4R)}\lvert u-u_{B(v,4R)}\rvert^2\dif\meas\overset{\eqref{e.PI}}{\leq} C_{\mathrm{PI}}{\scale(4R)} \int_{B(v,4A_{\mathrm{PI}}R)\cap\skeleton}
       \lvert\wgrad u\rvert^2\dif\medm.
 \end{equation}

For every $x\in B(v,4A_\mathrm{PI}R)\cap\skeleton$, there exists a unique $w\in\vertex$ such that $x\in \skeleton_{w}$. Since $R\in[1,\infty)$, \begin{equation}
	\metric_{\graph}(v,w)\overset{\eqref{e.quaiso}}{\leq}2\metric(v,w)\leq  2(\metric(v,x)+\metric(x,w))\overset{\eqref{e.sanwich}}{\leq} 2(4A_{\mathrm{PI}}R+1)\leq(8A_{\mathrm{PI}}+3)R.
\end{equation}
This implies that $B(v,4A_\mathrm{PI} R)\cap\skeleton\subset \bigcup_{w\in B_{\graph}(v,(8A_{\mathrm{PI}}+3)R)}\skeleton_{w}$. Combining this inclusion with \eqref{e.gpi1} proves the assertion.
\item[\ref{it.pi2}]
Let $N\in\bN$ such that $R\in[2^{N-1}r,2^{N}r]$. With a slight abuse of notation, set $B_k:=B_{\graph}(v,2^kr)$.  By \ref{it.pi1} and $\UVG$, for $k\in\{0,\ldots,N\}$ and every $q\in B_{r}$,
 \begin{equation}
 \left\lVert u-u_{K_{B_k}}\right\rVert_{L^2(K_{B_k})}^{2}
 \overset{\eqref{e.ghpPI}}{\lesssim} {\scale(2^k r)}
\sum_{w\in B_{\graph}(v,A2^k r)}E_u(w)\overset{\eqref{e.count}}{\lesssim} \scale(2^k r){\volume(2^k r)}
 (M_{\graph}E_u)(q)\label{e.PIescale}
 \end{equation}
Therefore, for every $q\in B_{r}$,
 \begin{align}
 \lvert u_{K_{B_{k+1}}}-u_{K_{B_k}}\rvert
 &\leq 
 \fint_{K_{B_k}}\lvert u-u_{K_{B_{k+1}}}\rvert\dif\meas\leq \meas(K_{B_k})^{-\frac{1}{2}}
 \left\lVert u-u_{K_{B_{k+1}}}\right\rVert_{L^2(K_{B_{k+1}})}\\
 &\overset{\eqref{e.PIescale}}{\lesssim} \scale(2^kr)^{\frac{1}{2}}(M_{\graph}E_u)(q)^{\frac{1}{2}},\label{e.PImax2}
 \end{align}
Therefore,
\begin{align}
	 &\phantom{\ \leq}\left\lVert u-u_{K_{B_{r}}}\right\rVert_{L^2(K_{B_{R}},\meas)}\leq \left\lVert u-u_{K_{B_{0}}}\right\rVert_{L^2(K_{B_{N}},\meas)}\\
	 &\leq \left\lVert u-u_{K_{B_{N}}}\right\rVert_{L^2(K_{B_{N}},\meas)}+\left\lVert u_{K_{B_{0}}}-u_{K_{B_{N}}}\right\rVert_{L^2(K_{B_{N}},\meas)}\\
	 &\overset{\eqref{e.PIescale}}{\lesssim} \scale(2^N r)^{\frac{1}{2}}\volume(2^Nr)^{\frac{1}{2}}(M_{\graph}E_u)(q)^{\frac{1}{2}}+\meas(K_{B_{N}})^{\frac{1}{2}}\sum_{k=0}^{N-1}\lvert u_{K_{B_{k+1}}}-u_{K_{B_k}}\rvert\\
	 &\overset{\eqref{e.PImax2}}{\lesssim} \scale(2^N r)^{\frac{1}{2}}\volume(2^N r)^{\frac{1}{2}} (M_{\graph}E_u)(q)^{\frac{1}{2}}\left(1+\sum_{k=0}^{N-1}\frac{\scale(2^k r)^{\frac{1}{2}}}{\scale(2^N r)^{\frac{1}{2}}}\right)\\
	 &\overset{\eqref{e.scale}}{\lesssim} \scale(2^N  r)^{\frac{1}{2}}\volume(2^N r)^{\frac{1}{2}} (M_{\graph}E_u)(q)^{\frac{1}{2}}\left(1+\sum_{k=0}^{N-1}2^{\beta_{1}(k-N)/2}\right)\\
	 &\lesssim \scale(R)^{\frac{1}{2}}\volume(R)^{\frac{1}{2}}(M_{\graph}E_u)(q)^{\frac{1}{2}}\ \text{(since $\textstyle \sum_{k=0}^{\infty}2^{-\beta_{1}k/2}<\infty$)}.
\end{align}
Taking the minimum over all $q\in B_{r}$, we obtain \eqref{e.nestPI}.
\qedhere
\end{enumerate}
\end{proof}
\begin{lemma}\label{l.utsmth}
Assume $\UVG$, $\HKE$ and $\Grad$.  For any $p\in(2,\infty)$ and $t\in(0,\infty)$, there exists $C_{p}(t)\in(0,\infty)$, such that
\begin{equation}\label{e.utsmth}
 \lVert\wgrad P_{t}u\rVert_{L^p(\skeleton,\medm)}
 \leq C_{p}(t)
 \left(\sum_{v\in\vertex}
 \lVert\wgrad u\rVert_{L^2(\skeleton_v,\medm)}^p\right)^{\frac{1}{p}},\ \text{ for all }u\in\domain.
\end{equation}
\end{lemma}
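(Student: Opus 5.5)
Since $t$ is fixed and the constant may depend on $t$, I would localize with the graph $\graph$ and cells $K_v$ and subtract constants using the Poincaré inequality. Fix $v\in\vertex$. I want to bound $\lVert\wgrad P_tu\rVert_{L^p(\skeleton_v,\medm)}$. Partition $\vertex$ into the annuli $A_0:=B_{\graph}(v,20)$ and $A_k:=B_{\graph}(v,20\cdot 2^{k})\setminus B_{\graph}(v,20\cdot2^{k-1})$ for $k\geq1$. Set $c:=u_{K_{A_0}}$ and $u_k:=(u-c)\one_{K_{A_k}}$. Because $P_t$ annihilates the gradient of constants, Lemma~\ref{l.trunid} with $B=\{v\}$ and $q=p$ gives $\wgrad P_tu=\sum_k\wgrad P_tu_k$ in $L^p(\skeleton_v,\medm)$, provided the series of norms converges. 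That convergence follows from the bounds below.

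For each $k$, apply Corollary~\ref{c.sebagd}-\ref{it.corgd2} with $E=K_{A_k}$ and $q=p$. This gives
\[
\lVert\wgrad P_tu_k\rVert_{L^p(\skeleton_v,\medm)}\leq C_p(t)\exp\bigl(-cp^{-1}\Phi(\dist(B(v,4),K_{A_k}),t)\bigr)\lVert u-c\rVert_{L^2(K_{A_k},\meas)}.
\]
By Lemma~\ref{l.c-dist}, $\dist(B(v,4),K_{A_k})\gtrsim 2^k$ for $k\geq1$. By the nested Poincaré inequality \eqref{e.nestPI} with $r=20$ and $R=20\cdot2^k$,
\[
\lVert u-c\rVert_{L^2(K_{A_k},\meas)}\lesssim\scale(2^k)^{\frac12}\volume(2^k)^{\frac12}(M_{\graph}E_u)(v)^{\frac12},
\]
where $E_u(q)=\lVert\wgrad u\rVert^2_{L^2(\skeleton_q,\medm)}$. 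Since $\Phi(2^k,t)$ grows like a positive power of $2^k$ (Lemma~\ref{l.phi}), the exponential factor beats the polynomial growth of $\scale(2^k)\volume(2^k)$, by \eqref{e.scale} and \eqref{e.VD1}. Summing over $k$ therefore gives
\[
\lVert\wgrad P_tu\rVert_{L^p(\skeleton_v,\medm)}\leq C_p(t)\,(M_{\graph}E_u)(v)^{\frac12}.
\]

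Raising to the $p$-th power and summing over $v\in\vertex$ gives
\[
\lVert\wgrad P_tu\rVert^p_{L^p(\skeleton,\medm)}\lesssim\sum_v (M_{\graph}E_u)(v)^{p/2}.
\]
Since $p/2>1$ and the graph $(\vertex,\metric_{\graph},\#)$ is doubling by Proposition~\ref{p.count}, the uncentered maximal operator is bounded on $\ell^{p/2}(\vertex)$. This yields $\sum_v(M_{\graph}E_u)(v)^{p/2}\lesssim\sum_vE_u(v)^{p/2}$, which is exactly the right-hand side of \eqref{e.utsmth} raised to the power $p$.

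The main obstacle is bookkeeping rather than any single estimate. The first issue is that Lemma~\ref{l.trunid} is stated for a partition into finite sets with a common constant $c$; the annuli are finite by local finiteness, so this is fine. The second issue is that $u\in\domain$ only gives $u\in L^2$, so both the Poincaré step (note $\domain\subset\domain_{\loc}$) and the summability must hold uniformly. If the right-hand side of \eqref{e.utsmth} is infinite there is nothing to prove. Otherwise $E_u\in\ell^{p/2}$, which makes $M_{\graph}E_u(v)$ finite and the series converge absolutely.
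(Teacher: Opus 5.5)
Your proposal is correct and follows essentially the same argument as the paper: annular decomposition of $\vertex$ around $v$, Corollary~\ref{c.sebagd}-\ref{it.corgd2} combined with the nested Poincar\'e inequality \eqref{e.nestPI} and the lower bound from Lemma~\ref{l.c-dist}, summation via Lemma~\ref{l.trunid}, and finally the $\ell^{p/2}$-boundedness of $M_{\graph}$. The only differences are cosmetic: you use radii $20\cdot 2^{k}$ instead of $2^{j+10}$, and you subtract the average over $K_{B_{\graph}(v,20)}$ rather than over $K_v$.
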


\begin{proof}
Define $E_u(q):=\lVert\wgrad u\rVert_{L^2(\skeleton_q,\medm)}^2$ for $q\in\vertex$. Fix $v\in\vertex$. Define $A_0:=B_{\graph}(v,2^{10})$ and $A_j:=B_{\graph}(v,2^{j+10})\setminus B_{\graph}(v,2^{j+9})$ for $j\in\bN$. For $j\in\bN$, \eqref{e.scale}, $\UVG$, and Proposition \ref{p.ghpPII}-\ref{it.pi2} give
\begin{equation}
 \lVert u-u_{K_{v}}\rVert_{L^{2}(K_{A_{j}},\meas)}\leq \lVert u-u_{K_{v}}\rVert_{L^{2}(K_{B_{\graph}(v,2^{j+10})},\meas)}\overset{\eqref{e.nestPI}}{\lesssim} 
 \scale(2^j)^{\frac{1}{2}}\volume(2^j)^{\frac{1}{2}}(M_{\graph}E_u)(v)^{\frac{1}{2}}.\label{e.smoot1}
\end{equation}
For $j\in\bN$, we have $\dist(B(v,4),K_{A_{j}})\overset{\eqref{e.d-ball}}{\geq} \frac{1}{2}\metric_{\graph}(v,K_{A_{j}})-5\gtrsim 2^{j}$, which, combined with Corollary \ref{c.sebagd}-\ref{it.corgd2}, gives
\begin{align}
 &\phantom{\ \leq}\lVert\wgrad P_{t}((u-u_{K_{v}})\one_{K_{A_{j}}})\rVert_{L^p(\skeleton_v,\medm)}\\
& \overset{\eqref{e.sepgrad2}}{\lesssim} \frac{\Lambda(t)^{\frac{2}{p}}\exp(-c p^{-1}\Phi(\dist(B(v,4),K_{A_{j}}),t))}{\scale^{-1}(t)^{1-\frac{2}{p}}\volume(\scale^{-1}(t))^{\frac{1}{2}-\frac{1}{p}}}\lVert (u-u_{K_{v}})\rVert_{L^{2}(K_{A_{j}},\meas)}\\
&\overset{\eqref{e.smoot1}}{\lesssim} \frac{\Lambda(t)^{\frac{2}{p}}}{\scale^{-1}(t)^{1-\frac{2}{p}}\volume(\scale^{-1}(t))^{\frac{1}{2}-\frac{1}{p}}}\exp(-c_{1}p^{-1}\Phi(2^{j},t))
 \scale(2^j)^{\frac{1}{2}}{\volume(2^j)^{\frac{1}{2}}}(M_{\graph}E_u)(v)^{\frac{1}{2}}.\label{e.smoot2}
\end{align}
For $j=0$, \eqref{e.nestPI} and Corollary~\ref{c.sebagd}-\ref{it.corgd2} give the same estimate, since the exponential term can be absorbed in the constant $C_p(t)$. By \eqref{e.smoot1}, \eqref{e.smoot2}, \eqref{e.scale}, $\UVG$ and the exponential decay of $\exp(-c_{1}p^{-1}\Phi(2^{j},t))$, there exists a constant $C_{p}(t)\in(0,\infty)$ such that \begin{equation}\label{e.smoot2+}
	\sum_{j=0}^{\infty}\lVert\wgrad P_{t}((u-u_{K_{v}})\one_{K_{A_{j}}})\rVert_{L^p(\skeleton_v,\medm)}\leq C_{p}(t)(M_{\graph}E_u)(v)^{\frac{1}{2}}<\infty.
\end{equation}
By Lemma~\ref{l.trunid},
\begin{equation}\label{e.utcell}
 \lVert\wgrad P_{t}u\rVert_{L^p(\skeleton_v,\medm)}
 \leq \sum_{j=0}^{\infty}\lVert\wgrad P_{t}((u-u_{K_{v}})\one_{K_{A_{j}}})\rVert_{L^p(\skeleton_v,\medm)}\overset{\eqref{e.smoot2+}}{\leq} C_{p}(t)(M_{\graph}E_u)(v)^{\frac{1}{2}}.
\end{equation}
Since $\frac{p}{2}>1$, and $(\vertex,d_{\graph},\#)$ is doubling by \eqref{e.count} and \eqref{e.VD}, the boundedness of the maximal operator $M_{\graph}$ on $\ell^{\frac{p}{2}}(\vertex)$ \cite[Theorem~2.2]{Hei01} gives
\begin{align}
 \lVert\wgrad P_{t}u\rVert_{L^p(\skeleton,\medm)}^p
=\sum_{v\in\vertex}\lVert\wgrad P_{t}u\rVert_{L^p(\skeleton_v,\medm)}^p&\overset{\eqref{e.utcell}}{\leq} C_{p}(t)\sum_{v\in\vertex}(M_{\graph}E_{u}(v))^{\frac{p}{2}}
= C_{p}(t)\norm{M_{\graph}E_{u}}_{\ell^{\frac{p}{2}}(\vertex)}^{\frac{p}{2}}\\
&\leq C_{p}(t)\norm{E_{u}}_{\ell^{\frac{p}{2}}(\vertex)}^{\frac{p}{2}}=C_{p}(t)\sum_{v\in\vertex}
 \lVert\wgrad u\rVert_{L^2(\skeleton_v,\medm)}^p.
\end{align}
This proves \eqref{e.utsmth}.
\end{proof}

\begin{lemma}
Assume $\UVG$, $\HKE$ and $\Grad$. There exists a constant $C\in(0,\infty)$ such that, for every $(v,R)\in\vertex\times[1,\infty)$ and every $u\in\domain$, we have \begin{equation}\label{e.harhb}
		\sup_{q\in B_{\graph}(v,R)}\norm{\wgrad P_{\scale(R)} u}_{L^{2}(\skeleton_{q},\medm)}\leq CR^{-1}\scale(R)^{\frac{1}{2}}\min_{q\in B_{\graph}(v,R)}(M_{\graph}E_{u})(q)^{\frac{1}{2}},
	\end{equation}
	where $E_{u}(q):=\left\lVert \wgrad u\right\rVert_{L^2(\skeleton_q,\medm)}^2$ for $q\in\vertex$.
\end{lemma}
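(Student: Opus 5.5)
The plan is to repeat the argument of Lemma~\ref{l.utsmth}, now with time $t=\scale(R)$ and keeping track of the explicit dependence on $t$ instead of absorbing it into $C_p(t)$. Fix $(v,R)$ and set $t:=\scale(R)$, so that $\scale^{-1}(t)=R\geq1$. Since $R\geq1$ and $t=\scale(R)\gtrsim R^{2}\geq R$ by Lemma~\ref{l.growthPsi}, we get $\scale^{-1}(t)\wedge t\asymp R$. Hence $\Lambda(t)\asymp R^{-1}\volume(R)^{-\frac12}$.

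Fix $q\in B_{\graph}(v,R)$ and decompose around the constant $c:=u_{K_{B_{\graph}(v,R)}}$. Put $A_0:=B_{\graph}(v,2^{10}R)$ and $A_j:=B_{\graph}(v,2^{j+10}R)\setminus B_{\graph}(v,2^{j+9}R)$ for $j\in\bN$, and set $u_j:=(u-c)\one_{K_{A_j}}$. Lemma~\ref{l.trunid}, applied with $q=2$ and $B=\{q\}$, gives
\[
\norm{\wgrad P_t u}_{L^2(\skeleton_q,\medm)}\leq\sum_{j\geq0}\norm{\wgrad P_t u_j}_{L^2(\skeleton_q,\medm)},
\]
provided the series converges. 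For each term we use Proposition~\ref{p.cellgrad} with $w=q$ and $E=K_{A_j}$:
\[
\norm{\wgrad P_t u_j}_{L^2(\skeleton_q,\medm)}\lesssim\Lambda(t)\exp\bigl(-c\Phi(\dist(B(q,4),K_{A_j}),t)\bigr)\norm{u-c}_{L^2(K_{A_j},\meas)}.
\]
For the $L^2$ factor, Proposition~\ref{p.ghpPII}-\ref{it.pi2} with inner radius $R$ and outer radius $2^{j+10}R$, using the minimum over points of $B_{\graph}(v,R)$, bounds $\norm{u-c}_{L^2(K_{A_j})}$ by $\scale(2^jR)^{\frac12}\volume(2^jR)^{\frac12}\min_{B_{\graph}(v,R)}(M_{\graph}E_u)^{\frac12}$, up to constants. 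For $j\geq1$, Lemma~\ref{l.c-dist} gives $\dist(B(q,4),K_{A_j})\gtrsim 2^{j}R$.

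Multiplying these bounds, the $j$-th term is at most a constant times
\[
R^{-1}\volume(R)^{-\frac12}\,\scale(2^jR)^{\frac12}\volume(2^jR)^{\frac12}\,e^{-c\Phi(2^jR,\scale(R))}\min(M_{\graph}E_u)^{\frac12}.
\]
By \eqref{e.scale} and \eqref{e.VD1}, the growth factors are bounded by $\scale(R)^{\frac12}\volume(R)^{\frac12}\,2^{j(\beta_2+d_2)/2}$. By the scaling of $\Phi$ (Lemma~\ref{l.phi}), $\Phi(2^jR,\scale(R))\gtrsim 2^{c'j}$. The series in $j$ therefore converges to a constant, and the total is bounded by $CR^{-1}\scale(R)^{\frac12}\min_{B_{\graph}(v,R)}(M_{\graph}E_u)^{\frac12}$. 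The $j=0$ term needs no exponential decay and is bounded directly. Taking the supremum over $q\in B_{\graph}(v,R)$ gives \eqref{e.harhb}.

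The main obstacle is checking that the bound is uniform in the scale. We need $\Lambda(\scale(R))\lesssim R^{-1}\volume(R)^{-1/2}$, which uses $\scale(R)\geq cR$ for $R\geq1$ from \eqref{e.growth1}. We also need the exponential decay $\Phi(2^jR,\scale(R))\gtrsim 2^{c'j}$ uniformly in $R$, which is where the scale invariance of $\Phi$ under $(R,\scale(R))$ enters. Lemma~\ref{l.trunid} requires the $A_j$ to be finite; this holds because $\graph$ is locally finite.
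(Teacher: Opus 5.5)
Your proposal is correct and follows essentially the same route as the paper's proof. Both arguments subtract the mean over $K_{B_{\graph}(v,R)}$ and split into dyadic graph annuli. Both bound each piece by combining Proposition~\ref{p.cellgrad} with Proposition~\ref{p.ghpPII}-\ref{it.pi2}. Both then use $R\wedge\scale(R)\asymp R$ for $R\geq1$ together with the uniform decay of $\Phi(2^{j}R,\scale(R))$, and sum via Lemma~\ref{l.trunid}. The only differences are cosmetic: your annuli start at $2^{10}R$ rather than $2^{5}R$, and you apply Lemma~\ref{l.trunid} cell by cell rather than on the whole ball.
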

\begin{proof}
	Define $C_{0}:=B_{\graph}(v,2^{5}R)$ and $C_{j}:=B_{\graph}(v,2^{j+5}R)\setminus B_{\graph}(v,2^{j+4}R)$ for $j\in\bN$.
	Let $u_{j}:=(u-u_{K_{B_{\graph}(v,R)}})\one_{K_{C_{j}}}$ for $j\in\{0\}\cup\bN$. By Proposition \ref{p.ghpPII}-\ref{it.pi2}, \begin{align}
		\norm{u_{j}}_{L^{2}(K_{C_{j}},\meas)}&\leq \norm{u-u_{K_{B_{\graph}(v,R)}}}_{L^{2}(K_{B_{\graph}(v,2^{j+7}R)},\meas)}\\
		&\lesssim\scale(2^{j}R)^{\frac{1}{2}}{\volume(2^{j}R)^{\frac{1}{2}}}\min_{w\in B_{\graph}(v,R)}(M_{\graph}E_{u})(w)^{\frac{1}{2}}.\label{e.harhb0}
	\end{align}
	For $j\in\bN$ and every $q\in B_{\graph}(v,R)$, \begin{align}
		&\phantom{\ \leq}\norm{\wgrad P_{\scale(R)}u_{j}}_{L^{2}(\skeleton_{q},\medm)}\\&\overset{\eqref{e.rawgd}}{\lesssim} \Lambda(\scale(R))\exp(-c\Phi(\dist(B(q,4),K_{C_{j}}),\scale(R)))\norm{u_{j}}_{L^{2}(K_{C_{j}},\meas)}\\
		&\overset{\eqref{e.harhb0},\eqref{e.phi1}}{\lesssim} \Lambda(\scale(R))\exp(-c_{1}2^{\frac{j\beta_{1}}{\beta_{2}-1}})\scale(2^{j}R)^{\frac{1}{2}}{\volume(2^{j}R)^{\frac{1}{2}}}\min_{q\in B_{\graph}(v,R)}(M_{\graph}E_{u})(q)^{\frac{1}{2}}\\
		&\overset{\eqref{e.growth1},\eqref{e.scale},\eqref{e.VD1}}{\lesssim} R^{-1}\scale(R)^{\frac{1}{2}}\exp(-c_{1}2^{\frac{j\beta_{1}}{\beta_{2}-1}})2^{\frac{j(\beta_{2}+d_{2})}{2}}\min_{w\in B_{\graph}(v,R)}(M_{\graph}E_{u})(w)^{\frac{1}{2}}.\label{e.harhb1}
	\end{align}
	For $j=0$, the same bound follows from \eqref{e.rawgd} and \eqref{e.harhb0}; the term $\exp(-c_1)$ is absorbed in the constant. Here \eqref{e.growth1} gives $R\wedge\scale(R)\asymp R$ for $R\in[1,\infty)$. Since $B_{\graph}(v,R)$ is finite and the bound is uniform in $q$ in this ball, the series converges absolutely in $L^2(\skeleton_{B_{\graph}(v,R)},\medm)$. Therefore,
\begin{align}
		\sup_{q\in B_{\graph}(v,R)}\norm{\wgrad P_{\scale(R)} u}_{L^{2}(\skeleton_{q},\medm)}
		&\overset{\eqref{e.trunid}}{\lesssim}\sup_{q\in B_{\graph}(v,R)}\sum_{j=0}^{\infty}\norm{\wgrad P_{\scale(R)}u_{j}}_{L^{2}(\skeleton_{q},\medm)}\\
		&\overset{\eqref{e.harhb1}}{\lesssim} R^{-1}\scale(R)^{\frac{1}{2}}\min_{w\in B_{\graph}(v,R)}(M_{\graph}E_{u})(w)^{\frac{1}{2}}.
	\end{align}
	This completes the proof of \eqref{e.harhb}.
\end{proof}

\section{Boundedness of the Riesz transforms}\label{s.riesz}
To prove the boundedness of the Riesz transform $\riesz$, we define
\begin{equation}
	\riesz_{\high}:=\riesz\circ (I-P_{1}),\ \text{ and }\ \riesz_{\low}:=\riesz\circ P_{1}.
\end{equation}
Thus, $\riesz=\riesz_{\high}+\riesz_{\low}$. 
\subsection{Estimates for the low-frequency part \texorpdfstring{$\riesz_{\low}$}{R\_{♭}}}
The following proposition is an adaptation of \cite[Theorem~2.1 and Lemmas~2.2--2.3]{ACDH04}, which originates in \cite{Mar04}.
\begin{proposition}\label{p.ACDH}
Assume $\UVG$. Let $T:L^2(\ambient,\meas)\to[0,\infty)^{\vertex}\cap\ell^2(\vertex)$ be a sublinear bounded map. Let $A_{r}:L^2(\ambient,\meas)\to L^2(\ambient,\meas)$, $r\in(0,\infty)$, be a family of bounded linear operators. Suppose that there exists $C_{0}\in(1,\infty)$ such that the following two conditions hold. \begin{enumerate}[label=\textup{({H\alph*})},align=right,leftmargin=*,topsep=5pt,parsep=0pt,itemsep=2pt]
	\item\label{it.ha} For every $(v,R)\in\vertex\times[1,\infty)$ and every $f\in L^2(\ambient,\meas)$, one has 
\begin{equation}\label{e.ACDH-1}
 \frac{1}{\#B_{\graph}(v,R)}\sum_{w\in B_{\graph}(v,R)}
 \lvert T(I-A_{R})f(w)\rvert^2
 \leq C_0 \inf_{z\in B_{\graph}(v,R)}(M_{\graph}F_{f})(z),
\end{equation}
where $F_{f}(v):=\left\lVert f\right\rVert_{L^2(K_v,\meas)}^2$ for $v\in\vertex$.
\item\label{it.hb} For every $(v,R)\in\vertex\times[1,\infty)$ and every $f\in L^2(\ambient,\meas)$, one has \begin{equation}\label{e.ACDH-2}
 \sup_{w\in B_{\graph}(v,R)} (T(A_{R}f))(w)
 \leq C_0\inf_{z\in B_{\graph}(v,R)}
M_{\graph}((Tf)^2)(z)^{\frac{1}{2}}.
\end{equation}
\end{enumerate}
Then, for every $p\in(2,\infty)$, there exist constants $C_{p},C_{p}^{\prime}\in(0,\infty)$, depending only on $p$, $C_{0}$, $\volume(1)$, and the constants in $\UVG$, such that
\begin{equation}\label{e.ACDHcl}
 \left\lVert Tf\right\rVert_{\ell^p(\vertex)}
 \leq C_p  \left(\sum_{v\in\vertex}
 \left\lVert f\right\rVert_{L^2(K_v,\meas)}^p\right)^{\frac{1}{p}}
 \leq C_{p}^{\prime}\left\lVert f\right\rVert_{L^p(\meas)},\ \text{ for every $f\in L^2(\ambient,\meas)$}.
\end{equation}
\end{proposition}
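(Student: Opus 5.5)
This is a discrete version of the good-$\lambda$ argument of \cite[Theorem~2.1]{ACDH04}, carried out on the doubling graph $(\vertex,\metric_{\graph},\#)$. The second inequality in \eqref{e.ACDHcl} is elementary. For $p\geq2$, H\"older's inequality on each cell gives
\[
\norm{f}_{L^2(K_v,\meas)}^p\leq \meas(K_v)^{\frac{p}{2}-1}\norm{f}_{L^p(K_v,\meas)}^p .
\]
By Lemma~\ref{l.sanwich} and $\UVG$ we have $\meas(K_v)\lesssim\volume(1)$. Summing over the disjoint cells $K_v$ then yields the bound. So the work lies in the first inequality, which we restate as $\norm{Tf}_{\ell^p}\lesssim\norm{F_f}_{\ell^{p/2}}^{1/2}$.

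Fix $f\in L^2(\ambient,\meas)$ and set $g:=(Tf)^2$, $G:=M_{\graph}g$ and $F:=M_{\graph}F_f$. Then $g\in\ell^1(\vertex)$. By the weak $(1,1)$ bound for $M_{\graph}$ on the doubling graph (Proposition~\ref{p.count} and \eqref{e.VD}), $G$ is weak-$\ell^1$. Hence every level set $\{G>\lambda\}$ is finite, and $\norm{G}_{\ell^{p/2}}<\infty$ once we know $\norm{G}_{\ell^{p/2,\infty}}$-type tails are controlled. To avoid circularity we first work with a truncation; for instance we restrict $f$ to $K_{D}$ with $D$ finite, or prove the estimate for $\min(G,N)$ over finitely many points and pass to the limit. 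The target is the good-$\lambda$ inequality: there are $K>1$ and $C$ such that for all $\gamma\in(0,1)$ and $\lambda>0$,
\[
\#\{G>K\lambda,\ F\leq\gamma\lambda\}\leq C\bigl(K^{-q_0}+\gamma K^{-1}\bigr)\,\#\{G>\lambda\}.
\]
Here $q_0>p/2$ is obtained below. Once this holds, the standard integration in $\lambda$ gives $\norm{G}_{\ell^{p/2}}\lesssim\norm{F}_{\ell^{p/2}}$, provided $K$ is chosen large and then $\gamma$ small. Then the maximal theorem on $\ell^{p/2}$ for $p/2>1$ \cite[Theorem~2.2]{Hei01} gives $\norm{Tf}_{\ell^p}^2\leq\norm{G}_{\ell^{p/2}}\lesssim\norm{F_f}_{\ell^{p/2}}$.

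To prove the good-$\lambda$ inequality, we take a Whitney-type decomposition of the finite set $\Omega=\{G>\lambda\}$ into graph balls $B_i=B_{\graph}(v_i,R_i)$. These balls have bounded overlap, and each has a dilate $\widetilde{B}_i$ that meets $\Omega^c$. This uses only doubling of $\#$ and the fact that $\Omega$ is finite. When $R_i<1$ the ball is a single point, and that case is handled by \ref{it.ha} with $R=1$ directly; otherwise we may take $R_i\geq1$. Fix $i$ such that $B_i$ contains a point $z$ with $F(z)\leq\gamma\lambda$, and write $Tf\leq T(I-A_{R})f+T(A_{R}f)$ with $R\asymp R_i$, using sublinearity of $T$.

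By \ref{it.hb}, applied with a point of $\widetilde{B}_i\cap\Omega^c$, we get $\sup_{B_i}T(A_{R}f)^2\leq C_0^2\lambda$. Hence on $\{G>K\lambda\}\cap B_i$ we have $M_{\graph}\bigl(|T(I-A_R)f|^2\one_{B_i'}\bigr)\gtrsim K\lambda$. This holds once $K$ is large, after localizing $M_{\graph}$ to a dilate $B_i'$; the contributions from larger balls are controlled by $G\leq C\lambda$ outside via $\Omega^c$. By the weak $(1,1)$ bound and \ref{it.ha},
\[
\#\{G>K\lambda\}\cap B_i\lesssim (K\lambda)^{-1}\sum_{B_i'}|T(I-A_R)f|^2\lesssim (K\lambda)^{-1}\#B_i'\,C_0F(z)\lesssim \gamma K^{-1}\#B_i.
\]
This yields the good-$\lambda$ inequality with only the $\gamma K^{-1}$ term, which is enough since $\gamma$ can be chosen as small as needed after fixing $K$. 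Summing over $i$ with bounded overlap and $\#B_i\lesssim\#(B_i\cap\Omega)$ finishes the argument.

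The main obstacle is the localization of $M_{\graph}$ to $B_i'$. One must show that for $w\in B_i$, every ball through $w$ of radius $\gtrsim R_i$ reaches $\Omega^c$, so that its average of $g$ is at most $C\lambda$. This requires $K$ larger than that constant, and it requires tracking how the radii in \ref{it.ha} and \ref{it.hb} must match the Whitney radii. A secondary technical point is justifying finiteness of $\norm{G}_{\ell^{p/2}}$ so that the absorption step is legitimate. We handle this by first running the argument with $g$ replaced by $g\one_{D}$ for finite $D$, or by using $\min(G,N)$, and then letting $D\uparrow\vertex$ by monotone convergence.
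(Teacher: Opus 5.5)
Your proposal is correct and follows essentially the paper's argument: you take a Whitney covering of the finite set $\{M_{\graph}((Tf)^2)>\lambda\}$, localize every ball through a point of $B_j$ with large average of $(Tf)^2$ into $B_{\graph}(x_j,2\delta_j)$ (which contains a point of the complement), apply \ref{it.hb} and \ref{it.ha} on that same ball, and finish with the weak $(1,1)$ bound. The only cosmetic difference is that you put $M_{\graph}F_f$ directly into the good-$\lambda$ set, whereas the paper first passes through the sharp function $M^{\#}_{T,A}f\le C_0^{1/2}(M_{\graph}F_f)^{1/2}$. Two of your side worries are unnecessary: $\lVert M_{\graph}((Tf)^2)\rVert_{\ell^{p/2}}<\infty$ holds automatically because $(Tf)^2\in\ell^1\subset\ell^{p/2}$, so no truncation is needed; and no separate case $R_i<1$ arises, since you apply the hypotheses at radius $2\delta_i\ge2$ (the radius-one hypotheses alone would not control that case anyway, as $B_{\graph}(x_i,1)=\{x_i\}$ does not reach the complement).
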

\begin{proof}
Define
\begin{equation}\label{e.spfunc}
 M_{T,A}^{\#}f(z)
 :=\sup_{\substack{(v,r)\in\vertex\times[1,\infty)\\ z\in B_{\graph}(v,r)}}
 \left(
 \frac{1}{\#B_{\graph}(v,r)}
 \sum_{w\in B_{\graph}(v,r)}
 \lvert T(I-A_{r})f(w)\rvert^2
 \right)^{\frac{1}{2}},\ z\in\vertex.
\end{equation}
For each ball $B_{\graph}(v,r)$ containing $z$, condition \ref{it.ha}, namely \eqref{e.ACDH-1}, gives
\[
 \frac{1}{\#B_{\graph}(v,r)}
 \sum_{w\in B_{\graph}(v,r)}
 \lvert T(I-A_{r})f(w)\rvert^2
 \leq C_0(M_{\graph}F_f)(z).
\]
Taking the supremum over all such balls yields
\begin{equation}\label{e.spctol}
 M_{T,A}^{\#}f(z)
 \leq C_0^{\frac{1}{2}}(M_{\graph}F_f)(z)^{\frac{1}{2}},
 \ \text{for every } z\in\vertex.
\end{equation}

We next prove a \emph{local good-$\lambda$ estimate}. Fix $\vartheta=1/100$. Let $\Omega\subsetneq\vertex$ be a nonempty subset. Let $\{B_{\graph}(x_i,r_i)\}_{i\in I}$ be a $\vartheta$-Whitney covering of $\Omega$, provided by Proposition \ref{p.Whitney}.

Fix $B_{j}=B_{\graph}(x_{j},r_{j})$ and write
$\delta_{j}=\metric_{\graph}(x_{j},\vertex\setminus\Omega)$. Set
$\wt B_{j}:=B_{\graph}(x_{j},2\delta_{j})$. Since the graph is locally finite, there exists $z_{j}\in\vertex\setminus\Omega$ such that
$\metric_{\graph}(x_{j},z_{j})=\delta_{j}$. In particular, $z_{j}\in\wt B_{j}$; moreover,
$3B_{j}\subset\wt B_{j}$ because $3r_{j}=3\vartheta\delta_{j} /(1+\vartheta)<2\delta_{j}$.

For $j\in I$, $\lambda,K\in(0,\infty)$, and $\gamma\in(0,1)$, define \begin{equation}\label{e.defsG}
	\sG(\Omega,j,\lambda,K,\gamma):=\Sett{z\in3B_j}{
 M_{\graph}((Tf)^2)(z)>K^2\lambda \text{ and }
 M_{T,A}^{\#}f(z)\leq\gamma\lambda^{\frac{1}{2}}}.
\end{equation}

\begin{enumerate}[label=\textit{Claim {\arabic*}},align=right,leftmargin=*,topsep=5pt,parsep=0pt,itemsep=2pt]
	\item\label{it.ACDHc1} \textit{There exists $K_{0}\in(1,\infty)$ such that, for every $j\in I$ and $\lambda\in(0,\infty)$, if $M_{\graph}((Tf)^2)(z_{j})\leq\lambda$, if $z\in 3B_{j}$, and if a metric ball $B\subset\vertex$ containing $z$ satisfies  $\frac{1}{\#B}
 \sum_{w\in B}\lvert Tf(w)\rvert^2> K_{0}^{2}\lambda$, then $B\subset \wt{B}_{j}$.}

Suppose $j\in I$ and $\lambda\in(0,\infty)$ such that $M_{\graph}((Tf)^2)(z_{j})\leq\lambda$. Let $z\in 3B_{j}$ and $B=B_{\graph}(y,\rho)\ni z$. Let $K_{0}\in(1,\infty)$ be determined below. Suppose $\frac{1}{\# B_{\graph}(y,\rho)}
 \sum_{w\in B_{\graph}(y,\rho)}\lvert Tf(w)\rvert^2> K_{0}^{2}\lambda$. Suppose, to the contrary, that $B_{\graph}(y,\rho)\setminus\wt B_{j}\neq\emptyset$, and choose $q\in B_{\graph}(y,\rho)\setminus\wt B_{j}$. Since
$\metric_{\graph}(z,x_j)<3r_j=3\vartheta\delta_j/(1+\vartheta)$ and
$\metric_{\graph}(q,x_j)\geq2\delta_j$, we have
\[
 2\rho> \metric_{\graph}(z,y)+\metric_{\graph}(y,q)\geq \metric_{\graph}(z,q)
 \geq \metric_{\graph}(q,x_j)-\metric_{\graph}(z,x_j)
 >\frac{2-\vartheta}{1+\vartheta}\delta_j.
\]
Thus $\delta_j<2(1+\vartheta)(2-\vartheta)^{-1}\rho$. Moreover,
\[
 \metric_{\graph}(z_j,y)
 \leq \metric_{\graph}(z_j,x_j)+\metric_{\graph}(x_j,z)+d_\graph(z,y)
 <\delta_j+\frac{3\vartheta}{1+\vartheta}\delta_j+\rho<\frac{4+7\vartheta}{2-\vartheta}\rho<3\rho.
\]
Hence $z_j\in B_{\graph}(y,3\rho)$. By the doubling property of $\graph$, the definition of $M_{\graph}$, the fact that $z_j\in B_{\graph}(y,3\rho)$, and the assumption in the claim, 
\begin{align}
 K_{0}^{2}\lambda<\frac{1}{\#B_{\graph}(y,\rho)}
 \sum_{w\in B_{\graph}(y,\rho)}\lvert Tf(w)\rvert^2
 &\leq C\frac{1}{\#B_{\graph}(y,3\rho)}
 \sum_{w\in B_{\graph}(y,3\rho)}\lvert Tf(w)\rvert^2\\
 &\leq CM_{\graph}((Tf)^{2})(z_{j})\leq C\lambda.\label{e.lgballav}
\end{align}
Choosing $K_0^2$ larger than the constant $C$ in \eqref{e.lgballav} gives a contradiction. Therefore, $B_{\graph}(y,\rho)\setminus\wt B_{j}=\emptyset$.
 
 	\item\label{it.ACDHc2} \textit{There exist $K_{1}\in(1,\infty)$ and $C\in(0,\infty)$ such that, for every $j\in I$, $\lambda\in(0,\infty)$, $K\in[K_{1},\infty)$ and $\gamma\in(0,1)$, if $M_{\graph}((Tf)^2)(z_{j})\leq\lambda$, then $\#\sG(\Omega,j,\lambda,K,\gamma) \leq C\gamma^2\#\wt B_{j}$.}
We may assume that $\sG(\Omega,j,\lambda,K,\gamma)$ is nonempty. In this case, using the fact that $3B_j\subset\wt B_j$, \eqref{e.spfunc}, and the definition in \eqref{e.defsG},
\begin{equation}\label{e.rghsml}
 \frac{1}{\#\wt B_j}
 \sum_{w\in\wt B_j}
 \lvert T((I-A_{2\delta_j})f)(w)\rvert^2
 \leq\gamma^2\lambda.
\end{equation}
 By the sublinearity of $T$, \begin{equation}
	Tf(w)\leq T(A_{2\delta_j}f)(w)
+T((I-A_{2\delta_{j}})f)(w),\ \text{ for every $w\in\vertex$}.\label{e.sublin}
\end{equation} 
Condition \ref{it.hb}, namely \eqref{e.ACDH-2}, applied with $(x_j,2\delta_j)$, together with the assumption in the claim and the fact that $z_j\in\wt B_j$, gives
\begin{equation}\label{e.smthctl}
 \sup_{w\in\wt B_j}T(A_{2\delta_j}f)(w)
 \overset{\eqref{e.ACDH-2}}{\leq} C_0\inf_{z\in\wt B_j}M_{\graph}((Tf)^2)(z)^{\frac{1}{2}}\leq C_0M_{\graph}((Tf)^2)(z_{j})^{\frac{1}{2}}
 \leq C_0\lambda^{\frac{1}{2}}.
\end{equation}
Let $K_{1}:=K_{0}\vee (2C_{0})$. Suppose that $z\in3B_j$ satisfies $M_{\graph}((Tf)^2)(z)>K^2\lambda$. Then, by definition, there exists $B_{\graph}(y,\rho)\ni z$ such that \[\frac{1}{\# B_{\graph}(y,\rho)}
 \sum_{w\in B_{\graph}(y,\rho)}\lvert Tf(w)\rvert^2> K^{2}\lambda\geq K_{0}^{2}\lambda.\] By \ref{it.ACDHc1}, $B_{\graph}(y,\rho)\subset \wt{B}_{j}$. Hence, by \eqref{e.sublin} and \eqref{e.smthctl}, for every $K\in[K_{1},\infty)$,
\begin{align}
 K^2\lambda
 &<\frac{1}{\#B_{\graph}(y,\rho)}
 \sum_{w\in B_{\graph}(y,\rho)}\lvert Tf(w)\rvert^2\\
 &\overset{\eqref{e.sublin},\eqref{e.smthctl}}{\leq}2C_0^2\lambda
 +\frac{2}{\#B_{\graph}(y,\rho)}
 \sum_{w\in B_{\graph}(y,\rho)}
 \lvert T((I-A_{2\delta_j})f)(w)\rvert^2\\
 &\leq \frac{1}{2} K^2\lambda+\frac{2}{\#B_{\graph}(y,\rho)}
 \sum_{w\in B_{\graph}(y,\rho)}
 \lvert T((I-A_{2\delta_j})f)(w)\rvert^2.\label{e.sublin+}
 \end{align}
Consequently, since $B_{\graph}(y,\rho)\subset \wt{B}_{j}$,
\begin{align}
	 &\phantom{\ \leq}	M_{\graph}\left(\lvert T((I-A_{2\delta_j})f)\rvert^2\one_{\wt{B}_{j}}\right)(z)\geq \frac{1}{\#B_{\graph}(y,\rho)}
 \sum_{w\in B_{\graph}(y,\rho)}
 \lvert T((I-A_{2\delta_j})f)(w)\rvert^2\\
 & \overset{\eqref{e.sublin+}}{\geq}\frac{1}{4}K^2\lambda, \ \text{ for every }z\in \sG(\Omega,j,\lambda,K,\gamma).\label{e.level-set-lower-bound}
\end{align}
The weak $(1,1)$ estimate for $M_{\graph}$ \cite[Theorem~2.2]{Hei01} and \eqref{e.rghsml} now give
\begin{align}
	&\phantom{\ \leq}\#\sG(\Omega,j,\lambda,K,\gamma)\\&\overset{\eqref{e.level-set-lower-bound}}{\leq}\#\Sett{z\in\vertex}{M_{\graph}\left(\lvert T((I-A_{2\delta_j})f)\rvert^2\one_{\wt{B}_{j}}\right)(z)\geq \frac{1}{4}K^2\lambda}\\
	&\leq \frac{4C}{K^2\lambda}\sum_{w\in\wt B_j}
 \lvert T((I-A_{2\delta_j})f)(w)\rvert^2\overset{\eqref{e.rghsml}}{\leq}  \frac{C}{C_{0}^2\lambda}\gamma^{2} \lambda\#\wt B_j=C_{1}\gamma^{2}\#\wt B_j. 
\end{align}
This proves \ref{it.ACDHc2}.
\end{enumerate}

For $\lambda\in(0,\infty)$, set
$\Omega_\lambda:=\Sett{z\in\vertex}{M_{\graph}((Tf)^2)(z)>\lambda}$. Since
$T:L^2(\ambient,\meas)\to\ell^2(\vertex)$ is bounded, the weak $(1,1)$ estimate for $M_{\graph}$ gives $\#\Omega_\lambda
 \leq\frac{C}{\lambda}\lVert Tf\rVert_{\ell^2(\vertex)}^2<\infty$. If $\Omega_\lambda$ is nonempty, then it is a proper subset of the infinite graph. We apply Proposition \ref{p.Whitney}, obtaining a Whitney covering of $\Omega_\lambda$, that is, pairwise disjoint balls
$\{B_i^{(\lambda)}:=B_{\graph}(x^{(\lambda)}_i,r^{(\lambda)}_i)\}_{i\in I_{\lambda}}$. Put
$\delta_i^{(\lambda)}:=\metric_{\graph}(x_i^{(\lambda)},\vertex\setminus\Omega_\lambda)$ and
$\wt B^{(\lambda)}_i:=B_{\graph}(x^{(\lambda)}_i,2\delta^{(\lambda)}_i)$. Recall that we defined $z_{j}^{(\lambda)}\in\vertex\setminus\Omega_{\lambda}$ such that $\metric_{\graph}(x^{(\lambda)}_{j},z^{(\lambda)}_{j})=\delta^{(\lambda)}_{j}$. In particular, $M_{\graph}((Tf)^2)(z^{(\lambda)}_{j})\leq \lambda$.

Since $\Sett{z\in\vertex}{M_{\graph}((Tf)^2)(z)>K^2\lambda}=\Omega_{K^{2}\lambda}\subset \Omega_{\lambda}\subset \bigcup_{j\in I_{\lambda}}3B_j^{(\lambda)}$, we have 
\begin{align}
	&\phantom{\ \leq}\Sett{z\in\vertex}{M_{\graph}((Tf)^2)(z)>K^2\lambda,\ 
 M_{T,A}^{\#}f(z)\leq\gamma\lambda^{\frac{1}{2}}}\\
 &\subset \bigcup_{j\in I_{\lambda}}\Sett{z\in 3B_j^{(\lambda)}}{M_{\graph}((Tf)^2)(z)>K^2\lambda,\ 
 M_{T,A}^{\#}f(z)\leq\gamma\lambda^{\frac{1}{2}}}\\
 &=\bigcup_{j\in I_{\lambda}}\sG(\Omega_{\lambda},j,\lambda,K,\gamma).\label{e.witn1}
\end{align}

Therefore, by \ref{it.ACDHc2}, for $K\in[K_{1},\infty)$, \begin{align}
	&\phantom{\ \leq}\#\Sett{z\in\vertex}{M_{\graph}((Tf)^2)(z)>K^2\lambda,\ 
 M_{T,A}^{\#}f(z)\leq\gamma\lambda^{\frac{1}{2}}}\\
 &\overset{\eqref{e.witn1}}{\leq} \sum_{j\in I_{\lambda}}\#\sG(\Omega_{\lambda},j,\lambda,K,\gamma)\leq \sum_{j\in I_{\lambda}}C\gamma^2\#\wt B^{(\lambda)}_{j}\overset{\eqref{e.count}}{\leq} C_{1}\gamma^2\sum_{j\in I_{\lambda}}\#B^{(\lambda)}_{j} \text{ (doubling property)}\\
 &\leq  C_{1}\gamma^2\#\Omega_{\lambda}\ \text{ (since $\{B_i^{(\lambda)}\}_{i\in I_{\lambda}}$ are disjoint and contained in $\Omega_{\lambda}$)}.\label{e.witn2}
\end{align}
Consequently,
\begin{equation}\label{e.dglamb}
 \begin{aligned}
 &\phantom{\ \leq}\#\set{M_{\graph}((Tf)^2)>K^2\lambda}\\
 &\leq \#\set{M_{\graph}((Tf)^2)>K^2\lambda,\ 
 M_{T,A}^{\#}f(z)\leq\gamma\lambda^{\frac{1}{2}}}+\#\set{M_{T,A}^{\#}f>\gamma\lambda^{\frac{1}{2}}}\\
  &\overset{\eqref{e.witn2}}{\leq} C_{1}\gamma^2
 \#\set{M_{\graph}((Tf)^2)>\lambda}+\#\set{M_{T,A}^{\#}f>\gamma\lambda^{\frac{1}{2}}}.
 \end{aligned}
\end{equation}

Set $q:=p/2>1$, $U:=M_{\graph}((Tf)^2)$, and $S:=M_{T,A}^{\#}f$.
Since $Tf\in\ell^2(\vertex)$ and $F_f\in\ell^1(\vertex)$, the inclusion
$\ell^1(\vertex)\subset\ell^q(\vertex)$, the boundedness of the maximal operator, and
\eqref{e.spctol} imply that $M_{\graph}((Tf)^2)\in\ell^q(\vertex)$ and $S\in\ell^{2q}(\vertex)$. Fix $K\geq K_1$. Multiplying \eqref{e.dglamb} by $qK^{2q}\lambda^{q-1}$ and integrating over $(0,\infty)$ gives
\[
 \lVert M_{\graph}((Tf)^2)\rVert_{\ell^q}^{q}
 \leq C_1\gamma^2K^{2q}\lVert M_{\graph}((Tf)^2)\rVert_{\ell^q}^{q}
      +K^{2q}\gamma^{-2q}\lVert M_{T,A}^{\#}f\rVert_{\ell^{2q}}^{2q}.
\]
Choose $\gamma\in(0,1)$ so that $C_1\gamma^2K^{2q}\leq1/2$.
Absorbing the first term and taking the $p$-th root yields
\begin{equation}\label{e.dglamb2}
 \left\lVert M_{\graph}((Tf)^2)^{\frac{1}{2}}\right\rVert_{\ell^p(\vertex)}
 \leq C_p\lVert M_{T,A}^{\#}f\rVert_{\ell^p(\vertex)}.
\end{equation}
Since $\lvert Tf\rvert^2\leq M_{\graph}((Tf)^2)$, equations \eqref{e.spctol} and \eqref{e.dglamb2}, together with the boundedness of $M_{\graph}$ on $\ell^{\frac{p}{2}}(\vertex)$, yield
 \begin{align}
 &\phantom{\ \leq}\lVert Tf\rVert_{\ell^p(\vertex)}\leq \lVert M_{\graph}((Tf)^2)^{\frac{1}{2}}\rVert_{\ell^p(\vertex)}
 \overset{\eqref{e.dglamb2}}{\leq} C_p\lVert M_{T,A}^{\#}f\rVert_{\ell^p(\vertex)}\\
& \overset{\eqref{e.spctol}}{\leq} C_p\left\lVert(M_{\graph}F_f)^{\frac{1}{2}}\right\rVert_{\ell^p(\vertex)}
 \leq C_p\left(\sum_{v\in\vertex}F_f(v)^{\frac{p}{2}}\right)^{\frac{1}{p}}
 =C_p\left(\sum_{v\in\vertex}
 \lVert f\rVert_{L^2(K_v,\meas)}^p\right)^{\frac{1}{p}}.\label{e.Tcell}
 \end{align}
Finally, \eqref{e.sanwich} and H\"older's inequality yield
\begin{equation}\label{e.cellhr}
 \lVert f\rVert_{L^2(K_v,\meas)}^p
 \leq\meas(K_v)^{\frac{p}{2}-1}\int_{K_v}\lvert f\rvert^p\dif\meas
 \overset{\eqref{e.sanwich},\UVG}{\leq} C
 \int_{K_v}\lvert f\rvert^p\dif\meas.
\end{equation}
Summing \eqref{e.cellhr} over all $v\in\vertex$ and combining with
\eqref{e.Tcell} proves \eqref{e.ACDHcl}.
\end{proof}

As an application, we obtain the following result. 

\begin{proposition}
	Assume $\UVG$, $\HKE$, $\Grad$ and \ref{e.EventGau}. For every $p\in(2,\infty)$ and $s\in(0,\infty)$, there exists $C_{p,s}\in(0,\infty)$ such that, for every $f\in L^{2}(\ambient,\meas)$, \begin{equation}
		\sup_{\kappa\in(0,\infty)}\left(\sum_{v\in\vertex}\norm{\wgrad(-\gen+\kappa)^{-\frac{1}{2}}P_{s}f}_{L^{2}(\skeleton_{v},\medm)}^{p}\right)^{\frac{1}{p}}\leq C_{p,s}\norm{f}_{L^{p}(\ambient,\meas)}.\label{e.ACDHapp}
	\end{equation}
\end{proposition}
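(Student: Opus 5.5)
We apply Proposition~\ref{p.ACDH} with the sublinear map
\[
Tf(v):=\norm{\wgrad(-\gen+\kappa)^{-\frac12}P_{s}f}_{L^{2}(\skeleton_{v},\medm)},\qquad v\in\vertex,
\]
and with the smoothing family $A_{r}:=P_{\scale(r)}$, the same choice as in \cite{ACDH04}. The plan is to verify \ref{it.ha} and \ref{it.hb} with constants independent of $\kappa\in(0,\infty)$. Boundedness $T:L^{2}\to\ell^{2}(\vertex)$ is immediate, because $\sum_{v}Tf(v)^{2}=\norm{(-\gen)^{\frac12}(-\gen+\kappa)^{-\frac12}P_{s}f}_{L^{2}}^{2}\leq\norm{f}_{L^{2}}^{2}$. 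Proposition~\ref{p.ACDH} then gives \eqref{e.ACDHapp} uniformly in $\kappa$.

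\textbf{Step 1: condition \ref{it.hb}.} Set $u:=(-\gen+\kappa)^{-\frac12}P_{s}f\in\domain$. Since the operators commute, $T(A_{R}f)(w)=\norm{\wgrad P_{\scale(R)}u}_{L^{2}(\skeleton_{w},\medm)}$, and $E_{u}(q)=Tf(q)^{2}$. The lemma giving \eqref{e.harhb} then yields
\[
\sup_{w\in B_{\graph}(v,R)}T(A_Rf)(w)\lesssim R^{-1}\scale(R)^{\frac12}\min_{B_{\graph}(v,R)}M_{\graph}((Tf)^{2})^{\frac12}.
\]
For $R\geq1$, the upper bound $\scale(R)\lesssim R^{2}$ comes from \ref{e.EventGau} when $R\geq R_{*}$ and from \eqref{e.scale} on $[1,R_{*}]$. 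Hence $R^{-1}\scale(R)^{\frac12}\lesssim1$, which is exactly \ref{it.hb}. This is the only place where eventual Gaussianity is used.

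\textbf{Step 2: condition \ref{it.ha}.} This is the main obstacle. I would write
\[
T(I-A_{R})f(w)=\norm{\wgrad(-\gen+\kappa)^{-\frac12}(I-P_{\scale(R)})P_{s}f}_{L^{2}(\skeleton_{w},\medm)}.
\]
Then I would use the subordination identity
\[
(-\gen+\kappa)^{-\frac12}=\pi^{-\frac12}\int_{0}^{\infty}e^{-\kappa t}t^{-\frac12}P_{t}\dif t,
\]
which gives
\[
(-\gen+\kappa)^{-\frac12}(I-P_{\scale(R)})P_{s}=\pi^{-\frac12}\int_{0}^{\infty}e^{-\kappa t}t^{-\frac12}\bigl(P_{t+s}-P_{t+s+\scale(R)}\bigr)\dif t.
\]
Decompose $f=\sum_{j\geq0}f\one_{K_{C_{j}}}$ using graph annuli $C_{j}$ around $B_{\graph}(v,R)$ at scale $2^{j}R$, as in the proof of \eqref{e.harhb}.

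For the local piece $j=0$, use the $L^{2}$ boundedness of $T(I-A_{R})$, which has norm at most $2$. Together with $\#B_{\graph}(v,R)\asymp\volume(R)$ from \eqref{e.count}, this bounds the average by $M_{\graph}F_{f}$ at any point of the ball.

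For the far pieces $j\geq1$, I would estimate each $\wgrad P_{\tau}(f\one_{K_{C_{j}}})$ on $\skeleton_{B_{\graph}(v,R)}$ by Corollary~\ref{c.sebagd}-\ref{it.corgd1}. For the difference $P_{t+s}-P_{t+s+\scale(R)}$ I would write it as $-\int_{t+s}^{t+s+\scale(R)}\gen P_{\tau}\dif\tau$. Then I would apply the analogue of \eqref{e.rawgd} obtained by combining \eqref{e.ddt<} with the gradient bound, namely $\wgrad\gen P_{\tau}=\wgrad P_{\tau/2}\gen P_{\tau/2}$, which gains a factor $\tau^{-1}$ with the same Gaussian-type decay. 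Integrating in $t$ should give roughly
\[
\int_{0}^{\infty}t^{-\frac12}\min\Bigl(1,\frac{\scale(R)}{t}\Bigr)\Lambda(t+s)\,e^{-c\Phi(2^{j}R,t+s)}\dif t .
\]
Here I use $\Lambda(\tau)^{2}\asymp1/(\tau\volume(\scale^{-1}(\tau)))$ for $\tau\geq s$, where the $\wedge$ in \eqref{e.deflamb} only matters near $0$, and this is avoided because of the factor $P_{s}$. Multiplying by $\volume(R)^{\frac12}\norm{f}_{L^{2}(K_{C_{j}})}\lesssim\volume(2^{j}R)M_{\graph}F_{f}^{\frac12}$, I expect a bound of $2^{-\epsilon j}$ times $M_{\graph}F_{f}^{\frac12}$, uniformly in $\kappa$ (the factor $e^{-\kappa t}\le 1$ is simply dropped). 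The exponential factor handles $t\lesssim\scale(2^{j}R)$, while for large $t$ the decay $t^{-3/2}\scale(R)$ together with $\volume(\scale^{-1}(t))\gtrsim\volume(2^{j}R)$ handles the tail. This uses $\scale(r)\asymp r^{2}$ at large scales again, so that the $t$-integral produces a net power decay.

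Summing over $j$, using Lemma~\ref{l.trunid} to justify the decomposition, should give \ref{it.ha}. The delicate point is the bookkeeping of the $t$-integral near $t\sim\scale(2^{j}R)$. It must yield summable decay in $j$, and the constants must depend on $s$ but not on $\kappa$ or $R$.
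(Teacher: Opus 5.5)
Your outline is correct and is essentially the paper's argument, except at one step where you need an estimate the paper does not prove.

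\textbf{What matches.} You use the same $T$, the same $A_r=P_{\scale(r)}$, and the same verification of \ref{it.hb} through \eqref{e.harhb}. For \ref{it.ha} you use the same annular splitting: the local piece by $L^2$-boundedness, the far pieces by subordination and \eqref{e.sepgrad1}. Your $t$-integral obeys the same bound as \eqref{e.Qweight}, namely $\lesssim\scale(R)\Lambda(T')T'^{-1/2}$ with $T'=\scale(2^jR)$. This gives decay $2^{-\beta_1 j}$ once \ref{e.EventGau} controls $\scale(2^jR)^{1/2}/(2^jR)$. So, contrary to your Step~1 remark, eventual Gaussianity is used in \ref{it.ha} as well.

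\textbf{Where you differ.} You put the time difference on the semigroup, $P_{t+s}-P_{t+s+\scale(R)}=-\int\gen P_\tau\,\dif\tau$. This requires
$\norm{\wgrad\gen P_\tau(\one_Eg)}_{L^2(\skeleton_w,\medm)}\lesssim\tau^{-1}\Lambda(\tau)e^{-c\Phi(D,\tau)}\norm{g}_{L^2(E,\meas)}$, with $D=\dist(B(w,4),E)$. This bound is not a direct consequence of \eqref{e.ddt<} and $\Grad$, for two reasons:
\begin{itemize}
\item pointwise bounds cannot be integrated over $\skeleton_w$, because $\medm(\skeleton_w)$ may be infinite (it is, in the Vicsek example);
\item $\gen P_{\tau/2}(\one_Eg)$ is not supported in $E$, so \eqref{e.rawgd} does not apply to it as it stands.
\end{itemize}

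\textbf{How to supply it.} Split $\gen P_{\tau/2}(\one_Eg)$ into its parts within and beyond distance $D/2$ of $E$.
\begin{itemize}
\item The part within $D/2$ of $E$ lies at distance $\geq D/2$ from $B(w,4)$, so \eqref{e.rawgd} supplies the decay.
\item The part beyond $D/2$ has $L^2$-norm $\lesssim\tau^{-1}e^{-c\Phi(D/2,\tau/2)}\norm{g}_{L^2(E,\meas)}$, by a Schur bound from \eqref{e.ddt<} and Lemma~\ref{l.intexp}. Then \eqref{e.rawgd} without decay suffices.
\item Finish with $\Phi(D/2,\tau/2)=\frac{1}{2}\Phi(D,\tau)$ and $\Lambda(\tau/2)\lesssim\Lambda(\tau)$.
\end{itemize}

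\textbf{How the paper avoids this.} It shifts variables so that the difference falls on the scalar kernel: your subordination integral equals $\int_0^\infty Q_{s,\scale(R),\kappa}(\tau)P_\tau\,\dif\tau$. The mean value theorem for $r\mapsto e^{-\kappa r}r^{-1/2}$ gives $\sup_\kappa\abs{Q_{s,h,\kappa}(\tau)}\lesssim h(\tau-s)^{-3/2}$ for $\tau\geq s+2h$ (Lemma~\ref{l.KerQ}). So only \eqref{e.sepgrad1} for $\wgrad P_t$ is needed. The paper also starts the annuli at scale $2^{J_s}R$, so that $s$ is negligible compared with $\scale(2^jR)$.

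\textbf{Trade-off.} Your route makes uniformity in $\kappa$ trivial, since $e^{-\kappa t}$ multiplies both terms. The paper's route needs no new gradient estimate.
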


\begin{proof}
Fix $p\in(2,\infty)$, $s\in(0,\infty)$, and $\kappa\in(0,\infty)$, and define $Tf:\vertex\to\bR$ by
\begin{equation}
		(Tf)(v):=\norm{\wgrad(-\gen+\kappa)^{-\frac{1}{2}}P_{s}f}_{L^{2}(\skeleton_{v},\medm)}, \ \text{ for every }v\in\vertex.
	\end{equation}
It is clear that $T$ is non-negative and is sublinear. By the spectral calculus, we have
\begin{equation}\label{e.bddT}
 \lVert Tf\rVert_{\ell^2(\vertex)}^2
 =\int_{[0,\infty)}\frac{\lambda}{\lambda+\kappa}e^{-2s\lambda}
        \dif\langle\proj_\lambda f,f\rangle
 \leq\lVert f\rVert_{L^2(\ambient,\meas)}^2.
\end{equation}
Set $A_r:=P_{\scale(r)}$. We will verify that $T$ and $\{A_{r}\}_{r\in(0,\infty)}$ satisfy conditions \ref{it.ha} and \ref{it.hb} in Proposition \ref{p.ACDH}, with constants independent of $\kappa$.

Fix $v\in\vertex$ and $R\in[1,\infty)$, and let $F_f(w):=\lVert f\rVert_{L^2(K_w,\meas)}^2$. Choose $L_{s}:=1+s/\scale(1)$, so that $s<L_{s}\scale(R)$.
By \eqref{e.scale}, there exists an integer $J_{s}\in\bN$ such that
\[
 2(L_{s}+2)\scale(R)\leq\scale(2^jR)
 \quad\text{for all }R\in[1,\infty)\text{ and }j\in [J_{s},\infty)\cap\bN.
\]
Let
\[
 C_0:=B_{\graph}(v,2^{J_{s}+4}R),\ \text{ and }\
 C_j:=B_{\graph}(v,2^{j+J_{s}+4}R)\setminus B_{\graph}(v,2^{j+J_{s}+3}R),\ j\in\bN.
\]
Define $f_{j}:=f\one_{K_{C_j}}$ for $j\in\{0\}\cup\bN$. Then $f=\sum_{j=0}^{\infty}f_j$ in $L^2(\ambient,\meas)$. By \eqref{e.bddT}, the contractivity of the semigroup, and the doubling property,
\begin{align}
		&\phantom{\ \leq}\frac{1}{\#B_{\graph}(v,R)}\sum_{w\in B_{\graph}(v,R)}
 \lvert T(I-A_{R})(f_{0})(w)\rvert^2\\
 &\leq  \frac{1}{\#B_{\graph}(v,R)}\norm{T(I-P_{\scale(R)})(f\one_{K_{C_{0}}})}_{\ell^{2}(\vertex)}^{2}\\
 &\overset{\eqref{e.bddT}}{\lesssim} \frac{1}{\#B_{\graph}(v,R)}\norm{f\one_{K_{C_{0}}}}_{L^{2}(\ambient,\meas)}^{2}=\frac{1}{\#B_{\graph}(v,R)}\sum_{w\in B_{\graph}(v,2^{J_{s}+4}R)}F_{f}(w)\lesssim \inf_{z\in B_{\graph}(v,R)}M_{\graph}F_{f}(z).
	\end{align}
	For $j\in\bN$, we have $2^{j+J_{s}}R\gtrsim\metric_{\graph}(B_{\graph}(v,R),K_{C_{j}})\gtrsim 2^{j+J_{s}}R-C \gtrsim 2^{j+J_{s}}R$. Considering the function $Q_{s,\scale(R),\kappa}$ defined in Lemma~\ref{l.KerQ}, and applying Lemma~\ref{l.funint} with $\phi:= Q_{s,\scale(R),\kappa}$, we have \begin{equation}
		\wgrad(-\gen+\kappa)^{-\frac{1}{2}}P_{s}(I-P_{\scale(R)})=\riesz n_{\phi}(-\gen).
	\end{equation}
	By Lemma~\ref{l.funint}-\ref{it.scalar3} and the duality, we have for every measurable subset $F\subset\skeleton$, 
	\begin{equation}
		\norm{\wgrad(-\gen+\kappa)^{-\frac{1}{2}}P_{s}(I-P_{\scale(R)})f}_{L^{2}(F,\medm)}\leq \int_{0}^{\infty}\abs{Q_{s,\scale(R),\kappa}(t)}\norm{\wgrad P_{t}f}_{L^{2}(F,\medm)}\dif t.\label{e.bddT1}
	\end{equation} 
	As a consequence,
	\begin{align}
		&\phantom{\ \leq}\frac{1}{\#B_{\graph}(v,R)}\sum_{w\in B_{\graph}(v,R)}
 \lvert T(I-A_{R})(f\one_{K_{C_{j}}})(w)\rvert^2\\
 &\leq  \frac{1}{\#B_{\graph}(v,R)}\norm{\wgrad(-\gen+\kappa)^{-\frac{1}{2}}P_{s}(I-P_{\scale(R)})(f\one_{K_{C_{j}}})}_{L^{2}(\skeleton_{B_{\graph}(v,R)},\medm)}^{2}\\
 &\overset{\eqref{e.bddT1},\eqref{e.sepgrad1}}{\lesssim} \frac{\volume(R)}{\#B_{\graph}(v,R)}
\left(\int_{0}^{\infty}\frac{\abs{Q_{s,\scale(R),\kappa}(t)}\Lambda(t)}{\exp(c\Phi(\metric_{\graph}(B_{\graph}(v,R),K_{C_{j}}),t))}\dif t\right)^{2}\norm{f}_{L^{2}(K_{C_{j}},\meas)}^{2}\\
&\overset{\eqref{e.Qweight}}{\lesssim} \left(\frac{\scale(R)}{\scale(2^{j}R)}\right)^{2}\frac{\scale(2^{j}R)}{(2^{j}R)^{2}V(2^{j}R)}\norm{f}_{L^{2}(K_{C_{j}},\meas)}^{2}\\
&\lesssim 2^{-2\beta_{1}j}\left(\sup_{r\in[1,\infty)}r^{-2}{\scale(r)}\right)\inf_{z\in B_{\graph}(v,R)}M_{\graph}F_{f}(z)\overset{\eqref{e.EventGau}}{\lesssim}2^{-2\beta_{1}j}\inf_{z\in B_{\graph}(v,R)}M_{\graph}F_{f}(z).\label{e.bddT2}
	\end{align}
	The sublinearity of $T$ and Minkowski’s inequality give \begin{align}
		&\phantom{\ \leq} \frac{1}{\#B_{\graph}(v,R)}\sum_{w\in B_{\graph}(v,R)}
 \lvert T(I-A_{R})f(w)\rvert^2\\
 &\leq \left(\sum_{j=0}^{\infty}\left(\frac{1}{\#B_{\graph}(v,R)}\sum_{w\in B_{\graph}(v,R)}
 \lvert T(I-A_{R})(f\one_{K_{C_{j}}})(w)\rvert^2\right)^{\frac{1}{2}}\right)^{2}\\
 &\overset{\eqref{e.bddT2}}{\lesssim} \left(\sum_{j=0}^{\infty}2^{-\beta_{1}j}\right)^{2} \inf_{z\in B_{\graph}(v,R)}M_{\graph}F_{f}(z)\lesssim  \inf_{z\in B_{\graph}(v,R)}M_{\graph}F_{f}(z).
	\end{align}
	This proves \ref{it.ha}. To verify \ref{it.hb}, observe that \begin{equation}\label{e.bDDT3}
		(Tf)^{2}(q)=\norm{\wgrad (-\gen+\kappa)^{-\frac{1}{2}}P_{s}f}_{L^{2}(\skeleton_{q},\medm)}^{2}=E_{(-\gen+\kappa)^{-\frac{1}{2}}P_{s}f}(q).
	\end{equation}
Consequently, \begin{align}
		&\phantom{\ \leq}\sup_{w\in B_{\graph}(v,R)} (T(A_{R}f))(w)=\sup_{w\in B_{\graph}(v,R)}\norm{\wgrad(-\gen+\kappa)^{-\frac{1}{2}}P_{s}P_{\scale(R)}f}_{L^{2}(\skeleton_{w},\medm)}\\
		&=\sup_{w\in B_{\graph}(v,R)}\norm{\wgrad P_{\scale(R)}(-\gen+\kappa)^{-\frac{1}{2}}P_{s}f}_{L^{2}(\skeleton_{w},\medm)}\\
		&\overset{\eqref{e.harhb}}{\lesssim}R^{-1}\scale{(R)}^{\frac{1}{2}}\min_{q\in B_{\graph}(v,R)}(M_{\graph}E_{(-\gen+\kappa)^{-\frac{1}{2}}P_{s}f})(q)^{\frac{1}{2}}\\
		&\overset{\eqref{e.bDDT3}}{=}R^{-1}\scale{(R)}^{\frac{1}{2}}\min_{q\in B_{\graph}(v,R)}(M_{\graph}((Tf)^{2}))(q)^{\frac{1}{2}}\overset{\eqref{e.EventGau}}{\leq}C\min_{q\in B_{\graph}(v,R)}(M_{\graph}((Tf)^{2}))(q)^{\frac{1}{2}}.
	\end{align}
	This proves \ref{it.hb}. Proposition~\ref{p.ACDH} applies with constants
independent of $\kappa$. Taking the supremum over $\kappa\in(0,\infty)$ proves
\eqref{e.ACDHapp}.
\end{proof}

\begin{proposition}\label{p.bddlow}
	Assume $\UVG$, $\HKE$, $\Grad$ and \ref{e.EventGau}. For every $p\in(2,\infty)$, there exists $C_{p}\in(0,\infty)$ such that \begin{equation}\label{e.lowest}
		\norm{\riesz_{\low}f}_{L^{p}(\skeleton,\medm)}\leq C_{p} \norm{f}_{L^{p}(\ambient,\meas)},\ \text{for all }f\in L^{p}(\ambient,\meas)\cap L^{2}(\ambient,\meas).
	\end{equation}
\end{proposition}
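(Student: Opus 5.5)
We plan to deduce \eqref{e.lowest} from the preceding proposition \eqref{e.ACDHapp} (with $s=1$), combined with Lemma~\ref{l.utsmth} to upgrade cellwise $L^{2}$ control to $L^{p}$ control on $\skeleton$, and then a limit $\kappa\downarrow0$ via Lemma~\ref{l.defriesz}-\ref{it.dres3}.

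\textbf{Step 1 (semigroup splitting).} Write $P_{1}=P_{1/2}P_{1/2}$. For $\kappa>0$ and $f\in L^{2}\cap L^{p}$ set $u_{\kappa}:=(-\gen+\kappa)^{-\frac12}P_{1/2}f\in\domain$. By spectral calculus, $(-\gen+\kappa)^{-\frac12}P_{1}f=P_{1/2}u_{\kappa}$. Lemma~\ref{l.utsmth} with $t=1/2$ then gives
\[
\norm{\wgrad(-\gen+\kappa)^{-\frac12}P_{1}f}_{L^{p}(\skeleton,\medm)}=\norm{\wgrad P_{1/2}u_{\kappa}}_{L^{p}(\skeleton,\medm)}\leq C_{p}(1/2)\Bigl(\sum_{v\in\vertex}\norm{\wgrad u_{\kappa}}_{L^{2}(\skeleton_{v},\medm)}^{p}\Bigr)^{\frac1p}.
\]
The right-hand side is at most $C_{p}(1/2)\,C_{p,1/2}\norm{f}_{L^{p}(\ambient,\meas)}$ by \eqref{e.ACDHapp} with $s=1/2$, uniformly in $\kappa$. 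This is where \ref{e.EventGau} and $\Grad$ enter, through the earlier proposition.

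\textbf{Step 2 (removing $\kappa$).} Since $P_{1}f\in\domain$, Lemma~\ref{l.defriesz}-\ref{it.dres3} gives $\wgrad(-\gen+\kappa)^{-\frac12}P_{1}f\to\riesz P_{1}f=\riesz_{\low}f$ in $L^{2}(\skeleton,\medm)$ as $\kappa\downarrow0$. Pass to a subsequence converging $\medm$-a.e. Fatou's lemma then transfers the uniform $L^{p}$ bound of Step~1 to $\riesz_{\low}f$, giving \eqref{e.lowest}.

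\textbf{Main obstacle.} The delicate point is Step~1. We must check that Lemma~\ref{l.utsmth} applies to $u_{\kappa}$, which requires $u_{\kappa}\in\domain$; this holds since $\lambda\mapsto(\lambda+\kappa)^{-1/2}$ is bounded for $\kappa>0$. We must also check that the constant $C_{p}(1/2)$ does not depend on $\kappa$; it depends only on $t$ and $p$. If Lemma~\ref{l.utsmth} needed finiteness of the right-hand side, that finiteness follows from the bound \eqref{e.ACDHapp} itself.
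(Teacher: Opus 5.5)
Your proposal is correct and follows the paper's proof essentially line for line. You split $P_{1}=P_{1/2}P_{1/2}$, apply Lemma~\ref{l.utsmth} with $t=1/2$ and then \eqref{e.ACDHapp} with $s=1/2$ uniformly in $\kappa$, and remove $\kappa$ via Lemma~\ref{l.defriesz}-\ref{it.dres3}, an a.e.-convergent subsequence, and Fatou's lemma (your opening sentence says $s=1$, but Step~1 correctly uses $s=1/2$).
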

\begin{proof}
Fix $f\in L^p(\ambient,\meas)\cap L^2(\ambient,\meas)$. We have \begin{align}
	&\phantom{\ \leq}\sup_{\kappa\in(0,\infty)}\norm{\wgrad(-\gen+\kappa)^{-\frac{1}{2}}P_{1}f}_{L^{p}(\skeleton,\medm)}=\sup_{\kappa\in(0,\infty)}\norm{\wgrad P_{\frac{1}{2}}(-\gen+\kappa)^{-\frac{1}{2}}P_{\frac{1}{2}}f}_{L^{p}(\skeleton,\medm)}\\
	&\overset{\eqref{e.utsmth}}{\lesssim}\sup_{\kappa\in(0,\infty)}\left(\sum_{v\in\vertex}\norm{\wgrad (-\gen+\kappa)^{-\frac{1}{2}}P_{\frac{1}{2}}f}_{L^{2}(\skeleton_v,\medm)}^{p}\right)^{\frac{1}{p}}\overset{\eqref{e.ACDHapp}}{\lesssim}\norm{f}_{L^{p}(\ambient,\meas)}.\label{e.lowest1}
\end{align}

By Lemma~\ref{l.defriesz}-\ref{it.dres3}, there exists a sequence $\kappa_n\downarrow0$ such that $\wgrad(-\gen+\kappa_{n})^{-\frac{1}{2}}P_{1}f$ converges to $\riesz_{\low}f$ $\medm$-a.e.. By Fatou's lemma, \begin{equation}
	\norm{\riesz_{\low}f}_{L^{p}(\skeleton,\medm)}\overset{\text{(Fatou)}}{\leq}\liminf_{n\to\infty}\norm{\wgrad(-\gen+\kappa_{n})^{-\frac{1}{2}}P_{1}f}_{L^{p}(\skeleton,\medm)}\overset{\eqref{e.lowest1}}{\lesssim}\norm{f}_{L^{p}(\ambient,\meas)}.
\end{equation}
This completes the proof of \eqref{e.lowest}.
\end{proof}

\subsection{Estimates for the high-frequency part \texorpdfstring{$\riesz_{\high}$}{R\_{♯}}}

\begin{proposition}\label{p.bddhi}
Assume $\UVG$, $\HKE$, and $\Grad$. Let $p\in(2,\infty)$. Suppose that $\DN{p}$ holds. Then there exists $C_p\in(0,\infty)$ such that
\begin{equation}\label{e.hiest}
 \norm{\riesz_{\high}f}_{L^p(\skeleton,\medm)}
 \leq C_p\norm{f}_{L^p(\ambient,\meas)},
 \ \text{ for all } f\in L^p(\ambient,\meas)\cap L^2(\ambient,\meas).
\end{equation}
\end{proposition}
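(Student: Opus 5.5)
We plan to write $\riesz_{\high}$ as a superposition of gradients of the semigroup and then apply Proposition~\ref{p.migrad} inside the integral. Start from the spectral identity
\[
\lambda^{-\frac12}(1-e^{-\lambda})=\pi^{-\frac12}\int_0^\infty t^{-\frac12}\bigl(e^{-t\lambda}-e^{-(t+1)\lambda}\bigr)\dif t
=\pi^{-\frac12}\int_0^\infty \phi(t)e^{-t\lambda}\dif t,
\]
with $\phi(t):=t^{-\frac12}-(t-1)^{-\frac12}\one_{(1,\infty)}(t)$. This gives $\riesz_{\high}=\riesz(I-P_1)=\pi^{-\frac12}\riesz n_\phi(-\gen)$, where $n_\phi(\lambda)=\lambda^{\frac12}m_\phi(\lambda)$. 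The kernel $\phi$ is not integrable against $t^{-\frac12}$ near $0$. So we first truncate: for $\varepsilon\in(0,1)$ put $\phi_\varepsilon:=\phi\one_{[\varepsilon,\infty)}$, treated in the same way (or equivalently use $P_\varepsilon(I-P_1)$). We check that $\phi_\varepsilon\in L^1$ and $\int_0^\infty|\phi_\varepsilon(t)|t^{-\frac12}\dif t<\infty$. For large $t$ this holds because $|\phi(t)|\lesssim t^{-\frac32}$. Hence Lemma~\ref{l.funint}-\ref{it.scalar3} applies and gives, for $f\in L^p\cap L^2$ and $h\in L^{p'}\cap L^2(\skeleton,\medm)$,
\[
\langle \riesz n_{\phi_\varepsilon}(-\gen)f,h\rangle=\int_0^\infty\phi_\varepsilon(t)\langle\wgrad P_tf,h\rangle\dif t.
\]

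Next we bound the integral by H\"older's inequality and \eqref{e.migrad}:
\[
\abs{\langle \riesz n_{\phi_\varepsilon}(-\gen)f,h\rangle}\lesssim\norm{f}_{L^p}\norm{h}_{L^{p'}}\int_0^\infty|\phi(t)|\,t^{-\frac1p}\scale^{-1}(t)^{-1+\frac2p}\dif t .
\]
We then show the integral is finite, splitting at $t=\scale(1)$ (or any fixed time).

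For small $t$, $|\phi(t)|\asymp t^{-\frac12}$. Substituting $t=\scale(r)$ turns the integrand into
\[
\scale(r)^{-\frac12-\frac1p}r^{-1+\frac2p}\dif\scale(r).
\]
We handle the $\dif\scale$ without differentiability by a dyadic decomposition $t\in[\scale(2^{-k-1}),\scale(2^{-k})]$ together with \eqref{e.scale}. The $k$-th piece is then $\asymp\scale(r_k)^{\frac12-\frac1p}r_k^{-1+\frac2p}=(\scale(r_k)/r_k^2)^{\frac12-\frac1p}$ with $r_k=2^{-k}$. Summing these is comparable to $\int_0^1(\scale(r)/r^2)^{\frac12-\frac1p}\dif r/r$, which is finite exactly by $\DN{p}$. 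Here we use monotonicity of $\scale$ and \eqref{e.growth1} to compare dyadic sums with the integral.

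For large $t$, $|\phi(t)|\lesssim t^{-\frac32}$. Since $\scale^{-1}(t)\geq c$ for $t\geq\scale(1)$ and $-1+\frac2p<0$, the integrand is $\lesssim t^{-\frac32-\frac1p}$, which is integrable. The window $t\in(1,2)$, where $(t-1)^{-\frac12}$ blows up, is integrable because that singularity is of order $-\frac12$ and $\scale^{-1}(t)$ is bounded below there.

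By duality this gives $\norm{\riesz n_{\phi_\varepsilon}(-\gen)f}_{L^p}\leq C_p\norm{f}_{L^p}$ uniformly in $\varepsilon$. Finally we let $\varepsilon\downarrow0$. The functions $n_{\phi_\varepsilon}$ are uniformly bounded and converge pointwise to $\pi^{\frac12}\lambda^{-\frac12}(1-e^{-\lambda})\cdot\lambda^{\frac12}\cdot\pi^{-\frac12}$ up to normalisation. Hence $n_{\phi_\varepsilon}(-\gen)f\to\pi^{\frac12}(I-P_1)f$ in $L^2$, so $\riesz n_{\phi_\varepsilon}(-\gen)f\to\pi^{\frac12}\riesz_{\high}f$ in $L^2(\skeleton,\medm)$. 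A subsequence converges $\medm$-a.e., and Fatou's lemma transfers the $L^p$ bound to $\riesz_{\high}f$.

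The main obstacle is the small-time integral: matching $\int_0 t^{-\frac12-\frac1p}\scale^{-1}(t)^{-1+\frac2p}\dif t$ with $\DN{p}$ through the change of variables. The endpoint near $t=0$ is also where the truncation is genuinely needed for Lemma~\ref{l.funint} to apply.
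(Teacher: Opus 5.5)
Your proposal is correct and follows the paper's proof essentially step for step: the same kernel $Q_{0,1,0}$ (up to the factor $\pi^{-1/2}$), truncation at $\varepsilon$, Lemma~\ref{l.funint}-\ref{it.scalar3} combined with H\"older and Proposition~\ref{p.migrad}, duality, and an a.e.\ convergent subsequence plus Fatou as $\varepsilon\downarrow0$. The one difference is how the small-time integral is tied to $\DN{p}$. You use a dyadic comparison, which only needs the direction ``time integral $\lesssim$ Dini integral'', and your doubling argument supplies that. The paper instead uses the exact Tonelli identity of Proposition~\ref{p.equiH1}, and controls $t\geq1$ through monotonicity of $t\mapsto t^{-1/p}\scale^{-1}(t)^{-1+2/p}$.
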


\begin{proof}
Set $\phi:=Q_{0,1,0}$. By Lemma~\ref{l.KerQ},
\begin{equation}
		\int_{0}^{\infty}\phi(t)e^{-t\lambda}\dif t=\lambda^{-\frac{1}{2}}(1-e^{-\lambda}),\ \lambda\in(0,\infty).
	\end{equation}
By Proposition~\ref{p.equiH1} and the condition $\DN{p}$, we know that $\int_0^1t^{-\frac12-\frac1p} \scale^{-1}(t)^{-1+\frac2p}\dif t<\infty$. Since the function $t\mapsto t^{-1/p}\scale^{-1}(t)^{-1+2/p}$ is decreasing, we therefore have
\begin{equation}\label{e.Jpftn}
	 J_p:=\int_0^\infty|\phi(t)|t^{-1/p} \scale^{-1}(t)^{-1+2/p}\dif t<\infty.
\end{equation}
Fix $f\in L^2(\ambient,\meas)\cap L^p(\ambient,\meas)$. For $\varepsilon\in(0,1)$, define
\[
 \phi_\varepsilon(t):=\one_{(\varepsilon,\infty)}(t)\phi(t),
 \ \text{ and }\
 n_\varepsilon(\lambda):=\lambda^{\frac{1}{2}}
 \int_\varepsilon^\infty\phi(t)e^{-t\lambda}\dif t,\ \lambda\in[0,\infty).
\]
Now $\int_0^\infty|\phi_\varepsilon(t)|t^{-1/2}\dif t<\infty$, so by Lemma~\ref{l.funint}-\ref{it.scalar3} and the H\"{o}lder inequality, we have, for all $g\in L^2(\skeleton,\medm)\cap L^{p'}(\skeleton,\medm)$
\begin{align}
 \bigl|\langle\riesz n_\varepsilon(-\gen)f,g\rangle_{L^2(\skeleton,\medm)}\bigr|
 &\overset{\eqref{e.sccgradf}}{\leq}\int_\varepsilon^\infty|\phi(t)|
 \norm{\wgrad P_tf}_{L^p(\skeleton,\medm)}\dif t\,
 \norm{g}_{L^{p'}(\skeleton,\medm)}\\
 &\overset{\eqref{e.migrad},\eqref{e.Jpftn}}{\leq} C_pJ_p\norm{f}_{L^p(\ambient,\meas)}
 \norm{g}_{L^{p'}(\skeleton,\medm)},
\end{align}
where the last inequality follows from Proposition~\ref{p.migrad}. By duality,
\begin{equation}\label{e.hiestvep}
	 \norm{\riesz n_\varepsilon(-\gen)f}_{L^p(\skeleton,\medm)}
 \leq C_pJ_p\norm{f}_{L^p(\ambient,\meas)}.
\end{equation}
Since
\[
 n_\varepsilon(\lambda)
 =1-e^{-\lambda}-\frac{\lambda^{\frac{1}{2}}}{\sqrt\pi}
 \int_0^\varepsilon t^{-1/2}e^{-t\lambda}\dif t,\ \text{for every $\lambda\in[0,\infty)$},
\]
we have $|n_\varepsilon(\lambda)|\leq2$ and $\lim_{\varepsilon\downarrow0} n_\varepsilon(\lambda)=1-e^{-\lambda}$ for all $\lambda\in[0,\infty)$. By the spectral theorem and Lemma~\ref{l.defriesz}, we have
\[
 \lim_{\varepsilon\downarrow0}\riesz n_\varepsilon(-\gen)f=
 \riesz(I-P_1)f=\riesz_{\high}f
 \quad\text{in }L^2(\skeleton,\medm).
\]
Let $\varepsilon_{j}\downarrow0$ as $j\uparrow\infty$ such that $\lim_{j\uparrow \infty}\riesz n_{\varepsilon_{j}}(-\gen)f=\riesz_{\high}f$ $\medm$-a.e. on $\skeleton$. Then \begin{equation}
	\norm{\riesz_{\high}f}_{L^p(\skeleton,\medm)}\leq\liminf_{j\uparrow\infty}\norm{\riesz n_{\varepsilon_{j}}(-\gen)f}_{L^p(\skeleton,\medm)}\overset{ \eqref{e.hiestvep}}{ \leq} C_pJ_p\norm{f}_{L^p(\ambient,\meas)},
\end{equation} 
which gives \eqref{e.hiest}.
\end{proof}

\subsection{The one-dimensional case}

 \begin{proposition}\label{p.onedim}
 	Assume $\UVG$, $\HKE$, $\Grad$, \ref{e.EventGau} and \ref{e.linear}. Then for each $p\in[2,\infty)$, $\R{p}$ holds, namely, there exists $C_{p}\in(0,\infty)$ such that \begin{equation}
	\norm{\riesz f}_{L^{p}(\skeleton,\medm)}\leq C_{p}\norm{f}_{L^{p}(\ambient,\meas)},\ \text{ for all }f\in L^{2}(\ambient,\meas)\cap L^{p}(\ambient,\meas).
\end{equation}
 \end{proposition}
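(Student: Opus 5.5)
The overview in the introduction says that in this case we show $\meas\asymp\medm$ and then apply \cite[Theorem~1.9]{CJKS20}. I would follow that outline.

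\textbf{Step 1: small-scale volume is linear.} By \eqref{e.growth2} in Lemma~\ref{l.growthPsi}, $\scale(r)\asymp r\volume(r)$ for $r\in(0,\iota]$. Together with \ref{e.linear}, this gives $\volume(r)\asymp r$ for $r\in(0,r_*\wedge\iota]$. By $\UVG$ we then have $\meas(B(x,r))\asymp r$ uniformly in $x$ for all small $r$.

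\textbf{Step 2: comparison of $\meas$ and $\medm$.} On a small ball $B(x,r)$, which is a real tree, I want $\medm(B(x,r))\asymp r$ as well. The lower bound $\medm(B(x,r))\geq r$ is immediate, since the ball contains a geodesic segment of length $r$ (the space is unbounded and geodesic). The upper bound is harder, because a tree ball could have many branches. Here I would use a packing argument on the branches.
\begin{itemize}
\item Suppose $B(x,r)$ contains $N$ disjoint arcs of length $r/4$. Each arc contains a ball of radius $r/8$ around its midpoint, and these balls are essentially disjoint.
\item Tree geometry should give $N\lesssim\meas(B(x,2r))/\volume(r/8)\lesssim 1$.
\item Hence $\medm(B(x,r))\lesssim r$.
\end{itemize}
Once both measures of every small ball are comparable to $r$, a Vitali/differentiation argument for the doubling measure $\meas$ should give $\meas\asymp\medm$ on Borel sets. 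This needs care in two places. The set $\ambient\setminus\skeleton$ must be shown to be $\meas$-null; I expect this to follow because $\meas(\ambient\setminus\skeleton)$ is controlled by covering the branch points with small balls. I also need $\medm$ to be Radon, which the upper bound gives. I expect this step to be the main obstacle, since the measures may a priori be mutually singular.

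\textbf{Step 3: reduction to a carré du champ setting.} With $\dif\medm=\rho\,\dif\meas$, $\rho\asymp1$, the form becomes $\form(u,u)=\int|\wgrad u|^2\rho\,\dif\meas$. So $\Gamma(u)=|\wgrad u|^2\rho$ is a carré du champ with respect to $\meas$, and
\[
\norm{\wgrad f}_{L^p(\skeleton,\medm)}\asymp\norm{\Gamma(f)^{1/2}}_{L^p(\ambient,\meas)}.
\]
Thus $\R{p}$ is equivalent to $L^p$ boundedness of $\Gamma((-\gen)^{-1/2}\cdot)^{1/2}$.

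\textbf{Step 4: verifying the hypotheses of \cite[Theorem~1.9]{CJKS20}.} By Step 1 and \ref{e.EventGau}, $\scale(r)\asymp r^2$ at all scales, so $\HKE$ is a two-sided Gaussian estimate and $\volume$ is doubling. That theorem requires a gradient heat kernel bound of Gaussian type. This is exactly $\Grad$ with $\scale^{-1}(t)\asymp\sqrt t$, transferred from $\medm$ to $\meas$ by Step 2. The result then gives $L^p$ boundedness for every $p\in(2,\infty)$. The case $p=2$ is the isometry of Lemma~\ref{l.defriesz}.

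Alternatively, if applying that theorem proves awkward, I can avoid it. Proposition~\ref{p.bddlow} already handles $\riesz_{\low}$. For $\riesz_{\high}$ only small times matter, and there $\meas\asymp\medm$ gives a local Gaussian Dirichlet space with Gaussian gradient bounds. A standard Calderón--Zygmund argument (dualising to the $L^{p'}$ bound for $\riesz^*$ with kernel estimates from $\Grad$) should then handle the truncated singular integral $\int_0^1 t^{-1/2}\wgrad P_t\,\dif t$.
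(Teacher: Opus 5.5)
Your route is the paper's: get $\volume(r)\asymp r$ at small scales from \eqref{e.growth2} and \ref{e.linear}, prove $\meas\asymp\medm$, and then invoke \cite[Theorem~1.9]{CJKS20}. Steps~1, 3 and~4 are essentially fine. The paper checks (UE) by bi-Lipschitz invariance of $\HKE$, checks the $L^p$-Poincaré inequality using \eqref{e.PI} with $A_{\mathrm{PI}}=1$, and checks $(G_p)$ via Proposition~\ref{p.migrad}, rather than a pointwise Gaussian gradient bound. Your density $\rho\asymp1$ also gives the comparison of the intrinsic metric with $\metric$, which the paper takes from \cite{KM20}.

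The genuine gap is the upper bound $\medm(B(x,r))\lesssim r$ in Step~2. Bounding the number of disjoint arcs of length $r/4$ in $B(x,r)$ cannot control total length. A spine of length $r$ carrying very many teeth of length $\delta\ll r$ contains only boundedly many disjoint arcs of length $r/4$, yet its length measure is arbitrarily large. So the ``hence'' does not follow. The disjointness of the midpoint balls also fails: if the midpoints of two disjoint arcs are joined by a branch of length $\varepsilon$, the two balls of radius $r/8$ overlap.

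What is missing is to use the linear volume bound at every scale $\delta\downarrow0$, together with $\medm=\sH^1$ on $\skeleton$. The paper takes a maximal $\delta$-separated set $\{x_j\}_{j=1}^{N_\delta}$ in $\overline{B(x,r)}$. The balls $B(x_j,\delta/3)$ are disjoint, lie in $B(x,2r)$, and satisfy $\meas(B(x_j,\delta/3))\gtrsim\delta$, so $N_\delta\lesssim r/\delta$. Covering by the balls $B(x_j,\delta)$ then gives $\medm(B(x,r))\leq\limsup_{\delta\downarrow0}2\delta N_\delta\lesssim r$.

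With two-sided ball comparability at small scales, the $5B$-covering argument finishes the comparison. It bounds $\medm(K)\lesssim\meas(O)$ and $\meas(K)\lesssim\medm(O)$ for compact $K\subset A\subset O$ with $O$ open. Regularity of both measures ($\medm$ is now Radon) then gives $\meas\asymp\medm$ on all Borel sets. In particular $\meas(\ambient\setminus\skeleton)\lesssim\medm(\ambient\setminus\skeleton)=0$ comes for free.

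Your separate plan for this null set is misdirected. $\ambient\setminus\skeleton$ consists of endpoints of the tree; branch points lie in $\skeleton$. In Vicsek-type sets the endpoints carry all of $\meas$, so their nullity is a consequence of linear growth at all small scales, not of covering branch points.

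Your fallback Calderón--Zygmund argument is not needed. As sketched, treating $\riesz_{\high}^*$ would also require regularity of $y\mapsto\wgrad p_{t,x}(y)$, which $\Grad$ does not provide.
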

 \begin{proof}
By \ref{e.EventGau}, \ref{e.linear}, and the continuity of $\scale$, we may assume that $\scale(r)=r^{2}$ for all $r\in(0,\infty)$. By \cite[Theorem 2.13-(b)]{KM20}, the energy measure $\Gamma\langle f,f\rangle\ll\meas$ for every $f\in\domain$, and the intrinsic metric $\metric_{\mathrm{int}}$ of $(\form,\domain)$ is bi-Lipschitz equivalent to the underlying metric $\metric$.  Therefore, by the definition in \cite[p.~4]{CJKS20}, $(\ambient,\metric_{\mathrm{int}},\meas,\form)$ is a \emph{Dirichlet metric measure space endowed with a ``carr\'e du champ''}. We verify that the ambient measure $\meas$ is comparable to $\medm$, and that $(\ambient,\metric_{\mathrm{int}},\meas,\form)$ satisfies the conditions ${(UE)}$, ${P}_{p}$ and ${G}_{p}$ in \cite{CJKS20}.\begin{enumerate}[label=\textup{(\roman*)},align=right,leftmargin=*,topsep=5pt,parsep=0pt,itemsep=2pt]
 	\item We first prove the equivalence between $\meas$ and $\medm$. By Lemma~\ref{l.growthPsi} and \ref{e.linear}, the metric measure space $(\ambient,\metric,\meas)$ has \emph{locally linear} volume growth: there exists $C\in(1,\infty)$ such that
   \begin{equation}\label{e.linear1}
C^{-1}r\overset{\eqref{e.linear}}{\leq} \volume(r)\leq Cr,\ \text{ for all }r\in(0,r_{*}).
 \end{equation} Fix $x\in\ambient$ and $r\in(0,\min(r_{*},\iota))$. Since $(\ambient,\metric)$ is unbounded, there exists $y\in\ambient$ with $\metric(x,y)>r$. By the geodesic property, there exists a unit-speed geodesic $\gamma:[0,\metric(x,y)]\to\ambient$ with $\gamma(0)=x$ and $\gamma(d(x,y))=y$. It follows that $\medm(B(x,r))\geq\sH^{1}(\gamma(0,r))=r$. Let $\delta\in(0,r/10)$. Since $\ol{B(x,r)}$ is compact, there exists a finite maximal $\delta$-separated set $\{x_{j}\}_{j=1}^{N_{\delta}}$. Since $\{B(x_{j},\delta/3)\}_{j=1}^{N_{\delta}}$ are pairwise disjoint, and $\bigcup_{j=1}^{N_{\delta}}B(x_{j},\delta/3)\subset B(x,2r)$, we have \begin{equation}
 		\frac{N_{\delta}\delta}{3C}\leq \sum_{j=1}^{N_{\delta}}\meas(B(x_{j},\delta/3))\leq \meas(B(x,2r))\overset{\eqref{e.linear1}}{\leq} Cr.
 	\end{equation}
 	Hence, $N_{\delta}\leq 3C^{2}\delta^{-1}r$. Since $\{x_{j}\}_{j=1}^{N_{\delta}}$ is maximal, and $\{B(x_{j},\delta)\}_{j=1}^{N_\delta}$ covers $B(x,r)$, by the definition of the one-dimensional Hausdorff measure, we have \begin{equation}
 		\medm(B(x,r))=\sH^{1}(\skeleton\cap B(x,r))\leq\limsup_{\delta\downarrow0}\sum_{j=1}^{N_{\delta}}\diam(B(x_{j},\delta))\leq 6C^{2}r.
 	\end{equation}
 	Therefore, \begin{equation}
 		r\leq\medm(B(x,r))\leq 6C^{2}r,\ \text{ for every }(x,r)\in\ambient\times(0,r_{*}\wedge\iota).
 	\end{equation}
 	
 	Let $A\subset \ambient$ be a Borel set. Let $K\subset A$ be a compact set, and let $O\supset A$ be an open set. For each $x\in K$, choose $r_{x}\in (0,(10)^{-1}(\iota\wedge r_{*}\wedge\dist(x,\ambient\setminus O)))$. Here, if $O=\ambient$, then $\dist(x,\ambient\setminus O):=\infty$. The family $\{B(x,r_{x})\}_{x\in K}$ is a covering of $K$. By the $5B$-covering lemma, there exists an at most countable subset $K_{1}\subset K$ such that $K\subset\bigcup_{x\in K_{1}}B(x,5r_{x})\subset O$ and $\{B(x,r_{x})\}_{x\in K_{1}}$ are pairwise disjoint. Therefore, \begin{equation}
 		\medm(K)\leq \sum_{x\in K_1}\medm(B(x,5r_{x}))\leq 30C^{2}\sum_{x\in K_1} r_{x}\overset{\eqref{e.linear1}}{\leq} 30C^{3}\sum_{x\in K_1} \meas(B(x,r_{x}))\leq 30C^{3}\meas(O).
 	\end{equation}
 	Moreover, \begin{equation}
 		\meas(K)\leq \sum_{x\in K_1}\meas(B(x,5r_{x}))\overset{\eqref{e.linear1}}{\leq} 5C\sum_{x\in K_1} r_{x}\leq 5C\sum_{x\in K_1}\medm(B(x,r_{x}))\leq 5C \medm(O).
 	\end{equation}
 	In particular, this proves that $\medm$ is a Radon measure on $(\ambient,\metric)$. Using the inner and outer regularity of $\medm$ and $\meas$, we may take the supremum over compact $K\subset A$ and then the infimum over open $O\supset A$, and obtain
\begin{equation}\label{e.medm=meas}
 		\frac{1}{5C}\meas(A)\leq\medm(A)\leq 30C^{3}\meas(A).
 	\end{equation}
	\item  Since the heat kernel estimates $\HKE$ are invariant under bi-Lipschitz changes of metric, we know that $(\ambient,\metric_{\mathrm{int}},\meas,\form)$ satisfies ${(UE)}$ defined in \cite[p.~5]{CJKS20}.
	\item Under $\UVG$ and $\HKE$, the Poincar\'{e} inequality \eqref{e.PI} holds. By \cite[Theorem 2.4]{Stu96}, the constant $A_{\mathrm{PI}}$ appearing in \eqref{e.PI} can be chosen to be $A_{\mathrm{PI}}=1$. Therefore, by H\"{o}lder's inequality, the bi-Lipschitz equivalence between $\metric$ and $\metric_{\mathrm{int}}$, and $p\geq2$, we have
\begin{align}
		&\phantom{\ \leq}\fint_{B(x,r)}\abs{f-f_{B(x,r)}}\dif\meas\\
		&\leq \left(\fint_{B(x,r)}\abs{f-f_{B(x,r)}}^{2}\dif\meas\right)^{\frac{1}{2}}\overset{\eqref{e.PI}}{\leq} C\left(\frac{r^{2}}{m(B(x,r))}\int_{B(x,r)}\Gamma\langle f,f\rangle\dif \meas\right)^{\frac{1}{2}}\\
		&\overset{\eqref{e.medm=meas}}{\leq}C\left(r^{2}\fint_{B(x,r)}\frac{\dif\Gamma\langle f,f\rangle}{\dif \meas}\dif \meas\right)^{\frac{1}{2}}\leq C r\left(\fint_{B(x,r)}\abs{\frac{\dif\Gamma\langle f,f\rangle}{\dif \meas}}^{\frac{p}{2}}\dif \meas\right)^{\frac{1}{p}}
	\end{align}
	This proves the $L^{p}$-Poincar\'{e} inequality in \cite[p.~6]{CJKS20}.
	\item  The heat-semigroup gradient estimate $(G_{p})$ from \cite[p.~11]{CJKS20} is established in Proposition \ref{p.migrad}, upon noting that \eqref{e.medm=meas} and that $\left(\frac{\Gamma\langle f,f\rangle}{\dif \medm}\right)^{\frac{1}{2}}=\abs{\wgrad f}$ for every $f\in\domain$.
\end{enumerate}
It follows from \cite[Theorem 1.9]{CJKS20} that the Riesz transform $\riesz$ is bounded from $L^{p}(\ambient,\meas)$ to $L^{p}(\skeleton,\medm)$ for all $p\in[2,\infty)$.
 \end{proof}

 \begin{proof}[Proof of Theorem~\ref{t.main}]
 	Assume $\UVG$, $\HKE$, and $\Grad$.
 	\begin{enumerate}[label=\textup{({\roman*})},align=right,leftmargin=*,topsep=5pt,parsep=0pt,itemsep=2pt]
 		\item Assume \ref{e.EventGau} and $\DN{q}$ for some $q\in(2,\infty)$. Then $\DN{p}$ holds for all $p\in[q,\infty)$. Propositions~\ref{p.bddlow} and~\ref{p.bddhi} therefore show that $\R{p}$ holds for every $p\in[q,\infty)$. For $p\in(2,q)$, we can use interpolation of $\R{q}$ and $\R{2}$, since $(\ambient,\meas)$ and $(\skeleton,\medm)$ are $\sigma$-finite. For any $q\in(1,2)$, $\RR{q}$ follows from Lemma \ref{l.reverse-duality}.
 		\item If \ref{e.EventGau} and \ref{e.linear} hold and $p\in[2,\infty)$, then $\R{p}$ follows directly from Proposition \ref{p.onedim}. Again, for any $q\in(1,2]$, $\RR{q}$ follows from Lemma \ref{l.reverse-duality}.
 		\qedhere
 	\end{enumerate}
 \end{proof}

\subsection{Reverse Riesz inequality}\label{s.sharpness}
\begin{lemma}\label{l.reverse-semigroup}
Let $p\in(1,\infty)$ and $t\in(0,\infty)$. For every $f\in L^2(\ambient,\meas)\cap L^p(\ambient,\meas)$, one has $(I-P_t)f\in\Dom((-\gen)^{-\frac12})$ and
\begin{equation}\label{e.sharpmultiplier}
\norm{(-\gen)^{-\frac12}(I-P_t)f}_{L^p(\ambient,\meas)}\leq\frac{4\sqrt{t}}{\sqrt{\pi}}\norm{f}_{L^p(\ambient,\meas)}.
\end{equation}
Consequently, every $u\in\domain$ with $(-\gen)^{\frac12}u\in L^p(\ambient,\meas)$ satisfies $u-P_tu\in L^p(\ambient,\meas)$ and
\begin{equation}\label{e.reverse-semigroup}
\norm{u-P_tu}_{L^p(\ambient,\meas)}\leq\frac{4\sqrt{t}}{\sqrt{\pi}}\norm{(-\gen)^{\frac12}u}_{L^p(\ambient,\meas)}.
\end{equation}
\end{lemma}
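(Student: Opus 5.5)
We represent the multiplier $\lambda^{-1/2}(1-e^{-t\lambda})$ as a superposition of heat semigroup operators with an integrable kernel. This turns the $L^p$ bound into a pure contraction argument.

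The starting point is the identity $\lambda^{-1/2}=\pi^{-1/2}\int_0^\infty s^{-1/2}e^{-s\lambda}\dif s$ for $\lambda>0$. It gives
\begin{equation}
\lambda^{-\frac12}(1-e^{-t\lambda})=\frac{1}{\sqrt\pi}\int_0^\infty \bigl(s^{-\frac12}-(s-t)_+^{-\frac12}\one_{(t,\infty)}(s)\bigr)e^{-s\lambda}\dif s=:\int_0^\infty\phi_t(s)e^{-s\lambda}\dif s .
\end{equation}
The kernel $\phi_t$ is integrable on $(0,\infty)$. For $s<t$ we have $\int_0^t s^{-1/2}\dif s=2\sqrt t$. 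For $s>t$ the difference $(s-t)^{-1/2}-s^{-1/2}$ is nonnegative, and a direct computation of $\int_t^R\bigl((s-t)^{-1/2}-s^{-1/2}\bigr)\dif s=2\sqrt{R-t}-2\sqrt R+2\sqrt t\to 2\sqrt t$ shows it integrates to $2\sqrt t$. Hence $\norm{\phi_t}_{L^1}=4\sqrt t/\sqrt\pi$, which is exactly the constant in \eqref{e.sharpmultiplier}.

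Next comes membership in the domain. Since $\lambda^{-1}(1-e^{-t\lambda})^2\le t(1-e^{-t\lambda})\le t$, we have $\int\lambda^{-1}(1-e^{-t\lambda})^2\dif\langle\proj_\lambda f,f\rangle<\infty$. Together with $\proj(\{0\})=0$ from Lemma~\ref{l.defriesz}-\ref{it.dres1}, this gives $(I-P_t)f\in\Dom((-\gen)^{-1/2})$, and $(-\gen)^{-1/2}(I-P_t)f=m_{\phi_t}(-\gen)f$ in the notation of Lemma~\ref{l.funint}.

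By Lemma~\ref{l.funint}-\ref{it.scalar1}, for $g\in L^2\cap L^{p'}$,
\begin{equation}
\langle m_{\phi_t}(-\gen)f,g\rangle=\int_0^\infty\phi_t(s)\langle P_sf,g\rangle\dif s .
\end{equation}
Since $P_s$ is an $L^p$ contraction (it is Markovian and symmetric), $\abs{\langle P_sf,g\rangle}\le\norm{f}_{L^p}\norm{g}_{L^{p'}}$. Duality, using density of $L^2\cap L^{p'}$ in $L^{p'}$, then yields \eqref{e.sharpmultiplier}.

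For \eqref{e.reverse-semigroup}, let $u\in\domain$ with $f:=(-\gen)^{1/2}u\in L^2\cap L^p$. Then $(I-P_t)f=(-\gen)^{1/2}(I-P_t)u$, and applying $(-\gen)^{-1/2}$ via the spectral calculus (again using $\Ker(-\gen)=\{0\}$) gives $u-P_tu=(-\gen)^{-1/2}(I-P_t)f$. The first part then finishes the proof.

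The only delicate point is the exact $L^1$ norm of $\phi_t$ and its sign structure; everything else is routine spectral calculus and duality.
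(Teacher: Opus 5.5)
Your proposal is correct and follows the paper's proof essentially step for step. You use the same kernel $Q_{0,t,0}$ with $L^1$ norm $4\sqrt{t}/\sqrt{\pi}$, the same bound $\lambda^{-1}(1-e^{-t\lambda})^2\le t$ for domain membership, $L^p$-contractivity of $P_s$ plus duality, and $\Ker(-\gen)=\{0\}$ for the consequence. The only cosmetic difference is that you cite Lemma~\ref{l.funint}-\ref{it.scalar1} for the Fubini interchange, whereas the paper re-justifies it through the total-variation bound on the spectral measure.
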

\begin{proof}
Since $0\leq 1-e^{-s}\leq (s\wedge1)$ for all $s\in[0,\infty)$, one has
\begin{equation}\label{e.sharpmult1}
\int_{(0,\infty)}\lambda^{-1}(1-e^{-t\lambda})^2\dif\langle\proj_\lambda f,f\rangle\leq t\norm{f}_{L^2(\ambient,\meas)}^2.
\end{equation}
It follows that $(I-P_t)f\in\Dom((-\gen)^{-\frac12})$. Set $g:=(-\gen)^{-\frac12}(I-P_t)f\in L^2(\ambient,\meas)$.

For $t\in(0,\infty)$, let $Q_{0,t,0}$ be defined in \eqref{e.Qkern},
\begin{equation}\label{e.sharpmult2}
Q_{0,t,0}(s)=\frac{1}{\sqrt{\pi}}\left(s^{-\frac12}\one_{(0,t]}(s)+(s^{-\frac12}-(s-t)^{-\frac12})\one_{(t,\infty)}(s)\right), \ s\in(0,\infty).
\end{equation}
Direct integration gives $\int_0^t s^{-\frac12}\dif s=2\sqrt{t}$ and  $\int_t^\infty\bigl((s-t)^{-\frac12}-s^{-\frac12}\bigr)\dif s=2\sqrt{t}$. Therefore,
\begin{equation}\label{e.sharpmult3}
\int_0^\infty\abs{Q_{0,t,0}(s)}\dif s=\frac{4\sqrt{t}}{\sqrt{\pi}},\ \text{ and }\ \int_0^\infty{Q_{0,t,0}(s)}\dif s=0.
\end{equation}
For $\lambda\in(0,\infty)$, substitution in the two convergent Laplace integrals yields
\begin{equation}\label{e.sharpmult4}
\begin{aligned}
\int_0^\infty Q_{0,t,0}(s)e^{-s\lambda}\dif s=\frac{1-e^{-t\lambda}}{\sqrt{\pi}}\int_0^\infty s^{-\frac12}e^{-s\lambda}\dif s=\frac{1-e^{-t\lambda}}{\sqrt{\lambda}}.
\end{aligned}
\end{equation}

For $h\in L^2(\ambient,\meas)$, by \cite[Lemma IX.1.9, p.~257]{Con90}, the total variation of the spectral measure $A\mapsto \int_{A}\dif\langle\proj_\lambda f,h\rangle$ satisfies
\begin{equation}\label{e.sharpmult5}
\int_{[0,\infty)}\dif\abs{\langle\proj_\lambda f,h\rangle}\leq\norm{f}_{L^2(\ambient,\meas)}\norm{h}_{L^2(\ambient,\meas)}.
\end{equation}
Consequently,
\begin{equation}\label{e.sharpmult7}
\int_0^\infty\int_{[0,\infty)}\abs{Q_{0,t,0}(s)}e^{-s\lambda}\dif\abs{\langle\proj_\lambda f,h\rangle}\dif s\overset{\eqref{e.sharpmult5}}{\leq}\frac{4\sqrt{t}}{\sqrt{\pi}}\norm{f}_{L^2(\ambient,\meas)}\norm{h}_{L^2(\ambient,\meas)}<\infty.
\end{equation}
For $h\in L^2(\ambient,\meas)\cap L^{p'}(\ambient,\meas)$, where $p'=p/(p-1)$, the spectral theorem, \eqref{e.sharpmult4}, and Fubini's theorem (which is justified by \eqref{e.sharpmult7}), give
\begin{align}
	\abs{\langle (-\gen)^{-\frac12}(I-P_t)f,h\rangle_{L^2(\ambient,\meas)}}&\overset{\eqref{e.sharpmult4}}{=}\abs{\int_0^\infty Q_{0,t,0}(s)\langle P_sf,h\rangle_{L^2(\ambient,\meas)}\dif s}\\
	&\leq \int_0^\infty \abs{Q_{0,t,0}(s)}\norm{P_sf}_{L^p(\ambient,\meas)}\norm{h}_{L^{p'}(\ambient,\meas)}\dif s\\
	&\overset{\eqref{e.sharpmult3}}{\leq}\frac{4\sqrt{t}}{\sqrt{\pi}}\norm{f}_{L^p(\ambient,\meas)}\norm{h}_{L^{p'}(\ambient,\meas)}.
\end{align}
By duality, we obtain \eqref{e.sharpmultiplier}.

Finally, apply \eqref{e.sharpmultiplier} to $f:=(-\gen)^{\frac12}u$. Since $\Ker(-\gen)=\{0\}$, the spectral calculus gives
\begin{equation}\label{e.sharpmult11}
(-\gen)^{-\frac12}(I-P_t)(-\gen)^{\frac12}u=(I-P_t)u\ \text{ in }L^2(\ambient,\meas).
\end{equation}
This proves \eqref{e.reverse-semigroup}.
\end{proof}
\begin{proposition}\label{p.reverse-rigidity}
Assume $\UVG$ and $\HKE$, and let $p\in(2,\infty)$. There exist $c_p\in(0,\infty)$ and $r_0\in(0,\iota)$ such that, for any $(x,r)\in\ambient\times(0,r_{0}]$, the function $\eta_{x,r}$ defined in Lemma~\ref{l.cutoff} satisfies
\begin{equation}\label{e.reverse-test-functions}
\norm{(-\gen)^{\frac12}\eta_{x,r}}_{L^p(\ambient,\meas)}\geq c_p\left(\frac{\volume(r)}{r}\right)^{\frac1p-\frac12}\norm{\wgrad\eta_{x,r}}_{L^p(\skeleton,\medm)}>0.
\end{equation}
\end{proposition}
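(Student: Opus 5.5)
We bound both sides of \eqref{e.reverse-test-functions} separately and compare. The upper side is straightforward: by \eqref{e.cutoff1} and \eqref{e.cutoff2},
\[
\norm{\wgrad\eta_{x,r}}_{L^p(\skeleton,\medm)}\leq r^{-1}\medm(\supp_\medm[\abs{\wgrad\eta_{x,r}}])^{1/p}\lesssim r^{-1+1/p}.
\]
Positivity also comes from the construction of $\eta_{x,r}$: it equals $1$ on $B(x,r)$ and $0$ off $B(x,4r)$, so by Proposition~\ref{p.arc} along an arc leaving $B(x,4r)$ (which exists since $\ambient$ is unbounded and geodesic), $\wgrad\eta_{x,r}\neq0$ on a set of positive $\medm$-measure. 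It therefore suffices to show
\[
\norm{(-\gen)^{\frac12}\eta_{x,r}}_{L^p(\ambient,\meas)}\gtrsim \volume(r)^{\frac1p-\frac12}r^{-\frac12},
\]
since $(\volume(r)/r)^{1/p-1/2}\,r^{-1+1/p}=\volume(r)^{1/p-1/2}r^{-1/2}$.

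For the lower bound we use Lemma~\ref{l.reverse-semigroup} with $t:=\scale(\theta r)$, where $\theta\in(0,1)$ is a small constant fixed below:
\[
\norm{(-\gen)^{\frac12}\eta_{x,r}}_{L^p}\gtrsim t^{-1/2}\norm{\eta_{x,r}-P_t\eta_{x,r}}_{L^p}.
\]
We bound the right-hand norm from below on an annulus just outside the support. Pick $y$ on a geodesic from $x$ at distance about $4r+\theta r$, and set $B':=B(y,\theta r/2)$. On $B'$ we have $\eta_{x,r}=0$, and $P_t\eta_{x,r}(z)\geq\int_{B(x,r)}p_t(z,w)\dif\meas(w)$. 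Since the distance from $z$ to $B(x,r)$ is comparable to $r$ while $\scale^{-1}(t)=\theta r$, this lower bound is exponentially small, so it is the wrong choice of region. We instead work at the level set where $\eta$ drops: near the outer edge $\{\eta\le 1/2\}$ the semigroup averages in the mass from $\{\eta=1\}$.

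The cleaner choice is $t:=\scale(r)$ and an inner region. On $B(x,r)$ we have $\eta_{x,r}=1$, and $1-P_t\eta_{x,r}(z)\geq P_t\one_{B(x,4r)^c}(z)$. By the full lower heat kernel estimates (chain condition) together with $\UVG$, $P_{\scale(r)}\one_{B(x,4r)^c}(z)\geq c_0>0$ uniformly for $z\in B(x,r)$, since $B(x,4r)^c$ contains a ball of radius about $r$ at distance about $5r$ from $z$. Hence
\[
\norm{\eta_{x,r}-P_t\eta_{x,r}}_{L^p}\geq c_0\,\meas(B(x,r))^{1/p}\gtrsim\volume(r)^{1/p},
\]
and so
\[
\norm{(-\gen)^{\frac12}\eta_{x,r}}_{L^p}\gtrsim \scale(r)^{-1/2}\volume(r)^{1/p}.
\]
For $r\le r_0\le\iota$, Lemma~\ref{l.growthPsi} \eqref{e.growth2} gives $\scale(r)\asymp r\volume(r)$, so this lower bound equals $r^{-1/2}\volume(r)^{1/p-1/2}$, which is exactly what we need.

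The main obstacle is the uniform lower bound $P_{\scale(r)}\one_{B(x,4r)^c}\geq c_0$ on $B(x,r)$. This uses the near-diagonal lower bound in $\HKE$ at a distance comparable to $\scale^{-1}(t)$ rather than $\delta\scale^{-1}(t)$. We get it either from the full off-diagonal lower estimate (Remark after Definition~\ref{d.HKE}), with $\Phi(6r,\scale(r))\lesssim1$ by the scaling properties of $\Phi$, or by choosing $t=\scale(Cr/\delta)$ instead and absorbing the constant via \eqref{e.scale} and \eqref{e.VD}. We also choose $r_0\le\iota$ so that \eqref{e.growth2} applies and the cutoff from Lemma~\ref{l.cutoff} exists.
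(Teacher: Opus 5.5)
Your proof is correct, and its key step differs from the paper's. Both proofs share the same frame: bound $\norm{\wgrad\eta_{x,r}}_{L^p(\skeleton,\medm)}\lesssim r^{-1+1/p}$ by \eqref{e.cutoff1}--\eqref{e.cutoff2}, apply \eqref{e.reverse-semigroup} at a time $t\asymp\scale(r)$, and convert $\scale$ into $r\volume(r)$ via \eqref{e.growth2}. The difference is how you get $(I-P_t)\eta_{x,r}\gtrsim1$ on $B(x,r)$.

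The paper takes $t=\scale(Ar)$ with $A$ a large constant. It uses only the on-diagonal upper bound $p_t\lesssim1/\volume(\scale^{-1}(t))$ together with reverse doubling \eqref{e.volreverse}. This gives $P_{\scale(Ar)}\eta_{x,r}\lesssim\volume(4r)/\volume(Ar)\lesssim1/A\le\frac12$ everywhere, i.e.\ the bump is diluted.

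You instead write $1-P_t\eta_{x,r}\ge P_t(1-\eta_{x,r})\ge P_t\one_{B(y,r)}$ with $\metric(x,y)=6r$. This needs only $P_t\one_{\ambient}\le1$, so conservativeness is not even required. You then bound the escaping heat from below by a lower heat kernel estimate. Either use the full off-diagonal bound with $\Phi(8r,\scale(r))\lesssim1$, or the near-diagonal bound at time $\scale(8r/\delta)$ with $r_0\le\delta\iota/8$.

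Your route trades reverse doubling for the lower bound in $\HKE$. This costs nothing extra, since \eqref{e.growth2} already relies on two-sided estimates. The paper's route shows that, apart from \eqref{e.growth2}, this step needs only the upper bound.

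Three small fixes:
\begin{itemize}
\item $r_0$ must also be below $\iota/6$, not just $\iota$, for Lemma~\ref{l.cutoff} to apply.
\item \eqref{e.reverse-semigroup} applies only when $(-\gen)^{\frac12}\eta_{x,r}\in L^p(\ambient,\meas)$; the other case is trivial and should be noted.
\item Delete the abandoned exploratory paragraph about the region outside the support.
\end{itemize}
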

\begin{proof}
For $x\in\ambient$ and $r\in(0,\iota/6)$, by Lemma~\ref{l.cutoff} the function $\eta_{x,r}\in\domain\cap C_c(\ambient)$ satisfies
\begin{equation}\label{e.sharpcut1}
0\leq\eta_{x,r}\leq1,\ \restr{\eta_{x,r}}{B(x,r)}=1,\ \text{ and }\ \restr{\eta_{x,r}}{\ambient\setminus B(x,4r)}=0.
\end{equation}
Since $\eta_{x,r}$ is not constant, \eqref{e.cutoff1} and \eqref{e.cutoff2} imply
\begin{equation}\label{e.reverse-cutoff}
0<\int_{\skeleton}\abs{\wgrad\eta_{x,r}}^p\dif\medm\overset{\eqref{e.cutoff1}}{\leq} r^{-p}\medm\bigl(\supp_{\medm}[\abs{\wgrad\eta_{x,r}}]\bigr)\overset{\eqref{e.cutoff2}}{\leq} Cr^{1-p}.
\end{equation}
For $A\in(4,\infty)$, the heat-kernel upper bound, $\UVG$, and \eqref{e.volreverse} give
\begin{equation}\label{e.sharpcut2}
0\leq P_{\scale(Ar)}\eta_{x,r}(z)\overset{\HKE, \eqref{e.sharpcut1}}\leq\frac{C\meas(B(x,4r))}{\volume(Ar)}\overset{\UVG}{\leq}\frac{C\volume(4r)}{\volume(Ar)}\overset{\eqref{e.volreverse}}{\leq}\frac{C}{A},\ \text{ $\meas$-a.e. $z\in\ambient$}.
\end{equation}
Fix $A>8C+4$ so that the last term is at most $1/2$. Choose $r_{0}\in(0,1\wedge\frac{\iota}{3A})$. For $r\in(0,r_0]$, \eqref{e.sharpcut1} and \eqref{e.sharpcut2} yield
\begin{equation}\label{e.sharpcut4}
(I-P_{\scale(Ar)})\eta_{x,r}\geq\frac12\ \meas\text{-a.e.  on }B(x,r).
\end{equation}
If $(-\gen)^{\frac12}\eta_{x,r}\notin L^p(\ambient,\meas)$, then \eqref{e.reverse-test-functions} holds with infinite left-hand side. Otherwise, \eqref{e.reverse-semigroup} implies that
\begin{align}\label{e.sharpcut6}
&\phantom{\ \leq}\norm{(-\gen)^{\frac12}\eta_{x,r}}_{L^p(\ambient,\meas)}\overset{\eqref{e.reverse-semigroup}}{\geq}\frac{\sqrt{\pi}}{4\sqrt{\scale(Ar)}}\norm{(I-P_{\scale(Ar)})\eta_{x,r}}_{L^p(\ambient,\meas)}\\
&\overset{\eqref{e.sharpcut4}}{\geq} \frac{\sqrt{\pi}}{8\sqrt{\scale(Ar)}}\meas(B(x,r))^{\frac1p}\overset{\eqref{e.growth2},\UVG}{\gtrsim} c_pr^{-\frac12}\volume(r)^{\frac1p-\frac12}\\
&\overset{\eqref{e.reverse-cutoff}}{\gtrsim}\left(\frac{\volume(r)}{r}\right)^{\frac1p-\frac12}\norm{\wgrad\eta_{x,r}}_{L^p(\skeleton,\medm)}>0.
\end{align}
This proves \eqref{e.reverse-test-functions}.
\end{proof}
\begin{proof}[Proof of Theorem~\ref{t.smallp}]
By Lemma \ref{l.reverse-duality}, if $\R{q}$ holds for some $q\in(1,2)$, then $\RR{p}$ holds for $p=q/(q-1)\in(2,\infty)$. So it suffices to prove the second case. Suppose that $\RR{p}$ holds for some $p\in(2,\infty)$. Then \eqref{e.reverse-test-functions} gives
\begin{equation}\label{e.sharpcut7}
\sup_{r\in(0,r_{0}]}\left(\frac{r}{\volume(r)}\right)^{\frac12-\frac1p}< \infty.
\end{equation}
Since $\frac12-\frac1p>0$, this implies $\volume(r)\geq cr$. On the other hand, by \eqref{e.volreverse}, we have $\volume(r)\leq C\volume(1)r$ for all $r\in(0,r_{0}]$. Thus $\volume(r)\asymp r$, and $\scale(r)\asymp r^2$ by \eqref{e.growth2}. The fact that $\meas\asymp\medm$ follows from the same proof as in Proposition \ref{p.onedim}.
\end{proof}
 \section{A sufficient condition for the gradient heat kernel estimates}\label{s.RHG} 
 
 \subsection{Reverse H\"{o}lder inequality}
 \begin{definition}
Let $(\ambient,\metric,\meas,\form,\domain)$ be a uniform local tree Dirichlet space in Framework \ref{f.treeMMD}.
\begin{enumerate}[label=\textup{({\arabic*})},align=right,leftmargin=*,topsep=5pt,parsep=0pt,itemsep=2pt]
  \item  Let $\Omega$ be a non-empty open subset of $\ambient$. We say that $h\in\domain_{\loc}$ is \emph{$\form$-harmonic} on $\Omega$, if \begin{equation}
	\form(h,\phi)=0,\ \text{ for all }\phi\in\domain\cap C_{c}(\Omega).
\end{equation}
\item We say that a \hypertarget{RH}{\emph{reverse H\"{o}lder inequality} for harmonic functions} $\RH$ holds, if there exist constants $A_{\mathrm{RH}},C_{\mathrm{RH}}\in(1,\infty)$ such that for any $(x,r)\in \ambient\times (0,\infty)$ and any $h\in \domain_{\loc}$ that is $\sE$-harmonic on $B(x,A_{\mathrm{RH}}r)$\begin{equation}\label{e.RH}
	\norm{\wgrad h}_{L^{\infty}(B(x,r),\medm)}\leq \frac{C _{\mathrm{RH}}}{r}\fint_{B(x,A_{\mathrm{RH}}r)}\abs{h}\dif\meas.
\end{equation}
\end{enumerate}
 \end{definition}
 
 The following gradient estimate for solutions of the Poisson equation is adapted from \cite[Theorem~3.2]{CJKS20}.
 \begin{proposition}\label{p.Poisson}
 Assume $\UVG$, $\UHK$ and $\RH$ hold for $(\ambient,\metric,\meas,\form,\domain)$. Then there exists $C\in(1,\infty)$ such that, for any $(x,r)\in\ambient\times(0,\infty)$, any $f\in L^{\infty}(B(x,A_{\mathrm{RH}}r),\meas)$, and any $u\in\domain$ satisfying \begin{equation}\label{e.Pos-1}
 		\form(u,\phi)=\int_{B(x,A_{\mathrm{RH}}r)}f\phi\dif\meas,\ \text{ for all }\phi\in \domain\cap C_{c}(B(x,A_{\mathrm{RH}}r)),
 	\end{equation}
 	we have 
 	\begin{equation}
 		\norm{\wgrad u}_{L^{\infty}(B(x,r),\medm)}\leq \frac{C }{r}\fint_{B(x,A_{\mathrm{RH}}r)}\abs{u}\dif\meas+C\frac{\scale{(r)}}{r}\norm{f}_{L^{\infty}(B(x,A_{\mathrm{RH}}r),\meas)}.\label{e.Pos-cm}
 	\end{equation}
 \end{proposition}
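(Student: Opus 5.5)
The plan is the standard splitting of $u$ into a harmonic part and a Green potential on the ball. Write $B:=B(x,A_{\mathrm{RH}}r)$ and $B':=B(x,A_{\mathrm{RH}}r/2)$; the constant in $\RH$ may be adjusted so that the argument runs on comparable balls. Let $G_{B}$ be the Green operator of the part of $(\form,\domain)$ on $B$, defined through the Dirichlet heat kernel $p^{B}_{t}\le p_{t}$, and set $w:=G_{B}(f\one_{B})$. Then $w\in\domain$ with $w=0$ quasi-everywhere off $B$, and $\form(w,\phi)=\int_{B} f\phi\dif\meas$ for every $\phi\in\domain\cap C_{c}(B)$. It follows that $h:=u-w$ is $\form$-harmonic on $B$.

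\textbf{Step 1: bound $w$.} From $\UVG$ and $\UHK$, the exit-time estimate $\sup_{y}\mathbb{E}_{y}[\tau_{B}]\lesssim\scale(r)$ holds. This can be obtained either from the standard consequences of $\UHK$ (the survival estimate) together with \eqref{e.scale}, or directly: split $\int_{0}^{\infty}P^{B}_{t}\one\dif t$ at $t=\scale(r)$, and for large $t$ use the exponential decay $\|P^{B}_{t}\|_{\infty\to\infty}\le Ce^{-ct/\scale(r)}$, which follows from the on-diagonal bound together with the Faber--Krahn inequality implied by $\UHK$. This yields $\|w\|_{L^{\infty}}\lesssim\scale(r)\|f\|_{L^{\infty}(B,\meas)}$.

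\textbf{Step 2: the harmonic part.} Apply $\RH$ to $h$ on the smaller ball $B(x,r')$ with $A_{\mathrm{RH}}r'$ inside $B$, where $r'\asymp r$ and doubling is used:
\[\|\wgrad h\|_{L^{\infty}(B(x,r'),\medm)}\lesssim \frac1r\fint_{B}\bigl(\abs{u}+\abs{w}\bigr)\dif\meas\lesssim\frac1r\fint_{B}\abs{u}\dif\meas+\frac{\scale(r)}{r}\|f\|_{\infty}.\]
Covering $B(x,r)$ by boundedly many such balls keeps this a finite bound.

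\textbf{Step 3: the gradient of the potential, which I expect to be the main obstacle.} A pointwise gradient bound for $w$ does not follow from $\RH$ directly, since $w$ is not harmonic. Following \cite[Theorem~3.2]{CJKS20}, I would localize through a dyadic decomposition. Fix a point $y$ and a small scale $\rho_{k}=2^{-k}r$, and compare $w$ on $B(y,\rho_{k})$ with the solution of the same Poisson problem on $B(y,A_{\mathrm{RH}}\rho_{k})$. The difference is harmonic there, so $\RH$ applies to it, while the local potential has sup-norm $\lesssim\scale(\rho_{k})\|f\|_{\infty}$ by Step 1. Iterating the resulting estimate for the oscillation of averages gives
\[\|\wgrad w\|_{L^{\infty}(B(y,\rho),\medm)}\lesssim\sum_{k}\frac{\scale(\rho_{k})}{\rho_{k}}\|f\|_{\infty},\]
and this series converges geometrically to $\lesssim\scale(r)/r$, because $\scale(\rho)/\rho$ grows at least like $\rho^{\beta_{1}-1}$ with $\beta_{1}>1$ by \eqref{e.scale}.

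Alternatively, one can work on the one-dimensional structure: $\medm$-a.e. the derivative $\wgrad w$ along an arc equals a flux. Using the tree structure inside $B(y,\iota)$ and testing with cutoffs from Lemma~\ref{l.cutoff}, one gets $\abs{\wgrad w}\le$ (harmonic flux) $+\,\meas(\text{subtree})\|f\|_{\infty}/\ldots$. I would try the dyadic comparison first. Combining Steps 2 and 3 gives \eqref{e.Pos-cm}.
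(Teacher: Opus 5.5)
Your proposal follows the paper's proof. You split $u=h+v$ with $v$ the Dirichlet solution on $B(x,A_{\mathrm{RH}}r)$, bound $\|v\|_{\infty}\lesssim\scale(r)\|f\|_{\infty}$ via the exit-time/Green estimates from $\UVG+\UHK$, and apply $\RH$ to $h$. You can do this directly on $B(x,r)$, so the shrinking and covering in your Step 2 are unnecessary. Finally you control $\wgrad v$ through dyadic local Dirichlet solutions $v_j$ on $B(y,A_{\mathrm{RH}}r_j)$, as adapted from \cite[Theorem~3.2]{CJKS20}.

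To make Step 3 precise, apply $\RH$ to the consecutive differences $v_j-v_{j+1}$ rather than to $w$ minus each local solution. Each such difference is harmonic on $B(y,A_{\mathrm{RH}}r_{j+1})$ with sup-norm $\lesssim\scale(r_j)\|f\|_{\infty}$, so the gradient bounds add instead of compounding as an oscillation iteration would. Handle the remainder $v_N$ through the difference quotient $\abs{v(y)-v(z)}/\metric(y,z)$ along the arc, choosing $r_N\asymp\metric(y,z)$, so that $\abs{v_N(y)-v_N(z)}\le2\|v_N\|_{\infty}\lesssim\metric(y,z)\scale(r_N)/r_N$. Then $\beta_1>1$ sums the series to $\lesssim\scale(r)/r$.
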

 
 \begin{proof}
Since $\UVG$ and $\UHK$ imply the positivity of the first Dirichlet eigenvalue on $B(x,A_{\mathrm{RH}}r)$, by Riesz's representation theorem, there exists a unique $v\in\domain(B(x,A_{\mathrm{RH}}r))$ such that \begin{equation}\label{e.Pos-2}
 		\form(v,\phi)=\int_{B(x,A_{\mathrm{RH}}r)}f\phi\dif\meas,\ \text{ for all }\phi\in \domain\cap C_{c}(B(x,A_{\mathrm{RH}}r)).
 	\end{equation}
 	Let $h:=u-v\in\domain_{\loc}$. Then $h$ is $\form$-harmonic on $B(x,A_{\mathrm{RH}}r)$ by the bilinearity of $\form$, \eqref{e.Pos-1} and \eqref{e.Pos-2}. By \cite[Theorem 2.2]{GH14} and \cite[Lemmas~3.1 and 3.2, Definition~3.10]{GT12}, we know that there exists a constant $C\in(1,\infty)$, depending only on $A_{\mathrm{RH}}$ and constants appearing in $\UVG$ and $\UHK$, such that \begin{equation}\label{e.Pos-3}
 		\norm{v}_{L^{\infty}(B(x,A_{\mathrm{RH}}r),\meas)}\leq C\scale(r)\norm{f}_{L^{\infty}(B(x,A_{\mathrm{RH}}r),\meas)}.
 	\end{equation} By $\RH$, the definition of $h$, and the triangle inequality, we have \begin{align}
 		\norm{\wgrad h}_{L^{\infty}(B(x,r),\medm)}&\overset{\RH}{\lesssim} \frac{1}{r}\fint_{B(x,A_{\mathrm{RH}}r)}\abs{h}\dif\meas\leq \frac{1}{r}\fint_{B(x,A_{\mathrm{RH}}r)}\abs{u}\dif\meas+\frac{1}{r}\norm{v}_{L^{\infty}(B(x,A_{\mathrm{RH}}r),\meas)}\\
 		&\overset{\eqref{e.Pos-3}}{\lesssim}\frac{1}{r}\fint_{B(x,A_{\mathrm{RH}}r)}\abs{u}\dif\meas+\frac{\scale(r)}{r}\norm{f}_{L^{\infty}(B(x,A_{\mathrm{RH}}r),\meas)}.\label{e.Pos-4}
 	\end{align}
 
 	We next estimate $\norm{\wgrad v}_{L^{\infty}(B(x,r),\medm)}$. Fix $y\in B(x,r)\cap\skeleton$, $A=4A_{\mathrm{RH}}(A_{\mathrm{RH}}-1)^{-1}$. Let $r_{0}=r$, $r_{j}:=2^{-j}A^{-1}r$ for $j\in\bN$, $B_{j}:=B(y,r_{j})$ for $j\in\{0\}\cup\bN$. Define $v_{0}:=v$. Again, by $\UVG$ and $\UHK$, we know that for each $j\in\bN$, the first Dirichlet eigenvalue on $A_{\mathrm{RH}}B_{j}$ is positive. Therefore, by the Riesz representation theorem, there exists a unique $v_{j}\in\domain(A_{\mathrm{RH}}B_{j})$ such that \begin{equation}
 		\form(v_{j},\phi)=\int_{A_{\mathrm{RH}}B_{j}}f\phi\dif\meas,\ \text{ for all }\phi\in \domain\cap C_{c}(A_{\mathrm{RH}}B_{j}).
 	\end{equation}
 	By \cite[Lemmas~3.1 and 3.2, Definition~3.10, Theorem~7.4]{GT12}, we know that there exists a constant $C\in(1,\infty)$, depending only on $A_{\mathrm{RH}}$ and constants appearing in $\UHK$ and $\UVG$, such that \begin{equation}\label{e.Pos<}
 		\norm{v_{j}}_{L^{\infty}(A_{\mathrm{RH}}B_{j},\meas)}\leq C\scale(r_{j})\norm{f}_{L^{\infty}(B(x,A_{\mathrm{RH}}r),\meas)},\ \text{ for all }j\in\{0\}\cup\bN.
 	\end{equation}
 	
 	For each $j\in\{0\}\cup\bN$, we define $w_{j}:=v_{j}-v_{j+1}\in\domain(A_{\mathrm{RH}}B_{j})$. Since $\restr{v_{j}}{\ambient\setminus (A_{\mathrm{RH}}B_{j})}=0$ $\meas$-a.e., we have by the definition of $w_{j}$ and the triangle inequality that\begin{align}
 		\norm{w_{j}}_{L^{\infty}(A_{\mathrm{RH}}B_{j+1},\meas)}&\leq\norm{v_{j}}_{L^{\infty}(A_{\mathrm{RH}}B_{j},\meas)}+\norm{v_{j+1}}_{L^{\infty}(A_{\mathrm{RH}}B_{j+1},\meas)}\\
& \overset{\eqref{e.Pos<},\eqref{e.scale}}{\lesssim}{\scale(r_{j})}\norm{f}_{L^{\infty}(B(x,A_{\mathrm{RH}}r),\meas)}.\label{e.Pos<1}
 	\end{align}
 	Let $z\in (B(y,\iota\wedge (2^{-10}A^{-1}r))\cap \skeleton)\setminus\{y\}$. Let $N=N_{y,z}\in\bN$ be an integer such that \begin{equation}\label{e.Pos<0}
 		r_{N_{y,z}}\in[\metric(y,z),2\metric(y,z)).
 	\end{equation}
 	Then, for each $j\in\{0,\ldots,N_{y,z}-1\}$, since $[y,z]\subset B_{j}$ and $w_{j}$ is $\form$-harmonic on $A_{\mathrm{RH}}B_{j+1}$ for each $j\in\{0\}\cup\bN$, we have \begin{align}
 	&\phantom{\ \leq}	\abs{w_{j}(z)-w_{j}(y)}\leq\int_{[z,y]}\abs{\wgrad w_{j}}\dif\medm\leq d(y,z)\norm{\wgrad w_{j}}_{L^{\infty}(B_{j+1},\medm)}\\ 
 		&\overset{\RH}{\lesssim}\frac{d(y,z)}{r_{j}}\norm{w_{j}}_{L^{\infty}(A_{\mathrm{RH}}B_{j+1},\meas)}\overset{\eqref{e.Pos<1}}{\lesssim} \metric(y,z)\frac{\scale(r_{j})}{r_{j}}\norm{f}_{L^{\infty}(B(x,A_{\mathrm{RH}}r),\meas)}.\label{e.Pos<2}
 	\end{align}
 	For $N=N_{y,z}\in\bN$, we have \begin{align}
 		&\phantom{\ \leq}\abs{v_{N}(y)-v_{N}(z)}\leq 2\norm{v_{N}}_{L^{\infty}(B_{N},\meas)}\overset{\eqref{e.Pos<}}{\lesssim} \scale(r_{N})\norm{f}_{L^{\infty}(B(x,A_{\mathrm{RH}}r),\meas)}\\
 		&=r_{N}\cdot \frac{\scale(r_{N})}{r_{N}}\norm{f}_{L^{\infty}(B(x,A_{\mathrm{RH}}r),\meas)}\overset{\eqref{e.Pos<0}}{\lesssim}\metric(y,z)\frac{\scale(r_{N})}{r_{N}}\norm{f}_{L^{\infty}(B(x,A_{\mathrm{RH}}r),\meas)}.\label{e.Pos<3}
 	\end{align}
 	Therefore, using the fact that $v=v_{N_{y,z}}+\sum_{j=0}^{N_{y,z}-1}w_{j}$, we have that, for $N=N_{y,z}$ \begin{align}
		\abs{v(y)-v(z)}&\leq  \abs{v_{N}(y)-v_{N}(z)}+\sum_{j=0}^{N-1}\abs{w_{j}(z)-w_{j}(y)}\\
		&\overset{\eqref{e.Pos<2},\eqref{e.Pos<3}}\lesssim \metric(y,z)\left(\sum_{j=0}^{N}\frac{\scale(r_{j})}{r_{j}}\right)\norm{f}_{L^{\infty}(B(x,A_{\mathrm{RH}}r),\meas)}\\
		&\leq \metric(y,z)\left(\sum_{j=0}^{\infty}\frac{\scale(r_{j})}{r_{j}}\right)\norm{f}_{L^{\infty}(B(x,A_{\mathrm{RH}}r),\meas)}.	\label{e.Pos3+}\end{align}
		For $j\in\{0\}\cup\bN$, we know that $r_{j}\leq r$ and therefore \begin{equation}\label{e.Pos<4}
			\frac{\scale(r_{j})}{r_{j}}\overset{\eqref{e.scale}}{\lesssim}\frac{\scale(r)}{r_{j}}\left(\frac{r_{j}}{r}\right)^{\beta_{1}}=\frac{\scale(r)}{r}\left(\frac{r_{j}}{r}\right)^{\beta_{1}-1}=A^{-(\beta_{1}-1)}2^{-j(\beta_{1}-1)}\frac{\scale(r)}{r}
		\end{equation}
		which implies that \begin{equation}
			\sum_{{j\in\{0\}\cup\bN}}\frac{\scale(r_{j})}{r_{j}}\overset{\eqref{e.Pos<4}}{\lesssim} \frac{\scale(r)}{r}\sum_{j\in\{0\}\cup\bN}A^{-(\beta_{1}-1)}2^{-j(\beta_{1}-1)}\lesssim\frac{\scale(r)}{r}
		\end{equation}
		which, combined with \eqref{e.Pos3+}, gives that, for all $y\in B(x,r)\cap\skeleton$ and every $z\in (B(y,\iota\wedge (2^{-10}A^{-1}r))\setminus\{y\})\cap \skeleton$, we have\begin{equation}
			\frac{\abs{v(y)-v(z)}}{\metric(y,z)}\lesssim \frac{\scale(r)}{r}\norm{f}_{L^{\infty}(B(x,A_{\mathrm{RH}}r),\meas)}
		\end{equation}
		Letting $z\to y$, we see that \begin{equation}
			\norm{\wgrad v}_{L^{\infty}(B(x,r),\medm)}\lesssim\frac{\scale(r)}{r}\norm{f}_{L^{\infty}(B(x,A_{\mathrm{RH}}r),\meas)},
		\end{equation}
		which, combined with \eqref{e.Pos-4} and the triangle inequality \[\norm{\wgrad u}_{L^{\infty}(B(x,r),\medm)}\leq \norm{\wgrad h}_{L^{\infty}(B(x,r),\medm)}+\norm{\wgrad v}_{L^{\infty}(B(x,r),\medm)},\] gives \eqref{e.Pos-cm}.
 \end{proof}
\begin{theorem}\label{t.RH>Gd}
Assume $\UVG$, $\UHK$ and $\RH$. Then $\Grad$ holds.
\end{theorem}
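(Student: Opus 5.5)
Fix $t>0$ and $x\in\ambient$, set $r:=\scale^{-1}(t)$, and regard $u:=p_{t,x}=p_t(x,\cdot)$ as the solution of a Poisson equation in the second variable. By Lemma~\ref{l.meab}-\ref{it.meab3}, $u\in\Dom(-\gen)$ and $-\gen u=-\frac{\dif}{\dif t}p_t(x,\cdot)$. Hence, for every ball $B(y,A_{\mathrm{RH}}\rho)$ and every $\phi\in\domain\cap C_c(B(y,A_{\mathrm{RH}}\rho))$,
\[
\form(u,\phi)=\int f\phi\dif\meas,\qquad f:=-\tfrac{\dif}{\dif t}p_t(x,\cdot).
\]
Applying Proposition~\ref{p.Poisson} on $B(y,\rho)$ with $\rho:=r$ gives, for $\medm$-a.e.\ point of $B(y,r)$,
\[
|\wgrad p_{t,x}|\lesssim \frac1r\fint_{B(y,A_{\mathrm{RH}}r)}p_t(x,\cdot)\dif\meas+\frac{\scale(r)}{r}\Bigl\|\tfrac{\dif}{\dif t}p_t(x,\cdot)\Bigr\|_{L^\infty(B(y,A_{\mathrm{RH}}r))}.
\]

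Next I will bound both terms by the right-hand side of $\Grad$. The first term is handled by $\UHK$. The second uses Lemma~\ref{l.time_derivative}: together with $\scale(r)=t$ it gives $\frac{t}{r}\cdot\frac{1}{t\volume(r)}e^{-c\Phi(\metric(x,z),t)}$. For $z\in B(y,A_{\mathrm{RH}}r)$ we need $\Phi(\metric(x,z),t)\gtrsim \Phi(\metric(x,y),t)-C$. This holds because $\metric(x,z)\ge \metric(x,y)-A_{\mathrm{RH}}r$ and, for $R\ge 2A_{\mathrm{RH}}r$, the scaling properties of $\Phi$ collected in Lemma~\ref{l.phi} give $\Phi(R-A_{\mathrm{RH}}r,t)\ge c\Phi(R,t)$. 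When $\metric(x,y)<2A_{\mathrm{RH}}r$, we have $\Phi(\metric(x,y),t)\lesssim1$, so the exponential factor is bounded below and can be absorbed. Combined with $\meas(B(x,r))\asymp\volume(r)$ from $\UVG$, this yields
\[
|\wgrad p_{t,x}(y')|\lesssim \frac{1}{r\volume(r)}e^{-c'\Phi(\metric(x,y),t)}\quad\text{for }\medm\text{-a.e. }y'\in B(y,r).
\]
Adjusting constants, I can replace $\metric(x,y)$ by $\metric(x,y')$ using the same $\Phi$ comparison. This gives $\Grad$ for every $x$, not merely $\meas$-a.e.\ $x$.

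Finally, I cover $\skeleton$ by countably many balls $B(y_k,r)$ with centers in a dense set, so that the a.e.\ statements combine into one $\medm$-a.e.\ statement.

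I expect the main obstacle to be justifying the Poisson-equation input. Proposition~\ref{p.Poisson} requires $f\in L^\infty$ on the ball and $u\in\domain$; both follow from Lemma~\ref{l.time_derivative} and Lemma~\ref{l.meab}. The delicate point is the uniform control of the exponential factor under the shift by $A_{\mathrm{RH}}r$, which must rest on the properties of $\Phi$ in Lemma~\ref{l.phi}: sub-additivity, monotonicity, and the homogeneity $\Phi(R,at)=a\Phi(R/a,t)$.
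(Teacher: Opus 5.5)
Your proposal is correct and follows the paper's own proof. You view $p_{t,x}$ as a solution of the Poisson equation with right-hand side $-\frac{\dif}{\dif t}p_t(x,\cdot)$, apply Proposition~\ref{p.Poisson} at scale $r=\scale^{-1}(t)$, bound both terms via $\UHK$ and Lemma~\ref{l.time_derivative} with the same two-case comparison of $\Phi$, and glue over a countable dense set of centres. The only difference is cosmetic and comes at the end: the paper passes from $\metric(x,z)$ to $\metric(x,y)$ using a sequence $z_j\to y$ and the lower semicontinuity of $\Phi$, whereas you apply the same $\Phi$-comparison directly inside a ball of radius $r$, which works equally well.
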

\begin{proof}
	
Fix $(t,x)\in(0,\infty)\times\ambient$. By Lemma \ref{l.meab}-\ref{it.meab1}, we know that $p_{t,x}=p_{t}(x,\cdot)\in\Dom(-\gen)$, and therefore, for any $(z,r)\in\ambient\times(0,\infty)$, and any $\phi\in \domain\cap C_{c}(B(z,A_{\mathrm{RH}}r))$, \begin{equation}
	\form(p_{t,x},\phi)=\langle-\gen p_{t,x},\phi\rangle_{L^{2}(\ambient,\meas)}\overset{\eqref{e.dif-gen}}{=}-\int_{B(z,A_{\mathrm{RH}}r)}\frac{\dif}{\dif t}p_{t}(x,y)\cdot\phi(y)\dif\meas(y).
\end{equation}
By Proposition \ref{p.Poisson}, \begin{equation}
	\norm{\wgrad p_{t,x}}_{L^{\infty}(B(z,r),\medm)}\lesssim\frac{1}{r}\fint_{B(z,A_{\mathrm{RH}}r)}p_{t,x}\dif m+\frac{\scale(r)}{r}\norm{\frac{\dif}{\dif t}p_{t}(x,\cdot)}_{L^{\infty}(B(z,A_{\mathrm{RH}}r),\meas)}.\label{e.sufgrd}
\end{equation}
 
Take $r=\scale^{-1}(t)$ in \eqref{e.sufgrd}. We now estimate the two terms on the right hand side of \eqref{e.sufgrd}. If $\metric(x,z)\leq 2A_{\mathrm{RH}}\scale^{-1}(t)$, then by \eqref{e.phi1} and \eqref{e.scale}, $\Phi(\metric(x,z),t)\lesssim1$, and therefore, \begin{equation}\label{e.sufgrad1}
	\exp(-c\Phi(\metric(x,y),t))\leq 1\lesssim \exp(-c\Phi(\metric(x,z),t)),\ \text{ for any $y\in B(z,A_{\mathrm{RH}}\scale^{-1}(t))$}.
\end{equation}
If $\metric(x,z)\geq 2A_{\mathrm{RH}}\scale^{-1}(t)$, then for any $y\in B(z,A_{\mathrm{RH}}\scale^{-1}(t))$, $\metric(x,y)\geq \metric(x,z)-\metric(y,z)\geq\frac{1}{2}\metric(x,z)$, so by \eqref{e.phi2}, \begin{equation}
	\exp(-c\Phi(\metric(x,y),t))\leq \exp(-c_{1}\Phi(\metric(x,z),t)),\ \text{ for any $y\in B(z,A_{\mathrm{RH}}\scale^{-1}(t))$}.\label{e.sufgrad2}
\end{equation}
Combining the above two cases, we have
\begin{align}
	&\phantom{\ \leq}\int_{B(z,A_{\mathrm{RH}}\scale^{-1}(t))}p_{t}(x,y)\dif m(y)\\
	&\overset{\UHK}{\lesssim}\frac{1}{\meas(B(x,\scale^{-1}(t)))}\int_{B(z,A_{\mathrm{RH}}\scale^{-1}(t))}\exp(-c\Phi(\metric(x,y),t))\dif m(y)\\
	&\overset{\eqref{e.sufgrad1},\eqref{e.sufgrad2}}{\lesssim}\frac{1}{\meas(B(x,\scale^{-1}(t)))}\int_{B(z,A_{\mathrm{RH}}\scale^{-1}(t))}\exp(-c_{2}\Phi(\metric(x,z),t))\dif m(y)\\
	&\overset{\UVG}{\lesssim}\exp(-c_{2}\Phi(\metric(x,z),t)).\label{e.sufgrad3}
\end{align}

For the second term of \eqref{e.sufgrd}, we use \eqref{e.ddt<} and obtain \begin{align}
	\norm{\frac{\dif}{\dif t}p_{t}(x,\cdot)}_{L^{\infty}(B(z,A_{\mathrm{RH}}\scale^{-1}(t)),\meas)}&\lesssim \frac{1}{t\volume(\scale^{-1}(t))}\sup_{y\in B(z,A_{\mathrm{RH}}\scale^{-1}(t))} \exp\left(-c_{3}\Phi\left({\metric(x, y)},{t}\right)\right)\\
	&\overset{\eqref{e.sufgrad1},\eqref{e.sufgrad2}}{\lesssim}\frac{1}{t\volume(\scale^{-1}(t))}\exp(-c_{4}\Phi(\metric(x,z),t)).\label{e.sufgrad4}
\end{align}
Combining $\UVG$, \eqref{e.sufgrad3}, \eqref{e.sufgrad4}, and \eqref{e.sufgrd} with $r=\scale^{-1}(t)$, we see that\begin{align}
	&\phantom{\ \leq}\norm{\wgrad p_{t,x}}_{L^{\infty}(B(z,\scale^{-1}(t)),\medm)}\\
	&\overset{\eqref{e.sufgrd}}{\lesssim}\frac{1}{\scale^{-1}(t)}\fint_{B(z,A_{\mathrm{RH}}\scale^{-1}(t))}p_{t,x}\dif m+\frac{t}{\scale^{-1}(t)}\norm{\frac{\dif}{\dif t}p_{t}(x,\cdot)}_{L^{\infty}(B(z,A_{\mathrm{RH}}\scale^{-1}(t)),\meas)}\\
	&\overset{\eqref{e.sufgrad3}, \eqref{e.sufgrad4}}{\lesssim} \frac{\exp(-c_{2}\Phi(\metric(x,z),t))}{\scale^{-1}(t)\volume(\scale^{-1}(t))}+\frac{t}{\scale^{-1}(t)}\frac{\exp(-c_{4}\Phi(\metric(x,z),t))}{t\volume(\scale^{-1}(t))}\\
	&\lesssim \frac{1}{\scale^{-1}(t)\meas(B(x,\scale^{-1}(t)))}\exp(-c_{5}\Phi(\metric(x,z),t))\label{e.sufgrad5}
\end{align} 

Let $Z$ be a countable dense subset of $(\ambient,\metric)$. Apply \eqref{e.sufgrad5} for every $z\in Z$, we obtain a $\medm$-measurable subset $N\subset\skeleton$ with $\medm(N)=0$, such that if $y\in (\skeleton \cap  B(z,\scale^{-1}(t)))\setminus N$ for some $z\in Z$,  then \begin{equation}
	\abs{\wgrad p_{t,x}(y)}\lesssim \frac{1}{\scale^{-1}(t)\meas(B(x,\scale^{-1}(t)))}\exp(-c_{5}\Phi(\metric(x,z),t)).
\end{equation}
For any $y\in\skeleton\setminus N$, since $Z$ is dense, we can choose a sequence $\{z_{j}\}_{j\in\bN}\subset Z$ such that $y\in B(z_{j},\scale^{-1}(t))\setminus N$  and $z_{j}\to y$. Since $\lim_{j\to\infty}\metric(x,z_{j})=\metric (x,y)$, by the lower semicontinuity of $\Phi$ in Lemma \ref{l.phi}, we obtain that \begin{equation}
	\abs{\wgrad p_{t,x}(y)}\lesssim \frac{1}{\scale^{-1}(t)\meas(B(x,\scale^{-1}(t)))}\exp(-c_{6}\Phi(\metric(x,y),t)),\ \text{ for all }y\in \skeleton\setminus N,
\end{equation}
which is exactly $\Grad$.
\end{proof}
\subsection{Geometric group actions on uniform local trees}
In this subsection, we use group actions to give a sufficient \emph{geometric} condition to ensure the heat kernel estimate $\HKE$, heat kernel gradient estimate $\Grad$ and reverse H\"{o}lder inequality $\RH$. This condition is relatively easy to verify. For Riemannian coverings of compact manifolds with nilpotent deck group, Dungey \cite{Dun04} proved heat kernel gradient estimates $\Grad$ and the boundedness of Riesz transform $\R{p}$. The method we use here in adapted from \cite[Appendix~A.5]{CJKS20}. For the reader's convenience, we recall the definition of a group action on a metric space. We refer to \cite{dlH00} for more details.
\begin{definition}
Let $G$ be a group, and let $(\ambient,\metric)$ be a metric space. 
\begin{enumerate}
[label=\textup{(\alph*)},align=right,leftmargin=*,topsep=5pt,parsep=0pt,itemsep=2pt]
\item We say that \emph{$G$ acts on $\ambient$}, if there exists a map $\phi:G\times \ambient\to \ambient$, which is abbreviated as $\phi(g,x)=g(x)$ such that $(gh)(x)=g(h(x))$ for every $g,h\in G$ and every $x\in \ambient$, and $e(x)=x$ for all $x\in\ambient$, where $e\in G$ is the unit of $G$.
\item We say that \emph{$G$ acts on $(\ambient,\metric)$ by isometries}, if $G$ acts on $\ambient$ and for each $g\in G$, $\phi(g,\cdot):\ambient\to \ambient$ is an isometry on $(\ambient,\metric)$.
\item An isometric action of $G$ on $(\ambient,\metric)$ is called \emph{properly discontinuous}, if \[\text{ for every compact $K\subset \ambient$, $\#\{g\in G: g(K)\cap K\neq\emptyset\}<\infty$. }\]
\item An action of $G$ on $(\ambient,\metric)$ is called \emph{co-compact} if there exists a compact set $K\subset \ambient$ such that $\ambient=\bigcup_{g\in G}g(K)$.
\item We say that $G$ acts on $(\ambient,\metric)$ \emph{geometrically}, if $G$ acts on $(\ambient,\metric)$ by isometries, and is properly discontinuous and co-compact.
\end{enumerate}
\end{definition}
The following lemma shows that a finitely generated group acting properly discontinuously by isometries on a proper metric space, with bounded displacement of its generators, is virtually abelian and hence has polynomial growth. We refer to \cite{dlH00} for the definition of \emph{growth} of groups.
\begin{lemma}\label{l.G-center}
Let $G$ be a group that is finitely generated by $S$, and is acting properly discontinuously by isometries on a proper metric space $(\ambient,\metric)$. If \begin{equation}\label{e.G-center-displacement}
	\sup_{(g,x)\in S\times\ambient}\metric(x,g(x))<\infty,
\end{equation} then $G$ is virtually abelian. In particular, $G$ is of polynomial growth.
\end{lemma}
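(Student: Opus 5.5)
The plan is to show that each generator $s\in S$ has finite conjugacy class. Then the FC-centre of $G$ contains a generating set, so it is all of $G$; a finitely generated FC-group is virtually abelian. Indeed, the centralizers $C_G(s)$, $s\in S$, have finite index, so $Z(G)=\bigcap_{s\in S}C_G(s)$ has finite index in $G$.

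Fix a basepoint $o\in\ambient$ and set $D:=\sup_{(g,x)\in S\times\ambient}\metric(x,g(x))<\infty$. The symmetric set $S\cup S^{-1}$ also satisfies this bound, because $\metric(x,s^{-1}(x))=\metric(s(s^{-1}x),s^{-1}x)\le D$. For any $h\in G$ and $s\in S$, the conjugate $hsh^{-1}$ moves $o$ by
\[
\metric(o,hsh^{-1}(o))=\metric(h^{-1}(o),s(h^{-1}(o)))\le D,
\]
since $h$ is an isometry. Hence every conjugate of $s$ lies in $\{g\in G:\metric(o,g(o))\le D\}$.

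It remains to show this set is finite. Let $K:=\ol{B(o,D+1)}$, which is compact because $(\ambient,\metric)$ is proper. If $\metric(o,g(o))\le D$, then $g(o)\in K\cap g(K)$, so $g(K)\cap K\ne\emptyset$. Proper discontinuity then gives only finitely many such $g$. Consequently each $s\in S$ has finitely many conjugates, $[G:C_G(s)]<\infty$, and $Z(G)$ has finite index.

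It follows that $G$ is finitely generated and centre-by-finite. A finite-index subgroup of a finitely generated group is finitely generated (Schreier), so $Z(G)$ is a finitely generated abelian group of finite index, and $G$ is virtually abelian. Finitely generated virtually $\bZ^k$ groups have polynomial growth of degree $k$, because growth type is invariant under passing to finite-index subgroups \cite{dlH00}.

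The only point requiring care, and the step I expect to need the most attention, is the finiteness of the set of elements moving $o$ by at most $D$. This uses properness of the space together with proper discontinuity applied to a single compact ball; the remaining steps are standard group theory.
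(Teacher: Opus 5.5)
Your proposal is correct and follows essentially the same route as the paper: bounded displacement makes every conjugate $hsh^{-1}$ of a generator move a basepoint by at most $D$. Proper discontinuity applied to a compact closed ball then makes each conjugacy class finite, so each $[G:C_G(s)]$ is finite and hence $[G:Z(G)]<\infty$. The only difference is at the end, where you get polynomial growth directly via Schreier's lemma and the growth of $\bZ^k$, whereas the paper cites Wolf's theorem for virtually nilpotent groups.
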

\begin{proof}
Let $D:=\sup_{(g,x)\in S\times\ambient}\metric(x,g(x))<\infty$. Proper discontinuity implies that, for every $x\in\ambient$ and every $R\in(0,\infty)$,\begin{equation}\label{e.G-center-finite-displacement}
	\#\Sett{g\in G}{\metric(x,gx)\leq R}\leq \#\Sett{g\in G}{g(\overline{B(x,R+1)})\cap \overline{B(x,R+1)}\neq\emptyset}<\infty.
\end{equation}
For every $s\in S$, let $C_G(s)=\Sett{g\in G}{gs=sg}$ be the centralizer. Then \begin{equation}
	[G:C_G(s)]=\#\{hsh^{-1}:h\in G\}\leq \#\Sett{g\in G}{\metric(x,gx)\leq D}\overset{\eqref{e.G-center-finite-displacement}}{<}\infty.\label{e.G-cetfdis}
\end{equation}
Let $Z(G):=\Sett{g\in G}{gh=hg\text{ for every }h\in G}$ be the centre of $G$. Then $Z(G)=\bigcap_{s\in S}C_G(s)$. Therefore
\begin{equation}\label{e.G-center-finite-index}
[G:Z(G)]\leq\prod_{s\in S}[G:C_G(s)]\overset{\eqref{e.G-cetfdis}}{<}\infty.
\end{equation}
Since $Z(G)$ is abelian (thus nilpotent), this proves that $G$ is virtually abelian and virtually nilpotent. By Wolf's theorem \cite{Wol68} (see also \cite{Gro81}), $G$ is of polynomial growth.
\end{proof}

\begin{theorem}\label{t.Gp}
	Assume that $(\ambient,\metric,\meas,\form,\domain)$ satisfies $\UVG$. Suppose there exists a group $G$, such that \begin{enumerate}[label=\textup{(\roman*)},align=right,leftmargin=*,topsep=5pt,parsep=0pt,itemsep=2pt]
  \item\label{it.G-geo} $G$ acts on $(\ambient,\metric)$ {geometrically}, which implies that $G$ is finitely generated by a symmetric set $S=S^{-1}$;
  \item\label{it.G-dis} The generating set $S$ acts on $(\ambient,\metric)$ with \emph{bounded displacement}, that is, \begin{equation}\label{e.G-dis}
  	\sup_{(g,x)\in S\times \ambient}\metric(x,g(x))<\infty;
  \end{equation}
   \item\label{it.G-pre} $G$ preserves $(\form,\domain)$, that is, for every $f\in\domain$ and every $g\in G$, we have $f\circ g\in\domain$ and $\form(f\circ g,f\circ g)=\form(f,f)$.
\end{enumerate}
Then the following hold:
\begin{enumerate}[label=\textup{(\alph*)},align=right,leftmargin=*,topsep=5pt,parsep=0pt,itemsep=2pt]
\item\label{it.G-Vol} There exist constants $C\in(1,\infty)$ and $A\in(1,\infty)$ such that \begin{equation}
	C^{-1}\#B^{G}(e,A^{-1}r)\leq \volume(r)\leq C\#B^{G}(e,Ar),\ \text{ for all }r\in[1,\infty).
\end{equation}
Here $B^G(e,r)$ denotes the ball of radius $r$ centred at the identity $e$ in the Cayley graph of $G$ with generating set $S$ and its graph metric. In particular, there exists a constant  $C_{1}\in(1,\infty)$ and a unique $D\in\bN$ such that \begin{equation}\label{e.G-pol}
	C_{1}^{-1}r^{D}\leq V(r)\leq C_{1} r^{D},\ \text{ for all }r\in[1,\infty).
\end{equation}
  \item\label{it.G-HKE} $(\ambient,\metric,\meas,\form,\domain)$ satisfies $\HKE$ with \begin{equation}\label{e.G-scale}
	\scale(r)=r\cdot\left(r\wedge\int_{0}^{r}\frac{V(s)}{s}\dif s\right)\asymp
	\begin{dcases}
		rV(r),\ &r\in(0,\iota]\\
		r^{2},\ &r\in(\iota,\infty).
	\end{dcases}
\end{equation}
In particular, $(\ambient,\metric,\meas,\form,\domain)$ is eventually Gaussian \ref{e.EventGau}.
\item\label{it.G-RHG} $(\ambient,\metric,\meas,\form,\domain)$ satisfies $\RH$ and $\Grad$.
\end{enumerate}
\end{theorem}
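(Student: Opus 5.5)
The plan is to prove the three assertions in order, using (a) and the group structure throughout (b) and (c).

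\emph{Step (a): volume.} The action is geometric and $(\ambient,\metric)$ is proper and geodesic, so the Švarc--Milnor lemma makes the orbit map $g\mapsto g(o)$ a quasi-isometry from the Cayley graph $(G,S)$ to $(\ambient,\metric)$. Fix a compact $K$ with $\ambient=\bigcup_g g(K)$. For $r\geq1$ I will compare $\meas(B(o,r))$ with the number of translates $g(K)$ meeting $B(o,r)$. Proper discontinuity bounds the overlap of the translates $g(K)$. $\UVG$ gives $\meas(g(K))\asymp \volume(\diam K)$ from above and, via a fixed ball inside $K$, from below. This yields $\#B^G(e,A^{-1}r)\lesssim \volume(r)\lesssim\#B^G(e,Ar)$. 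Lemma~\ref{l.G-center} shows that $G$ is virtually abelian, so by Bass--Guivarc'h its growth is $\asymp r^D$ for a unique integer $D$.

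\emph{Step (b): $\HKE$.} I will verify the standard characterization $\HKE\Leftrightarrow \mathrm{VD}+\mathrm{PI}(\scale)+\mathrm{CS}(\scale)$ (Grigor'yan--Hu--Lau, or Barlow--Bass--Kumagai) at all scales, treating two regimes.
\begin{itemize}[leftmargin=*,topsep=5pt,parsep=0pt,itemsep=2pt]
\item For $r\leq\iota$, balls are real trees. The local analysis of \cite{BCY25} (Theorem~4.3 and Lemma~4.6, as used in Lemma~\ref{l.growthPsi}) should give the Poincaré and capacity estimates with $\Theta(r)=r\widetilde\volume(r)$.
\item For $r\geq\iota$, I will show that $\nu(B(x,r))\asymp\volume(r)$. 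This holds because $\nu$ restricted to $K$ is finite and $G$-invariant; $G$-invariance follows from (iii) together with the fact that isometries preserve length.
\end{itemize}
The large-scale cutoff is the piecewise-linear function $\eta=(1-\metric(x,\cdot)/r)_+$-type truncation. It has $|\wgrad\eta|\leq r^{-1}$, so its energy is $\lesssim \volume(r)/r^2$, which gives the cutoff Sobolev inequality with $r^2$. For the large-scale Poincaré inequality $\mathrm{PI}(r^2)$, I will follow \cite[Appendix~A.5]{CJKS20}.
\begin{itemize}[leftmargin=*,topsep=5pt,parsep=0pt,itemsep=2pt]
\item Discretize by the orbit.
\item Control $f(y)-f(sy)$ for $s\in S$ by integrating $|\wgrad f|$ along a geodesic of length at most the displacement bound in \eqref{e.G-dis}.
\item Average over the orbit and apply the Saloff-Coste/Varopoulos Poincaré inequality on the Cayley graph of a group of polynomial growth.
\item Combine with the unit-scale Poincaré inequality on a tree ball.
\end{itemize}
The two regimes glue into $\scale(r)\asymp r\volume(r)$ for $r\leq\iota$ and $\asymp r^2$ for $r>\iota$, which is exactly \eqref{e.G-scale}. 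I expect this gluing, specifically checking the hypotheses of the stability theorem uniformly across the transition scale, to be the main technical obstacle.

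\emph{Step (c): $\RH$, then $\Grad$.} By Theorem~\ref{t.RH>Gd}, $\Grad$ follows from $\UVG$, $\UHK$ (from (b)) and $\RH$, so only $\RH$ needs proof. It splits by scale.
\begin{itemize}[leftmargin=*,topsep=5pt,parsep=0pt,itemsep=2pt]
\item \emph{Small scales, $r\lesssim\iota$.} A harmonic $h$ on a tree ball has $\wgrad h$ locally constant on edges and satisfies Kirchhoff-type balancing. The local gradient bounds of \cite{BCY25} then give $\|\wgrad h\|_{L^\infty(B(x,r))}\lesssim r^{-1}\osc_{B(x,2r)}h$. The elliptic Harnack inequality implied by $\HKE$ then turns $\osc$ into an $L^1$ average.
\item \emph{Large scales, $r\gg1$.} Here I will use translation invariance. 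For $s$ in a finite set $S'\supset S$ that moves a unit ball across a fundamental domain, the function $h_s:=h\circ s-h$ is harmonic on a slightly smaller ball, using (iii) and the fact that harmonicity is preserved under $G$. Its mean-value bound from Harnack and its Caccioppoli-type energy control give $\sup_{B(x,r)}|h_s|\lesssim r^{-1}\fint_{B(x,Ar)}|h|$; this is the discrete-derivative analogue of a gradient bound, in the spirit of \cite{CJKS20}. The unit-scale estimate then bounds $|\wgrad h|$ at any point by $\osc_{B(y,1)}h$, which is in turn bounded by $\max_{s\in S'}\sup|h_s|$ up to a bounded chain of group moves. This yields \eqref{e.RH}.
\end{itemize}
The delicate point in this step is justifying the large-scale bound for $h_s$: I will first try iterating Caccioppoli on $h_s$ together with a mean-value inequality, and if that proves insufficient, fall back on the Poincaré inequality from (b) on the Cayley-graph discretization.
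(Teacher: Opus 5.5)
\textbf{Step (b) has a genuine gap at large scales.} Your large-scale cutoff $(1-\metric(x,\cdot)/r)_+$ depends on the claim $\medm(B(x,r))\asymp\volume(r)$, which in turn depends on $\medm$ being finite on the compact set $K$. The length measure is not Radon in exactly the regime the theorem targets. Suppose $\volume(\delta)=o(\delta)$ as $\delta\downarrow0$, as for the alternating Vicsek fractafold, where $\volume(\delta)=\delta^{\alpha_d}$ with $\alpha_d>1$. Then $B(x,2r)$ contains $\gtrsim\volume(r)/\volume(\delta)$ disjoint balls $B(y_j,\delta/3)$. Each of them contains an arc of length $\delta/3$, because the space is unbounded and geodesic. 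Hence $\medm(B(x,2r))\gtrsim\delta\volume(r)/\volume(\delta)\to\infty$. So every ball has infinite $\medm$-measure, your cutoff has infinite energy and is not in $\domain$, and $G$-invariance of $\medm$ does not help.

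Even a finite bound $\form(\eta,\eta)\lesssim\volume(r)/r^2$ would not yield $\mathrm{CS}$. Since $\Gamma\langle\eta,\eta\rangle=\abs{\wgrad\eta}^2\dif\medm$ may be singular to $\meas$, the term $\int f^2\dif\Gamma\langle\eta,\eta\rangle$ must be controlled for every $f$. This forces the gradient of the cutoff onto a set of bounded $\medm$-measure per unit cell, as in Lemma~\ref{l.cutoff} (from \cite[Lemma~3.3]{BCY25}). On such a set, $f^2$ can be handled by the one-dimensional bound $\abs{f(y)-f(z)}^2\le\metric(y,z)\int_{[y,z]}\abs{\wgrad f}^2\dif\medm$.

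The paper never builds large-scale cutoffs by hand. It takes $\mathrm{PI}(\scale)_{\loc}$ and $\mathrm{CS}(\scale)_{\loc}$ from \cite[Section~3]{BCY25}, and $\mathrm{PI}(2)$ and $\mathrm{CS}(2)$ on the Cayley graph from \cite{Kle10} and \cite{BB04}. It transfers these through the rough isometry via \cite[Theorem~2.21]{BBK06} and concludes with \cite{GHL15}. This is also what handles the ``gluing across the transition scale'' that you flagged as the main obstacle.

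\textbf{Step (c) is missing the key absorption idea.} Your large-scale bound on $h\circ s-h$ is a legitimate and simpler alternative to the paper's iterative self-improvement of the H\"older exponent, once (b) is in place. You need to make the link explicit:
\begin{itemize}
\item For unit cells $Q$ with centre $q_Q$, show $\sup_{y\in Q}\abs{h(sy)-h(y)}^2\lesssim\int_{B(q_Q,M_0+2)}\abs{\wgrad h}^2\dif\medm$, where $M_0$ bounds the displacement.
\item Sum over cells and apply Caccioppoli with $\scale(r)\asymp r^2$.
\item Finish with the $L^2$ mean-value inequality.
\end{itemize}
This gives $\sup_{B(x,r)}\abs{h\circ g-h}\lesssim r^{-1}\fint_{B(x,Ar)}\abs{h}\dif\meas$ for every $g$ of displacement at most $M_0$, including conjugates, since $\sup_z\metric(z,ghg^{-1}z)=\sup_z\metric(z,hz)$.

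The step that fails is the next one. A chain of group moves only compares $h$ along the discrete orbit $Gy$, so it can never bound $\abs{h(z_1)-h(z_2)}$ for $z_1,z_2\in B(y,1)$ in different orbits. Hence it cannot bound $\osc_{B(y,1)}h$.

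The missing ingredient is an absorption argument using the elliptic Harnack inequality. Let $K$ be compact with $\ambient=\bigcup_{g}g(K)$, let $\Omega=\bigcup_{g\in J_0}g(K)\supset B(q_0,L_2)$, and let $D[u]=\max_{g\in J_0}\sup_{K}\abs{u\circ g-u}$. Then $\osc_{\Omega}u\le2D[u]+\osc_{K}u$. Harnack oscillation decay gives $\osc_{K}u\le\frac18\osc_{B(q_0,L_2)}u\le\frac18\osc_{\Omega}u$. Absorbing yields $\osc_{\Omega}u\le\frac{16}{7}D[u]$. Translating by $g\in J(x,r)$ and combining with the unit-scale gradient bound \cite[Lemma~4.9]{BCY25} then gives $\mathrm{RH}$ at large scales.
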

\begin{proof}
	We first record an implication of the assumption \ref{it.G-geo}. By the \v{S}varc--Milnor lemma \cite[Theorem~IV.B.25]{dlH00}, $G$ is generated by a finite symmetric set $S$ and the Cayley graph of $G=\langle S\rangle$, equipped with graph metric $d_{S}$, is quasi-isometric to $(\ambient,\metric)$. To be precise, for any $x\in\ambient$, there is a constant $C\in (1,\infty)$ such that the map $g\mapsto g(x)$ is \emph{coarsely bi-Lipschitz} between $(G,d_{S})$ and $(\ambient,\metric)$, that is, \begin{equation}
		C^{-1}d_{S}(g_{1},g_{2})-C\leq \metric (g_{1}(x),g_{2}(x))\leq Cd_{S}(g_{1},g_{2})+C,\ \text{ for all }g_{1},g_{2}\in G,
	\end{equation}
	and the image is \emph{relatively dense} $\ambient=\bigcup_{g\in G}B(g(x),C)$.
	By $\UVG$, there exists $C_{1},C_{2}\in(0,\infty)$, depending on $V(1)$, the constants in $\UVG$, and $\# S$, such that \begin{equation}
		C_{2}^{-1}\# B^{G}(e,C_{1})\leq \meas(B(g(x),C_{1}))\leq C_{2} \# B^{G}(e,C_{1}).
	\end{equation}
	Therefore, $(\ambient,\metric,\meas)$ and $(G,d_{S},\#)$ are \emph{roughly isometric} in the sense of \cite[Definition~2.20]{BBK06}.

	\begin{enumerate}[label=\textup{(\alph*)},align=right,leftmargin=*,topsep=5pt,parsep=0pt,itemsep=2pt]
\item[\ref{it.G-Vol}] The first assertion is a direct consequence of the rough isometry between $(\ambient,\metric,\meas)$ and $(G,d_{S},\#)$ and \cite[Proposition~2.2]{CSC95}. The second assertion \eqref{e.G-pol} is implied by \cite[Theorem~1.1]{Bre14}, \ref{it.G-dis} and Lemma \ref{l.G-center}.
\item[\ref{it.G-HKE}] By \cite[Section~3]{BCY25}, we know that the \emph{local Poincar\'{e} inequality} $\mathrm{PI}(\Psi)_{\loc}$ and the \emph{local cutoff Sobolev inequality} $\mathrm{CS}(\Psi)_{\loc}$ defined in \cite[p.~494]{BBK06} holds for the metric measure Dirichlet space $(\ambient,\metric,\meas,\form,\domain)$. By \ref{it.G-Vol} and \cite[Theorem 2.2]{Kle10}, the Cayley graph $(G,d_{S})$ satisfies the Poincar\'{e} inequality with scale function $r\mapsto r^{2}$.  By \cite[Remarks 1.6-(2)]{BB04}, the cutoff Sobolev inequality $\mathrm{CS}(2)$ defined in \cite[Definition 1.4]{BB04} holds for $(G,d_{S})$. The  arguments in \cite[Section~5, proof of Proposition~5.5]{BBK06} also apply to the present scale function $\scale$, since the comparisons require only \eqref{e.scale} and $\UVG$. Therefore, by \ref{it.G-Vol}, the rough isometry between $(\ambient,\metric,\meas)$ and $(G,d_{S},\#)$, \cite[Theorem 2.21]{BBK06} and \cite[Theorem 1.2]{GHL15}, we know that $(\ambient,\metric,\meas,\form,\domain)$ satisfies $\HKE$ with $\Psi$ given in \eqref{e.G-scale}.
\item[\ref{it.G-RHG}] We begin with the proof of $\RH$. The proof is motivated by \cite[Theorem~A.2]{CJKS20}. By $\HKE$, $\UVG$ and \cite[Theorem~1.2]{GHL15}, we know that the \emph{elliptic Harnack inequality} \ref{e.EHI} holds for $(\ambient,\metric,\meas,\form,\domain)$, that is \begin{equation}\label{e.EHI}\tag{$\mathrm{EHI}$}
	{\begin{minipage}{370pt}
		there exist constants $A,C\in(1,\infty)$ such that, for all $(x_{0},r)\in\ambient\times(0,\infty)$, all $u\in\domain_{\loc}$ that is non-negative and $\form$-harmonic on $B(x_{0},Ar)$, we have $\esup_{B(x_{0},r)}u\leq C\einf_{B(x_{0},r)}u$.		
	\end{minipage}}
\end{equation}

An immediate consequence of \ref{e.EHI} is that all $\form$-harmonic functions have a continuous $\meas$-version, and there exist $A,C\in(1,\infty)$ and $\theta\in(0,1]$ such that, for any $(x_{0},r)\in\ambient\times(0,\infty)$ and any $u\in\domain_{\loc}$ that is $\form$-harmonic in $B(x_{0},Ar)$, we have \begin{equation}\label{e.EHI1}
	\abs{u(x)-u(y)}\leq C\left(\frac{\metric(x,y)}{r}\right)^{\theta}\fint_{B(x_{0},Ar)}\abs{u}\dif\meas,\ \text{ for every $x,y\in B(x_{0},r)$.}
\end{equation}
Here and in the following, we represent every $\form$-harmonic function by its continuous $\meas$-version. See \cite[Lemma~A.1]{BBKT08} and \cite[Theorem~6.3,~Lemma~9.2]{GHL15}.
Define \begin{equation}
	\delta(g):=\sup_{x\in\ambient}\metric(x,g(x)),\ \text{ for each $g\in G$}.
\end{equation}
By \eqref{e.EHI1}, for $A_{1}=2A\in(1,\infty)$ and $\theta\in(0,1]$, the following  \hypertarget{PH}{property}, denoted by $\hyperlink{PH}{P(A_{1},\theta)}$, holds: 

\begin{equation}\label{e.PEHI}
	{\begin{minipage}{370pt}
		there exists a constant $C_{1}\in(1,\infty)$ such that, for all $(x_{0},r)\in\ambient\times(0,\infty)$, all $g\in G$ with $\delta(g)\in[0,r)$, and all $u\in\domain_{\loc}$ that is $\form$-harmonic on $B(x_{0},A_{1}r)$, we have \vspace{-15pt}\begin{equation}
		\abs{u(g(z))-u(z)}\leq C_{1}\left(\frac{\delta(g)}{r}\right)^{\theta}\fint_{B(x_{0},A_{1}r)}\abs{u}\dif\meas,\ \text{for all $z\in B(x_{0},r)$}.
		\end{equation}
	\end{minipage}}
\end{equation}
In fact, for any $g\in G$, if $\delta(g)=0$ then $g(z)=z$ for all $z\in\ambient$; if $\delta(g)\in(0,r)$ and $z\in B(x_{0},r)$, we know that $\metric(g(z),z)\leq\delta(g)<r$, so $\{z,g(z)\}\subset B(x_{0},2r)$. So we may apply \eqref{e.EHI1} with $2r$ in place of $r$. Define a function \begin{equation}\label{e.EHI1.1}
h(\theta)=2\theta\one_{(0,2^{-1})}(\theta)+\frac{3}{4}\one_{\{2^{-1}\}}(\theta)+\one_{(2^{-1},1]}(\theta),\ \text{ for }\theta\in(0,1].
\end{equation} We will prove that, for every $A_{1}\geq 2A$ such that $\hyperlink{PH}{P(A_{1},\theta)}$ holds, the property can be improved to $\hyperlink{PH}{P(A_{2},h(\theta))}$ for some $A_{2}\in(1,\infty)$. For this, we first fix $(x_{0},r)\in\ambient\times(0,\infty)$, fix $g\in G$ with $\delta(g)\in(0,r)$, and fix $u\in\domain_{\loc}$ that is $\form$-harmonic on $B(x_{0},A_{2}r)$ for $A_{2}=A_{1}(A_{1}\vee 3)+2$. If $\delta(g)\in[\frac{1}{4}r,r)$, then since $A_{2}>A_{1}\geq2A$, \begin{align}
	\abs{u(g(z))-u(z)}&\leq 2\sup_{B(x_{0},2r)}\abs{u}\leq 2C\fint_{B(x_{0},2Ar)}\abs{u}\dif\meas\\
	&\leq 2C\cdot 4^{2\theta}\left(\frac{\delta(g)}{r}\right)^{2\theta}\fint_{B(x_{0},2Ar)}\abs{u}\dif\meas.\label{e.EHI2+}
\end{align}
Thus the claim holds automatically. If $\delta(g)\in(0,\frac{1}{4}r)$, then there exists a unique $k\in\{0\}\cup\bN$ such that $\delta(g)\in(2^{-k-3}r,2^{-k-2}r]$. For each $j\in\{0,\ldots,k+1\}$, we define $g_{j}:=g^{2^{j}}\in G$ and define $w_{j}:=(u\circ g_{j})-u$. Since $G$ acts on $(\ambient,\metric)$ by isometries, we know that $\delta(g_{j})\leq 2^{j}\delta(g)<r$. Since $G$ acts on $(\ambient,\metric)$ by isometries and $G$ preserves $(\form,\domain)$ by \ref{it.G-pre}, we know that $(u\circ g_{j})\in\domain_{\loc}$ is $\form$-harmonic on $B(g_{j}^{-1}(x_{0}),A_{2}r)$. Since $A_{2}\geq A_{1}^{2}+2$ and $\delta(g_{j}^{-1})=\delta(g_{j})\leq\frac{1}{2}r$, we know that \begin{equation}\label{e.EHI3}
	B(x_{0},A_{1}^{2}r)\subset B(x_{0},A_{2}r)\cap B(g_{j}^{-1}(x_{0}),A_{2}r).
\end{equation}
Since $u$ is $\form$-harmonic on $B(x_{0},A_{1}^{2}r)$, applying $\hyperlink{PH}{P(A_{1},\theta)}$ to $u$, we see that \begin{align}
	\sup_{z\in B(x_{0},A_{1}r)}\abs{w_{j}(z)}&=\sup_{z\in B(x_{0},A_{1}r)}\abs{u(g_{j}(z))-u(z)}\leq C_{1}\left(\frac{\delta(g_{j})}{A_{1}r}\right)^{\theta}\fint_{B(x_{0},A_{1}^{2}r)}\abs{u}\dif\meas\\
&\leq C_{1}A_{1}^{-\theta}2^{\theta j}\left(\frac{\delta(g)}{r}\right)^{\theta}\fint_{B(x_{0},A_{1}^{2}r)}\abs{u}\dif\meas.\label{e.EHI4}
\end{align}
By \eqref{e.EHI3}, $w_{j}$ is $\form$-harmonic on $B(x_{0},A_{1}r)$. So we may also apply $\hyperlink{PH}{P(A_{1},\theta)}$ to $w_{j}$ on $B(x_{0},A_{1}r)$ and see that, for all $z\in B(x_{0},r)$, and all $j\in\{0,\ldots, k\}$, \begin{align}
	&\phantom{\ \leq}\abs{u(g_{j+1}(z))-2u(g_{j}(z))+u(z)}\\
	&=\abs{w_{j}(g_{j}(z))-w_{j}(z)}\leq C_{1}\left(\frac{\delta(g_{j})}{r}\right)^{\theta}\fint_{B(x_{0},A_{1}r)}\abs{w_{j}}\dif\meas\\
	&\overset{\eqref{e.EHI4}}{\leq}C_{1}^{2}A_{1}^{-\theta}2^{2\theta j}\left(\frac{\delta(g)}{r}\right)^{2\theta}\fint_{B(x_{0},A_{1}^{2}r)}\abs{u}\dif\meas\label{e.EHI4.1}
\end{align}
The definition of $k$, we have $1/4<2^{k+1}\delta(g)/r\leq1/2$ and $\delta(g_{k+1})\leq r/2$. For $z\in B(x_0,r)$, we have $\{z,g_{k+1}(z)\}\subset B(x_0,2r)$. By the same argument in \eqref{e.EHI2+}, $\UVG$ and the fact that $A_1^2\geq2A$, we have
\begin{align}
\abs{u(g_{k+1}(z))-u(z)}
&\leq2\sup_{B(x_0,2r)}\abs{u}
\leq C\fint_{B(x_0,A_1^2r)}\abs{u}\dif\meas\\
&\leq C4^{2\theta}2^{2\theta(k+1)}
\left(\frac{\delta(g)}r\right)^{2\theta}
\fint_{B(x_0,A_1^2r)}\abs{u}\dif\meas.\label{e.EHI4.2}
\end{align}
Using the following identity, \begin{align}
&\phantom{\ \leq}u(z)-u(g(z))\\
	&=2^{-k-1}(u(z)-u(g_{k+1}(z)))+\sum_{j=0}^{k}2^{-j-1}(u(z)-2u(g_{j}(z))+u(g_{j+1}(z))),\label{e.EHI5}
\end{align}
we have \begin{align}
	&\phantom{\ \leq}\abs{u(z)-u(g(z))}\\
	&\overset{\eqref{e.EHI5}}{\leq}2^{-k-1}\abs{u(z)-u(g_{k+1}(z))}+\sum_{j=0}^{k}2^{-j-1}\abs{u(z)-2u(g_{j}(z))+u(g_{j+1}(z))}\\
	&\overset{\eqref{e.EHI4.1},\eqref{e.EHI4.2}}{\leq}C(C_{\volume},\theta,A_{1})\sum_{j=0}^{k+1}2^{(2\theta-1) j}\left(\frac{\delta(g)}{r}\right)^{2\theta}\fint_{B(x_{0},A_{1}^{2}r)}\abs{u}\dif\meas.\label{e.EHI6}
\end{align}
\begin{itemize}
	\item If $\theta\in(0,\frac{1}{2})$, then $\sum_{j=0}^{k+1}2^{(2\theta-1) j}\leq \sum_{j=0}^{\infty}2^{(2\theta-1) j}=C(\theta)<\infty$. So \begin{equation}
		\abs{u(z)-u(g(z))}\leq C_{1}(C_{\volume},\theta,A_{1})\left(\frac{\delta(g)}{r}\right)^{2\theta}\fint_{B(x_{0},A_{1}^{2}r)}\abs{u}\dif\meas.
	\end{equation}
	This gives $\hyperlink{PH}{P(A_{2},2\theta)}$
	\item If $\theta=\frac{1}{2}$, then using the inequality $\sup_{s\in[1,\infty)}s^{-\frac{1}{4}}\log s<\infty$, we have \begin{align}
		&\phantom{\ \leq}\sum_{j=0}^{k+1}2^{(2\theta-1) j}\left(\frac{\delta(g)}{r}\right)^{2\theta}=(k+2)\left(\frac{\delta(g)}{r}\right)\\
		&\leq \frac{1}{\log 2}\left(\log\frac{r}{\delta(g)}\right)\left(\frac{\delta(g)}{r}\right)^{\frac{1}{4}}\left(\frac{\delta(g)}{r}\right)^{\frac{3}{4}}\leq C\left(\frac{\delta(g)}{r}\right)^{\frac{3}{4}},
	\end{align}
	which, combined with \eqref{e.EHI6}, gives $\hyperlink{PH}{P(A_{2},\frac{3}{4})}$.
	\item If $\theta\in(\frac{1}{2},1]$, then $2\theta>1$, so \begin{align}
		&\phantom{\ \leq}\sum_{j=0}^{k+1}2^{(2\theta-1) j}\left(\frac{\delta(g)}{r}\right)^{2\theta}\leq C(\theta)2^{(2\theta-1) k}\left(\frac{\delta(g)}{r}\right)^{2\theta}\\
		&\leq C(\theta)\left(\frac{r}{4\delta(g)}\right)^{2\theta-1}\left(\frac{\delta(g)}{r}\right)^{2\theta}=C_{1}(\theta)\frac{\delta(g)}{r},
	\end{align}
	which, combined with \eqref{e.EHI6}, gives $\hyperlink{PH}{P(A_{2},1)}$.
\end{itemize}
Combining the above three cases and the definition of $h$ in \eqref{e.EHI1.1}, we obtain $\hyperlink{PH}{P(A_{2},h(\theta))}$. Note that for any $\theta\in(0,1]$, there exists $N(\theta)\in\bN$ such that $h^{\circ N(\theta)}(\theta)=1$. Iterating the self-improvement $\hyperlink{PH}{P(A_{1},\theta)} \Longrightarrow \hyperlink{PH}{P(A_{2},h(\theta))}$ total of $N(\theta)$ times, we obtain $\hyperlink{PH}{P(A_{*},1)}$, for some $A_{*}\in(1,\infty)$ depending only on $A_{1}$ and $\theta$.

By the co-compactness of the action $G$ on $(\ambient,\metric)$, there exists a non-empty compact set $K\subset\ambient$ such that $\ambient=\bigcup_{g\in G}g(K)$. Fix $q_{0}\in K$. Fix $L_{1}:=2\diam(K,\metric)$ so that $K\subset B(q_{0},L_1)$. By \ref{e.EHI} and \cite[Lemma~A.1]{BBKT08}, there exists $L_{2}\in(A_{*}L_{1},\infty)$, depending only on $A_{*}$, $L_{1}$ and the constants appearing in \ref{e.EHI}, such that, for any $u\in\domain_{\loc}$ that is $\form$-harmonic on $B(q_{0},AL_{2})$,\begin{equation}\label{e.EHI6.1}
	\osc_{K}u\leq \osc_{B(q_{0},L_1)}u\leq \frac{1}{8}\osc_{B(q_{0},L_2)}u,
\end{equation}
where $A\in(1,\infty)$ is the constant from \ref{e.EHI}. Let \begin{equation}\label{e.EHI7}
	J_{0}:=\Sett{g\in G}{g(K)\cap\overline{B(q_{0},L_2)}\neq\emptyset},\ \text{ and }\ \Omega:=\bigcup_{g\in J_{0}}g(K).
\end{equation}
By the proper discontinuity of the action, we know that $\# J_{0}<\infty$, which, combined with \eqref{e.G-dis}, gives \begin{equation}\label{e.EHI7.1}
	M_{0}:=\max_{g\in J_{0}}\delta(g)<\infty.
\end{equation}

 Since $G$ acts on $(\ambient,\metric)$ by isometries, we know that $\Omega\subset B(q_{0},L_{1}+L_{2})$.  Since $\ambient=\bigcup_{g\in G}g(K)$, we have \begin{equation}\label{e.EHI8}
K\subset B(q_{0},L_2)\subset	\ol{B(q_{0},L_2)}=\ol{B(q_{0},L_2)}\cap\ambient=\bigcup_{g\in G}(g(K)\cap \ol{B(q_{0},L_2)})\subset \Omega.
\end{equation}
For any $v:\Omega\to \bR$, we write \begin{equation}\label{e.EHI8.1}
	D[v]:=\max_{g\in J_{0}}\sup_{x\in K}\abs{v(g(x))-v(x)}.
\end{equation}
Then for any $x_{1},x_{2}\in \Omega$, by the definition of $\Omega$ in \eqref{e.EHI7}, there exists $g_{1},g_{2}\in J_{0}$ and $y_{1},y_{2}\in K$ such that $x_{j}=g_{j}(y_{j})$, $j\in\{1,2\}$. In this case, \begin{align}
	\abs{v(x_{1})-v(x_{2})}&\leq\abs{v(g_{1}(y_{1}))-v(y_{1})}+\abs{v(y_{1})-v(y_{2})}+\abs{v(g_{2}(y_{2}))-v(y_{2})}\\
	&\leq 2D[v]+\osc_{K}v.
\end{align}
Taking the supremum over $x_{1},x_{2}\in \Omega$, we obtain $\osc_{\Omega}v\leq 2D[v]+\osc_{K}v$. If $u\in\domain_{\loc}$ is an $\form$-harmonic function on $B(q_{0},(A+1)L_{2})\supset \Omega$, then, using \eqref{e.EHI6.1}, we obtain \begin{equation}
	\osc_{\Omega}u\leq 2D[u]+\osc_{K}u\overset{\eqref{e.EHI6.1}}{\leq} 2D[u]+\frac{1}{8}\osc_{B(q_{0},L_2)}u\overset{\eqref{e.EHI8}}{\leq} 2D[u]+\frac{1}{8}\osc_{\Omega}u.
\end{equation}
Therefore, \begin{equation}\label{e.EHI9}
	\osc_{\Omega}u\leq \frac{16}{7}D[u], \text{ for any $u\in\domain_{\loc}$ that is $\form$-harmonic on $B(q_{0},(A+1)L_{2})$}.
\end{equation}

For any $(x,r)\in\ambient\times(0,\infty)$, we define \begin{equation}
	J({x,r}):=\Sett{g\in G}{g(K)\cap B(x,r)\neq\emptyset}.
\end{equation}
Again, the proper discontinuity of $G$ and the properness of $(\ambient,\metric)$ imply that $\# J({x,r})<\infty$ for all $(x,r)\in\ambient\times(0,\infty)$. As in \eqref{e.EHI8}, we have \begin{equation}\label{e.EHI10}
	B(x,r)\subset \bigcup_{g\in J({x,r})}g(K),\ \text{ for all $(x,r)\in\ambient\times(0,\infty)$}.
\end{equation}

Let $u\in\domain_{\loc}$ be $\form$-harmonic on $B(x,A_{3}r)$ for some $A_{3}=A_{0}+3A_{*}+1$, where $A_{0}$ is the constant from the \emph{locally Lipschitz regular condition} in \cite[Lemma 4.9]{BCY25}.
\begin{itemize}
	\item Suppose $r\in(0,(6A_{0})^{-1}]$. By \cite[Lemma 4.9]{BCY25}, we know that there exists $C \in(1,\infty)$, such that\begin{equation}\label{e.G-LGH}
	\norm{\wgrad u}_{L^{\infty}(B(x,r),\medm)}\leq \frac{C}{r}\fint_{B(x,A_{0}r)}\abs{u}\dif\meas.
\end{equation}

	\item Suppose $r\in [(\frac{A+2}{A_{3}-1}L_{2})\vee L_{1}\vee A_{*}\vee M_{0} ,\infty)$. Let $g\in J({x,r})$. We will use $\hyperlink{PH}{P(A_{*},1)}$ and \eqref{e.EHI9} to estimate $\osc_{g(\Omega)}u$. Let $v:=u\circ g$. By \ref{it.G-geo} and \ref{it.G-pre}, we know that $v$ is $\form$-harmonic on $B(g^{-1}(x),A_{3}r)$. Let $z\in K\cap g^{-1}(B(x,r))$. For any $y\in B(q_{0},(A+1)L_{2})$, we have  \begin{align}
		\metric(y,g^{-1}(x))&\leq \metric(y,q_{0})+\metric(q_{0},z)+\metric(z,g^{-1}(x))\\
		&\leq (A+1)L_{2}+\diam(K,\metric)+r\\
		&< (A+2)L_{2}+r\leq A_{3}r. 
	\end{align}
	So $B(q_{0},(A+1)L_{2})\subset B(g^{-1}(x),A_{3}r)$. In particular, $v$ is $\form$-harmonic on $B(q_{0},(A+1)L_{2})$. By \eqref{e.EHI9}, \begin{align}
		&\phantom{\ \leq}\osc_{g(\Omega)}u=\osc_{\Omega}v\overset{\eqref{e.EHI9}}{\leq} \frac{16}{7}D[v]\overset{\eqref{e.EHI8.1}}{=}\frac{16}{7}\max_{h\in J_{0}}\sup_{z\in K}\abs{v(h(z))-v(z)}\\
		&\leq \frac{16}{7}\max_{h\in J_{0}}\sup_{z\in B(q_{0},L_{1})}\abs{v(h(z))-v(z)}=\frac{16}{7}\max_{h\in J_{0}}\sup_{z\in B(q_{0},L_{1})}\abs{u((gh)(z))-u(g(z))}\\
		&\leq \frac{16}{7}\max_{h\in J_{0}}\sup_{z\in B(x,3r)}\abs{u((ghg^{-1})(z))-u(z)}\ \text{ (since $B(g(q_{0}),L_{1})\subset B(x,3r)$)} \\
		&\lesssim \frac{M_{0}}{r}\fint_{B(x,3A_{*}r)}\abs{u}\dif m \ \text{ (using $\hyperlink{PH}{P(A_{*},1)}$, $\delta(ghg^{-1})=\delta(h)$ and \eqref{e.EHI7.1})}.\label{e.EHI10.1}
	\end{align}
	Let $\rho:=(12A_{0})^{-1}((L_{2}-L_{1})\wedge1)$. Let $N\in\bN$ and $\{q_{j}\}_{j=1}^{N}\subset K$ such that $K\subset \bigcup_{j=1}^{N}B(q_{j},\rho)$. Therefore \begin{equation}\label{e.EHI11}
		B(q_{j},A_{0}\rho )\subset B(q_{0},L_{2})\overset{\eqref{e.EHI8}}{\subset} \Omega.
	\end{equation}
	Since $\{B(q_{j},\rho)\}_{j=1}^{N}$ covers $K$, the inclusion in \eqref{e.EHI10} implies  \begin{equation}
		B(x,r)\subset \bigcup_{g\in J(x,r)}\bigcup_{j=1}^{N}g(B(q_{j},\rho)).
	\end{equation}
Therefore, using \eqref{e.G-LGH} with $u-u(g(q_{j}))$ in place of $u$, we have \begin{align}
	&\phantom{\ \leq}\norm{\wgrad u}_{L^{\infty}(B(x,r),\medm)}\\
	&\leq \max_{g\in J(x,r)}\max_{j\in\{1,\ldots,N\}}\norm{\wgrad (u-u(g(q_{j}))\one_{\ambient})}_{L^{\infty}(g(B(q_{j},\rho)),\medm)}\\
	&\overset{\eqref{e.G-LGH}}{\leq} \max_{g\in J(x,r)}\max_{j\in\{1,\ldots,N\}}\frac{C}{\rho}\fint_{B(g(q_{j}),A_{0}\rho)}\abs{u-u(g(q_{j}))}\dif\meas\\
	&\leq \frac{C}{\rho}\max_{g\in J(x,r)}\max_{j\in\{1,\ldots,N\}}\osc_{g(B(q_{j},A_{0}\rho ))}u\overset{\eqref{e.EHI11}}{\leq} \frac{C}{\rho}\max_{g\in J(x,r)}\osc_{g(\Omega)}u\\
	&\overset{\eqref{e.EHI10.1}}{\leq} \frac{C_{1}}{\rho}\frac{M_{0}}{r}\fint_{B(x,3A_{*}r)}\abs{u}\dif m.\label{e.G-LGH2}
\end{align}
	\item Suppose $r\in((6A_{0})^{-1},(\frac{A+2}{A_{3}-1}L_{2})\vee L_{1}\vee A_{*}\vee M_{0})$. Let $\rho_{1}=(12A_{0})^{-1}$. Then there exist $M\in\bN$ and $\{y_{j}\}_{j=1}^{M}\subset B(x,r)$ such that $B(x,r)\subset \bigcup_{j=1}^{M}B(y_{j},\rho_{1})$.
Then \begin{align}
	&\phantom{\leq}\norm{\wgrad u}_{L^{\infty}(B(x,r),\medm)}\\
	&\leq \max_{j\in\{1,\ldots, M\}}\norm{\wgrad u}_{L^{\infty}(B(y_{j},\rho_{1}),\medm)}\overset{\eqref{e.G-LGH}}{\leq}\frac{C}{\rho_{1}}\max_{j\in\{1,\ldots, M\}}\fint_{B(y_{j},A_{0}\rho_{1})}\abs{u}\dif\meas\\
	&\overset{\UVG}{\lesssim} \frac{\volume((A_{0}+1)((\frac{A+2}{A_{3}-1}L_{2})\vee L_{1}\vee A_{*}\vee M_{0}))}{ C^{-1}\rho_{1}\volume(A_{0}\rho_{1})}\fint_{B(x,(A_{0}+1)r)}\abs{u}\dif \meas.\label{e.G-LGH3}
\end{align}
\end{itemize}
Combining the above three cases \eqref{e.G-LGH}, \eqref{e.G-LGH2} and \eqref{e.G-LGH3}, and using $\UVG$, we obtain $\RH$. By Theorem \ref{t.RH>Gd} and \ref{it.G-HKE}, we obtain $\Grad$.
\qedhere
\end{enumerate} 
\end{proof}

\subsection{Example: alternating Vicsek fractafold}\label{Sec:examples}

In this section, we use Theorem \ref{t.Gp} to study the validity of $\R{p}$ and $\RR{p}$ for $p\in(1,\infty)$ for the alternating Vicsek fractafold, whose heat kernel is Gaussian at large scale, and is sub-Gaussian at small scale.

\begin{example}[Alternating Vicsek fractafold in $\bZ^{d}$]\label{ex.Vicsek}
Let $d\in\bN\cap[2,\infty)$. Let 
\begin{equation}
 L_d:=\Sett{\bfk\in\mathbb Z^d}{\bfk=(k_{1},\ldots,k_{d})\ \text{ with }\ k_1\equiv\cdots\equiv k_d\pmod 2}.
\end{equation}
Let $\mathbf{0}=(0,\ldots,0)\in\bZ^{d}$. For $\bfk\in L_d$, let $Q_{\bfk}:=\bfk+[0,1]^d$. Let $\bfc=(2^{-1},\ldots,2^{-1})\in\bR^{d}$ and let $A_d=\{0,1\}^d\cup\{\mathbf{c}\}$.  For $\bfa\in A_d$,
define
\begin{equation}
 F_{\bfa}(\mathbf{x}):=\bfa+\frac{1}{3}(\mathbf{x}-\bfa),\  \mathbf{x}\in\mathbb R^d.
\end{equation}
Let $K_{\mathbf{0}}$ be the unique non-empty compact subset of $Q_{\mathbf{0}}$, satisfying $K_{\mathbf{0}}=\bigcup_{\bfa\in A_d}F_{\bfa}(K_{\mathbf{0}})$. The existence and uniqueness of $K_{\mathbf{0}}$ follow from \cite[Section~3.1-(3)]{Hut81}. Let $\alpha_{d}:=\log_{3}(2^{d}+1)\in(1,d)$. Let $d_{{\mathbf{0}}}:K_{\mathbf{0}}\times K_{\mathbf{0}}\to[0,\infty)$ be the geodesic metric of $K_{\mathbf{0}}$ generated from the Euclidean metric. Let $K_{\bfk}:= \bfk+K_{\mathbf{0}}$ for each $\bfk\in L_{d}$. Then $d_{{\mathbf{k}}}(\cdot,\cdot):=d_{{\mathbf{0}}}(\cdot-\bfk,\cdot-\bfk)$ is a geodesic metric on $K_{\bfk}$. Let $m_{\mathbf{k}}:=\mathcal H^{\alpha_d}_{d_{\bfk}}$ be the $\alpha_d$-dimensional Hausdorff measure. There exists $C\in(1,\infty)$ such that \begin{equation}
	C^{-1}r^{\alpha_{d}}\leq m_{\bfk}(B_{K_{\bfk}}(x,r))\leq Cr^{\alpha_{d}}, \text{ for all }(x,r)\in K_{\bfk}\times(0,2] \text{ and all }\bfk\in L_{d}.
\end{equation} 
\begin{enumerate}[label=\textup{(\roman*)},align=right,leftmargin=*,topsep=5pt,parsep=0pt,itemsep=2pt]
\item Define $\ambient:=\bigcup_{\bfk\in L_{d}}K_{\bfk}$; see Figure \ref{fg.Vic-ffd} for the case $d=2$.
 \begin{figure}
 \centering
  \includegraphics[width=0.85\textwidth]{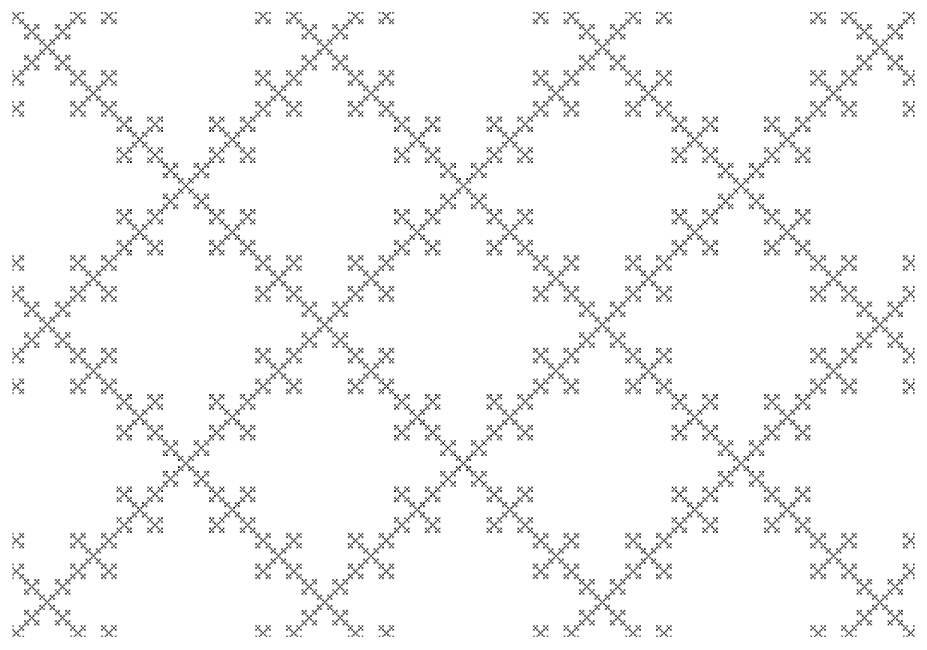}
 \caption{Alternating Vicsek fractafold in $\bZ^{2}$}
 \label{fg.Vic-ffd}
 \end{figure}
We define a function $\metric:\ambient\times\ambient\to[0,\infty)$ by \begin{equation}
\metric(x,y):=\inf\Biggl\{\sum_{j=0}^{n-1}d_{{\mathbf{k}_{j}}}(\mathbf{q}_{j},\mathbf{q}_{j+1})\Biggm|\begin{minipage}{230pt}
	$n\in\bN$, $(\bfk_{j})_{j=0}^{n-1}\subset L_{d}$, $x=\mathbf{q}_{0}\in K_{\bfk_{0}}$, $y=\mathbf{q}_{n}\in K_{\bfk_{n-1}}$, and  $\mathbf{q}_{j}\in K_{\bfk_{j-1}}\cap K_{\bfk_{j}}$ for all $j\in\{1,\ldots,n-1\}$ whenever $n\geq2$
	\end{minipage}
	\Biggr\}
\end{equation}
for all $(x,y)\in\ambient\times\ambient$. In other words, $\metric$ is the induced path metric on $\ambient$. We define a Borel measure $\meas$ on $\ambient$ by \begin{equation}
	\meas(E):=\sum_{\bfk\in L_{d}}m_{\bfk}(K_{\bfk}\cap E),\ \text{ for all Borel }E\subset \ambient.
\end{equation}
It is clear that $(\ambient,\metric)$ is an unbounded, proper, geodesic, and separable uniform local tree (with $\iota=4^{-1}$); $\meas$ is a Radon measure on $(\ambient,\metric)$ with full support, and the uniform volume growth condition $\UVG$ holds with \begin{equation}\label{e.VicUVG}
	\volume(r):=r^{\alpha_{d}}\vee r^{d},\ r\in(0,\infty).
\end{equation}
Let $(\form,\domain)$ be the canonical strongly local regular symmetric Dirichlet form on $L^{2}(\ambient,\meas)$ in Proposition \ref{p.energy}. Clearly
\begin{equation}\label{e.VicDF}
	\form(f,f)=\sum_{\bfk\in L_{d}}\int_{K_{\bfk}}\abs{\wgrad f}^{2}\dif\medm\  \text{ for all }f\in\domain.
\end{equation}

\item Note that $(L_{d},+)$ is an Abelian group with unit $\mathbf{0}$, and the inverse of $\bfk\in L_{d}$ is $-\bfk$. For $\bfk\in  L_{d}$ and $x\in\ambient$, the translation map $(\bfk,x)\mapsto \bfk+x$ gives an action of $L_{d}$ on $\ambient$. Let $S_{d}:=\{-1,1\}^{d}\subset L_{d}$. Then $L_{d}=\langle S_{d}\rangle$. It is evident that $L_{d}$ acts geometrically on $(\ambient,\metric)$, has polynomial growth, and 
\begin{equation}
	\sup_{(\bfk,x)\in S_{d}\times \ambient}\metric(x,\bfk+x)\leq C_d<\infty.
\end{equation}
By \eqref{e.VicDF}, we know that $L_{d}$ preserves $(\form,\domain)$. By Theorem \ref{t.Gp}, $(\ambient,\metric,\meas,\form,\domain)$ satisfies $\HKE$, $\RH$ and $\Grad$, with \begin{equation}
	\scale(r):=r^{\alpha_{d}+1}\wedge r^{2},\ r\in(0,\infty).
\end{equation}

\item It is clear that $\scale$ satisfies the local Dini-type condition \ref{e.near0int} for all $p\in(2,\infty)$. Theorem~\ref{t.main} therefore gives $\R{p}$ for all $p\in[2,\infty)$ and $\RR{q}$ for $q\in(1,2]$. By Theorem \ref{t.smallp} and \eqref{e.VicUVG}, these are the only ranges of $p\in(1,\infty)$ such that $\R{p}$ or $\RR{p}$ hold. In summary, for the alternating Vicsek fractafold, for $p\in(1,\infty)$, \begin{equation}
	\R{p}\ \text{ holds if and only if }p\in[2,\infty),\ \text{ and }\RR{p} \text{  holds if and only if } p\in(1,2].
\end{equation}
\end{enumerate}
\end{example}
\begin{remark}
The implication from $\HKE$ to $\Grad$ in \cite[condition~(1.3) and Theorem~2.1]{GY26} requires \emph{strong recurrence}, which is not satisfied by the alternating Vicsek fractafold in $\bZ^d$ for $d\geq2$.
\end{remark}

\appendix
\section{Useful facts}\label{s.appendix}

\begin{lemma}\label{l.phi}
Let $\scale:(0,\infty)\to(0,\infty)$ be a continuous, increasing bijection such that \eqref{e.scale}
 holds for some $C\in(1,\infty)$ and $1<\beta_{1}\leq\beta_{2}<\infty$. Define $\Phi$ by \eqref{e.phi}.
Then $\Phi$ is a $[0,\infty)$-valued lower semicontinuous function satisfying the following properties.
\begin{enumerate}[label=\textup{({\arabic*})},align=right,leftmargin=*,topsep=5pt,parsep=0pt,itemsep=2pt]
\item\label{it.phi1}
For every $t\in(0,\infty)$, the function $\Phi(\cdot,t)$ is non-decreasing, and $\Phi(R,t)>0$ for every $R\in(0,\infty)$.

\item\label{it.phi2}
For every $R\in(0,\infty)$, the function $\Phi(R,\cdot)$ is non-increasing, and
$a\Phi(R,t)\leq\Phi(aR,t)$ for every $a\in[1,\infty)$.

\item\label{it.phi3}
There exists a constant $C_{1}\in(1,\infty)$ depending only on $C$, $\beta_{1}$, and $\beta_{2}$ such that
\begin{equation}\label{e.phi1}
 C_{1}^{-1}\min_{k\in\{1,2\}}
 \left(\frac{\scale(R)}{t}\right)^{\frac{1}{\beta_{k}-1}}
 \leq \Phi(R,t)
 \leq C_{1}\max_{k\in\{1,2\}}
 \left(\frac{\scale(R)}{t}\right)^{\frac{1}{\beta_{k}-1}},
\end{equation}
and
\begin{equation}\label{e.phi2}
 C_{1}^{-1}\left(\frac{R}{r}\right)^{\frac{\beta_{2}}{\beta_{2}-1}}
 \leq \frac{\Phi(R,t)}{\Phi(r,t)}
 \leq C_{1}\left(\frac{R}{r}\right)^{\frac{\beta_{1}}{\beta_{1}-1}},
 \qquad 0<r\leq R<\infty.
\end{equation}
\end{enumerate}
\end{lemma}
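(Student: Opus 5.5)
The plan is to work directly from the variational definition $\Phi(R,t)=\sup_{r>0}\bigl(\frac{R}{r}-\frac{t}{\scale(r)}\bigr)$ and prove the properties in the order: lower semicontinuity and nonnegativity, monotonicity \ref{it.phi1}--\ref{it.phi2}, the two-sided bound \eqref{e.phi1}, and finally \eqref{e.phi2}, which follows from the first three.

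\emph{Lower semicontinuity and nonnegativity.} For fixed $r>0$ the map $(R,t)\mapsto \frac{R}{r}-\frac{t}{\scale(r)}$ is continuous, and a supremum of continuous functions is lower semicontinuous. Since $\scale(r)\to\infty$ as $r\to\infty$ (it is an increasing bijection onto $(0,\infty)$), letting $r\to\infty$ shows $\Phi\ge 0$. If $R>0$, fix $t$ and choose $r$ so large that $\frac{t}{\scale(r)}<\frac{R}{2r}$. This is possible because $\scale(r)/r\gtrsim r^{\beta_1-1}\to\infty$ by \eqref{e.scale} and $\beta_1>1$. It gives $\Phi(R,t)>0$.

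\emph{Monotonicity.} Each term is nondecreasing in $R$ and nonincreasing in $t$, and both properties pass to the supremum. For $a\ge1$ we have $\frac{aR}{r}-\frac{t}{\scale(r)}\ge a\bigl(\frac{R}{r}-\frac{t}{\scale(r)}\bigr)$ whenever the bracket is nonnegative. Restricting the supremum to those $r$, which is allowed since $\Phi\ge0$, yields $a\Phi(R,t)\le\Phi(aR,t)$.

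\emph{The bound \eqref{e.phi1}.} Substitute $r=R/\lambda$, so that $\Phi(R,t)=\sup_{\lambda>0}\bigl(\lambda-\frac{t}{\scale(R/\lambda)}\bigr)$, and set $s:=\scale(R)/t$. Applying \eqref{e.scale} with the pair $(R/\lambda,R)$ in the appropriate order gives the two-sided comparison
\[
C^{-1}\min_k\lambda^{\beta_k}\le\frac{\scale(R)}{\scale(R/\lambda)}\le C\max_k\lambda^{\beta_k}.
\]
Hence $\frac{t}{\scale(R/\lambda)}$ is sandwiched between $C^{-1}s^{-1}\min_k\lambda^{\beta_k}$ and $Cs^{-1}\max_k\lambda^{\beta_k}$, up to swapping the min and the max.

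For the upper bound, note that
\[
\lambda-\frac{t}{\scale(R/\lambda)}\le\lambda-C^{-1}s^{-1}\min_k\lambda^{\beta_k}.
\]
Splitting into $\lambda\le1$ and $\lambda\ge1$ and maximizing the elementary expressions $\lambda-c\,s^{-1}\lambda^{\beta}$ gives a value $\asymp s^{1/(\beta-1)}$. This is bounded by $C_1\max_k s^{1/(\beta_k-1)}$.

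For the lower bound, choose $\lambda=c\min_k s^{1/(\beta_k-1)}$ with $c$ small. Then $Cs^{-1}\max_k\lambda^{\beta_k}\le\lambda/2$, so $\Phi\ge\lambda/2$. The case distinctions $s\lessgtr1$ and $\lambda\lessgtr1$ are where the bookkeeping concentrates, and I expect this to be the main (routine but fiddly) obstacle: one must check that each combination of $\beta_1,\beta_2$ in the min/max is matched correctly.

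\emph{The ratio bound \eqref{e.phi2}.} I would deduce it from \eqref{e.phi1} together with \eqref{e.scale}, since these give $\frac{\scale(R)}{\scale(r)}\in[C^{-1}(R/r)^{\beta_1},C(R/r)^{\beta_2}]$. When $s$ is comparable at both scales the ratio comes out cleanly, but \eqref{e.phi1} mixes exponents, so the naive route yields only polynomial bounds with the exponents $\frac{\beta_k}{\beta_k-1}$ possibly misaligned.

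A cleaner route for the lower bound is to use \ref{it.phi2} directly: it gives $\Phi(R,t)\ge\frac{R}{r}\Phi(r,t)$, i.e.\ exponent $1$. I would improve this to $\frac{\beta_2}{\beta_2-1}$ by the scaling identity $\Phi(R,at)=a\Phi(R/a,t)$ together with the homogeneity-type comparisons above, rechecking the stated exponents against \eqref{e.phi1} in the regimes $s\ge1$ and $s\le1$ separately.
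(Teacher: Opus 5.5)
Your proofs of lower semicontinuity, nonnegativity and strict positivity, items (1)--(2), and \eqref{e.phi1} are correct and self-contained. The case check for the lower bound in \eqref{e.phi1} with $\lambda=c\min_k s^{1/(\beta_k-1)}$ does close, and finiteness of $\Phi$ follows from the upper bound. The paper simply cites the literature for all of this.

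The genuine gap is \eqref{e.phi2}. You give no argument for the upper bound. The only concrete route you name for the lower bound, checking against \eqref{e.phi1} in the regimes $s\geq1$ and $s\leq1$, cannot succeed. For $s=\scale(R)/t\geq1$ the two sides of \eqref{e.phi1} are $s^{1/(\beta_2-1)}$ and $s^{1/(\beta_1-1)}$. These differ by the factor $s^{1/(\beta_1-1)-1/(\beta_2-1)}$, which is unbounded when $\beta_1<\beta_2$. A function merely squeezed between these bounds can therefore have $\Phi(R,t)/\Phi(r,t)$ arbitrarily small or large at fixed $R/r$. So \eqref{e.phi2} must be extracted from the variational definition again.

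The missing step is to combine your scaling identity with a rescaling of the variable inside the supremum. With $\mu:=R/r\geq1$ one has $\Phi(R,t)=\mu\,\Phi(r,t/\mu)$, so it suffices to compare $\Phi(r,t/\mu)$ with $\Phi(r,t)$.

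\emph{Lower bound.} Assume $\mu\geq C$; otherwise monotonicity suffices. Put $\lambda:=(\mu/C)^{1/(\beta_2-1)}\geq1$, and for arbitrary $\sigma>0$ test $\Phi(r,t/\mu)$ at $\rho=\sigma/\lambda$. Since \eqref{e.scale} gives $\scale(\sigma/\lambda)\geq C^{-1}\lambda^{-\beta_2}\scale(\sigma)$,
\[
\frac{r}{\rho}-\frac{t}{\mu\scale(\rho)}\geq\frac{\lambda r}{\sigma}-\frac{C\lambda^{\beta_2}}{\mu}\,\frac{t}{\scale(\sigma)}=\lambda\Bigl(\frac{r}{\sigma}-\frac{t}{\scale(\sigma)}\Bigr).
\]
Hence $\Phi(r,t/\mu)\geq\lambda\Phi(r,t)$, and so $\Phi(R,t)/\Phi(r,t)\geq C^{-1/(\beta_2-1)}\mu^{\beta_2/(\beta_2-1)}$.

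\emph{Upper bound.} Put $\lambda:=(C\mu)^{1/(\beta_1-1)}$ and test $\Phi(r,t)$ at $\sigma=\lambda\rho$. Now $\scale(\lambda\rho)\geq C^{-1}\lambda^{\beta_1}\scale(\rho)$ yields $\Phi(r,t)\geq\lambda^{-1}\Phi(r,t/\mu)$, i.e.\ $\Phi(R,t)/\Phi(r,t)\leq C^{1/(\beta_1-1)}\mu^{\beta_1/(\beta_1-1)}$.

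These are the homogeneity comparisons you allude to. They have to be applied termwise to the supremand rather than routed through \eqref{e.phi1}.
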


\begin{proof}
The lower semicontinuity, assertions~\ref{it.phi1} and~\ref{it.phi2}, together with \eqref{e.phi1}, are proved in \cite[Lemma~5.7]{GK17}. Estimate~\eqref{e.phi2} is proved in \cite[Lemma~2.10]{Mur20}.
\end{proof}

\begin{lemma}\label{l.isom}
Let $E\subset\mathbb R$, and let $\tau:E\to\mathbb R$ satisfy
\begin{equation}
 |\tau(s)-\tau(t)|=|s-t|,
 \qquad s,t\in E.
\end{equation}
If $E$ contains at least two points, then there exist unique
$\varepsilon\in\{-1,1\}$ and $c\in\mathbb R$ such that $\tau(t)=c+\varepsilon t$ for all $t\in E$.
\end{lemma}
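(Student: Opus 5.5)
The plan is to show first that $\tau$ is affine on any three points of $E$, and then to use a two-point anchor to fix the constants. Fix two distinct points $a,b\in E$. The isometry condition gives $\tau(b)-\tau(a)=\pm(b-a)$, so there is a unique $\varepsilon\in\{-1,1\}$ with $\tau(b)-\tau(a)=\varepsilon(b-a)$. Set $c:=\tau(a)-\varepsilon a$. Then $\tau(t)=c+\varepsilon t$ holds for $t\in\{a,b\}$, and it remains to verify it for every other $t\in E$.

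Fix $t\in E\setminus\{a,b\}$. The isometry gives two equations,
\[
\tau(t)-\tau(a)=\sigma_1(t-a) \quad\text{and}\quad \tau(t)-\tau(b)=\sigma_2(t-b),
\]
with signs $\sigma_1,\sigma_2\in\{-1,1\}$. We need to show $\sigma_1=\varepsilon$. Subtracting the two equations gives
\[
\tau(b)-\tau(a)=\sigma_1(t-a)-\sigma_2(t-b).
\]
Combined with $\tau(b)-\tau(a)=\varepsilon(b-a)$, this yields $\varepsilon(b-a)=\sigma_1(t-a)-\sigma_2(t-b)$.

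We now run through the sign cases.

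If $\sigma_1=\sigma_2=\sigma$, the identity reduces to $\varepsilon(b-a)=\sigma(b-a)$. Since $b\neq a$, this forces $\sigma=\varepsilon$.

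If $\sigma_1=-\sigma_2=\sigma$, the identity becomes $\varepsilon(b-a)=\sigma(2t-a-b)$. Taking absolute values gives $|2t-a-b|=|b-a|$, so either $2t-a-b=b-a$ or $2t-a-b=a-b$. The first gives $t=b$ and the second gives $t=a$, both excluded. So this case cannot occur.

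Hence $\sigma_1=\varepsilon$, and therefore $\tau(t)=\tau(a)+\varepsilon(t-a)=c+\varepsilon t$.

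For uniqueness, suppose $c+\varepsilon t=c'+\varepsilon' t$ at the two distinct points $a,b$. Subtracting the two equalities gives $(\varepsilon-\varepsilon')(b-a)=0$, so $\varepsilon=\varepsilon'$, and then $c=c'$.

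I expect no real obstacle. The only step needing care is the mixed-sign case, which must be ruled out using $t\notin\{a,b\}$.
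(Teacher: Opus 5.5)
Your proof is correct and follows essentially the same route as the paper: anchor at two distinct points $a,b$, fix $\varepsilon$ from $\tau(b)-\tau(a)=\varepsilon(b-a)$, and show that every other value is forced to be $\tau(a)+\varepsilon(t-a)$. The paper states the key step as ``the two distance identities determine $\tau(t)$ uniquely, and $\tau(s_0)+\varepsilon(t-s_0)$ satisfies both''; your sign case analysis, which rules out the mixed-sign case using $t\notin\{a,b\}$, makes that uniqueness claim explicit.
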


\begin{proof}
Choose $s_0,s_1\in E$ with $s_0<s_1$. Since $\tau$ preserves distances, there exists a unique $\varepsilon\in\{-1,1\}$ such that $\tau(s_1)-\tau(s_0)=\varepsilon(s_1-s_0)$. Fix $t\in E$. The two identities
\begin{equation}
 |\tau(t)-\tau(s_i)|=|t-s_i|,
 \text{ for } i\in\{0,1\},
\end{equation}
determine $\tau(t)$ uniquely. The number $\tau(s_0)+\varepsilon(t-s_0)$ satisfies both identities; hence $\tau(t)=\tau(s_0)+\varepsilon(t-s_0)$. Taking $c:=\tau(s_0)-\varepsilon s_0$ proves the assertion. Uniqueness is evdient.
\end{proof}

\begin{lemma}\label{l.measu}
Let $(X,\mathcal X)$ be a measurable space whose $\sigma$-algebra $\sX$ is generated by countably many measurable sets, let $\nu$ be a $\sigma$-finite measure on $(X,\mathcal X)$, and let $p\in[1,\infty)$. There exists a $\mathcal B(L^p(X,\nu))\otimes\mathcal X$-measurable function $\mathsf M:L^p(X,\nu)\times X\to\mathbb R$ such that, for every $f\in L^p(X,\nu)$, we have $\mathsf M(f,x)=f(x)$ for $\nu$-a.e. $x\in X$. Here, $L^p(X,\nu)$ is equipped with the Borel $\sigma$-algebra induced by its norm topology.
\end{lemma}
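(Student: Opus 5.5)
The plan is to construct $\mathsf M$ by approximating each $f$ with simple functions built from a countable generating family, choosing the approximants in a way that depends measurably on $f$, and taking a pointwise limit. Fix a countable algebra $\mathcal A=\{A_1,A_2,\ldots\}$ generating $\sX$; it exists because the countable generating family can be closed under finite Boolean operations. Intersecting with a fixed exhausting sequence $X_1\subset X_2\subset\cdots$ of sets of finite $\nu$-measure, we may assume that $\nu(A_i)<\infty$ for every $i$. Let $\mathcal D$ be the set of finite rational linear combinations of indicators $\one_{A_i}$ with $\nu(A_i)<\infty$. It is countable, and by the standard approximation argument (a monotone class argument applied to finite-measure pieces, using that $\mathcal A$ generates $\sX$) it is dense in $L^p(X,\nu)$. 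Enumerate $\mathcal D=\{d_1,d_2,\ldots\}$. Each $d_k$ is an everywhere-defined, $\sX$-measurable function on $X$.

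For $n\in\bN$ and $f\in L^p(X,\nu)$, let $k_n(f):=\min\{k: \norm{f-d_k}_{L^p}<2^{-n}\}$, which is finite by density. The map $f\mapsto k_n(f)$ is Borel: the set $\{k_n=k\}$ is a finite Boolean combination of the open sets $\{f:\norm{f-d_j}_{L^p}<2^{-n}\}$. Define
\[
\mathsf M_n(f,x):=d_{k_n(f)}(x)=\sum_{k}\one_{\{k_n=k\}}(f)\,d_k(x).
\]
This is a countable sum of products of measurable functions, so it is $\mathcal B(L^p)\otimes\sX$-measurable. Set $\mathsf M(f,x):=\limsup_{n}\mathsf M_n(f,x)$ wherever this is finite and $0$ otherwise; this is again jointly measurable.

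For fixed $f$ we have $\norm{\mathsf M_n(f,\cdot)-f}_{L^p}<2^{-n}$. Then $\sum_n\norm{\mathsf M_{n+1}(f,\cdot)-\mathsf M_n(f,\cdot)}_{L^p}<\infty$, and $\sigma$-finiteness together with the monotone convergence theorem on each finite-measure piece gives $\sum_n\abs{\mathsf M_{n+1}-\mathsf M_n}(f,x)<\infty$ for $\nu$-a.e. $x$. Hence $\mathsf M_n(f,x)$ converges $\nu$-a.e. Its limit equals $f$ $\nu$-a.e., because a subsequence of an $L^p$-convergent sequence converges a.e. to the $L^p$ limit. Therefore $\mathsf M(f,\cdot)=f$ $\nu$-a.e.

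The only step that needs care is the density of $\mathcal D$ in $L^p$ together with the Borel measurability of the selector $k_n$. Density follows from the fact that the class of sets $E$ with $\nu(E\cap X_m)<\infty$ whose indicators on $X_m$ lie in the closure of $\mathcal D$ is a monotone class containing $\mathcal A$; combined with density of simple functions, this yields density in $L^p$.
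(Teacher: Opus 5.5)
Your argument is correct, but it follows a different route from the paper.

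The paper's construction:
\begin{itemize}
\item It builds, from the exhausting sets and the first $n$ generators, countable partitions $\mathcal P_n$ of $X$ into sets of finite measure.
\item It takes $\mathsf M_n(f,\cdot)=\mathbb E^{\nu}[f\mid\sigma(\mathcal P_n)]$. Then $f\mapsto\mathsf M_n(f,x)$ is a sum of continuous linear functionals times indicators, which gives joint measurability.
\item It obtains $\nu$-a.e.\ convergence to $f$ from the martingale convergence theorem on $\sigma$-finite spaces.
\end{itemize}

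Your construction:
\begin{itemize}
\item You use countable generation only to produce a countable dense set $\mathcal D$ of everywhere-defined measurable functions.
\item You select $d_{k_n(f)}$ by a Borel first-index rule.
\item You get a.e.\ convergence from the elementary fast-Cauchy argument behind the Riesz--Fischer theorem.
\end{itemize}

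What each route buys:
\begin{itemize}
\item Yours is more elementary, with no martingale theory. It actually proves the conclusion whenever $L^p(X,\nu)$ is separable, since after the density step you use only that the $d_k$ are countably many measurable functions forming a dense set.
\item The price is that your $\mathsf M_n$ is neither linear nor continuous in $f$ and depends on the enumeration. The paper's approximants are canonical conditional expectations.
\item The paper uses $\mathsf M$ only through its joint measurability (in Lemma~\ref{l.meab}), so either construction suffices there.
\end{itemize}

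One small point of presentation: once you intersect the $A_i$ with the $X_m$, the family is no longer an algebra. Run the monotone class (or $\pi$--$\lambda$) argument either with the original algebra $\mathcal A$ (taken to contain $X$) or with the countable algebra generated by $\mathcal A\cup\{X_m\}_m$. With that reading, your density step is sound.
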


\begin{proof}
Choose a collection $\{G_k\}_{k\in\mathbb N}\subset\mathcal X$ such that
$\mathcal X=\sigma(\{G_k\}_{k\in\mathbb N})$. Since $\nu$ is $\sigma$-finite, there exists a disjoint measurable partition
$X=\bigsqcup_{j=1}^{\infty}X_j$ such that $X_j\in\mathcal X$ and
$\nu(X_j)<\infty$ for every $j\in\mathbb N$.

For each $k\in\mathbb N$, define $G_k^{(0)}:=G_k$ and
$G_k^{(1)}:=X\setminus G_k$. For each $n\in\mathbb N$, define
\begin{equation}
 \mathcal P_n:=
 \Sett{X_j\cap\bigcap_{k=1}^{n}G_k^{(\varepsilon_k)}}
 {j\in\mathbb N,\ (\varepsilon_1,\ldots,\varepsilon_n)\in\set{0,1}^n}.
\end{equation}
Then $\mathcal P_n$ is a countable measurable partition of $X$, after possibly discarding empty sets, and every $A\in\mathcal P_n$ satisfies $\nu(A)<\infty$. Let $\mathcal X_n:=\sigma(\mathcal P_n)$. Then
$\mathcal X_n\subset\mathcal X_{n+1}$ for every $n\in\mathbb N$, and $\sigma\left(\bigcup_{n\in\mathbb N}\mathcal X_n\right)=\mathcal X$.

For every $f\in L^p(X,\nu)$ and every $x\in X$, define
\begin{equation}\label{e.meas1}
 \mathsf M_n(f,x):=
 \sum_{\substack{A\in\mathcal P_n\\ \nu(A)\in(0,\infty)}}
 \left(\frac{1}{\nu(A)}\int_A f\dif\nu\right)\one_A(x).
\end{equation}
Since $\mathcal P_n$ is a partition, at most one summand in \eqref{e.meas1} is nonzero at each $x\in X$. We first show that the map
$(f,x)\mapsto\mathsf M_n(f,x)$ is measurable. Indeed, for every
$A\in\mathcal P_n$ with $0<\nu(A)<\infty$, H\"older's inequality gives
\begin{equation}
 \left|\int_A f\dif\nu\right|
 \leq \nu(A)^{1-1/p}\lVert f\rVert_{L^p(X,\nu)}.
\end{equation}
Thus $f\mapsto\int_A f\dif\nu$ is a continuous linear functional on
$L^p(X,\nu)$. Since $x\mapsto\one_A(x)$ is measurable, the map
\begin{equation}
 (f,x)\mapsto
 \left(\frac{1}{\nu(A)}\int_A f\dif\nu\right)\one_A(x)
\end{equation}
is jointly measurable. Because $\mathcal P_n$ is countable and the sum in
\eqref{e.meas1} has at most one nonzero term at each point,
$(f,x)\mapsto\mathsf M_n(f,x)$ is jointly measurable.

Moreover, since $\mathcal P_n$ is a partition into sets of finite measure, the measure space $(X,\mathcal X_n,\nu)$ is $\sigma$-finite, and
\begin{equation}
 \mathsf M_n(f,\cdot)=\mathbb E^{\nu}[f\mid\mathcal X_n]
 \qquad \nu\text{-a.e. on }X.
\end{equation}
Define
\begin{equation}
 A:=
 \bigcap_{k\in\mathbb N}
 \bigcup_{N\in\mathbb N}
 \bigcap_{m,n\geq N}
 \Sett{(f,x)\in L^p(X,\nu)\times X}
 {\left|\mathsf M_m(f,x)-\mathsf M_n(f,x)\right|<k^{-1}}.
\end{equation}
The set $A$ is measurable by the joint measurability of
$\{\mathsf M_n\}_{n\in\mathbb N}$. Define
\begin{equation}
 \mathsf M(f,x):=
 \begin{cases}
 \displaystyle\lim_{n\to\infty}\mathsf M_n(f,x), & (f,x)\in A,\\
 0, & (f,x)\notin A.
 \end{cases}
\end{equation}
Then $\mathsf M$ is measurable on $L^p(X,\nu)\times X$. Finally, since
$\mathcal X_n\uparrow\mathcal X$, the martingale convergence theorem on
$\sigma$-finite measure spaces \cite[Theorem~6.1.1]{Str25} gives, for every $f\in L^p(X,\nu)$,
\begin{equation}
 \lim_{n\to\infty}\mathsf M_n(f,x)=\lim_{n\to\infty}\mathbb E^{\nu}[f\mid\mathcal X_n](x)= f(x),
 \  \text{\ for $\nu$-a.e. }x\in X.
\end{equation}
Therefore, $x\mapsto\mathsf M(f,x)$ is a measurable representative of $f$.
\end{proof}

\begin{proposition}[{Cf. \cite[Proposition 3.2]{Mur24}}]\label{p.Whitney}
Let $\graph=(\vertex,\edge)$ be a connected locally finite graph for which there exists $C\in(1,\infty)$ such that
\begin{equation}
 \#B_{\graph}(v,2R)\leq C\#B_{\graph}(v,R),
 \qquad (v,R)\in\vertex\times(0,\infty).
\end{equation}
Let $\vartheta\in(0,1/2)$ and $\emptyset\neq\Omega\subsetneq\vertex$. For $x\in\Omega$, set $\delta_{\Omega}(x):=\metric_{\graph}(x,\vertex\setminus\Omega)$.
Then there exists a collection of balls
\begin{equation}
 \{B_i=B_{\graph}(x_i,r_i):x_i\in\Omega,\ r_i>0,\ i\in I\}
\end{equation}
satisfying the following properties.
\begin{enumerate}[label=\textup{(\roman*)},align=right,leftmargin=*,topsep=5pt,parsep=0pt,itemsep=2pt]
\item The balls $\{B_i:i\in I\}$ are pairwise disjoint.

\item For every $i\in I$, $r_i=\frac{\vartheta}{1+\vartheta}\delta_{\Omega}(x_i)$;

\item If $K_{\vartheta}:=2(1+\vartheta)\in(2,3)$, then $\Omega=\bigcup_{i\in I}B_{\graph}(x_i,K_{\vartheta}r_i)$.
\end{enumerate}
Such a collection of balls is called a \emph{$\vartheta$-Whitney covering of $\Omega$}.
\end{proposition}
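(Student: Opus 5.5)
This is the Whitney covering lemma for a doubling graph. I would prove it by a maximal disjoint family argument, as in the Euclidean Whitney decomposition.

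First, set $\rho(x):=\frac{\vartheta}{1+\vartheta}\delta_{\Omega}(x)$ for $x\in\Omega$. Since $\Omega\neq\vertex$ and $\graph$ is connected, $\delta_\Omega(x)$ is a finite integer $\ge1$, so $\rho(x)>0$. Let $\{B_{\graph}(x_i,\rho(x_i))\}_{i\in I}$ be a maximal pairwise disjoint subfamily of $\{B_{\graph}(x,\rho(x))\}_{x\in\Omega}$. It exists by Zorn's lemma; alternatively, $\vertex$ is countable (connected and locally finite), so a greedy choice along an enumeration works. Properties (i) and (ii) then hold by construction, and the containment $\bigcup_i B_{\graph}(x_i,K_\vartheta r_i)\subset\Omega$ is free once checked: $K_\vartheta r_i=2\vartheta\delta_\Omega(x_i)<\delta_\Omega(x_i)$ because $\vartheta<1/2$.

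The main step is the reverse inclusion in (iii). Fix $x\in\Omega$. By maximality, $B_{\graph}(x,\rho(x))$ meets some $B_{\graph}(x_i,r_i)$, so $d:=\metric_{\graph}(x,x_i)<\rho(x)+r_i$. Since $\delta_\Omega$ is $1$-Lipschitz,
\[
\delta_\Omega(x)\le\delta_\Omega(x_i)+d<\delta_\Omega(x_i)+\tfrac{\vartheta}{1+\vartheta}\bigl(\delta_\Omega(x)+\delta_\Omega(x_i)\bigr).
\]
Rearranging gives $\frac{1}{1+\vartheta}\delta_\Omega(x)<\frac{1+2\vartheta}{1+\vartheta}\delta_\Omega(x_i)$, that is, $\delta_\Omega(x)<(1+2\vartheta)\delta_\Omega(x_i)$. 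Substituting back,
\[
d<\tfrac{\vartheta}{1+\vartheta}\bigl((1+2\vartheta)+1\bigr)\delta_\Omega(x_i)=2(1+\vartheta)r_i\cdot\tfrac{1+\vartheta}{1+\vartheta}=K_\vartheta r_i,
\]
since $\frac{\vartheta(2+2\vartheta)}{1+\vartheta}\delta_\Omega(x_i)=2\vartheta\,\delta_\Omega(x_i)=K_\vartheta r_i$. Hence $x\in B_{\graph}(x_i,K_\vartheta r_i)$, which gives (iii).

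The doubling hypothesis is not needed for this construction; it matters only later, when these balls are counted. The one point that needs care is that the strict inequalities survive the integer-valued graph distance, and the chain of estimates above is strict throughout. In Murugan's version the $\vartheta$ constants may be arranged slightly differently, but the identity $\frac{\vartheta}{1+\vartheta}(2+2\vartheta)=2\vartheta$ confirms that the stated $K_\vartheta$ is exactly right.
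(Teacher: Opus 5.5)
Your argument is correct and is essentially the standard proof behind the cited result \cite[Proposition 3.2]{Mur24}, for which the paper gives no proof of its own. You take a maximal pairwise disjoint subfamily of $\{B_{\graph}(x,\frac{\vartheta}{1+\vartheta}\delta_{\Omega}(x))\}_{x\in\Omega}$, use the $1$-Lipschitz property of $\delta_{\Omega}$ to get $\delta_{\Omega}(x)<(1+2\vartheta)\delta_{\Omega}(x_i)$, and hence obtain $\metric_{\graph}(x,x_i)<2\vartheta\,\delta_{\Omega}(x_i)=K_{\vartheta}r_i$; you are also right that the doubling hypothesis plays no role in (i)--(iii).
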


\begin{lemma}\label{l.KerQ}
For any $s\in[0,\infty)$ and $h,\kappa\in[0,\infty)$, define
\begin{equation}\label{e.Qkern}
 Q_{s,h,\kappa}(t)
 :=\frac{1}{\pi^{\frac{1}{2}}}
 \left(
 \frac{\exp(-\kappa(t-s))}{(t-s)^{\frac{1}{2}}}\one_{(s,\infty)}(t)
 -\frac{\exp(-\kappa(t-s-h))}{(t-s-h)^{\frac{1}{2}}}\one_{(s+h,\infty)}(t)
 \right),
 \ t\in(0,\infty).
\end{equation}
Then the following assertions hold.
\begin{enumerate}[label=\textup{({\arabic*})},align=right,leftmargin=*,topsep=5pt,parsep=0pt,itemsep=2pt]
\item\label{it.series0}
If $t\in(0,s]$, then $Q_{s,h,\kappa}(t)=0$. If $t\in(s,s+h]$, then
\begin{equation}
 Q_{s,h,\kappa}(t)
 =\frac{\exp(-\kappa(t-s))}{\pi^{\frac{1}{2}}(t-s)^{\frac{1}{2}}}.
\end{equation}

\item\label{it.series1}
The Laplace transform of $Q_{s,h,\kappa}$ is
\begin{equation}\label{e.QLap}
 \int_{0}^{\infty}Q_{s,h,\kappa}(t)e^{-t\lambda}\dif t
 =(\lambda+\kappa)^{-\frac{1}{2}}e^{-s\lambda}(1-e^{-h\lambda}),
 \qquad \lambda\in[0,\infty).
\end{equation}
For $\lambda=\kappa=0$, we define the right hand side in \eqref{e.QLap} to be $0$ by continuous extension.
\item\label{it.series2}
There exists $C\in(0,\infty)$ such that
\begin{equation}\label{e.Qshif}
 \sup_{\kappa\in(0,\infty)}\lvert Q_{s,h,\kappa}(t)\rvert
 \leq C\frac{h}{(t-s)^{\frac{3}{2}}},
 \qquad t\in[s+2h,\infty)\cap (s,\infty).
\end{equation}

\item\label{it.series4}
For every $A,c\in(0,\infty)$, there exists
$C=C(A,c,\volume,\scale)\in(0,\infty)$ such that, for every
$R\in(0,\infty)$, every $h\in\left(0,\frac{\scale(R)}{2(A+2)}\right]$ and every $s\in(0,Ah)$, one has
\begin{equation}\label{e.Qweight}
 \sup_{\kappa\in(0,\infty)}
 \int_{0}^{\infty}
 \lvert Q_{s,h,\kappa}(t)\rvert\Lambda(t)
 \exp(-c\Phi(R,t))\dif t
 \leq C\frac{h}{\scale(R)}
 \frac{\scale(R)^{\frac{1}{2}}}
 {(R\wedge\scale(R))^{\frac{1}{2}}R^{\frac{1}{2}}
 \volume(R)^{\frac{1}{2}}}.
\end{equation}
Here $\Lambda$ is the function defined in \eqref{e.deflamb}.
\end{enumerate}
\end{lemma}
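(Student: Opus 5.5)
Parts \ref{it.series0}--\ref{it.series2} are elementary one-variable computations, and part \ref{it.series4} follows by splitting the time integral at $t=s+2h$.

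\textbf{Part \ref{it.series0}.} This is read off from the indicators in \eqref{e.Qkern}.

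\textbf{Part \ref{it.series1}.} I substitute $u=t-s$ in the first term:
\[
\int_s^\infty \frac{e^{-\kappa(t-s)}}{(t-s)^{1/2}}e^{-t\lambda}\dif t=e^{-s\lambda}\Gamma(\tfrac12)(\lambda+\kappa)^{-1/2}.
\]
The second term is the same expression with $s$ replaced by $s+h$. Dividing by $\pi^{1/2}=\Gamma(\frac12)$ gives \eqref{e.QLap}. The case $\lambda=\kappa=0$ is covered by the stated convention.

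\textbf{Part \ref{it.series2}.} Put $g_\kappa(u):=\pi^{-1/2}e^{-\kappa u}u^{-1/2}$, so that $Q_{s,h,\kappa}(t)=g_\kappa(u)-g_\kappa(u-h)$ with $u=t-s\ge 2h$. Then
\[
g_\kappa'(u)=-\pi^{-1/2}e^{-\kappa u}\bigl(\kappa u^{-1/2}+\tfrac12u^{-3/2}\bigr).
\]
Since $\sup_{x>0}xe^{-x}=e^{-1}$, we get $|g_\kappa'(\xi)|\le C\xi^{-3/2}$ uniformly in $\kappa$. By the mean value theorem, with $\xi\in[u-h,u]$ and $u-h\ge u/2$, this yields $|Q_{s,h,\kappa}(t)|\le Chu^{-3/2}$, which is \eqref{e.Qshif}.

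\textbf{Part \ref{it.series4}: reduction.} Set $T:=\scale(R)$. Directly from \eqref{e.deflamb}, $\Lambda(T)=(R\wedge\scale(R))^{-1/2}R^{-1/2}\volume(R)^{-1/2}$. So the right-hand side of \eqref{e.Qweight} equals $C\,hT^{-1/2}\Lambda(T)$, and this is the bound I aim for. I need two auxiliary facts.
\begin{itemize}
\item Polynomial growth of $\Lambda$. For $0<t\le T$ there exist $a,C$ with $\Lambda(t)\le C(T/t)^{a}\Lambda(T)$. This follows from \eqref{e.scale} (so $\scale^{-1}(T)/\scale^{-1}(t)\le C(T/t)^{1/\beta_1}$) and from \eqref{e.VD1}.
\item Decay of the exponential. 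For $t\le T$, \eqref{e.phi1} gives $\Phi(R,t)\ge c(T/t)^{1/(\beta_2-1)}$. Hence for every $N$ there is $C_N$ with $\Lambda(t)e^{-c\Phi(R,t)}\le C_N(t/T)^{N}\Lambda(T)$ whenever $t\le T$.
\end{itemize}

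\textbf{Near region $t\in(s,s+2h]$.} Here $t<(A+2)h\le T/2$ and $|Q|\le C\bigl((t-s)^{-1/2}+(t-s-h)_+^{-1/2}\bigr)$. The integral is therefore at most
\[
C_N\Lambda(T)\bigl((A+2)h/T\bigr)^N h^{1/2}.
\]
Taking $N=\frac12$ gives $C\,hT^{-1/2}\Lambda(T)$.

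\textbf{Far region $t>s+2h$.} Since $s<Ah$ and $t-s\ge 2h$, we have $t-s\ge c_A t$. Part \ref{it.series2} then gives $|Q|\le Cht^{-3/2}$. I split this range at $t=T$.
\begin{itemize}
\item For $t\le T$, I use the decay bound with $N=\frac12$ (so $t^{-3/2}(t/T)^{1/2}=t^{-1}T^{-1/2}$), which gives $hT^{-1/2}\Lambda(T)\int t^{-1}\dif t$. This integral diverges logarithmically at $0$, so I will instead take $N=1$, making the integrand $t^{-1/2}T^{-1}$ integrable.
\item For $t\ge T$, I use that $\Lambda$ is decreasing and $e^{-c\Phi}\le1$. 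This gives $h\Lambda(T)\int_T^\infty t^{-3/2}\dif t=2hT^{-1/2}\Lambda(T)$.
\end{itemize}
All constants are independent of $\kappa$, so the supremum over $\kappa$ is harmless.

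The main obstacle is the uniform bound $\Lambda(t)e^{-c\Phi(R,t)}\lesssim(t/T)^N\Lambda(T)$ for small $t$. It requires combining the two-sided scale bounds with the factor $\scale^{-1}(t)\wedge t$ in $\Lambda$, which switches behaviour at $t\approx1$. Both branches are polynomial in $t$, so the stretched-exponential decay of $e^{-c\Phi}$ absorbs them.
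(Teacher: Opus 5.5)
Your proof is correct in parts (1), (3) and (4), and it follows the paper's route. You use the same Gamma-function substitution and the same uniform-in-$\kappa$ bound $|g_\kappa'(r)|\lesssim r^{-3/2}$. For (4) you use the same two auxiliary estimates for $t\le T=\scale(R)$, followed by a three-piece split of the time integral. The differences are cosmetic. You cut at $s+2h$ instead of at $L=2(A+2)h$. On the middle range you take $N=1$ instead of the paper's $N=\frac32$; any $N>\frac12$ works there, as your discarded attempt with $N=\frac12$ shows.

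The one step you do not actually prove is the case $\lambda=\kappa=0$ of part (2). The convention only assigns the value $0$ to the right-hand side of \eqref{e.QLap}. The left-hand side $\int_0^\infty Q_{s,h,0}(t)\dif t$ is a genuine Lebesgue integral, and you must show it vanishes. For $h>0$ one has $Q_{s,h,0}\in L^1(0,\infty)$: its singularities at $s$ and $s+h$ are of order $(\cdot)^{-1/2}$. At infinity it is $O(h(t-s)^{-3/2})$, by your part (3) computation, which works equally for $\kappa=0$.

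Your substitution argument cannot be reused here. For $\lambda=\kappa=0$, each of the two terms of $Q_{s,h,0}$ separately has a divergent integral. Either of two short arguments closes the gap:
\begin{itemize}
\item compute $\int_0^{M}Q_{s,h,0}(t)\dif t=\frac{2}{\sqrt\pi}\bigl(\sqrt{M-s}-\sqrt{M-s-h}\bigr)$, which tends to $0$ as $M\to\infty$, as the paper does; or
\item let $\lambda\downarrow0$ in the already-proved identity with $\kappa=0$. Use dominated convergence on the left, and $\lambda^{-1/2}e^{-s\lambda}(1-e^{-h\lambda})\le h\lambda^{1/2}\to0$ on the right.
\end{itemize}
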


\begin{proof}
\begin{enumerate}[label=\textup{({\arabic*})},align=right,leftmargin=*,topsep=5pt,parsep=0pt,itemsep=2pt]
\item[\ref{it.series0}]
This follows immediately from the definition in \eqref{e.Qkern}. Indeed, both indicator functions vanish when $t\leq s$, whereas only the first indicator function is nonzero when $s<t\leq s+h$.

\item[\ref{it.series1}]
Fix $a\in[0,\infty)$ and $\lambda\in[0,\infty)$ with $\lambda\vee \kappa>0$. By the substitution $r=t-a$ and the identity $\Gamma(1/2)=\pi^{\frac{1}{2}}$,
\begin{equation}\label{e.Qshhf}
\frac{1}{\pi^{\frac{1}{2}}}\int_{a}^{\infty}
 \frac{\exp(-\kappa(t-a))}{(t-a)^{\frac{1}{2}}}
 e^{-t\lambda}\dif t=\frac{e^{-a\lambda}}{\pi^{\frac{1}{2}}}
 \int_{0}^{\infty}e^{-(\lambda+\kappa)r}r^{-\frac{1}{2}}\dif r
 =(\lambda+\kappa)^{-\frac{1}{2}}e^{-a\lambda}.
\end{equation}
Applying \eqref{e.Qshhf} first with $a=s$ and then with $a=s+h$, and subtracting the resulting identities, gives
\begin{align}
 \int_{0}^{\infty}Q_{s,h,\kappa}(t)e^{-t\lambda}\dif t
 =(\lambda+\kappa)^{-\frac{1}{2}}
 \bigl(e^{-s\lambda}-e^{-(s+h)\lambda}\bigr)=(\lambda+\kappa)^{-\frac{1}{2}}e^{-s\lambda}(1-e^{-h\lambda}).
\end{align}
This proves \eqref{e.QLap} in the case $\lambda\vee\kappa>0$.

If $\lambda=\kappa=0$, the identity also holds for $h=0$ because $Q_{s,0,0}\equiv0$. Suppose $h>0$. Then
\begin{equation}\label{e.revnew-zero-laplace}
\begin{aligned}
\int_0^{\infty} Q_{s,h,0}(t)\dif t=\lim_{T\to\infty}\int_0^T Q_{s,h,0}(t)\dif t=\lim_{T\to\infty}\frac{2}{\sqrt\pi}\left(\sqrt{T-s}-\sqrt{T-s-h}\right)=0.
\end{aligned}
\end{equation}

\item[\ref{it.series2}]
For $h=0$ and $t>s$, both sides of \eqref{e.Qshif} are zero. Suppose $h>0$. For $r\in(0,\infty)$, put $F_{\kappa}(r):=e^{-\kappa r}r^{-\frac{1}{2}}$.
If $t\geq s+2h$ and $x:=t-s$, then $x\geq2h$ and
\begin{equation}\label{e.Q1df}
 \pi^{\frac{1}{2}}Q_{s,h,\kappa}(t)
 =F_{\kappa}(x)-F_{\kappa}(x-h)
 =\int_{x-h}^{x}F_{\kappa}'(r)\dif r.
\end{equation}
For $r\in(0,\infty)$, we have $ F_{\kappa}'(r)
 =-e^{-\kappa r}
 (\kappa r^{-\frac{1}{2}}+\frac{1}{2}r^{-\frac{3}{2}})$. Since $\sup_{u\in[0,\infty)}ue^{-u}<\infty$, we have $\sup_{\kappa\in(0,\infty)}\lvert F_{\kappa}'(r)\rvert
 \leq Cr^{-\frac{3}{2}}$ for all $r\in(0,\infty)$.
Moreover, $r\in[x-h,x]$ implies $r\geq x/2$. Therefore,
\begin{align}
 \sup_{\kappa\in(0,\infty)}\lvert Q_{s,h,\kappa}(t)\rvert\overset{\eqref{e.Q1df}}{\leq}
 \frac{1}{\pi^{\frac{1}{2}}}
 \int_{x-h}^{x}Cr^{-\frac{3}{2}}\dif r
 \leq Chx^{-\frac{3}{2}}=C\frac{h}{(t-s)^{\frac{3}{2}}}.
\end{align}
This proves \eqref{e.Qshif}.

\item[\ref{it.series4}]
Put $T:=\scale(R)$. We first record an estimate that will be used repeatedly. Fix $t\in(0,T]$ and set $r:=\scale^{-1}(t)$. Then $r\leq R$. From the definition of $\Lambda$ in \eqref{e.deflamb}, the doubling property \eqref{e.VD1}, and the lower scaling bound for $\scale$, we obtain
\begin{align}
 \frac{\Lambda(t)}{\Lambda(T)}
 &=\left(\frac{R\wedge T}{r\wedge t}\right)^{\frac{1}{2}}
 \left(\frac{R}{r}\right)^{\frac{1}{2}}
 \left(\frac{\volume(R)}{\volume(r)}\right)^{\frac{1}{2}}\leq C
 \max\left(\frac{R}{r},\frac{T}{t}\right)^{\frac{1}{2}}
 \left(\frac{R}{r}\right)^{\frac{1+d_{2}}{2}}\\
 &\leq C\left(\frac{T}{t}\right)^{a},
 \ \text{ with } a:=\frac{1}{2}+\frac{1+d_{2}}{2\beta_{1}}.
 \label{e.lbdcpr}
\end{align}
By the lower estimate for $\Phi$ implied by the scaling condition on $\scale$, and since $T/t\geq1$,
\begin{equation}\label{e.plowq}
 \Phi(R,t)
 \geq c_{0}\left(\frac{T}{t}\right)^{\frac{1}{\beta_{2}-1}}.
\end{equation}
Consequently, for every $N\in(0,\infty)$, there exists $C_{N}\in(0,\infty)$ such that
\begin{equation}\label{e.lbdpdc}
 \Lambda(t)e^{-c\Phi(R,t)}
 \leq C_{N}\Lambda(T)\left(\frac{t}{T}\right)^{N},
 \ \text{ for all }t\in (0, T].
\end{equation}
Indeed, this follows from \eqref{e.lbdcpr}, \eqref{e.plowq}, and $\sup_{u\in[1,\infty)}u^{a+N}
 \exp\left(-cu^{\frac{1}{\beta_{2}-1}}\right)<\infty$.
Set $L:=2(A+2)h$. The assumption on $h$ gives $L\leq T$. Set
\begin{align}
 I_{1}&:=\int_{0}^{L}\lvert Q_{s,h,\kappa}(t)\rvert
 \Lambda(t)e^{-c\Phi(R,t)}\dif t,\notag\\
 I_{2}&:=\int_{L}^{T}\lvert Q_{s,h,\kappa}(t)\rvert
 \Lambda(t)e^{-c\Phi(R,t)}\dif t,\notag\\
 I_{3}&:=\int_{T}^{\infty}\lvert Q_{s,h,\kappa}(t)\rvert
 \Lambda(t)e^{-c\Phi(R,t)}\dif t.
\end{align}

For $I_{1}$, the definition of $Q_{s,h,\kappa}$ and \eqref{e.lbdpdc} with $N=1/2$ give
\begin{align}
 I_{1}\leq C\frac{\Lambda(T)}{T^{\frac{1}{2}}}
 \left(
 \int_{s}^{L}(t-s)^{-\frac{1}{2}}t^{\frac{1}{2}}\dif t
 +\int_{s+h}^{L}(t-s-h)^{-\frac{1}{2}}t^{\frac{1}{2}}\dif t
 \right)\leq C\frac{\Lambda(T)L}{T^{\frac{1}{2}}}
 \leq C_{A}\frac{h\Lambda(T)}{T^{\frac{1}{2}}}.
 \label{e.q-near}
\end{align}
Here and below, an integral is understood to be zero whenever its lower endpoint is not smaller than its upper endpoint.

If $t\geq L$, then, because $s<Ah$, we have $ t-s\geq L-s>(A+4)h\geq2h$ and $t-s\geq\frac{1}{2}t$. Thus \eqref{e.Qshif} and \eqref{e.lbdpdc} with $N=3/2$ imply
\begin{align}
 I_{2}
 &\leq Ch\int_{L}^{T}t^{-\frac{3}{2}}
 \Lambda(T)\left(\frac{t}{T}\right)^{\frac{3}{2}}\dif t
 \leq C\frac{h\Lambda(T)}{T^{\frac{1}{2}}}.
 \label{e.q-middle}
\end{align}

Finally, $s<Ah\leq A(2(A+2))^{-1}T<T/2$. Hence, for $t\geq T$, one has $t-s\geq t/2$. Since $\Lambda$ is non-increasing,
\begin{align}
 I_{3}
 &\leq Ch\Lambda(T)\int_{T}^{\infty}t^{-\frac{3}{2}}\dif t
 \leq C\frac{h\Lambda(T)}{T^{\frac{1}{2}}}.
 \label{e.q-far}
\end{align}
Combining \eqref{e.q-near}, \eqref{e.q-middle}, and \eqref{e.q-far}, we obtain
\begin{equation}
 \sup_{\kappa\in(0,\infty)}
 \int_{0}^{\infty}\lvert Q_{s,h,\kappa}(t)\rvert
 \Lambda(t)e^{-c\Phi(R,t)}\dif t
 \leq C\frac{h\Lambda(T)}{T^{\frac{1}{2}}}.
\end{equation}
Since $T=\scale(R)$ and $\scale^{-1}(T)=R$, $ \Lambda(T)
 ={(R\wedge\scale(R))^{-\frac{1}{2}}R^{-\frac{1}{2}}
 \volume(R)^{-\frac{1}{2}}}$. Therefore,
\begin{align}
 \frac{h\Lambda(T)}{T^{\frac{1}{2}}}
 &=\frac{h}{\scale(R)}
 \frac{\scale(R)^{\frac{1}{2}}}
 {(R\wedge\scale(R))^{\frac{1}{2}}R^{\frac{1}{2}}
 \volume(R)^{\frac{1}{2}}},
\end{align}
which proves \eqref{e.Qweight}.
\qedhere
\end{enumerate}
\end{proof}
	\begin{proposition}\label{p.equiH1}
Let $p\in(2,\infty)$, and let $\Psi:(0,\infty)\to(0,\infty)$ be a continuous increasing bijection. Then
\[
 \int_0^1 t^{-\frac12-\frac1p}\Psi^{-1}(t)^{-1+\frac2p}\dif t<\infty
 \quad\Longleftrightarrow\quad
 \int_0^{\Psi^{-1}(1)}\left(\frac{\Psi(r)}{r^2}\right)^{\frac12-\frac1p}\frac{\dif r}{r}<\infty.
\]
\end{proposition}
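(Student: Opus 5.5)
The equivalence will follow from the substitution $t=\Psi(r)$, i.e.\ $r=\Psi^{-1}(t)$, which maps $(0,1]$ onto $(0,\Psi^{-1}(1)]$. The left-hand integrand becomes $\Psi(r)^{-\frac12-\frac1p}r^{-1+\frac2p}$. This can be rewritten as
\[
\Psi(r)^{-\frac12-\frac1p}r^{-1+\frac2p}=\Big(\frac{\Psi(r)}{r^2}\Big)^{\frac12-\frac1p}\cdot\frac{1}{\Psi(r)}\cdot r^{-1+\frac2p}\cdot r^{1-\frac2p}=\Big(\frac{\Psi(r)}{r^2}\Big)^{\frac12-\frac1p}\frac{1}{\Psi(r)}.
\]
Hence, formally, the left integral equals $\int_0^{\Psi^{-1}(1)}(\Psi(r)/r^2)^{\frac12-\frac1p}\,\frac{\dif\Psi(r)}{\Psi(r)}$.

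The only issue is that $\Psi$ is merely continuous and increasing, not differentiable. I would therefore interpret the substitution as the pushforward identity $\int_0^1 F(\Psi^{-1}(t))\dif t=\int F(r)\dif\mu(r)$, where $\mu$ is the Lebesgue–Stieltjes measure of $\Psi$ on $(0,\Psi^{-1}(1)]$. This holds for nonnegative Borel $F$ because $\Psi$ is a homeomorphism pushing $\mu$ forward to Lebesgue measure. The task then reduces to comparing $\int g\,\dif\mu/\Psi$ with $\int g\,\dif r/r$, where $g(r):=(\Psi(r)/r^2)^{\frac12-\frac1p}$.

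For this comparison I would discretize dyadically in $t$ rather than in $r$, which avoids needing any regularity beyond monotonicity. Set $t_k=2^{-k}$ and $r_k=\Psi^{-1}(t_k)$, so that $r_k$ decreases to $0$ and $r_0=\Psi^{-1}(1)$. Since $\Psi^{-1}$ is increasing, on $[t_{k+1},t_k]$ the left integrand satisfies
\[
t^{-\frac12-\frac1p}r^{-1+\frac2p}\asymp t_k^{-\frac12-\frac1p}\,r^{-1+\frac2p},\qquad r\in[r_{k+1},r_k],
\]
and $r^{-1+\frac2p}$ lies between $r_k^{-1+\frac2p}$ and $r_{k+1}^{-1+\frac2p}$. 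Thus the left side is bounded above and below by constant multiples of sums of $t_k^{\frac12-\frac1p}r_{k}^{-1+\frac2p}$ and of the same expression with $r_{k+1}$. On the right, $\Psi(r)\asymp t_k$ on $[r_{k+1},r_k]$, so $\int_{r_{k+1}}^{r_k}g\,\dif r/r\asymp t_k^{\frac12-\frac1p}\int_{r_{k+1}}^{r_k}r^{-1+\frac2p}\frac{\dif r}{r}$, which is at most a constant times $t_k^{\frac12-\frac1p}r_{k+1}^{-1+\frac2p}$.

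The main obstacle is the mismatch between $r_k$ and $r_{k+1}$ when $r_{k+1}\ll r_k$, since without scaling assumptions on $\Psi$ the dyadic pieces need not be comparable. The saving feature is the factor $\frac12-\frac1p>0$ in the $t$-weight combined with the negative exponent of $r$. Concretely, $t_k^{\frac12-\frac1p}r_{k+1}^{-1+\frac2p}=2^{\frac12-\frac1p}\,t_{k+1}^{\frac12-\frac1p}r_{k+1}^{-1+\frac2p}$, so the term with $r_{k+1}$ is just the $(k+1)$-th term with the $r_k$ convention, up to a constant. The sum of these terms is a shift of the $r_k$-sum, and the $r_k$-sum is dominated by the $r_{k+1}$-sum termwise because $r^{-1+\frac2p}$ is decreasing. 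Hence both integrals are comparable to $\sum_k t_k^{\frac12-\frac1p}r_k^{-1+\frac2p}$.

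For the lower bound on the right integral I would use $\int_{r_{k+1}}^{r_k}r^{-2+\frac2p}\dif r\ge c\,r_k^{-1+\frac2p}$ only when $r_{k+1}\le r_k/2$. When instead $r_{k+1}>r_k/2$, I would group consecutive indices until $r$ halves; along such a group $t$ decays geometrically, so the sum over the group is controlled by its first term. I would verify this grouping step carefully, since it is the one place where the argument could break.
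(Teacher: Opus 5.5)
Your reduction of the left-hand integral to $S:=\sum_{k\ge0}t_k^{a}r_k^{-2a}$, with $a=\frac12-\frac1p$, is correct; the shift identity $t_k^ar_{k+1}^{-2a}=2^at_{k+1}^ar_{k+1}^{-2a}$ is exactly what is needed. The bound $\int_0^{r_0}g\,\frac{\dif r}{r}\lesssim\sum_k t_k^ar_{k+1}^{-2a}\lesssim S$ is also correct, so finiteness of the left integral does force finiteness of the right one.

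The converse, however, rests entirely on the grouping step you flagged, and as sketched it does not close. For a group $\{k,\dots,n-1\}$, with $n$ the first index such that $r_n\le r_k/2$, you correctly bound its contribution to $S$ by $C\,t_k^ar_k^{-2a}$. But you never say which part of $\int_0^{r_0}g\,\frac{\dif r}{r}$ pays for $t_k^ar_k^{-2a}$. The part the group owns in your partition, namely $[r_n,r_k]$, cannot do it. Nothing prevents $\Psi$ from dropping from $t_k$ to $t_n$ on a very short interval just below $r_k$: take, e.g.\ $r_j=r_k-(j-k)\delta$ for $k\le j\le n$ with $\delta\ll r_k$. Then $\int_{r_n}^{r_k}g\,\frac{\dif r}{r}\lesssim(\delta/r_k)\,t_k^ar_k^{-2a}$. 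Your case split is also incompatible with the grouping. When a group ends at $n$ you know $r_n\le r_k/2$ but not $r_n\le r_{n-1}/2$, so your first-case estimate is unavailable at $n$ as well.

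The missing idea is to charge each group to the interval \emph{above} its leading point, where $\Psi\ge t_k$. There $\int_{r_k}^{2r_k}g\,\frac{\dif r}{r}\ge\frac{1-2^{-2a}}{2a}\,t_k^ar_k^{-2a}$. Define all groups uniformly: $k_0=0$, and $k_{i+1}$ is the first index with $r_{k_{i+1}}\le r_{k_i}/2$. Then $2r_{k_{i+1}}\le r_{k_i}$, so the intervals $[r_{k_i},2r_{k_i}]$, $i\ge1$, are non-overlapping subsets of $(0,r_0]$. The group with $i=0$ contributes at most $Cr_0^{-2a}$.

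Shorter still is summation by parts, using $t_k^a-t_{k+1}^a=(1-2^{-a})t_k^a$. Since $\Psi\ge t_{k+1}$ on $[r_{k+1},r_k]$,
\[
2a\int_0^{r_0}g\,\frac{\dif r}{r}\ \ge\ \sum_{k\ge0}t_{k+1}^a\bigl(r_{k+1}^{-2a}-r_k^{-2a}\bigr)\ \ge\ (1-2^{-a})\sum_{k\ge1}t_k^ar_k^{-2a}-r_0^{-2a}.
\]

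This is the discrete counterpart of the paper's argument, which avoids discretization altogether. The paper writes $\Psi^{-1}(t)^{-2a}=r_0^{-2a}+2a\int_{\Psi^{-1}(t)}^{r_0}r^{-2a-1}\dif r$ and integrates against $t^{a-1}\dif t$ over $(0,1)$. It then exchanges the integrals by Tonelli, using $\Psi^{-1}(t)\le r\iff t\le\Psi(r)$. This yields the exact identity $\int_0^1t^{a-1}\Psi^{-1}(t)^{-2a}\dif t=r_0^{-2a}/a+2\int_0^{r_0}\Psi(r)^ar^{-2a-1}\dif r$. That gives both implications at once, with no constants.
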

\begin{proof}
Put $a=\frac12-\frac1p>0$ and $r_0=\Psi^{-1}(1)$. For $0<t<1$,
\[
 \Psi^{-1}(t)^{-2a}
 =r_0^{-2a}+2a\int_{\Psi^{-1}(t)}^{r_0}r^{-2a-1}\dif r.
\]
Multiply by $t^{a-1}$ and integrate over $(0,1)$. Tonelli's theorem and the equivalence $\Psi^{-1}(t)\le r\Longleftrightarrow t\le\Psi(r)$ give the identity of extended nonnegative integrals
\begin{align}
 \int_0^1t^{a-1}\Psi^{-1}(t)^{-2a}\dif t
 &=\frac{r_0^{-2a}}a
 +2a\int_0^{r_0}r^{-2a-1}\left(\int_0^{\Psi(r)}t^{a-1}\dif t\right)\dif r\\
 &=\frac{r_0^{-2a}}a+2\int_0^{r_0}\Psi(r)^ar^{-2a-1}\dif r.
\end{align}
The asserted equivalence follows.
\end{proof}

\noindent \textbf{Acknowledgments.} F.B.~is partially supported by grant 10.46540/4283-00175B from the Independent Research Fund Denmark, by the Villum Investigator grant \emph{Stochastic Analysis in Aarhus}, and by the European Research Council (ERC) under the European Union's Horizon Europe research and innovation programme (RanGe project, Grant Agreement No.~101199772). A.C.~is partially funded by the Villum Investigator grant \emph{Stochastic Analysis in Aarhus}. L.C.~is partially supported by grant 10.46540/4283-00175B from the Independent Research Fund Denmark.

\vspace{0.4cm}
\noindent \textbf{AI disclosure statement.} Generative artificial intelligence tools were used during the initial exploratory phase of this work to assist in considering possible directions and organizing preliminary ideas, and during the final preparation phase to assist with linguistic and stylistic polishing. The authors take full responsibility for the content of the manuscript.

\vspace{10pt}

\noindent Fabrice~Baudoin:
\vspace{-3pt}

\noindent Department of Mathematics, Aarhus University, 8000 Aarhus C, Denmark
\vspace{-3pt}

\noindent \texttt{fbaudoin@math.au.dk}
\vspace{5pt}

\noindent Aobo~Chen:
\vspace{-3pt}

\noindent Department of Mathematics, Aarhus University, 8000 Aarhus C, Denmark
\vspace{-3pt}

\noindent \texttt{aobochen.math@hotmail.com} / \texttt{aobochen@math.au.dk}

\vspace{5pt}

\noindent Li~Chen:
\vspace{-3pt}

\noindent Department of Mathematics, Aarhus University, 8000 Aarhus C, Denmark
\vspace{-3pt}

\noindent \texttt{lchen@math.au.dk}

\end{document}